\newtheorem{lemma}{Lemma}[section]
\newtheorem{corollary}{Corollary}[section]
\newtheorem{theorem}{Theorem}
\newtheorem{remark}{Remark}[section]
\newtheorem{proposition}{Proposition}[section]
\newtheorem{definition}{Definition}[section]
\def\tr{\mbox{Tr}\,}
\def\limo{\lim_{\omega}}
\def\dim{{\rm dim}}
\def\<{\langle}
\def\>{\rangle}
\def\bA{{\bf A}}
\def\conv{{\rm Conv}}
\def\aut{{\rm Aut}}
\def\stab{{\rm Stab}}
\begin{document}

\title{Higher order Fourier analysis as an algebraic theory III.}
\author{{\sc Bal\'azs Szegedy}}

\maketitle

\abstract{ For every natural number $k$ we introduce the notion of $k$-th order convolution of functions on abelian groups. We study the group of convolution preserving automorphisms of function algebras in the limit. It turns out that such groups have $k$-nilpotent factor groups explaining why $k$-th order Fourier analysis has non-commutative features. To demonstrate our method in the quadratic case we develop a new quadratic representation theory on finite abelian groups.
We introduce the notion of a quadrtic nil-morphism of an abelian group into a two step nil-manifold. We prove a structure theorem saying that any bounded function on a finite abelian group is decomposable into a structured part (which is the composition of a nil-morphism with a bounded complexity continuous function) and a random looking part with small $U_3$ norm. It implies a new inverse theorem for the $U_3$ norm. (The general case for $U_n$, $n\geq 4$ will be discussed in the next part of this sequence.) We point out that our framework creates interesting limit objects for functions on finite (or compact) abelian groups that are measurable functions on nil-manifolds.
}

\tableofcontents

\section{introduction}

In this paper we continue our program started in \cite{Sz1} and \cite{Sz2} to develop an algebraic framework for higher order Fourier analysis using limits of functions on abelian groups.
Higher order characters and decompositions were studied in the first two papers. In this paper we set up the framework which is needed to understand the structure of higher order characters. Our main goal is to explain, probably the most fascinating fact, why non-commutative structures, such as $k$-nilpotent groups, arise when studying the $U_{k+1}$ Gowers norm on abelian groups.
The key ingredient is the notion of {\bf $k$-th order convolution} defined as the expected value of

$$\prod_{S\subseteq [k]}f_S\Bigl(x+\sum_{i\in S}t_i\Bigl)$$
as $x$ is taken randomly and $\{f_S\}_{S\subseteq [k]}$ is a system of $2^k$ functions on the abelian group $A$. The $k$-th order convolution is a $k$ variable function. For $k=1$ it is equal to the usual convolution.

If $A_1,A_2,\dots$ is a sequence of growing abelian groups then their ultra product $\bA$ has a natural probability space structure with shift invariant measure. For a shift invariant $\sigma$-algebra $\mathcal{B}$ on $\bA$ we study the subgroup $G\subset\aut(L_\infty(\mathcal{B}))$ consisting of those automorphisms that preserve $k$-th order convolution for every $k$. It turns out that if $\mathcal{B}$ is embedded into $\mathcal{F}_k$ (a canonical $\sigma$-algebra on $\bA$ related to $k$-th order Fourier analysis) then $G$ is a $k$-nilpotent group. Furthermore there is a natural way of representing the functions in $L_\infty(\mathcal{B})$ on this group.

In the first chapter of this paper we prove several general results about the structure of $G$ for every $k$. Note that this part uses a lots of intuition from the fundamental paper \cite{HKr} by Host and Kra.
In the second chapter we deal with $k=1$ and $k=2$.
We demonstrate the efficiency of the infinite method by constructing a ``quadratic representation theory'' for finite abelian groups.
Quite surprisingly it turns out that there is a rigid algebraic notion that we call a ``quadratic nil-morphism'' mapping an abelian group into a two step nil-manifold.

Ordinary Fourier analysis can be described through homomorphisms of an abelian group into the circle. However, it turns out, that in the higher order case, the natural algebraic objects are functions (called nil-morphisms) that take values in higher dimensional compact topological spaces, so called nil-manifolds.
It makes it more interesting that nil-manifolds are not groups but only homogeneous spaces of groups.

\medskip

Let us start with a simplified example for a quadratic nil-morphism which we call {\bf orbit representation}. Let $N$ be a two nilpotent Lie group with a discrete co-compact subgroup $T$ and let $Z$ be the center of $N$. Let $M$ denote the left coset space of $T$ in $N$. The space $M$ is called a two step ``nil-manifold''. There is a simple way of representing a finite abelian group in $M$.
Assume that some subgroup $H\in N$ acts transitively on a finite point set $S\subset C$ in the same way as $A$ acts on some finite set. Using that $A$ is abelian, we get that $H/(H\cap T)$ is a homomorphic image of $A$ and so there is an epimorphism $\phi:A\rightarrow H/(H\cap T)$ which also defines a map $\psi:A\rightarrow M$.
We say that $\psi$ is an orbit representation. In other words, an orbit representation is a map $\psi:A\rightarrow M$ such that for every $a\in A$ there is an element $n_a\in N$ such that
$$\psi(a+b)=n_a\psi(b)$$
for every $b\in A$.

This will not be quite enough for our purposes. We will need twisted versions of orbit representations. (Note that in the higher order case a more complicated definition will be needed.)

\begin{definition} We say that a map $\psi:A\rightarrow M$ is a {\bf quadratic nil-morphism} if for every $a\in A$ there is an element $n_a\in N$ and a homomorphism $\chi_a:A\rightarrow Z$ such that
$$\psi(a+b)=\chi_a(b)n_a\psi(b)$$
for every $b\in A$.
\end{definition}

The good news is that quadratic nil-morphisms can be understood through ordinary homomorphisms if we consider an enlarged version of $N$ (A certain group that contains $N$ as a section) that we usually call an {\bf interpretation} of $N$. However this enlargement depends on the group $A$ which is mapped into $M$.
For a function $f:A\rightarrow \mathbb{C}$ let $\Delta_t f$ denote the function $x\mapsto f(x+t)\overline{f(x)}$. A basic example for a quadratic nil-morphism is a function $\psi:A\rightarrow\mathbb{C}$ such that
$$\Delta_{t_1,t_2,t_3}\psi (x)=1$$ for every $t_1,t_2,t_3,x\in A$.
In this case $N=Z$ is the complex unit circle and $T$ is trivial.
Such functions can also be described in terms of ordinary homomorphisms $\alpha$ of $A$ into the semi direct product $G=A\ltimes(\mathcal{C}\times\hat{A})$ where $\mathcal{C}$ is the circle, $(c,\chi)^a=(c\chi(a),\chi)$ and $\alpha$ composed with $G\rightarrow A$ is the identity map on $A$.

\medskip

The notion of a polynomial map was introduced by Leibman \cite{Leib}.

\begin{definition}[Polynomial] A mapping $\phi$ of a group $G$ to a group $F$ is said to be plynomial of degree $k$ it trivializes after $k+1$ cosecutive applications of operators $D_h$,$h\in G$ defined by $D_h\phi(g)=\phi(g)^{-1}\phi(gh)$.
\end{definition}

Now we can define nil-morphisms in general.

\begin{definition}[Nil-morphism] Let $N$ be a $k$-nilpotent group, $T$ be a subgroup of $N$ and $M$ be the left coset space of $T$ in $N$. The factor map $g:N\rightarrow M$ is defined by $g(t)=tN$. A map $\phi$ from a group $A$ to $M$ is called a degree $k$ nil-morphism if there is a group $B$, surjective homomorphism $f:B\rightarrow A$ and polynomial map $\phi':B\rightarrow N$ of degree (at most) $k$ such that the next diagram is commutative.

\begin{displaymath}
    \xymatrix{
        B \ar[r]^{\phi'} \ar[d]_{f} & N \ar[d]^{g} \\
        A \ar[r]_{\phi}       & M }
\end{displaymath}
Let $N=N_0\supset N_1\supset\dots\supset N_k=\{1\}$ be the upper central series of $N$. In most applications we will also assume that $\phi':B\rightarrow N/N_i$ is a polynomial of degree $i$.
\end{definition}

Our program is to build up Higher order Fourier analysis in terms of higher order nil-morphisms.
The main ingredient of this program is to prove a structure theorem for functions on abelian groups which is analogous to Szemer\'edi's regularity lemma. It says that for any bounded function $f:A\rightarrow\mathbb{C}$ there is a nil-morphism $\psi:A\rightarrow M$ into a bounded dimensional $k$ step nil-manifold and a ``bounded complexity function'' $\phi:M\rightarrow\mathbb{C}$ such that
$$f(x)=h(x)+g(x)+\phi(\psi(x))$$
where $\|h\|_2$ is small and $\|g\|_{U_{k+1}}$ is arbitrary small in terms of the complexity of $\phi$ and $M$.
We will deal with the general case in the next paper.

Let us focus on the quadratic case. There are many possible ways of defining a bounded complexity function on $M$. A very natural choice is to prescribe a continuous embeddings $v:M\hookrightarrow\mathbb{R}^n$ for every nil-manifold and than say that $\phi:M\rightarrow\mathbb{C}$ has complexity at most $d$ if there is a polynomial function $p:\mathbb{R}^n\rightarrow\mathbb{C}$ of degree at most $d$ such that $\phi(x)=p(v(x))$.
Out of technical reasons we also assume that the the embedding of direct products of nil-manifolds is the direct product of the embeddings. Let us furthermore define the complexity of a manifold as the sum of its dimension and the number of connected components.

\begin{theorem}[Quadratic structure theorem] For an arbitrary function $F:\mathbb{R}^+\times\mathbb{N}\rightarrow\mathbb{R}^+$ and every $\epsilon>0$ there is a natural number $n$ and a two step nil-manifold $M$ of complexity at most $n$ such that for every function $f:A\rightarrow\mathbb{C}$ on a finite abelian group $A$ with $\|f\|_\infty\leq 1$ there is a decomposition $f=h+g+\psi\circ\phi$ where
\begin{enumerate}
\item $\psi:A\rightarrow M$ is a quadratic nil-morphism,
\item $\phi:M\rightarrow\mathbb{C}$ is a function of complexity at most $n$,
\item $\|g\|_{U_3}\leq F(\epsilon,n)$,~ $\|g\|_\infty\leq 1$,
\item $\|h\|_2\leq\epsilon$,
\item $|(h,g)|~,~|(h,\psi\circ\phi)|~,~|(g,\psi\circ\phi)|~\leq F(\epsilon,n)$.
\end{enumerate}
\end{theorem}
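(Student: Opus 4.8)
The plan is to prove the Quadratic structure theorem by working in the ultraproduct and transferring back, in the spirit of the correspondence principle that underlies this whole program. First I would pass to a sequence of finite abelian groups $A_1, A_2, \dots$ and form the ultraproduct $\bA$ with its shift-invariant Loeb probability structure, as in the introduction. A counterexample to the theorem would produce, via $\L$o\'s's theorem, a bounded measurable function $f$ on $\bA$; the goal becomes an honest (not $\epsilon$-approximate) decomposition $f = h + g + \psi\circ\phi$ on $\bA$, where $g$ is $U_3$-null (i.e. $\|g\|_{U_3}=0$), $\psi$ is an (ultraproduct) quadratic nil-morphism into a finite-complexity two-step nil-manifold $M$, and $h$, $g$ are orthogonal to each other and to $\psi\circ\phi$. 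The finitary statement with the arbitrary function $F$ then follows by the standard ``compactness + uniformity'' argument: once one has the qualitative ultraproduct decomposition, the measurable $\phi$ on $M$ can be approximated by a bounded-complexity polynomial in the coordinates of the embedding $v:M\hookrightarrow\bR^n$ up to $L_2$ error $\epsilon$, and the remaining $g$ has $U_3$ norm that is infinitesimal, hence below $F(\epsilon,n)$ on all but finitely many $A_i$.

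Second, for the core ultraproduct statement I would invoke the structure theory of the $\sigma$-algebra $\cF_2$ attached to third-order Fourier analysis: the conditional expectation $f_2 := \bE(f\mid\cF_2)$ has the property that $f - f_2$ has vanishing $U_3$ norm, so we set $g = f - f_2$, and it remains to analyze $f_2$, which is $\cF_2$-measurable. The heart of the matter is then the structural description of $\cF_2$ itself. Here I would use the Host--Kra style tower: $\cF_1$ is the Kronecker (first-order) factor and $\cF_2$ is a ``degree-2 extension'' of $\cF_1$ by a compact abelian group bundle, which is exactly the situation that the group $G\subset\aut(L_\infty(\cF_2))$ of convolution-preserving automorphisms — shown in Chapter 1 to be $2$-nilpotent — is designed to capture. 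The $2$-nilpotent group $G$, its subgroup $T$ fixing the relevant sub-$\sigma$-algebra, and the coset space $M = G/T$ give precisely the target two-step nil-manifold; the tautological map sending a point of $\bA$ (or rather its $G$-orbit data) into $M$ is the quadratic nil-morphism $\psi$, and $f_2$ factors through $\psi$ by a measurable $\phi:M\to\bC$. Verifying the nil-morphism identity $\psi(a+b) = \chi_a(b)\,n_a\,\psi(b)$ amounts to unwinding how translation on $\bA$ acts through $G$ modulo its center $Z$, using that $A$ is abelian so the induced maps into $G/Z$ are honest homomorphisms twisted by central characters $\chi_a$.

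Third, I would need a \emph{finite-dimensionality / bounded-complexity} reduction: $\cF_2$ as a whole is typically infinite-dimensional, so one cannot take $M$ to be a single finite-complexity nil-manifold for all of $f_2$ at once. The fix is the usual ``energy increment'' truncation: approximate $f_2$ in $L_2$ within $\epsilon/2$ by a function measurable with respect to a finitely generated sub-$\sigma$-algebra of $\cF_2$ that is still a degree-2 extension of a finite-dimensional piece of $\cF_1$; this produces a finite-dimensional two-step nil-manifold $M$ of complexity bounded in terms of $\epsilon$ (uniformly over the group, which is the crucial point the ultraproduct buys us), a quadratic nil-morphism $\psi: \bA\to M$, and $\phi:M\to\bC$ continuous, with $h$ absorbing both the $L_2$ truncation error and the component of $f$ in $\cF_2\ominus(\text{chosen sub-algebra})$. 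Orthogonality in item 5 is then automatic from the projection structure (each piece lives in a different orthogonal summand), and on $\bA$ it is exact, hence infinitesimal on the $A_i$.

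**The main obstacle** I expect is the bounded-complexity control of $M$ and of $\phi$: showing that the finite-dimensional approximation to $\cF_2$ can be realized as a genuine two-step \emph{nil-manifold} of dimension and component-count bounded purely in terms of $\epsilon$ (with no dependence on $A$), and that the quadratic nil-morphism structure survives the truncation — i.e. that truncating the compact abelian bundle to finitely many characters is compatible with the cocycle/twisting data defining $\psi$. This is where the algebraic input of Chapter 1 (the $2$-nilpotency of $G$ and the representation of $L_\infty(\cB)$-functions on $G$) has to be made quantitative enough to extract a finite-dimensional homogeneous-space model; the Leibman polynomial-map description of nil-morphisms and the ``interpretation'' construction (enlarging $N$ to a group in which the quadratic nil-morphism becomes an ordinary homomorphism twisted by a character into $\hat A$) are the tools I would lean on to package the truncated data as a bona fide degree-$2$ nil-morphism.
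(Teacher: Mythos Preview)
Your overall architecture matches the paper's: argue by contradiction, pass to the ultraproduct $\bA$, split off $g=f-\mathbb{E}(f\mid\mathcal{F}_2)$ with $\|g\|_{U_3}=0$, produce a nil-morphism into a finite-complexity two-step nil-manifold capturing the $\mathcal{F}_2$-part up to an $L_2$ error $h$, then transfer back. The paper does exactly this, but it handles the structured part character-by-character: write $\mathbb{E}(f\mid\mathcal{F}_2)=\sum_i\phi_i$ as a sum of second-order characters, truncate to finitely many, and for \emph{each} $\phi_i$ build the separable nil-pattern $W_0$ and the nil-morphism $\Psi_i:\bA\to C(W_0)$ from the stabilizer/centralizer analysis in the quadratic chapter. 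Your version works with $\mathcal{F}_2$ as a monolithic object and speaks of a single coset space $G/T$; that is morally fine, but the actual finite-dimensionality input is Theorem~\ref{veges}/\ref{fingentype}: every quadratic nil-pattern is the interpretation of a \emph{finitely generated} one, hence each $\phi_i$ can be replaced by a $\phi'_i$ whose type $T$ is finitely generated, making $C(N_i)$ an honest finite-dimensional nil-manifold. Your ``energy-increment truncation to a finitely generated sub-$\sigma$-algebra'' is the right intuition but does not by itself guarantee the truncated object is a nil-manifold; the paper's per-character argument plus Theorem~\ref{veges} is what supplies that.

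The genuine gap in your plan is the transfer back to the finite groups $A_i$. You call it ``the standard compactness + uniformity argument,'' but the structured term is $\phi\circ\psi$ with $\psi:\bA\to M$ a nil-morphism, and there is nothing standard about knowing that $\psi$ is an ultralimit of nil-morphisms $\psi_i:A_i\to M$. A priori $\psi$ is only an ultralimit of arbitrary maps $\psi'_i:A_i\to M$; these are merely $\epsilon_i$-\emph{almost} nil-morphisms with $\epsilon_i\to 0$, and one must correct them to genuine nil-morphisms. The paper isolates this as a separate rigidity result (Theorem~\ref{rigidity}), proved via the almost-homomorphism correction Theorem~\ref{almosthom} and its nil-pattern upgrade Lemma~\ref{almostnilhom}. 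Without this step your $\psi_i$ need not satisfy the nil-morphism identity on $A_i$, so item~1 of the conclusion fails. This rigidity ingredient is the piece your outline is missing.
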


\bigskip

\begin{remark}[Uniform version] Without proof we mention that the structure theorem remains true if we require that the range of $\psi$ is $F(\epsilon,n)$ close to the uniform distribution in $M$ (with respect to the Haar measure) in an arbitrary (but prescribed) metric on the space of probability distributions. However in this case $M$ will depend on the choice of $\epsilon$. This uniform version of the structure theorem has the big advantage that continuous embeddings of $M$ can be replaced by "almost continuous" embeddings (that are continuous on the complement of a closed $0$ measure set). There are nice embeddings of this type.
\end{remark}

\begin{remark} We will see, the structure of $M$ can be restricted if $A$ has special structure. Foe example if $A$ has bounded exponent then $M$ can be assumed to be $0$ dimensional.
\end{remark}

To formulate inverse theorems for the $U_3$ norm of conventional type we need to specify what we mean by correlation with a nil-morphism. This can be done in many different ways. Most of them are equivalent up to changing constants. We devote a chapter to this issue.
The simplest definition is to say that $f$ {\bf $\delta$-correlates} with a map $\psi:A\rightarrow M$ if $$(f,m(v(\psi(x))))>\delta$$ for some arbitrary but fixed continuous embedding $v:M\hookrightarrow\mathbb{R}^n$ and monomial
$$m(x_1,x_2,\dots,x_n)=\prod_{i=1}^n x_i^{t_i}$$ of degree at most $1/\delta$.
Note that we normalize scalar products by the size of $A$

\medskip

Now we are ready to formulate the inverse theorem.

\begin{theorem}[Quadratic inverse theorem] For every $\epsilon>0$ there is a nil-manifold $M$ and $\delta>0$ such that for every function $f:A\rightarrow\mathbb{C}$ with $\|f\|_{U_3}\geq\epsilon$ and $\|f\|_\infty\leq 1$ there is nil-morphism $\psi:A\rightarrow M$ such that $f$ $\delta$-correlates with $\psi$.
\end{theorem}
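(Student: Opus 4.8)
The plan is to derive the inverse theorem as an immediate corollary of the Quadratic structure theorem, using the tensor/Gowers--Cauchy--Schwarz trick to convert an $L_2$/$U_3$ decomposition into a correlation statement. Suppose $\|f\|_{U_3}\geq\epsilon$ and $\|f\|_\infty\leq 1$. Apply the Quadratic structure theorem with a growth function $F$ to be chosen later and with error parameter $\epsilon' = \epsilon/C$ for a suitable absolute constant $C$; this produces a single nil-manifold $M$ of complexity at most $n$ (depending only on $F$ and $\epsilon'$) and a decomposition $f = h + g + \psi\circ\phi$ with $\psi:A\to M$ a quadratic nil-morphism, $\phi:M\to\mathbb{C}$ of complexity $\leq n$, $\|h\|_2\leq\epsilon'$, $\|g\|_{U_3}\leq F(\epsilon',n)$, and the cross terms $|(h,g)|,|(h,\psi\circ\phi)|,|(g,\psi\circ\phi)|\leq F(\epsilon',n)$.

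First I would compute $\|f\|_{U_3}^8 = (f,f)_{U_3}$-type expansion and distribute it over the three summands; equivalently, use that $\langle f,\,\psi\circ\phi\rangle$ cannot be too small. Concretely, since $\|g\|_{U_3}$ is tiny and $\|g\|_\infty\le 1$, the $U_3$ Gowers--Cauchy--Schwarz inequality (together with $\|h\|_2$ small, $\|h\|_\infty \le \|f\|_\infty + \|g\|_\infty + \|\psi\circ\phi\|_\infty$ bounded) gives $\|h+g\|_{U_3}\leq \|h\|_{U_3} + \|g\|_{U_3} \le \|h\|_2^{?} \cdot (\text{bdd}) + F(\epsilon',n)$, which is $\leq \epsilon/2$ once $\epsilon'$ is small and $F$ is chosen to decay fast enough in its second argument. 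Hence $\|\psi\circ\phi\|_{U_3}\geq\epsilon/2$, and in particular $\|\psi\circ\phi\|_\infty$ is bounded below by $\epsilon/2$ as well, so $\phi$ is a nontrivial bounded-complexity function of the nil-morphism. Then the inner product $(f, \psi\circ\phi) = (f,f) - (f,h) - (f,g) \geq \|f\|_2^2 - \epsilon' - F(\epsilon',n)$ — no wait, better: from $f = h+g+\psi\circ\phi$ and the cross-term bounds one gets $(f,\psi\circ\phi) = (h,\psi\circ\phi)+(g,\psi\circ\phi)+\|\psi\circ\phi\|_2^2 \geq \|\psi\circ\phi\|_2^2 - 2F(\epsilon',n) \geq (\epsilon/2)^2/? - 2F(\epsilon',n)$, a positive quantity bounded below in terms of $\epsilon$ and $n$ only.

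It then remains to pass from ``$f$ has a substantial inner product with $\psi\circ\phi$, where $\phi$ has complexity $\le n$'' to ``$f$ $\delta$-correlates with $\psi$'' in the sense of the definition (correlation with a single monomial of degree $\le 1/\delta$ in the coordinates of the fixed embedding $v:M\hookrightarrow\mathbb{R}^n$). By definition of complexity, $\phi(x) = p(v(x))$ for a polynomial $p$ of degree $\le n$ in $n$ variables; write $p = \sum_m c_m\, m$ as a sum of at most $O_n(1)$ monomials each of degree $\le n$, with coefficients bounded in terms of $n$ (here one uses that $M$, hence $v(M)\subset\mathbb{R}^n$, is compact, so the sup-norm of $\phi$ controls the coefficients of $p$ up to a constant $c(n)$ — this is just equivalence of norms on the finite-dimensional space of polynomials of bounded degree restricted to a compact set). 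Since $(f,\psi\circ\phi) = \sum_m c_m (f, m\circ v\circ\psi)$ is bounded below, by pigeonhole there is a single monomial $m$ of degree $\le n \le 1/\delta$ (set $\delta = 1/n$, shrinking further to absorb the other constants) with $|(f, m(v(\psi(x))))| \geq \delta$; replacing $f$ by a unimodular scalar multiple (or $m$ by its conjugate) makes this a genuine one-sided correlation $> \delta$. This $\delta$ and this $M$ depend only on $\epsilon$, completing the proof.

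The main obstacle is the bookkeeping of constants: one must choose the growth function $F$ in the structure theorem \emph{after} seeing how $n$ enters, i.e.\ $F(\epsilon',n)$ must be made small relative to quantities like $\epsilon^2$ and $c(n)^{-1}$ (the norm-equivalence constant and the number of monomials), which is legitimate because the structure theorem quantifies over \emph{arbitrary} $F:\mathbb{R}^+\times\mathbb{N}\to\mathbb{R}^+$. The only genuinely non-formal input beyond the structure theorem is the claim that a bounded-complexity function on a fixed compact nil-manifold is a bounded linear combination of boundedly many bounded-degree monomials in the embedding coordinates, which is the finite-dimensional norm-equivalence statement above; everything else is the standard Gowers-norm triangle inequality plus pigeonhole.
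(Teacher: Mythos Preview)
Your approach---deduce the inverse theorem from the structure theorem by lower-bounding the correlation $(f,\psi\circ\phi)$ and then pigeonholing over the monomials appearing in $\phi$---is exactly the paper's route; in fact the paper gives no argument beyond the single sentence ``This inverse theorem with the above choice of complexity notion and correlation notion is obviously a consequence of the quadratic structure theorem.'' So the strategy is the intended one.

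One point in your sketch does need tightening. You claim the coefficients $c_m$ of $\phi=p\circ v$ are $O_n(1)$ by norm equivalence, since ``the sup-norm of $\phi$ controls the coefficients.'' That norm-equivalence is correct, but the \emph{statement} of the structure theorem gives no bound on $\|\phi\|_{L^\infty(M)}$, on $\|\psi\circ\phi\|_\infty$, or on $\|h\|_\infty$; and since the image $\psi(A)$ need not be dense in $M$, you cannot recover $\|\phi\|_{L^\infty(M)}$ from the bound $\|\psi\circ\phi\|_{L^2(A)}\le 3$. This same missing $L^\infty$ control is also what makes your ``$\|h\|_{U_3}\le\|h\|_2^{?}\cdot(\mathrm{bdd})$'' step and the lower bound on $\|\psi\circ\phi\|_2$ nonobvious. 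The fix is to look inside the paper's \emph{proof} of the structure theorem: there the structured part is built from polynomials $p_i$ approximating unit-modulus characters $\phi'_i$ in $L^2(C(N_i))$, so $\|p_i\|_{L^2(C(N_i))}\le 2$ and hence the coefficients are $O_n(1)$ by norm equivalence on the fixed manifold. Alternatively one can appeal to the ``uniform version'' in the remark after the structure theorem, which forces $\psi$ to be nearly equidistributed and so transfers the $L^2(A)$ bound to an $L^2(M)$ bound on $\phi$. With either fix in hand, your bookkeeping with $F$ and the pigeonhole over monomials goes through as you describe.
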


Note that Green and Tao \cite{GrTao} proved an inverse theorem for functions on cyclic groups which has a similar flavor but it used nil-sequences instead of nil-morphisms.

This inverse theorem with the above choice of complexity notion and correlation notion is obviously a consequence of the quadratic structure theorem.
It is interesting to mention that polynomial functions and monomials are arbitrary choices for ``dense enough'' function systems on nil-manifolds but in some sense they seem to be quite natural. Let us consider the natural embedding $x\rightarrow (\cos(x2\pi),\sin(x2\pi))$ of the circle group $\mathbb{R}/\mathbb{Z}$ into $\mathbb{R}^2$. A Polynomial function with respect to this embedding composed with a homomorphism $\phi:A\rightarrow\mathbb{R}/\mathbb{Z}$ can easily be expressed in terms of finitely many linear characters. However it is not clear which embedding of a general nil-manifold is the most natural.

\medskip

As a byproduct of our framework, an {\bf interesting limit notion for functions on abelian groups} can be defined.
The limit notion depends on a natural number $k$ which measures how much information we want to preserve in the limit. For $k=1$ the limit notion is only sensitive to dominant fourier coefficients. Quite interestingly for a sequence of measurable functions on compact abelian groups the limit object is again a measurable function on some compact abelian group. It can happen surprisingly, that limit of functions on the circle group is only defined on the torus.
For $k=2$ the limit object is a measurable function on the inverse limits of two step nil-manifolds.
This limit notion comes up in this paper naturally however a much more detailed description will be given in another paper.

\noindent

Finally we give a rough sketch of our method to shows where nilpotent Lie groups appear.

\bigskip

\noindent{\bf Step 1.~(getting a two nilpotent group)} Let $f_1,f_2,\dots$ be a sequence of bounded functions on a sequence of growing abelian groups $A_1,A_2,\dots$. Let $f$ be the ultra limit function on $\bA$ and let
$f=h+g_1+g_2+\dots$ be the quadratic fourier decomposition defined in \cite{Sz1} and \cite{Sz2}.
Let us take a component, say $g_1$, and take the smallest shift invariant $\sigma$-algebra $\mathcal{B}$ on $\bA$ containing $g_1$. Let $M$ be the function algebra $L_\infty(\mathcal{B})$ and let $G(M)$ be the group of convolution preserving automorphisms of $M$.
It follows from the first part of our paper that $G(M)$ is a two nilpotent group.
However $G(M)$ is very big. We need to find a Hausdorff topological group somewhere in it.

\medskip

\noindent{\bf Step 2.~(extracting a nice topological group)} Let $S$ be the stabilizer of $g_1$ in $G(M)$ and $H$ be the centralizer of $S$. Then it turns out that $N=H/S$ is a locally compact, Hausdorff topological group.
Furthermore $N$ has the very specific structure
$$0\rightarrow T\times Z\rightarrow N\rightarrow \hat{T}\rightarrow 0$$ where $Z$ is a closed subgroup in the circle group, $T$ is a discrete abelian group and the action of $\hat{T}$ is $T$ is defined by $(a,b)^c=(a,c(a)b)$. Furthermore $Z$ contains the commutator subgroup.
We call such extensions nil-patterns of type $T$. Note that such extensions can be described by elements of the second co-homology group $H^2(\hat{T},T\times Z)$. It can be seen that any homomorphism $\alpha:T\rightarrow T_2$ induces a map $H^2(\hat{T},T\times Z)\rightarrow (\hat{T_2},T_2\times Z)$ and so it induces a new nil pattern $N_2$ of type $T_2$ that we call the interpretation of $N$ under $\alpha$. There is a reverse map $N_2/T_2\rightarrow N/T$ between the corresponding nil-manifolds.

\noindent{\bf Step 3.~(nil-morphisms appear in the limit)} We show that there is a natural nil-morphism $\Psi:\bA\rightarrow N/T$ and a measurable function $s:N/T\rightarrow\mathbb{C}$ such that
$$g_1(x)=\Psi(s(x)).$$ In other words the function $g_1$ can be lifted to $N/T$ in a natural way.

\noindent{\bf Step 4.~(getting a finite dimensional Lie group) } In this step we extract a finite dimensional Lie group from $N$. We prove a theorem saying that $N$ is the interpretation of a nil pattern $N_2$ of type $T_2$ under a monomorphism $T_2\rightarrow T$ such that $T_2$ is finitely generated.
This means that $\hat{T_2}$ is a finite extension of a torus and so $N_2/T_2$ is finite dimensional. We deduce that $g_1$ can be approximated in the form $h(\Psi_2)$ where $\Psi_2$ is a nil-morphism into a finite dimensional nil manifold and $h$ is a measurable function on the nil-manifold.
Note that the group $T_2$ appears as a subgroup of the ultra product group $\hat{\bA}$
which forces restrictions on $T_2$ in terms of the structures of $\{A_i\}_{i=1}^\infty$. For example if $A_i$ are of bounded exponent then so is $T_2$ and $\hat{T}$. Consequently $N_2$ is a finite extension of the circle group.

\noindent{\bf Step 5.~(going back to finite groups)} We show that a nil-morphism $\Psi:\bA\rightarrow N/T$ into a nil-pattern is the ultra limit of nil-morphisms $\Psi_i:A_i\rightarrow N/T$. Then after approximating the above function $h$ by polynomials we can pull these polynomials back to the finite abelian groups $A_i$ using the maps $\Psi_i$.

\section{Higher order convolutions and nilpotent groups}

\subsection{The $k$-th order convolution}

\begin{definition}[$k$-th order convolution] Let $A$ be an abelian group with a shift invariant probability measure $\mu$. Let $F=\{f_S~|~S\subseteq [k]\}$ be a collection of $2^k$ functions in $L_\infty(A,\mu)$. We define the $k$-th order convolution $\conv_k(F)$ as the $k$ variable function
$$\conv_k(F)(t_1,t_2,\dots,t_k)=\int_x \prod_{S\subseteq [k]}f_S\Bigl(x+\sum_{i\in S}t_i\Bigl).$$
\end{definition}

The $k$-th order convolution has the next crucial property.

\begin{lemma}\label{convl2norm} $$\|\conv_k(F)\|_2\leq\prod_{S\subseteq[k]}\|f_S\|_{U_{k+1}}^{2^{k+1}}.$$
\end{lemma}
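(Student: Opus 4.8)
The plan is to reduce the statement to the Gowers--Cauchy--Schwarz (generalized von Neumann) inequality by unfolding the $L^2$ norm into a $(k+1)$-dimensional cube integral.

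First I would write
$$\|\conv_k(F)\|_2^2=\int_{t_1,\dots,t_k}\conv_k(F)(t_1,\dots,t_k)\ \overline{\conv_k(F)(t_1,\dots,t_k)}$$
and expand both factors by the definition, using an independent copy $y$ of the $x$--variable in the conjugated factor. Since there is no conjugation inside $\conv_k$, this yields
$$\|\conv_k(F)\|_2^2=\int_{x,y,t_1,\dots,t_k}\ \prod_{S\subseteq[k]}f_S\Bigl(x+\sum_{i\in S}t_i\Bigr)\ \overline{f_S\Bigl(y+\sum_{i\in S}t_i\Bigr)},$$
the integral being taken against the shift--invariant product measure. Next, the substitution $y=x+t_{k+1}$ (allowed because $\mu$ is shift invariant) converts this into a genuine $(k+1)$--dimensional cube integral: for $\omega=(\omega_1,\dots,\omega_{k+1})\in\{0,1\}^{k+1}$ put $S(\omega)=\{i\le k:\omega_i=1\}$ and let $g_\omega=f_{S(\omega)}$ if $\omega_{k+1}=0$, $g_\omega=\overline{f_{S(\omega)}}$ if $\omega_{k+1}=1$; then
$$\|\conv_k(F)\|_2^2=\int_{x,t_1,\dots,t_{k+1}}\ \prod_{\omega\in\{0,1\}^{k+1}}g_\omega\Bigl(x+\sum_{i:\,\omega_i=1}t_i\Bigr),$$
which is precisely a Gowers inner product of the $2^{k+1}$ functions $g_\omega$.

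Now I would apply the Gowers--Cauchy--Schwarz inequality, bounding this inner product by $\prod_{\omega\in\{0,1\}^{k+1}}\|g_\omega\|_{U_{k+1}}$. Because the $U_{k+1}$ norm is invariant under complex conjugation, and because each $f_S$ with $S\subseteq[k]$ occurs as $g_\omega$ for exactly the two indices $\omega=(\mathbf 1_S,0)$ and $\omega=(\mathbf 1_S,1)$, the product collapses to $\prod_{S\subseteq[k]}\|f_S\|_{U_{k+1}}^{2}$ (in the standard normalization of the Gowers norm). Taking square roots gives the asserted bound, the precise exponent on the right being whatever the normalization of $\|\cdot\|_{U_{k+1}}$ fixed in \cite{Sz1} forces.

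The only non--formal ingredient is the Gowers--Cauchy--Schwarz step; the rest is bookkeeping. I would either invoke it as known in the ultraproduct framework of \cite{Sz1}, or reprove it by induction on the number of difference variables: at stage $j$ one splits the $2^{k+1}$ factors according to whether they depend on $t_j$, applies Cauchy--Schwarz in $t_j$ with all other variables frozen, and reparametrizes the duplicated $t_j$--block as a fresh difference variable; after $k+1$ stages one arrives at the full product of $U_{k+1}$ norms. The one point to watch is that here the conjugation pattern is dictated by $\omega_{k+1}$ rather than by the parity of $|\omega|$; this is harmless, since $|\int\prod_\omega(\mathcal C^{\varepsilon_\omega}g_\omega)|\le\prod_\omega\|g_\omega\|_{U_{k+1}}$ holds for an arbitrary sign pattern $\varepsilon$ — the Cauchy--Schwarz splittings never see the conjugations. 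I expect this to be the main (though routine) obstacle; the unfolding and the reindexing in the first two steps are mechanical.
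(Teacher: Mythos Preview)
Your proof is correct and follows essentially the same route as the paper: unfold $\|\conv_k(F)\|_2^2$ into a $(k{+}1)$-dimensional cube average via the substitution $y=x+t_{k+1}$, then invoke the Gowers--Cauchy--Schwarz inequality. The paper does precisely this in two lines, writing the expression $Q=\mathbb{E}_{x,t_1,\dots,t_{k+1}}\bigl(\prod_{S\subseteq[k]}f_S(x+\sum_{i\in S}t_i)\overline{f_S}(x+t_{k+1}+\sum_{i\in S}t_i)\bigr)$, observing $Q=\|\conv_k(F)\|_2^2$, and then citing Gowers--Cauchy--Schwarz directly; your caution about the exponent and your sketch of the inductive Cauchy--Schwarz proof are extra detail the paper omits.
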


\begin{proof} Let us consider the expression
$$Q=\mathbb{E}_{x,t_1,t_2,\dots,t_{k+1}}\Bigl(\prod_{S\subseteq[k]}f_S(x+\sum_{i\in S}t_i)\overline{f_S}(x+t_{k+1}+\sum_{i\in S}t_i)\Bigr).$$
It is clear that $Q=\|\conv_k(F)\|_2^2$. On the other hand by the so called Gowers-Cauchy-Schwartz inequality we have that $Q\leq\prod_{S\in[k]}\|f_S\|_{U_{k+1}}^{2^{k+2}}$.
\end{proof}

\begin{remark} The previous lemma implies that if we change a function in the system $F$ by a function that has a small $U_{k+1}$ norm then the function $\conv_k(F)$ changes only a little in $L_2$. In other words the $k$-th order convolution is continuous in the $U_{k+1}$ norm. As the next corollary says, this shows the surprising thing that the $k$-th order convolution depends only on the $k$-th degree structure of the functions in $F$.
\end{remark}

\begin{corollary} Let $F=\{f_S~|~S\subseteq [k]\}$ be a collection of $L_\infty$ functions on an ultra product group $\bA$ and let $F'$ be the system $\{f_S'=\mathbb{E}(f_S|\mathcal{F}_k)~|~S\subseteq [k]\}$. Then $\conv_k(F)=\conv_k(F')$.
\end{corollary}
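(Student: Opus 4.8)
The plan is to derive the corollary directly from Lemma~\ref{convl2norm} together with the characterizing property of the $\sigma$-algebra $\mathcal{F}_k$ established in \cite{Sz1}, \cite{Sz2}: for every $h\in L_\infty(\bA)$ one has $\|h-\mathbb{E}(h\mid\mathcal{F}_k)\|_{U_{k+1}}=0$, i.e. $\mathcal{F}_k$ is the ``$U_{k+1}$-characteristic factor''. In particular, for each $S\subseteq[k]$ the functions $f_S$ and $f_S'=\mathbb{E}(f_S\mid\mathcal{F}_k)$ differ by a function of vanishing $U_{k+1}$ seminorm. Since conditional expectation is an $L_\infty$-contraction and $\|\cdot\|_{U_{k+1}}\le\|\cdot\|_\infty$ on $L_\infty(\bA)$, every $U_{k+1}$ seminorm that occurs below is finite.

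The main step is a telescoping (hybrid) argument, exploiting that $\conv_k$ is separately linear in each of its $2^k$ arguments, being the integral over $x$ of a product that is multilinear in the $f_S$. Fix an enumeration $S_1,\dots,S_{2^k}$ of the subsets of $[k]$ and let $F^{(j)}$ be the system whose $S_m$-slot carries $f'_{S_m}$ for $m\le j$ and $f_{S_m}$ for $m>j$, so that $F^{(0)}=F$ and $F^{(2^k)}=F'$. By linearity in the single slot $S_j$,
$$\conv_k(F)-\conv_k(F')=\sum_{j=1}^{2^k}\bigl(\conv_k(F^{(j-1)})-\conv_k(F^{(j)})\bigr)=\sum_{j=1}^{2^k}\conv_k(G_j),$$
where $G_j$ is the system that agrees with $F^{(j-1)}$ (equivalently with $F^{(j)}$) outside the $S_j$-slot and carries $f_{S_j}-f'_{S_j}$ in the $S_j$-slot.

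Finally I would apply Lemma~\ref{convl2norm} to each $G_j$: its right-hand bound contains the factor $\|f_{S_j}-f'_{S_j}\|_{U_{k+1}}^{2^{k+1}}=0$, while the remaining $2^k-1$ factors are finite, so $\|\conv_k(G_j)\|_2=0$ and hence $\conv_k(G_j)=0$ almost everywhere as a function of $(t_1,\dots,t_k)$. Summing over $j$ gives $\conv_k(F)=\conv_k(F')$, as claimed. I do not expect a genuine obstacle: the only non-formal ingredient is the identification of $\mathcal{F}_k$ quoted at the outset (which is where the real content sits, and which we are entitled to cite), and everything else is multilinearity plus the Gowers--Cauchy--Schwarz estimate already packaged in Lemma~\ref{convl2norm}; the sole point needing a line of care is that the auxiliary $U_{k+1}$ seminorms are finite, which is immediate from $L_\infty$-boundedness.
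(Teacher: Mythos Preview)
Your proof is correct and follows exactly the approach the paper intends: the paper's one-line proof just cites $\|f_S-f'_S\|_{U_{k+1}}=0$ and relies on the preceding remark that Lemma~\ref{convl2norm} makes $\conv_k$ continuous (indeed stable) in the $U_{k+1}$ seminorm of each argument, and your telescoping argument is precisely the standard unpacking of that remark.
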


\begin{proof} It is trivial since $\|f_S-f_S'\|_{U_{k+1}}=0$.
\end{proof}

\begin{lemma}\label{l2int} For a collection of $2^k$ functions $F=\{f_S~|~S\subseteq [k]\}$ and for an arbitrary number $1\leq i\leq k$ we have that
$$\|\conv_k(F)\|^2_2\leq \int_t \prod_{S\subseteq [k]\setminus\{i\}}\|f_S(x)f_{S\cup\{i\}}(x+t)\|_{U_k}^{2^{k+1}}$$
\end{lemma}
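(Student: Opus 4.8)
The plan is to peel off the variable $t_i$ and reduce the estimate to Lemma~\ref{convl2norm} applied one order lower, at level $k-1$. Since $\conv_k(F)$ is symmetric under permutations of the index set $[k]$ — such a permutation only permutes the arguments $t_1,\dots,t_k$ and hence leaves $\|\conv_k(F)\|_2$ unchanged — it is enough to prove the inequality for $i=k$.

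First I would split the power set of $[k]$ into the subsets $S\subseteq [k-1]$ and the subsets of the form $S\cup\{k\}$, and pair the two corresponding factors in the product defining $\conv_k(F)$. Concretely, for a fixed value $t$ of the last variable $t_k$, set
$$g_S^{(t)}(x)=f_S(x)\,f_{S\cup\{k\}}(x+t)\qquad (S\subseteq [k-1]),$$
and let $G^{(t)}=\{g_S^{(t)}\mid S\subseteq [k-1]\}$ be this system of $2^{k-1}$ functions. Since $x+t_k+\sum_{j\in S}t_j=(x+\sum_{j\in S}t_j)+t$, grouping the factors gives the identity
$$\conv_k(F)(t_1,\dots,t_{k-1},t)=\int_x\prod_{S\subseteq [k-1]}g_S^{(t)}\Bigl(x+\sum_{j\in S}t_j\Bigr)=\conv_{k-1}(G^{(t)})(t_1,\dots,t_{k-1}).$$

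Next I would apply Fubini in the variables $(t_1,\dots,t_{k-1},t)$, which is legitimate since all functions are bounded and the measure space is finite, to obtain
$$\|\conv_k(F)\|_2^2=\int_t\|\conv_{k-1}(G^{(t)})\|_2^2.$$
Now apply Lemma~\ref{convl2norm} at order $k-1$ to each system $G^{(t)}$: this gives $\|\conv_{k-1}(G^{(t)})\|_2\leq\prod_{S\subseteq [k-1]}\|g_S^{(t)}\|_{U_k}^{2^{k}}$, hence after squaring $\|\conv_{k-1}(G^{(t)})\|_2^2\leq\prod_{S\subseteq [k-1]}\|g_S^{(t)}\|_{U_k}^{2^{k+1}}$. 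Substituting into the displayed identity and unwinding the definition of $g_S^{(t)}$ yields exactly the claimed bound (for $i=k$, and hence for all $i$ by the symmetry reduction above).

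There is no serious obstacle here; the only points requiring care are the index bookkeeping — checking that the regrouped product over $2^{[k]}$ is literally $\conv_{k-1}$ of the derived system $G^{(t)}$ — and tracking the exponent, which passes from $2^{k}$ to $2^{k+1}$ precisely upon squaring, matching the exponent in Lemma~\ref{convl2norm}.
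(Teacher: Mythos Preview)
Your proposal is correct and follows essentially the same route as the paper: the paper likewise fixes $t=t_i$, forms the system $F_t=\{f_S(x)f_{S\cup\{i\}}(x+t)\mid S\subseteq[k]\setminus\{i\}\}$, observes $\|\conv_k(F)\|_2^2=\int_t\|\conv_{k-1}(F_t)\|_2^2$, and applies Lemma~\ref{convl2norm} at order $k-1$. The only cosmetic difference is that you first reduce to $i=k$ by symmetry, whereas the paper works with a general $i$ throughout.
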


\begin{proof} For every fixed $t$ we define the system
$$F_{t}=\{f_S(x)f_{S\cup\{i\}}(x+t)|S\subseteq [k]\setminus\{i\}\}.$$
By lemma \ref{convl2norm} we have that
$$\|\conv_{k-1}(F_t)\|_2\leq\prod_{S\subseteq [k]\setminus\{i\}}\|f_S(x)f_{S\cup\{i\}}(x+t)\|_{U_k}^{2^k}.$$
On the other hand we have that
$$\|\conv_k(F)\|^2_2=\int_t\|\conv_{k-1}(F_t)\|_2^2$$
and so the proof is complete.
\end{proof}

\begin{definition} Let $F=\{f_S~|~S\subseteq [k]\}$ be a system of $L_\infty$ functions. The so called Gowers inner product $(F)_k$ is defined by
\begin{equation}\label{gowinner}
(F)_k=\mathbb{E}_{x,t_1,t_2,\dots,t_k}\Bigl(\prod_{S\subseteq[k]}f_S^{c(|S|)}(x+\sum_{i\in S}t_i)\Bigr)
\end{equation}
where $c(n)$ is the complex conjugation applied $n$-times.
\end{definition}

 The next lemma shows that the Gowers inner product $(F)_{k+1}$ of $2^{k+1}$ functions is the usual inner product of two $k$-th order convolutions.

\begin{lemma} Assume that $[k+1]=K\cup\{i\}$ then
\begin{equation}\label{convin}(F)_{k+1}=\Bigl(\conv_k(\{f_S^{c(|S|)}|S\subseteq K\}),\conv_k(\{f_{S\cup\{i\}}^{c(|S|)}|S\subseteq K\})\Bigr).
\end{equation}
\end{lemma}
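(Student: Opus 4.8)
The plan is to expand both sides of (\ref{convin}) and match them, the one substantive point being a change of variables that uses shift-invariance of $\mu$. First I would split the index set in the definition of the Gowers inner product. Every $S\subseteq[k+1]$ either avoids $i$, so $S\subseteq K$, or else has the form $S=S'\cup\{i\}$ with $S'\subseteq K$ and $|S|=|S'|+1$. Writing $\vec t=(t_j)_{j\in K}$ (viewed as a $k$-tuple via a fixed bijection $K\cong[k]$) and setting $t_S=\sum_{j\in S}t_j$, formula (\ref{gowinner}) becomes
$$(F)_{k+1}=\mathbb{E}_{x,\,t_i,\,\vec t}\Bigl(\prod_{S\subseteq K}f_S^{c(|S|)}(x+t_S)\ \cdot\ \prod_{S\subseteq K}f_{S\cup\{i\}}^{c(|S|+1)}(x+t_i+t_S)\Bigr).$$

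Next I would observe that $c(|S|+1)$ is complex conjugation applied once more than $c(|S|)$, so $f^{c(|S|+1)}=\overline{f^{c(|S|)}}$, and hence the second product above is $\prod_{S\subseteq K}\overline{f_{S\cup\{i\}}^{c(|S|)}}(x+t_i+t_S)$. Put $g_S=f_S^{c(|S|)}$ and $h_S=f_{S\cup\{i\}}^{c(|S|)}$ for $S\subseteq K$. Expanding the definition of $\conv_k$ and of the $L_2(\mu)$ inner product $(\phi,\psi)=\int\phi\,\overline\psi$, and using that in a double integral the two inner variables may be taken independent, the right-hand side of (\ref{convin}) equals
$$\Bigl(\conv_k(\{g_S\}),\,\conv_k(\{h_S\})\Bigr)=\mathbb{E}_{\vec t,\,x,\,y}\ \prod_{S\subseteq K}g_S(x+t_S)\,\overline{h_S}(y+t_S),$$
with $x$ and $y$ independent copies of the base variable.

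Finally I would reconcile the two displays by the substitution $y=x+t_i$: shift-invariance of $\mu$ guarantees that, for each fixed $x$, the law of $x+t_i$ (with $t_i$ uniform) is again the base measure and is independent of $x$, so by Fubini $\mathbb{E}_{x,y}=\mathbb{E}_{x,t_i}$ after this substitution. This converts the right-hand side of (\ref{convin}) into exactly the expression for $(F)_{k+1}$ obtained in the first display, proving the lemma. I do not anticipate a genuine obstacle here; the only things that require care are the bookkeeping of $c(|S|)$ versus $c(|S|+1)$ and the implicit reindexing of a $K$-indexed system as a $[k]$-indexed one so that $\conv_k$ applies to it.
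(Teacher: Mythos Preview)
Your proposal is correct and follows exactly the paper's approach: the paper's proof consists of the single sentence ``introduce the new variable $y=x+t_i$,'' which is precisely your substitution, and the rest of your write-up just spells out the bookkeeping (splitting $S\subseteq[k+1]$ according to whether $i\in S$, and tracking $c(|S|)$ versus $c(|S|+1)$) that the paper leaves implicit.
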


\begin{proof}
In the formula (\ref{gowinner}) we introduce the new variable $y=x+t_i$. It is clear that the expected value taken for independent values of $x,t_1,t_2,\dots,t_{i-1},y,t_{i+1},\dots,t_{k+1}$ remains the same. This completes the proof.
\end{proof}

\subsection{Convolution on ultra product functions}

Let $\bA$ be an ultra product group. In this part we study convolutions on $\bA$.

\begin{lemma}\label{convnull} Let $F$ be a system of $2^k$ functions in $L_\infty(\mathcal{F}_k)$ such that each of them is contained in a rank one module over $L_\infty(\mathcal{F}_{k-1})$. Assume furthermore that for some $i$ there is a set $S\subseteq [k]$ not containing $i$ such that $f_Sf_{S\cup\{i\}}$ is not in the trivial module. Then $\conv_k(F)=0$.
\end{lemma}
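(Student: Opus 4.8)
The plan is to reduce, via Lemma~\ref{l2int}, to the assertion that a function lying in a \emph{nontrivial} rank one $L_\infty(\mathcal{F}_{k-1})$-module has vanishing $U_k$ norm. Recall that for $g\in L_\infty(\bA)$ one has $\|g\|_{U_k}=0$ if and only if $\mathbb{E}(g\mid\mathcal{F}_{k-1})=0$, i.e. $g$ has zero component in the trivial module $L_\infty(\mathcal{F}_{k-1})$ (this is the defining relationship between $\mathcal{F}_{k-1}$ and the $U_k$ norm). Applying Lemma~\ref{l2int} with the index $i$ supplied by the hypothesis,
$$\|\conv_k(F)\|_2^2\leq \int_t \prod_{S\subseteq [k]\setminus\{i\}}\|f_S(x)f_{S\cup\{i\}}(x+t)\|_{U_k}^{2^{k+1}},$$
and every factor on the right is a finite nonnegative number (at most $\|f_S\|_\infty^{2^{k+1}}\|f_{S\cup\{i\}}\|_\infty^{2^{k+1}}$), so it suffices to find a single $S_0\subseteq[k]\setminus\{i\}$ with $\|f_{S_0}(x)f_{S_0\cup\{i\}}(x+t)\|_{U_k}=0$ for every $t$: then the integrand vanishes identically and $\conv_k(F)=0$. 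I would take $S_0$ to be precisely the set in the hypothesis.

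Fix that $S_0$. By hypothesis $f_{S_0}$ and $f_{S_0\cup\{i\}}$ lie in rank one modules, say $\mathcal{M}_j=\phi_j\cdot L_\infty(\mathcal{F}_{k-1})$ with $\phi_1,\phi_2$ order-$k$ pure characters, which we may take of modulus one. For a fixed $t$, writing $f_{S_0\cup\{i\}}=\phi_2 u$ with $u\in L_\infty(\mathcal{F}_{k-1})$, shift invariance of $\mathcal{F}_{k-1}$ together with $\Delta_t\phi_2\in L_\infty(\mathcal{F}_{k-1})$ (a basic property of pure characters established in \cite{Sz1}) gives
$$f_{S_0\cup\{i\}}(x+t)=\phi_2(x)\bigl((\Delta_t\phi_2)(x)\,u(x+t)\bigr)\in\mathcal{M}_2 .$$
Hence $x\mapsto f_{S_0}(x)f_{S_0\cup\{i\}}(x+t)$ is a product of an element of $\mathcal{M}_1$ and an element of $\mathcal{M}_2$, so it lies in $\phi_1\phi_2\cdot L_\infty(\mathcal{F}_{k-1})$, which is again a rank one module since $\Delta_t(\phi_1\phi_2)=(\Delta_t\phi_1)(\Delta_t\phi_2)\in L_\infty(\mathcal{F}_{k-1})$; the same computation at $t=0$ shows the pointwise product $f_{S_0}f_{S_0\cup\{i\}}$ lies in this same module.

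Now the hypothesis enters: $f_{S_0}f_{S_0\cup\{i\}}$ is not in the trivial module, and since $0$ belongs to every module it is a \emph{nonzero} element of the rank one module $\phi_1\phi_2\cdot L_\infty(\mathcal{F}_{k-1})$; therefore that module is not the trivial one. Distinct rank one modules are mutually orthogonal in $L^2(\bA)$ (this is part of the module decomposition of $L_\infty(\mathcal{F}_k)$ over $L_\infty(\mathcal{F}_{k-1})$), so every element of $\phi_1\phi_2\cdot L_\infty(\mathcal{F}_{k-1})$ is orthogonal to $L_\infty(\mathcal{F}_{k-1})$; in particular $\mathbb{E}(f_{S_0}(x)f_{S_0\cup\{i\}}(x+t)\mid\mathcal{F}_{k-1})=0$ for every $t$. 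By the first paragraph this forces $\|f_{S_0}(x)f_{S_0\cup\{i\}}(x+t)\|_{U_k}=0$ for every $t$, and feeding this into the displayed consequence of Lemma~\ref{l2int} gives $\|\conv_k(F)\|_2=0$, i.e. $\conv_k(F)=0$.

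The step needing the most care is the middle one — tracking the rank one module of a function through a shift and through a product — which rests entirely on the structure of order-$k$ pure characters from the earlier parts ($\Delta_t\phi\in L_\infty(\mathcal{F}_{k-1})$ for such $\phi$, and closure under products modulo $L_\infty(\mathcal{F}_{k-1})$); a careful write-up should isolate and cite these facts. By contrast there is no subtlety with exceptional values of $t$: the relevant $U_k$ norm vanishes for \emph{every} $t$, so the integral in Lemma~\ref{l2int} is literally $0$.
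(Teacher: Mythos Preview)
Your argument is correct and is essentially the paper's own proof: both reduce via Lemma~\ref{l2int} to showing that $f_{S}(x)f_{S\cup\{i\}}(x+t)$ lies in the same nontrivial rank one module for every $t$ and hence has vanishing $U_k$ norm. You have simply unpacked the one-line claim ``the same module contains $f_S(x)f_{S\cup\{i\}}(x+t)$'' by writing out the $\Delta_t\phi$ computation explicitly, which is a welcome clarification rather than a different approach.
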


\begin{proof} Under the condition of the lemma there is a nontrivial module containing $f_S(x)f_{S\cup\{i\}}(x)$ and so the same module contains $f_S(x)f_{S\cup\{i\}}(x+t)$ for every $t$. It implies that $\|f_S(x)f_{S\cup\{i\}}(x+t)\|_{U_k}=0$ for every $t$. Then lemma \ref{l2int} show that $\|\conv_k(F)\|_2=0$ which completes the proof.
\end{proof}

\begin{corollary}[Additivity]\label{addconv} Let $\bA$ be an ultra product group and let $F$ be a system of $2^k$ functions in $L_\infty(\mathcal{F}_k)$. For every $c\in\hat{\bA}_k$ let $F_c$ denote the system
$\{\mathbb{E}(f_S|c^{\epsilon(|S|)})|S\in [k]\}$ where $\epsilon(n)=(-1)^n$.
Then
$$\conv_k(F)=\sum_{c\in \hat{\bA}_k}\conv_k(F_c).$$
\end{corollary}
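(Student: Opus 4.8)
The plan is to reduce the statement to the orthogonality package already assembled in Lemma~\ref{convnull}. First I would recall the structure of $\mathcal{F}_k$: as explained in \cite{Sz1,Sz2}, for each $c\in\hat{\bA}_k$ the conditional expectation $\mathbb{E}(\cdot\,|\,c^{\pm 1})$ projects onto a rank one module over $L_\infty(\mathcal{F}_{k-1})$ (the $c$-isotypic component), these modules are pairwise orthogonal, and every $f\in L_\infty(\mathcal{F}_k)$ decomposes as the (unconditionally convergent) sum $f=\sum_{c\in\hat{\bA}_k}\mathbb{E}(f|c^{\pm 1})$ in $L_2$. Fix a sign convention once and for all by writing $f_S=\sum_{c}\mathbb{E}(f_S|c^{\epsilon(|S|)})$ with $\epsilon(n)=(-1)^n$, which is legitimate because taking $c$ or $c^{-1}$ does not change the module, only the labelling; this is exactly the component $\mathbb{E}(f_S|c^{\epsilon(|S|)})$ appearing in the definition of $F_c$.

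Next I would substitute these expansions into the multilinear expression $\conv_k(F)(t_1,\dots,t_k)=\int_x\prod_{S\subseteq[k]}f_S(x+\sum_{i\in S}t_i)$. Since $\conv_k$ is separately linear in each of its $2^k$ arguments and the sums converge in $L_2$ (and the functions are bounded, so dominated convergence controls the integral), we may expand termwise and obtain
$$\conv_k(F)=\sum_{(c_S)_{S\subseteq[k]}}\conv_k\bigl(\{\mathbb{E}(f_S|c_S^{\epsilon(|S|)})\}_S\bigr),$$
the sum ranging over all tuples $(c_S)_{S\subseteq[k]}\in(\hat{\bA}_k)^{2^k}$. The point now is that the overwhelming majority of these terms vanish: Lemma~\ref{convnull} says that $\conv_k(\{g_S\})=0$ whenever each $g_S$ lies in a rank one $L_\infty(\mathcal{F}_{k-1})$-module and there exist $i\in[k]$ and $S\not\ni i$ with $g_S g_{S\cup\{i\}}$ not in the trivial module. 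For our term indexed by $(c_S)$, each $\mathbb{E}(f_S|c_S^{\epsilon(|S|)})$ lies in the $c_S$-module, and the product $\mathbb{E}(f_S|c_S^{\epsilon(|S|)})\cdot\mathbb{E}(f_{S\cup\{i\}}|c_{S\cup\{i\}}^{\epsilon(|S\cup\{i\}|)})$ lies in the trivial module precisely when $c_S^{\epsilon(|S|)}\cdot c_{S\cup\{i\}}^{\epsilon(|S|+1)}=1$, i.e. $c_S=c_{S\cup\{i\}}$ (using $\epsilon(|S|+1)=-\epsilon(|S|)$ and that inverting a character does not change its module). Hence a term survives only if $c_S=c_{S\cup\{i\}}$ for every $i$ and every $S\not\ni i$; walking through the Boolean lattice this forces all the $c_S$ to be equal to a single $c\in\hat{\bA}_k$. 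The surviving terms are exactly $\conv_k(F_c)$, $c\in\hat{\bA}_k$, which is the claimed identity.

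The main obstacle I anticipate is not the combinatorial bookkeeping but justifying the termwise expansion rigorously: one must argue that $\conv_k$ extends to a bounded multilinear map on the relevant $L_2$-closures (or, equivalently, that the partial sums of the expansion converge to $\conv_k(F)$ in $L_2$ uniformly enough). This follows from Lemma~\ref{convl2norm} together with the continuity remark after it — replacing a single $f_S$ by a tail $\sum_{c\notin\text{finite set}}\mathbb{E}(f_S|c^{\epsilon(|S|)})$ changes $\conv_k$ by something small in $L_2$ once the finite set is large, because the tail is small in $L_2\le \|\cdot\|_{U_{k+1}}$-controlled... more precisely one uses that $\conv_k$ is Lipschitz in each slot with respect to the $U_{k+1}$ norm (Lemma~\ref{convl2norm}) and that the tails go to $0$ in $L_2$, hence along the ultrafilter one can pass to the limit; alternatively one invokes that $\hat{\bA}_k$ is countable-modulo-nullsets on each bounded piece so only countably many terms are nonzero and standard $L_2$ convergence applies. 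Once this analytic point is secured, the algebraic collapse via Lemma~\ref{convnull} is immediate.
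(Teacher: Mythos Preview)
Your proposal is correct and follows exactly the route the paper has in mind: the paper gives no explicit proof of this corollary, treating it as an immediate consequence of Lemma~\ref{convnull} --- expand each $f_S$ into its rank one module components, use multilinearity of $\conv_k$, and observe via Lemma~\ref{convnull} that the only surviving cross terms are those where all $c_S$ coincide. Your discussion of the analytic justification (convergence of the termwise expansion) is more careful than the paper itself, which simply leaves the matter implicit.
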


\subsection{Automorphisms of function algebras}

Let $(X,{A},\mu)$ be a measure space and let $M$ be that algebra of complex valued functions in $L_\infty(X)$ (up to $0$ measure change) with respect to point wise multiplication.
The algebra $M$ is a $*$ algebra with respect to point wise conjugation and has a trace operation defined by $\tr(f)=\int_Xf~d\mu$.

\begin{definition} An automorphism $\sigma$ of $M$ is a transformation satisfying the following axioms.
\begin{enumerate}
\item $\sigma$ is an invertible linear transformation of the space $L_\infty(X)$,
\item $(fg)^\sigma=f^\sigma g^\sigma$,
\item $(\overline{f})^\sigma=\overline{f^\sigma}$,
\item $\tr(f^\sigma)=\tr(f)$.
\end{enumerate}
\end{definition}

The reader can easily see that automorphisms preserve the unit element. The numbers $\tr(f^i\overline{f}^j)$ determine the joint moments of the real and imaginary parts of $f$ where $x$ is chosen randomly. Since the distribution is compactly supported, this sequence determines the value distribution of $f$. It follows that automorphisms preserve the value distribution. As a consequence they preserve all possible norms depending only on the value distribution including $L_1,L_2$ and $L_\infty$.
The fact that automorphisms are trace preserving imply in particular that scalar products are also preserved. Note that automorphisms basically behave as measure preserving transformations of $X$, but for us it is more convenient to use this algebraic language.

The next lemma is a useful condition to check if $\sigma$ is an automorphism.

\begin{lemma}\label{autextends} Let $S\subseteq M$ be a set of elements of $M$ closed under multiplication, multiplication by scalars and conjugation. Assume that every element $f\in M$ can be approximated in $L_2$ by finite linear combinations of elements in $S$. Let $\sigma:S\rightarrow S$ be an invertible, multiplicative transformation satisfying~ $(\lambda s)^\sigma=\lambda s^\sigma$ for $s\in S,\lambda\in \mathbb{C}$ which commutes with conjugation and preserves the trace operation. Then $\sigma$ extends to an automorphism on $M$ in a unique way.
\end{lemma}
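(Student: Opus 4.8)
The plan is to define the extension by continuity and then verify the four axioms one at a time, using density of $\text{span}(S)$ in $L_2$ together with the trace-preservation of $\sigma$ on $S$. First I would extend $\sigma$ to all of $\text{span}(S)$ by linearity; this is forced, since the extension must be linear, so the only thing to check is well-definedness, i.e.\ that if $\sum_i \lambda_i s_i = 0$ then $\sum_i \lambda_i s_i^\sigma = 0$. For this, observe that for any $s \in S$ the scalar $\tr(s^\sigma \overline{t^\sigma}) = \tr((s\overline{t})^\sigma) = \tr(s\overline{t})$ for all $t \in S$ (using multiplicativity, compatibility with conjugation, and trace-preservation on $S$); hence $\sum_i \lambda_i s_i^\sigma$ is orthogonal to every $t^\sigma$, and since $\sigma$ is an invertible self-map of $S$ the set $\{t^\sigma : t\in S\}$ equals $S$, which spans a dense subspace, so $\sum_i \lambda_i s_i^\sigma = 0$. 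The same computation shows that $\sigma$ restricted to $\text{span}(S)$ is an $L_2$-isometry, so it extends uniquely to an isometry of the $L_2$-completion; one then checks this isometry carries $L_\infty(X) \subseteq L_2(X)$ into itself. Here I would use that $L_\infty$ elements are exactly those $f$ for which the moments $\tr(f^i\overline{f}^j)$ (equivalently, the value distribution) are compactly supported, and these moments are preserved along $L_2$-convergent approximations because $\sigma$ preserves $\tr$ of products of elements of $\text{span}(S)$; a uniform $L_\infty$ bound on a dense-in-$L_2$ approximating sequence passes to the limit via the moment/distribution description already set up in the text preceding the lemma.

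Once the linear map $\sigma : L_\infty(X) \to L_\infty(X)$ is in hand, the remaining axioms follow by approximation. For multiplicativity, pick $f,g \in L_\infty$ and sequences $s_n, t_n \in \text{span}(S)$ with $s_n \to f$, $t_n \to g$ in $L_2$ and $\|s_n\|_\infty, \|t_n\|_\infty$ bounded (such truncated approximants exist); then $s_n t_n \to fg$ in $L_2$, $s_n^\sigma \to f^\sigma$, $t_n^\sigma \to g^\sigma$ in $L_2$, and since $\sigma$ preserves $L_\infty$ norms the products $s_n^\sigma t_n^\sigma \to f^\sigma g^\sigma$ in $L_2$; multiplicativity of $\sigma$ on $\text{span}(S)$ (inherited from $S$) gives $(s_n t_n)^\sigma = s_n^\sigma t_n^\sigma$, and passing to the limit yields $(fg)^\sigma = f^\sigma g^\sigma$. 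Compatibility with conjugation is the analogous but easier limiting argument, using that conjugation is an $L_2$-isometry. Trace-preservation on all of $L_\infty$ follows since $\tr$ is $L_2$-continuous (it is a bounded functional on $L_2$) and $\sigma$ is $L_2$-continuous, so $\tr(f^\sigma) = \lim \tr(s_n^\sigma) = \lim \tr(s_n) = \tr(f)$. Invertibility of the extension comes for free: applying the whole construction to $\sigma^{-1} : S \to S$ produces an extension $\tau$ of $\sigma^{-1}$, and $\tau\sigma$ and $\sigma\tau$ agree with the identity on the dense set $\text{span}(S)$, hence everywhere; uniqueness of the extension is immediate since any automorphism extending $\sigma$ must be $L_2$-continuous (it preserves $L_2$ norm) and agrees with $\sigma$ on the $L_2$-dense set $\text{span}(S)$.

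The step I expect to be the main obstacle is showing that the $L_2$-isometric extension of $\sigma$ actually maps $L_\infty$ into $L_\infty$ (with preserved $\infty$-norm) — a priori the $L_2$-extension could take a bounded function to an unbounded one. The resolution is exactly the moment argument sketched above: for $f \in L_\infty$ with $\|f\|_\infty \le C$, approximate $f$ in $L_2$ by $s_n \in \text{span}(S)$ and then truncate; the key point is that $\tr\big((f^\sigma)^i \overline{(f^\sigma)}^{\,j}\big) = \tr(f^i \overline{f}^{\,j})$ for all $i,j$, which follows from multiplicativity and trace-preservation on $\text{span}(S)$ combined with the $L_2$-continuity of all these operations, so $f^\sigma$ has the same (compactly supported) value distribution as $f$ and in particular $\|f^\sigma\|_\infty = \|f\|_\infty$. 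Once this is secured, everything else is routine density-and-continuity bookkeeping.
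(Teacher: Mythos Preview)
Your plan is essentially the paper's own argument: extend $\sigma$ linearly to $\mathrm{span}(S)$, check well-definedness via trace preservation (the paper expands $\|\sum s_i^\sigma\|_2^2$ directly rather than arguing orthogonality to a dense set, but these are the same computation), extend by $L_2$-continuity, and invoke the moment/value-distribution remark preceding the lemma to see the extension lands in $L_\infty$. Your write-up is in fact more complete than the paper's, which stops once $f^{\sigma_3}\in M$ is established and does not spell out multiplicativity or invertibility of the full extension.

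One point to tighten in your multiplicativity step: the parenthetical ``such truncated approximants exist'' is not justified---truncating an element of $\mathrm{span}(S)$ by a cutoff generally leaves $\mathrm{span}(S)$, so you cannot assume $s_n,t_n\in\mathrm{span}(S)$ with uniformly bounded $L_\infty$-norm. The fix is a two-step bootstrap that avoids truncation entirely: first prove $(fg)^\sigma=f^\sigma g^\sigma$ for $f\in M$ and $g\in\mathrm{span}(S)$ by taking $s_n\to f$ in $L_2$ with $s_n\in\mathrm{span}(S)$ and using that $g,g^\sigma\in L_\infty$ (so $s_ng\to fg$ and $s_n^\sigma g^\sigma\to f^\sigma g^\sigma$ in $L_2$); then repeat with the roles swapped, now using that $f,f^\sigma\in L_\infty$ (which you have already established) to pass from $g\in\mathrm{span}(S)$ to $g\in M$.
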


\begin{proof} Let $M_2$ be the algebra formed by finite sums of elements of $S$.
First we show that there is a unique linear map $\sigma_2:M'\rightarrow M'$ that extends $\sigma$. To see that $\sigma_2$ exists it is enough to check that for every finite sum $s_1+s_2+\dots+s_n=0$ with $\{s_i\}_{i=1}^n \subset S$ we have that $s_1^\sigma+s_2^\sigma+\dots+s_n^\sigma=0$.
To see this we use
$$0=\|s_1+s_2+\dots+s_n\|^2_2$$
and so
$$0=\sum_{1\leq i,j\leq n}\tr(s_is_j^*)=\sum_{1\leq i,j\leq n}\tr(s_i^\sigma (s_j^\sigma)^*)=\|s_1^\sigma+s_2^\sigma+\dots+s_n^\sigma\|_2^2.$$
The uniqueness of $\sigma_2$ follows from the fact that $S$ is a generating system.
The multiplicativity of $\sigma_2$ follows from the fact that it is multiplicative on $S$.
Note that $\sigma_2$ preserves the $\tr$ operation and commutes with the conjugation automatically. In particular $\sigma_2$ preserves $L_2$ distances.

Now let $f\in M$ be an arbitrary function, and let $f_i$ be a sequence in $M'$ converging to $f$ in $L_2$. Then $f_i^{\sigma_2}$ is a Cauchy sequence and so it has a limit that we define as $f^{\sigma_3}$. By the remarks at the beginning of this chapter, the value distribution of $f^{\sigma_3}$ has to be the same as the value distribution of $f$ and so $\|f^{\sigma_3}\|_\infty<\infty$. It follows that $f^{\sigma_3}$ is in $M$.
\end{proof}

\subsection{Convolution preserving automorphisms}

Now we start focusing on ultra product groups.
Let $\mathcal{B}$ be a shift invariant $\sigma$-algebra in $(\bA,\mathcal{F}_k)$ which is closed under projections to $L_2(\mathcal{F}_i)$ for $i=0,1,2,3,\dots$, and let $M$ denote the $*$ algebra $L_\infty(\mathcal{B})$ under the point wise multiplication with the trace operator $\tr(f)=\int f$.
We denote by $M_i$ the subalgebra $M\cap L_\infty(\mathcal{F}_i)$.

\begin{definition} We say that an automorphism $\sigma$ of $M$ preserves the $d$-th order convolution if for every system $F=\{f_S~|~S\subseteq [d]\}$ of elements in $M$ we have that
$$\conv_d(F)=\conv_d(F^\sigma)$$
where $F^\sigma=\{f_S^\sigma|S\subseteq [d]\}$.
We denote by $G_d(M)$ the group of automorphisms of $M$ that preserve the $d$-th order convolution in $M$.
\end{definition}

\begin{definition} We denote by $\iota(a)$ the shift operator on $L_\infty(A)$ defined by $f^{\iota(a)}(x)=f(x+a)$. It is trivial that $\iota(a)$ is an element in $G_d(M)$ for every $a\in A$.
\end{definition}

\begin{definition} For $\sigma\in G_k(M)$ we denote by $\Delta_\sigma$ the operator defined by $\Delta_\sigma f= f^\sigma\overline{f}$. In particular $\Delta_{\iota(t)}f=\Delta_t(f)$.
If $\sigma_1,\sigma_2,\dots,\sigma_n$ are in $G_k(M)$ then $\Delta_{\sigma_n,\sigma_{n-1},\dots,\sigma_1}f$ denote the function $\Delta_{\sigma_n}(\Delta_{\sigma_{n-1}}(\dots\Delta_{\sigma_1}(f)))\dots)$.
\end{definition}

\subsection{Cubic structure and face actions}

Following a definition by Host and Kra we introduce cubic structures and face actions.
Let $d$ be a natural number.

For a $S\subseteq [d]$ we denote the homomorphism
$\psi_S:\bA^{d+1}\rightarrow\bA$
defined by $\psi_S(x,t_1,t_2,\dots,t_d)=x+\sum_{i\in S}t_i$.

\begin{definition} An $n$ dimensional face $\Lambda=\Lambda(F,K)$ in $[d]$ is a subset of subsets of $[d]$ of the form $\{S\cup K~|~S\subseteq F\}$ where $K$ and $F$ are two fixed disjoint subsets of $[d]$ and $|F|=n$. We will denote the powerset of $[d]$ by $(d)$. Note that $(d)$ is the unique $d$-dimensional face. Zero dimensional faces are elements in $(d)$.
\end{definition}

\begin{definition}\label{facerel} For a given face $\Lambda$ we define the $\sigma$-algebra $\mathcal{B}^\Lambda$, the Banach algebra $M^\lambda$ , the set of function systems $G(M,\Lambda)$, the function $F^\Lambda$ and the semigroup $L(M,\Lambda)$ in the following way
\begin{enumerate}
\item $\mathcal{B}^\Lambda=\bigvee_{S\in\Lambda}\psi^{-1}_S(\mathcal{B})$
\item $M^\Lambda=L_\infty(\mathcal{B}^\Lambda)$
\item Let $G(M,\Lambda)$ denote the set of function systems of the form $F=\{f_S|f_S\in M,S\in\Lambda\}$
\item If  $F\in G(M,\Lambda)$ then $$F^\Lambda=\prod_{S\subseteq \Lambda}\psi_S\circ f_S.$$
\item $L(M,\Lambda)$ is the semigroup formed by all functions $F^\Lambda$ where the elements of the system $F$ are all in $M$.
\end{enumerate}
\end{definition}

\begin{remark} Let us consider the induced action of the symmetric group $S_{[d]}$ on $(d)$ together with the actions $g_K(S)=S\triangle K$ for fixed subsets $K\subseteq [d]$. They generate the wreath product of $S_d$ with the elementary abelian group $C_2^d$. We denote this group by $\aut((d))$. In other words $\aut((d))$ is the automorphism group of the $d$ dimensional cube. It is acting on the faces and on all the objects in definition \ref{facerel} related to faces.
\end{remark}

\begin{remark} The $\sigma$-algebra $\mathcal{B}^\Lambda$ is always a shift invariant $\sigma$-algebra. If $\mathcal{B}$ is contained in $\mathcal{F}_k(\bA)$ then $\mathcal{B}^\Lambda$ is contained in $\mathcal{F}_k(\bA^{d+1})$.
\end{remark}

\begin{definition}[Face actions I.] Let $\sigma\in\aut(M)$ and $\Lambda$ be a face. Then we define the action $\sigma_2=\sigma^\Lambda$ on $G(M,(d))$ by $F^{\sigma_2}=F'$ where $f'_S=f_S^\sigma$ if $S\in\Lambda$ and $f'_S=f_S$ if $S$ is not in $\Lambda$.
\end{definition}

An important lemma related to face actions is the following.

\begin{lemma}\label{gowinvf} Let $\sigma\in G_d(M)$, let $\Lambda$ be a $d$ face of $(d+1)$ and $\sigma_2=\sigma^\Lambda$. Then for $F\in G(M,(d+1))$ the Gowers inner product $(F)_{d+1}$ is invariant under $\sigma_2$, i.e. $(F)_{d+1}=(F^{\sigma_2})_{d+1}$ or equivalently $\tr(F^{(d+1)})=\tr({F^{\sigma_2}}^{(d+1)})$. Furthermore $\sigma^{(d)}$ leaves the Gowers inner product of any system $F\in G(M,(d))$ invariant.
\end{lemma}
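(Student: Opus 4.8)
The plan is to reduce the statement about the Gowers inner product to the statement about $d$-th order convolutions that is already packaged in the earlier lemma relating $(F)_{d+1}$ to an inner product of two convolutions. First recall from \eqref{convin} that if $[d+1]=K\cup\{i\}$ then
$$(F)_{d+1}=\Bigl(\conv_d(\{f_S^{c(|S|)}\mid S\subseteq K\}),\conv_d(\{f_{S\cup\{i\}}^{c(|S|)}\mid S\subseteq K\})\Bigr),$$
so $(F)_{d+1}$ is literally an $L_2$ inner product of two $d$-variable functions, each of which is a $d$-th order convolution of one of the two ``halves'' of the system obtained by splitting $(d+1)$ along the coordinate $i$. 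The key observation is that a $d$-dimensional face $\Lambda$ of $(d+1)$ is, up to the action of $\aut((d+1))$, exactly one of these two halves: choosing $\Lambda$ amounts to fixing whether the coordinate $i$ lies in the sets or not (and possibly applying a cube symmetry). So after renaming coordinates via an element of $\aut((d+1))$ — which, by the preceding remarks, permutes the objects in Definition \ref{facerel} and in particular leaves $(F)_{d+1}$ invariant because the Gowers inner product is visibly symmetric under $\aut((d+1))$ — we may assume $\Lambda=\{S\mid S\subseteq K\}=(d)$ viewed inside $(d+1)$, i.e.\ $\Lambda$ is the face of subsets not containing $i$.

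With this normalization, $\sigma_2=\sigma^\Lambda$ acts by replacing $f_S$ with $f_S^\sigma$ for every $S\not\ni i$ and leaving $f_{S\cup\{i\}}$ untouched. Under \eqref{convin} this precisely replaces the first convolution $\conv_d(\{f_S^{c(|S|)}\mid S\subseteq K\})$ by $\conv_d(\{(f_S^\sigma)^{c(|S|)}\mid S\subseteq K\})$ and leaves the second convolution unchanged. Now I invoke the hypothesis $\sigma\in G_d(M)$: by definition $\sigma$ preserves $d$-th order convolutions, and since $\sigma$ commutes with conjugation (axiom (3) of an automorphism), $\conv_d(\{(f_S^\sigma)^{c(|S|)}\})=\conv_d(\{(f_S^{c(|S|)})^\sigma\})=\conv_d(\{f_S^{c(|S|)}\})$. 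Hence the first convolution is unchanged, so the inner product, hence $(F)_{d+1}$, is unchanged; this is exactly $(F)_{d+1}=(F^{\sigma_2})_{d+1}$. The reformulation $\tr(F^{(d+1)})=\tr((F^{\sigma_2})^{(d+1)})$ is just the definition of $(\cdot)_{d+1}$ together with Definition \ref{facerel}(4)–(5) and $\tr(g)=\int g$, up to absorbing the conjugation pattern $c(|S|)$ into the notation.

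For the last sentence — that $\sigma^{(d)}$ leaves the Gowers inner product of any $F\in G(M,(d))$ invariant — I apply what was just proved with $d$ in place of $d+1$: $(d)$ is a $d$-dimensional face, but now of the full cube $(d)$ itself, and the same argument with the split $[d]=K\cup\{i\}$ shows that applying $\sigma$ to all of the functions (equivalently, to all functions in the top face) replaces \emph{both} convolutions in \eqref{convin} by their $\sigma$-images, each of which is unchanged since $\sigma\in G_d(M)$ — wait, one must note $\sigma\in G_d(M)$ controls $\conv_d$, and here both halves are $(d-1)$-dimensional convolutions; so instead one argues directly from preservation of $\conv_d$ applied to the whole system $F\in G(M,(d))$, namely $\tr(\conv_d(F))$ with the appropriate conjugations is $(F)_d$, and $\conv_d(F^\sigma)=\conv_d(F)$ gives the claim after taking traces. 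The main obstacle I anticipate is purely bookkeeping: keeping the conjugation exponents $c(|S|)$ consistent when moving between $\conv_d$, $(\cdot)_k$, and the face-action notation, and making sure the normalization by an element of $\aut((d+1))$ is legitimate — i.e.\ that $(F)_{d+1}$ really is $\aut((d+1))$-invariant and that $\aut((d+1))$ acts transitively on $d$-dimensional faces with the correct compatibility with $\sigma^\Lambda$. None of this is deep, but it is where a careless argument would slip.
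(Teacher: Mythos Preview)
Your proposal is correct and follows the same route as the paper, which simply says the first statement follows immediately from \eqref{convin} and the second from the fact that $(F)_d$ is the trace (i.e.\ the integral) of $\conv_d$ applied to the conjugated system. Your normalization via $\aut((d+1))$ is an unnecessary detour: since \eqref{convin} holds for \emph{any} choice of $i\in[d+1]$, and every $d$-face of $(d+1)$ is one of the two halves along some coordinate $i$, you can apply \eqref{convin} directly with that $i$ and observe that $\sigma^\Lambda$ affects exactly one of the two convolutions, which is preserved because $\sigma\in G_d(M)$.
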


\begin{proof} The first statement follows immediately from (\ref{convin}). The second statement follows from the fact that $(F)_d$ is is the trace of $\conv_d(F)$ which is invariant under $\sigma$.
\end{proof}

\begin{definition}[Face actions II.] Let $\sigma\in\aut(M)$ be an algebra automorphism, let $\Lambda$ be a face and let $\sigma_2=\sigma^\Lambda$ be the action on $G(M,(d))$. We say that $\sigma$ induces a face action on $M^{(d)}$ corresponding to $\Lambda$ if there is an automorphism $\sigma_3$ on $M^{(d)}$ such that $(F^\Lambda)^{\sigma_3}=({F^{\sigma_2}})^\Lambda$ for every system of functions in $M$. If such a $\sigma_3$ exists then by abusing the notation we denote it by $\sigma^\Lambda$ as well. Note that if an induced action exists then it has to be unique since linear combinations of elements in $L(M,(d))$ approximate every measurable function in $M^{(d)}$.
\end{definition}

The next lemma yields a characterization of automorphisms that induce face actions on $M^{(d)}$.

\begin{lemma}\label{indactcond} Let $\sigma\in\aut(M)$ and $\Lambda$ be a face in $(d)$. Then $\sigma_2=\sigma^\Lambda$ induces a face action on $M^{(d)}$ if and only if $\tr((F^{\sigma_2})^\Lambda)=\tr(F^\Lambda)$ for every $F\in G(M,(d))$.
\end{lemma}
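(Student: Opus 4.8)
The plan is to prove both implications using the fact (recorded after Definition \ref{facerel} and used in the "Face actions II." definition) that finite linear combinations of elements of $L(M,\Lambda)$ are $L_2$-dense in $M^{(d)}$, so that $\sigma_3$, if it exists, is forced to be the unique linear extension of the assignment $F^\Lambda\mapsto (F^{\sigma_2})^\Lambda$. The "only if" direction is immediate: if $\sigma_2$ induces a face action $\sigma_3$ on $M^{(d)}$, then $\sigma_3$ is an automorphism of $M^{(d)}$, hence (by the axioms of an automorphism, in particular trace preservation) $\tr((F^{\sigma_2})^\Lambda)=\tr((F^\Lambda)^{\sigma_3})=\tr(F^\Lambda)$ for every $F\in G(M,(d))$.

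For the "if" direction I would mimic the construction in Lemma \ref{autextends}. Let $S=L(M,\Lambda)$; it is closed under multiplication (it is called a semigroup in Definition \ref{facerel}) and, after throwing in scalar multiples and conjugates, it generates a dense $*$-subalgebra of $M^{(d)}$. Define $\sigma_3$ on $S$ by $(F^\Lambda)^{\sigma_3}=(F^{\sigma_2})^\Lambda$. The crucial point is that this is well defined, i.e. depends only on the function $F^\Lambda$ and not on the presentation $F$: if $F^\Lambda = G^\Lambda$ then, computing the $L_2$-distance via the trace, $\|F^\Lambda-G^\Lambda\|_2^2$ is a sum of terms of the form $\tr(H^\Lambda)$ with $H$ obtained by pairing the functions of $F$ and $\overline{G}$ componentwise (this is exactly a Gowers-type inner product over the face $\Lambda$, factoring through $\bA$-fibers in the manner of Lemma \ref{l2int} and formula (\ref{convin})). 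By hypothesis $\tr((H^{\sigma_2})^\Lambda)=\tr(H^\Lambda)$ for all such $H$, so $\|(F^{\sigma_2})^\Lambda-(G^{\sigma_2})^\Lambda\|_2^2 = \|F^\Lambda-G^\Lambda\|_2^2 = 0$, giving well-definedness and, simultaneously, that $\sigma_3$ is isometric on $S$. More generally the same bookkeeping shows $\sigma_3$ preserves $\tr(uv^*)$ for $u,v\in S$.

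Now extend $\sigma_3$ linearly to $M' := \span S$: the isometry property just established shows that a relation $\sum_j u_j=0$ with $u_j\in S$ forces $\sum_j u_j^{\sigma_3}=0$ (expand $0=\|\sum_j u_j\|_2^2=\sum_{j,l}\tr(u_ju_l^*)=\sum_{j,l}\tr(u_j^{\sigma_3}(u_l^{\sigma_3})^*)=\|\sum_j u_j^{\sigma_3}\|_2^2$), exactly as in the proof of Lemma \ref{autextends}. Multiplicativity of $\sigma_3$ on $M'$ follows from multiplicativity on $S$ (here one uses that $\sigma_2$ is defined componentwise and $\sigma$ is multiplicative on $M$, so $(F^{\sigma_2})^\Lambda\cdot(G^{\sigma_2})^\Lambda=((FG)^{\sigma_2})^\Lambda$ where $FG$ denotes componentwise product); commutation with conjugation follows similarly from $(\overline f)^\sigma=\overline{f^\sigma}$; and $\tr$-preservation is the $v=1$ case of what we proved. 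Invertibility comes from applying the same construction to $\sigma^{-1}$ (note $\sigma^{-1}\in\aut(M)$ and the trace condition is symmetric in $\sigma,\sigma^{-1}$). Finally invoke Lemma \ref{autextends} verbatim with this $S$ to extend $\sigma_3$ uniquely to an automorphism of all of $M^{(d)}$, which by construction satisfies $(F^\Lambda)^{\sigma_3}=(F^{\sigma_2})^\Lambda$; that is precisely the assertion that $\sigma_2$ induces a face action.

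The main obstacle is the well-definedness step: one must be sure that $\|F^\Lambda-G^\Lambda\|_2^2$ can genuinely be written as a $\bC$-linear combination of traces $\tr(H^\Lambda)$ with each $H\in G(M,(d))$ (so that the hypothesis applies term by term), which requires expanding the square, using $\tr=\int$, and recognizing the resulting integral over $\bA^{d+1}$ as again of the form $\tr(H^{\Lambda'})$ for the appropriate face — this is the combinatorial heart, and is where the cubic/face formalism of the previous subsection does the real work. Everything after that is the routine extension argument already carried out in Lemma \ref{autextends}.
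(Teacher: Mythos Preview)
Your proof is correct and follows essentially the same route as the paper's: establish well-definedness of $\sigma_3$ on $L(M,(d))$ by expanding $\|F_1^{(d)}-F_2^{(d)}\|_2^2$ into traces and invoking the hypothesis, then hand off to Lemma \ref{autextends}. Your worried final paragraph is unnecessary: since $F_1^{(d)}\,\overline{F_2^{(d)}}=(F_1\overline{F_2})^{(d)}$ directly from the definition (componentwise multiplication commutes with composition by the $\psi_S$), the required expansion into terms $\tr(H^{(d)})$ with $H\in G(M,(d))$ is immediate and needs no appeal to Lemma \ref{l2int} or formula (\ref{convin}).
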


\begin{proof} First of all we need to show that the induced map $\sigma_3$ is well defined on $L(M,(d))$.
To see that assume that $F_1^{(d)}=F_2^{(d)}$ for two function systems in $G(M,(d))$. The fact that $\sigma$ preserves the trace implies that it preserves scalar products and so $0=\|(F_1)^{(d)}-(F_2)^{(d)}\|_2^2=\|(F_1^{\sigma_2})^{(d)}-(F_2^{\sigma_2})^{(d)}\|_2^2$.
From here lemma \ref{autextends} completes the proof.
\end{proof}

\begin{lemma}\label{indact10} Assume that $\sigma\in G_{d}(M)$ then for an arbitrary $d$ face $\Lambda$ of $(d+1)$ the action $\sigma^\Lambda$ induces a face action on $M^{(d+1)}$. Furthermore the action $\sigma^{(d)}$ induces a face action on $M^{(d)}$.
\end{lemma}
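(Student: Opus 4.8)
The plan is to reduce both assertions of Lemma~\ref{indact10} to the criterion of Lemma~\ref{indactcond}, namely to the trace identity $\tr((F^{\sigma_2})^\Lambda)=\tr(F^\Lambda)$ for every $F\in G(M,\Lambda)$ (where $\sigma_2=\sigma^\Lambda$). First I would observe that $\tr(F^\Lambda)$ is, essentially by definition, a Gowers inner product: for a face $\Lambda=\Lambda(F,K)$ of dimension $n$ sitting inside $(m)$, the quantity $\tr(F^\Lambda)=\int_{\bA^{m+1}}\prod_{S\in\Lambda}f_S\circ\psi_S$ is computed by integrating out, for each coordinate $t_j$ with $j\notin F\cup K$, a variable on which nothing depends, and then integrating out $x$ and the $t_i$ with $i\in F$. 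After renaming, this collapses exactly to the $n$-dimensional Gowers inner product $(\{f_S\}_{S\in\Lambda})_n$ of the system indexed by $\Lambda$ (identified with $(n)$ via $S\cup K\leftrightarrow S$). So the trace condition of Lemma~\ref{indactcond} is equivalent to the invariance of an $n$-dimensional Gowers inner product under the partial action $\sigma_2$.

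For the first statement, $\Lambda$ is a $d$-dimensional face of $(d+1)$, so $n=d$ and we need $(\{f_S\}_{S\in\Lambda})_d$ to be invariant under applying $\sigma$ to every coordinate indexed by $\Lambda$. This is precisely the last sentence of Lemma~\ref{gowinvf}: since $\sigma\in G_d(M)$, the action $\sigma^{(d)}$ leaves the Gowers inner product $(F)_d$ of any system $F\in G(M,(d))$ invariant, and under the identification above the partial action $\sigma^\Lambda$ on the $\Lambda$-coordinates corresponds exactly to $\sigma^{(d)}$ on $(d)$. Hence the trace identity holds and Lemma~\ref{indactcond} delivers the induced face action $\sigma^\Lambda$ on $M^{(d+1)}$.

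For the second statement, $\Lambda=(d)$ itself, so $n=d$ again and $F^{(d)}$ is the full product over the $d$-cube; its trace is the Gowers inner product $(F)_d$, which by the same sentence of Lemma~\ref{gowinvf} (or directly: $(F)_d=\tr(\conv_d(F))$, and $\conv_d(F)$ is $\sigma$-invariant because $\sigma\in G_d(M)$, while $\tr$ is automorphism-invariant) is unchanged when $\sigma$ is applied to all coordinates. Lemma~\ref{indactcond} then gives the induced action $\sigma^{(d)}$ on $M^{(d)}$. The only point requiring any care — and the part I would write out most carefully — is the bookkeeping that identifies $\tr(F^\Lambda)$ with a genuine $\dim\Lambda$-dimensional Gowers inner product (checking that the "absent" coordinates really do integrate out trivially and that the face action $\sigma^\Lambda$ matches the full action $\sigma^{(\dim\Lambda)}$ under the relabeling), together with noting that $\mathcal B^\Lambda$, being shift-invariant and inside $\mathcal F_k(\bA^{m+1})$, keeps us inside the hypotheses needed to invoke Lemmas~\ref{autextends}, \ref{indactcond} and \ref{gowinvf}. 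Everything else is a direct citation.
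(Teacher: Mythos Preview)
Your argument for the second statement is correct and matches the paper: $\tr(F^{(d)})$ is the Gowers inner product $(F)_d$ (up to the conjugation bookkeeping, which is harmless since $\sigma$ is a $*$-automorphism), and the last sentence of Lemma~\ref{gowinvf} plus Lemma~\ref{indactcond} finish it.

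For the first statement, however, there is a genuine gap coming from a misreading of Lemma~\ref{indactcond}. The criterion there is \emph{not} about $\tr(F^{\Lambda})$ for $F\in G(M,\Lambda)$; it is about $\tr(F^{(d+1)})$ for \emph{every} $F\in G(M,(d+1))$, i.e.\ the trace of the product over the \emph{whole} ambient cube, with $\sigma_2=\sigma^{\Lambda}$ acting only on the $\Lambda$-coordinates and leaving the others alone. (Look at the proof of Lemma~\ref{indactcond}: the well-definedness of $\sigma_3$ on $L(M,(d+1))$ is obtained from $\|F_1^{(d+1)}-F_2^{(d+1)}\|_2^2=\|(F_1^{\sigma_2})^{(d+1)}-(F_2^{\sigma_2})^{(d+1)}\|_2^2$, and expanding these squared norms as traces of products forces the full-cube trace identity, not just the $\Lambda$-restricted one.) What you verify --- that $\sigma$ applied to all vertices of a $d$-cube preserves $(\,\cdot\,)_d$ --- is only the special case where $f_S=1$ for $S\notin\Lambda$, and this weaker statement does not suffice to conclude that $F_1^{(d+1)}=F_2^{(d+1)}\Rightarrow (F_1^{\sigma_2})^{(d+1)}=(F_2^{\sigma_2})^{(d+1)}$.

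The fix is immediate and is exactly what the paper does: for statement~1 use the \emph{first} part of Lemma~\ref{gowinvf}, which says directly that $(F)_{d+1}=(F^{\sigma_2})_{d+1}$, i.e.\ $\tr(F^{(d+1)})=\tr((F^{\sigma_2})^{(d+1)})$, for every $F\in G(M,(d+1))$ and every $d$-face $\Lambda$ of $(d+1)$. Then Lemma~\ref{indactcond} applies as stated. Your elaborate reduction of $\tr(F^{\Lambda})$ to a lower-dimensional Gowers inner product is not needed (and not enough) here.
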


\begin{proof} Both statements follow directly from lemma \ref{gowinvf} and lemma \ref{indactcond}.
\end{proof}

\begin{definition}\label{diagonalaction} In the special case of lemma \ref{indact10} when $\sigma$ is a shift operator, we will call the induced action on $M^{(d)}$ the {\bf diagonal action} of $\bA$. The diagonal action is a simple shift on the space $\bA^{d+1}$ sending $(x,t_1,t_2,\dots,t_d)$ to $(x+t,t_1,t_2,\dots,t_d)$ for a fixed $t\in\bA$.
\end{definition}

\begin{lemma}\label{cubspace1} Let $h:(d)\rightarrow\hat{\bA}_k$ be a map on the $d$ dimensional cube and let $F\in G(L_\infty(\bA),(d))$ be a function system such that $f_S\in h(S)$ for every $S$ in $(d)$.
Assume that $\|\conv_d(F)\|_2\neq 0$. Then for every $d-k+1$ dimensional face $\Lambda=\Lambda(F,K)$ we have
that $$\prod_{S\in\Lambda}h(S)=1.$$
\end{lemma}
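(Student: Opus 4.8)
The plan is to reduce, by peeling off coordinates one at a time, to the situation of Lemma~\ref{convnull}. Write the given face as $\Lambda=\Lambda(F_0,K)$, using $F_0\subseteq[d]$ for its set of free directions (to keep it distinct from the function system $F$); thus $|F_0|=d-k+1$, $F_0\cap K=\emptyset$ and $\Lambda=\{T\cup K:T\subseteq F_0\}$. I treat the substantive case $d\geq k$; for $d<k$ one observes that a function lying in a nontrivial rank-one module over $L_\infty(\mathcal{F}_{k-1})$ has vanishing $U_k$-norm, hence vanishing $U_{d+1}$-norm (Gowers norms being non-decreasing and $d+1\leq k$), so by Lemma~\ref{convl2norm} the hypothesis $\|\conv_d(F)\|_2\neq0$ forces every $h(S)=1$ and the conclusion is trivial. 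So assume $d\geq k$, fix any $j\in F_0$, and put $I:=F_0\setminus\{j\}$; then $|I|=d-k$, the complementary set $R:=[d]\setminus I$ has exactly $k$ elements, $j\in R$, and $K\subseteq R\setminus\{j\}$.

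First I would iterate the identity $\|\conv_m(G)\|_2^2=\int_t\|\conv_{m-1}(G_t)\|_2^2$ from the proof of Lemma~\ref{l2int}, peeling the coordinates of $I$ one after another, to obtain
$$\|\conv_d(F)\|_2^2=\int_{(t_i)_{i\in I}}\bigl\|\conv_k\bigl(F_{(t_i)}\bigr)\bigr\|_2^2,$$
where $F_{(t_i)}=\{g_S:S\subseteq R\}$ is the $k$-dimensional system with $g_S(x)=\prod_{T\subseteq I}f_{S\cup T}\bigl(x+\sum_{i\in T}t_i\bigr)$. Since the left-hand side is nonzero, for a positive-measure set of tuples $(t_i)_{i\in I}$ we have $\conv_k(F_{(t_i)})\neq0$; fix one such tuple.

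Next I would run the argument of Lemma~\ref{convnull} on $F_{(t_i)}$ at the pair consisting of the index $j\in R$ and the base $K\subseteq R\setminus\{j\}$. Each $g_S$ lies in $L_\infty(\mathcal{F}_k)$ and, being a product of shifts of functions in the rank-one modules $h(S\cup T)$, lies in the rank-one module over $L_\infty(\mathcal{F}_{k-1})$ attached to the character $\prod_{T\subseteq I}h(S\cup T)\in\hat{\bA}_k$; this uses only the two facts already invoked in Lemma~\ref{convnull}, namely that a shift carries each such module onto itself and that a product of rank-one modules is the module of the product character (in particular the character of $g_S$ does not depend on the chosen tuple). Consequently $g_K(y)\,g_{K\cup\{j\}}(y+s)$ lies, for every $s$, in the module of the character $c:=\prod_{T\subseteq I\cup\{j\}}h(K\cup T)=\prod_{T\subseteq F_0}h(K\cup T)$. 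If $c\neq1$ this module is nontrivial, whence $\|g_K(y)g_{K\cup\{j\}}(y+s)\|_{U_k}=0$ for every $s$, and then Lemma~\ref{l2int} applied to $\conv_k(F_{(t_i)})$ at index $j$ — where the factor with $S=K$ already vanishes — gives $\conv_k(F_{(t_i)})=0$, contradicting the choice of the tuple. Hence $c=1$, that is $\prod_{T\subseteq F_0}h(K\cup T)=\prod_{S\in\Lambda}h(S)=1$, which is the assertion.

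The mechanical ingredients here are the iteration of the identity in Lemma~\ref{l2int} and the module calculus for $\hat{\bA}_k$. The one delicate point is the meaning of ``lies in a nontrivial module'' for a function that could be identically zero; I sidestep it by phrasing the final step through vanishing of the $U_k$-norm together with Lemma~\ref{l2int} — exactly as in the proof of Lemma~\ref{convnull} — rather than invoking Lemma~\ref{convnull} verbatim. The only genuinely non-routine move is the combinatorial choice $I=F_0\setminus\{j\}$: it is what makes the $k$ surviving coordinates equal $R=([d]\setminus F_0)\cup\{j\}$ and makes the prescribed face $\Lambda$ reappear, upon applying the Lemma~\ref{convnull} argument at $(j,K)$, as the sub-cube $\{K\cup T:T\subseteq I\cup\{j\}\}$.
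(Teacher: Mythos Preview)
Your argument is correct and follows essentially the same route as the paper's: freeze the free directions of the face, note that the resulting product function sits in the rank-one module $\prod_{S\in\Lambda}h(S)$, and conclude via vanishing of the relevant Gowers norm that $\conv_d(F)$ must vanish unless this product is trivial. The only cosmetic difference is that the paper freezes all $d-k+1$ free coordinates $F_0$ at once, obtaining a $(k-1)$-dimensional system $G$ in which $g_K$ itself lies in $\prod_{S\in\Lambda}h(S)$ and hence has $U_k$-norm zero, so Lemma~\ref{convl2norm} kills $\conv_{k-1}(G)$ directly; you instead freeze $d-k$ coordinates, keep one direction $j\in F_0$, and use Lemma~\ref{l2int} for the last step. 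Both choices amount to the same computation.
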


\begin{proof} Assume by contradiction that the face $\Lambda$ doesn't satisfy the condition.
For each set $S\subseteq [d]\setminus F$ let us introduce the function
$$g_S=\prod_{H\subseteq F}f_{H\cup S}(x+\sum_{i\in F} t_i).$$
We denote the the above function system by $G$.

For every fixed values of the variables $\{t_i\}_{i\in F}$ each function $g_S$ is contained in the rank one module $\prod_{H\in \Lambda(F,S)}h(H)$. In particular $g_K$ is contained in $b=\prod_{S\in\Lambda}h(S)$ which is not the trivial module by our assumption. In particular we have that $\|g_K\|_{U_k}=0$.
it follows by \ref{convl2norm} that $\|\conv_{k-1}(G)\|_2=0$ for every fixed values of $\{t_i\}_{i\in F}$. On the other hand the function $\conv_d(F)$ is the same as $\conv_{k-1}(G)$ if the variables $\{t_i\}_{i\in F}$ are fixed. This contradicts the assumption that $\|\conv_d(F)\|_2\neq 0$.
\end{proof}

\begin{lemma}\label{cubspace2} Let $A$ be an arbitrary abelian group and let $B$ denote the abelian group of all functions $h:(d)\rightarrow A$ with the point wise multiplication. Let $B_{d,k}$ denote subgroup formed by the elements $h$ with $\prod_{S\in\Lambda}h(S)=1$ for every $d-k+1$ dimensional face $\Lambda$.
Then $B_{d,k}$ is generated by all the elements $g=g(\Lambda,a)$ where $\Lambda$ is a $k$-dimensional face and $a\in A$ such that
$g(S)=1$ if $S\notin\Lambda$ and $g(S)=a^{\epsilon(S)}$ with $\epsilon(S)=(-1)^{|S|}$ if $S\in\Lambda$.
\end{lemma}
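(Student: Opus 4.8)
The plan is to show the two inclusions separately. For the easy direction, I would first check that each generator $g = g(\Lambda, a)$ actually lies in $B_{d,k}$. Fix a $(d-k+1)$-dimensional face $\Lambda' = \Lambda(F', K')$, so $|F'| = d-k+1$, and consider $\prod_{S \in \Lambda'} g(S)$. Since $\Lambda$ is $k$-dimensional and $\Lambda'$ is $(d-k+1)$-dimensional inside the $d$-cube, a dimension count shows $\dim\Lambda + \dim\Lambda' = d+1 > d$, so the two faces cannot be ``disjoint'' in the cube; more precisely, the sub-cube $\Lambda \cap \Lambda'$, when nonempty, is itself a face of dimension $\dim\Lambda + \dim\Lambda' - d = 1$, i.e.\ an edge, and the product of $a^{\epsilon(S)}$ over a single edge is $a \cdot a^{-1} = 1$ because the two endpoints of an edge have $|S|$ differing by one. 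If $\Lambda \cap \Lambda' = \emptyset$ then all factors are $1$ trivially. Hence $g \in B_{d,k}$, and since $B_{d,k}$ is a subgroup, the subgroup it generates is contained in $B_{d,k}$.

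For the hard direction — that these generators span all of $B_{d,k}$ — I would argue by a descent/elimination procedure on the ``support'' of an element $h \in B_{d,k}$, ordered by the cardinality of the sets $S$ on which $h(S) \ne 1$. Let $S_0$ be a set of minimal size with $h(S_0) = a \ne 1$. The idea is to multiply $h$ by a suitable product of generators $g(\Lambda, \cdot)$ attached to $k$-dimensional faces $\Lambda$ containing $S_0$ as their minimal element, chosen so as to cancel the value at $S_0$ without introducing new nontrivial values on sets of size $\le |S_0|$. Concretely, take $\Lambda$ to be a $k$-dimensional face with bottom vertex $S_0$ (such a face exists as long as $|S_0| + k \le d$, i.e.\ $S_0$ is not too large); then $g(\Lambda, a)^{-1}$ has value $a^{-1}$ at $S_0$ and values only on supersets of $S_0$ of size $> |S_0|$ elsewhere, so $h \cdot g(\Lambda, a)^{-1}$ has strictly smaller support near the bottom. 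The case $|S_0| > d - k$ must be handled using the defining relations: a set $S_0$ with $|S_0| > d-k$ sits inside some $(d-k+1)$-dimensional face $\Lambda'$ all of whose other elements are strictly larger, and the relation $\prod_{S \in \Lambda'} h(S) = 1$ then forces $h(S_0)$ to be determined by (hence cancellable against) the larger-index values, which we can assume already trivialized by induction. Iterating, we drive $h$ to the identity, exhibiting it as a product of generators.

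The main obstacle I expect is bookkeeping in the elimination step: one must verify that correcting the value at $S_0$ by a generator $g(\Lambda, a)$ does not disturb values already set to $1$ on sets of size $\le |S_0|$, and that the faces $\Lambda'$ used to handle large $S_0$ really do express $h(S_0)$ purely in terms of strictly larger sets — this is exactly where the precise dimension $d-k+1$ of the defining relations is used, and getting the inequalities $|S_0| + k \le d$ versus $|S_0| > d-k$ to dovetail is the crux. A clean way to organize this is to process the sets $S$ in order of \emph{increasing} cardinality and, for each $S$ with $|S| \le d-k$, use a generator based at $S$ to zero out $h(S)$; once all sets of size $\le d-k$ carry value $1$, the remaining relations (indexed by $(d-k+1)$-faces) become a triangular system on the top sets that forces all remaining values to be $1$ automatically. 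This shows $h$ is in the span of the generators and completes the proof. \qed
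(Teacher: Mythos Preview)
Your approach is correct but genuinely different from the paper's. The paper argues by induction on $d$: given $h \in B_{d,k}$, it restricts to the upper face $\{S : i \in S\}$ to get a function $h_i \in B_{d-1,k-1}$ on the cube $[d]\setminus\{i\}$, applies the inductive hypothesis to write $h_i$ as a product of $(k-1)$-dimensional generators there, extends each such generator across the $i$-th direction to a $k$-dimensional generator in $(d)$, and thereby multiplies $h$ by an element of $B'_{d,k}$ so that the result vanishes on $\{S : i \in S\}$. Iterating over all $i$ drives the support down to $\{\emptyset\}$, where a single $(d-k+1)$-face relation finishes. Your argument is instead a direct Gaussian elimination with no induction on $d$: clear the vertices in order of increasing $|S|$ using a $k$-face with bottom vertex $S$ whenever $|S| \le d-k$, and then show the relations force the remaining values to vanish. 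Your method is more elementary and self-contained; the paper's exploits the recursive structure $(d,k)\leadsto(d-1,k-1)$ of cubes.

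Two small corrections to your write-up. First, the intersection $\Lambda \cap \Lambda'$ of a $k$-face and a $(d-k+1)$-face, when nonempty, has dimension \emph{at least} $1$ (its set of free directions is $F \cap F'$, and $|F \cap F'| \ge |F| + |F'| - d = 1$), not exactly $1$; the conclusion is unaffected since $\sum_{S \in \Lambda''} (-1)^{|S|} = 0$ for any face $\Lambda''$ of positive dimension. Second, in your middle paragraph you say that for $|S_0| > d-k$ one takes a $(d-k+1)$-face $\Lambda'$ ``all of whose other elements are strictly larger'' and uses that the ``larger-index values'' are already trivialized --- this is backwards. You want $S_0$ to be the \emph{top} vertex of $\Lambda'$ (take $F' \subseteq S_0$ with $|F'| = d-k+1$ and $K' = S_0 \setminus F'$), so that every other vertex of $\Lambda'$ is a proper subset of $S_0$, hence of strictly smaller size, hence already carrying value $1$ by minimality of $|S_0|$; the relation then gives $h(S_0)=1$. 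Your final paragraph states this correctly (``triangular system on the top sets''); only the orientation in the middle paragraph is flipped.
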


\begin{proof} Let $B'_{d,k}$ be the group generated by the elements $g(\Lambda,a)$ where $\Lambda$ is a $k$-dimensional face. First of all note that the elements $g(\Lambda,a)$ are all contained in $B_{d,k}$ since if a $d-k+1$ dimensional face has a non trivial intersection with a $k$-dimensional face then the intersection is a face of dimension at least $1$. This implies that $B'_{d,k}\subseteq B_{d,k}$.

We prove the other containment by induction on $d$. Assume that it is true for $d-1$.
Let $h$ be an element in $B_{d,k}$.
For an arbitrary element $i\in [d]$ we look at the function $h_i(S)=h(S\cup\{i\})$ defined for sets $S\subseteq [d]\setminus\{i\}$. Obviously $h_i$ is in $B_{d-1,k-1}$ and so by induction $h_i\in B'_{d-1,k-1}$ in the cube corresponding to the set $[d]\setminus\{i\}$. Let us consider the map $e(\Lambda,a)=(\Lambda',a)$ where $\Lambda'$ is the $k$-dimensional face in $(d)$ extending $\Lambda$ with the dimension in the $i$-th coordinate. Using this extension operation and the fact that $h_i\in B'_{d-1,k-1}$ we obtain that the function $h'_i$ defined by $h'_i(S)=h_i(S)$ if $i\notin S$ and $h'_i(S)=h_i(S)^{-1}$ if $h\in S$ is in $B'_{d,k}$. Now it is enough to prove that $P_i(h)=h+h'_i$ is in $B'_{d,k}$. The operator $P_i$ behaves as a projection operator to the $d-1$ dimensional face $\Lambda([d]\setminus\{i\},\emptyset)$. Repeating this for different $i$'s we get a function which has to be the unit element at each vertex. This completes the proof.
\end{proof}

\subsection{The diagonal $\sigma$-algebra}

Let $\mathcal{Y}_k$ denote the coset $\sigma$ algebra corresponding to the subgroup
$\{(\bA,0,0,\dots,0)\}$ in $\bA^{k+1}$. The significance of $\mathcal{Y}_k$ is that $k$ degree convolution $\conv_k(F)$ can be obtained by taking the projection of $F^{(k)}$ to $\mathcal{Y}_k$ and then forgetting the first coordinate $x$.

An important property of the induced action of $G_k(M)$ on $\mathcal{B}^{(k)}$ is that it acts trivially on the $\sigma$-algebra $\mathcal{B}^{(k)}\cap\mathcal{Y}_k$.
This follows from the fact that $G_k(M)$ is convolution preserving and $\conv_k(F)$ is projection of $(F)^{(k)}$ to $\mathcal{Y}_k$. Since $\mathcal{Y}_k$ is a coset $\sigma$ algebra it is perpendicular to every shift invariant Hilbert space so the projection to $\mathcal{Y}_k$ from $\mathcal{B}^{(k)}$ is the same as the projection to $\mathcal{B}^{(k)}\cap \mathcal{Y}_k$.

\begin{lemma} An element $\sigma$ of $\aut(M)$ is in $G_k(M)$ if and only if it induces an action on $M^{(k)}$ under which $\mathcal{Y}_k$ is invariant.
\end{lemma}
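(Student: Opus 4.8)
The plan is to prove both directions by translating between convolution in $M$ and projection onto $\mathcal{Y}_k$ inside $M^{(k)}$. Throughout I will use that $\conv_k(F)$ is, up to forgetting the first coordinate, the projection of $F^{(k)}$ onto $\mathcal{Y}_k$, together with Lemma \ref{indact10} which guarantees that any $\sigma\in G_k(M)$ already induces an action $\sigma^{(k)}$ on $M^{(k)}$. The key auxiliary observation, already noted in the text preceding the statement, is that since $\mathcal{Y}_k$ is a coset $\sigma$-algebra it is perpendicular to every shift-invariant Hilbert subspace; hence for $f\in\mathcal{B}^{(k)}$ the conditional expectation $\mathbb{E}(f\mid\mathcal{Y}_k)$ coincides with $\mathbb{E}(f\mid\mathcal{B}^{(k)}\cap\mathcal{Y}_k)$. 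This lets me replace the statement ``$\mathcal{Y}_k$ is invariant under $\sigma^{(k)}$'' by the cleaner statement ``$\sigma^{(k)}$ commutes with $\mathbb{E}(\,\cdot\mid\mathcal{Y}_k)$ on $M^{(k)}$''.

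For the forward direction, suppose $\sigma\in G_k(M)$. First I would check that $\sigma^{(k)}$ is well-defined (Lemma \ref{indact10}) and that for every system $F\in G(M,(k))$ one has $\mathbb{E}(F^{(k)}\mid\mathcal{Y}_k)=\mathbb{E}((F^{\sigma^{(k)}})^{(k)}\mid\mathcal{Y}_k)$: indeed both sides, after forgetting the first coordinate, equal $\conv_k(F)=\conv_k(F^\sigma)$ by hypothesis, and since a function measurable in $\mathcal{Y}_k$ is determined by its dependence on $t_1,\dots,t_k$ alone, the two projections agree as elements of $M^{(k)}$. Since linear combinations of functions of the form $F^{(k)}$ are $L_2$-dense in $M^{(k)}$ (used already in Definition of Face actions II), and $\sigma^{(k)}$ is $L_2$-continuous (being trace-preserving), this shows $\sigma^{(k)}\circ\mathbb{E}(\,\cdot\mid\mathcal{Y}_k)=\mathbb{E}(\,\cdot\mid\mathcal{Y}_k)\circ\sigma^{(k)}$ on a dense set and hence everywhere; equivalently $\mathcal{Y}_k$ is invariant.

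For the converse, suppose $\sigma\in\aut(M)$ induces an action on $M^{(k)}$ under which $\mathcal{Y}_k$ is invariant. Then for any $F\in G(M,(k))$, applying $\sigma^{(k)}$ and projecting onto $\mathcal{Y}_k$ commute, so $\mathbb{E}(F^{(k)}\mid\mathcal{Y}_k)$ is carried to $\mathbb{E}((F^{\sigma^{(k)}})^{(k)}\mid\mathcal{Y}_k)$; but $\sigma^{(k)}$ fixes every function that depends only on $t_1,\dots,t_k$ — such functions lie in $\mathcal{B}^{(k)}\cap\mathcal{Y}_k$ because they are pulled back along the coset map, and the induced action restricted to this sub-$\sigma$-algebra is the identity, as it commutes with the diagonal shift action of $\bA$ and $\mathcal{Y}_k$ is exactly the $\sigma$-algebra of diagonal-invariants. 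Hence the two projections are literally equal, and forgetting the first coordinate gives $\conv_k(F)=\conv_k(F^{\sigma})$, i.e. $\sigma\in G_k(M)$.

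The main obstacle I anticipate is the subtle point in the converse: merely knowing that $\mathcal{Y}_k$ is $\sigma^{(k)}$-invariant as a $\sigma$-algebra does not immediately say $\sigma^{(k)}$ acts trivially on it — one must argue that the only way an induced face action can move $\mathcal{Y}_k$-measurable functions is trivially, using the compatibility of $\sigma^{(k)}$ with the diagonal $\bA$-action (Definition \ref{diagonalaction}) and the fact that $\mathcal{Y}_k$ is precisely the coset $\sigma$-algebra of the diagonal subgroup $\{(\bA,0,\dots,0)\}$. Making this rigorous will require carefully identifying $\mathcal{B}^{(k)}\cap\mathcal{Y}_k$ with (the pullback of) functions of $t_1,\dots,t_k$ and checking that $\sigma^{(k)}$ fixes the generating functions $\psi_S\circ f_S$ for $S\neq\emptyset$ appearing in $F^{(k)}$ modulo $\mathcal{Y}_k$-null information; the rest is routine density and $L_2$-continuity bookkeeping.
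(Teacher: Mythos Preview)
Your forward direction is correct and essentially what the paper does (the paper simply calls it ``trivial'', relying on the paragraph immediately preceding the lemma which already argued that $\sigma^{(k)}$ acts trivially on $\mathcal{B}^{(k)}\cap\mathcal{Y}_k$).

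For the converse there is a genuine gap, and you yourself flag it. The paper's proof does \emph{not} work from the hypothesis ``$\mathcal{Y}_k$ is invariant as a $\sigma$-algebra''; it works from the stronger hypothesis ``$\sigma$ induces the \emph{trivial} action on $\mathcal{Y}_k$''. Under that reading the converse is one line: triviality on $\mathcal{Y}_k$ means $\sigma^{(k)}$ commutes with the projection $\mathbb{E}(\,\cdot\,|\mathcal{Y}_k)$ and fixes its image pointwise, so $\conv_k(F)=\conv_k(F^\sigma)$. In other words, ``invariant'' in the statement is being used loosely to mean ``pointwise fixed''.

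Your attempt to upgrade mere set-wise invariance to pointwise triviality via the diagonal shift action does not work, for two reasons. First, $\sigma^{(k)}$ need not commute with the diagonal shifts: on generators one has $\iota(t)^{(k)}\sigma^{(k)}(F^{(k)})=(F^{\sigma\iota(t)})^{(k)}$ while $\sigma^{(k)}\iota(t)^{(k)}(F^{(k)})=(F^{\iota(t)\sigma})^{(k)}$, and nothing in the hypotheses forces $\sigma$ to commute with shifts in $\aut(M)$. Second, even if commutation held, an automorphism commuting with an action only preserves the invariant $\sigma$-algebra; it does not have to fix it pointwise. So the bridge you sketch in the last paragraph cannot be completed along those lines. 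The clean fix is simply to read the lemma as the paper's proof does.
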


\begin{proof} Assume that $\sigma$ induces the trivial action on $\mathcal{Y}_k$. Then since projections commute with $\sigma$ we obtain that convolution is preserved. The other direction is trivial.
\end{proof}

\begin{definition} Let $f\in M$ be a function and let $\Lambda$ be a face of the cube $(d)$. We define the function system $t(\Lambda,f)\in G(M,(d))$ by $\{f_S\}_{S\subseteq[d]}$ with $f_S=f^{c(|S|)}$ if $S\in\Lambda$ and $f_S=1$ if $S\notin\Lambda$.
\end{definition}

\begin{lemma}\label{gowgow} Let $F\in G(L_\infty(\bA),(k))$ be a function system such that one of the functions in it has zero $U_k$ norm. Then $\|F^{(k)}\|_{U_k}=0$.
\end{lemma}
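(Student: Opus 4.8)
The plan is to unfold $\|F^{(k)}\|_{U_k}$ --- computed on the group $\bA^{k+1}$, on which $F^{(k)}$ lives --- into an average of a product of values of the functions $f_S$ over a family of affine‑linear forms, arranged so that the $2^k$ forms carrying the distinguished function $f_{S_0}$ (the one with $\|f_{S_0}\|_{U_k}=0$) constitute a genuine $k$-dimensional Gowers cube, and then to squeeze that average by iterated Cauchy--Schwarz along the $k$ directions of this cube. A key preliminary remark is that a uniformly random $k$-dimensional cube in $\bA^{k+1}$ consists of $k+1$ mutually independent uniformly random $k$-dimensional cubes in $\bA$, one in each coordinate, all sharing the corner index $\omega\subseteq[k]$. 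Writing $(x_i^\omega)_\omega$ for the cube in the $i$-th coordinate and using that every $\psi_S$ is a homomorphism, the definition of the $U_k$-norm gives
$$\|F^{(k)}\|^{2^k}_{U_k(\bA^{k+1})}=\mathbb{E}\ \prod_{S\subseteq[k]}\ \prod_{\omega\subseteq[k]}f_S^{c(|\omega|)}\Bigl(x_0^\omega+\sum_{i\in S}x_i^\omega\Bigr),$$
the expectation being over the $k+1$ independent $\bA$-cubes.

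Next I would reparametrise the $i$-th $\bA$-cube by a base $x_i$ and directions $\xi_{i,1},\dots,\xi_{i,k}$, so that for each $S$ the inner product over $\omega$ becomes precisely the standard integrand computing $\|f_S\|^{2^k}_{U_k}$, evaluated on the $\bA$-cube with base $b_S:=x_0+\sum_{i\in S}x_i$ and directions $\eta_{S,j}:=\xi_{0,j}+\sum_{i\in S}\xi_{i,j}$. A volume-preserving change of variables turns $b_{S_0}$ and $\eta_{S_0,1},\dots,\eta_{S_0,k}$ themselves into free variables; then the cube attached to $S_0$ has free base and free directions, while for each $S\neq S_0$ its base and directions differ from the $S_0$-ones by linear combinations that genuinely involve some variable ($x_i$ or $\xi_{i,j}$ with $i\in S\triangle S_0$) absent from every $S_0$-form, since $S\triangle S_0\neq\emptyset$. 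One then applies Cauchy--Schwarz successively in the $k$ directions $\eta_{S_0,1},\dots,\eta_{S_0,k}$; each step ``doubles and reflects'' the factors carrying $f_{S_0}$ exactly as in the proof of the Gowers--Cauchy--Schwarz inequality used in the proof of Lemma~\ref{convl2norm}, so that after the $k$-th step the $f_{S_0}$-contribution collapses to exactly $\|f_{S_0}\|^{2^k}_{U_k(\bA)}$, while the factors carrying the remaining $f_S$ are merely transported along and, lying in $L_\infty$, contribute at most a finite constant. This gives $\|F^{(k)}\|^{2^k}_{U_k(\bA^{k+1})}\le C\,\|f_{S_0}\|^{2^k}_{U_k(\bA)}$ for some finite $C$, and the hypothesis $\|f_{S_0}\|_{U_k}=0$ finishes the proof.

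The step I expect to be the main obstacle is the bookkeeping in this Cauchy--Schwarz argument: one must check that the $k$ applications can be organised so that each one eliminates exactly one direction of the $S_0$-cube, with no step forced to ``waste'' itself against the other forms --- equivalently, that the linear configuration appearing here has Cauchy--Schwarz complexity $k-1$ with respect to the distinguished $f_{S_0}$-family. This is precisely where the ``additive cube'' structure of the forms $\psi_S$ (which makes the $S_0$-family a genuine $k$-cube) and the fact that $S\neq S_0\Rightarrow S\triangle S_0\neq\emptyset$ (so that no other form degenerates onto the span of the $S_0$-forms) are used. A convenient way to keep this under control is to run the Cauchy--Schwarz steps mechanically and verify that every lower-complexity term which could in principle survive is attached to some $f_S$ with $S\neq S_0$, never to $f_{S_0}$, so that the $f_{S_0}$-contribution is genuinely squeezed into $\|f_{S_0}\|^{2^k}_{U_k}$.
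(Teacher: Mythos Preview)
Your overall plan---expand $\|F^{(k)}\|_{U_k(\bA^{k+1})}^{2^k}$ as an average of a product indexed by pairs $(S,\omega)$ and then control it by $\|f_{S_0}\|_{U_k}$ via a Cauchy--Schwarz / generalized von Neumann argument---is the right strategy and is exactly what the paper does. The gap is in your choice of pivot variables. After your change of variables, for every $S\neq S_0$ one has $b_S=b_{S_0}+c_S$ and $\eta_{S,j}=\eta_{S_0,j}+d_{S,j}$, so the form $L_{S,\omega}=b_{S_0}+\sum_{j\in\omega}\eta_{S_0,j}+(c_S+\sum_{j\in\omega}d_{S,j})$ \emph{does} depend on $b_{S_0}$ and on every $\eta_{S_0,j}$ with $j\in\omega$. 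Hence when you apply Cauchy--Schwarz in the $\eta_{S_0,j}$, the $f_S$-factors with $S\neq S_0$ are doubled right along with the $f_{S_0}$-factors; they are not ``merely transported along''. Concretely: freezing the extra variables and running Gowers--Cauchy--Schwarz on the remaining $(b_{S_0},\eta_{S_0})$-average bounds it by $\prod_\omega\|g_\omega\|_{U_k}$ with $g_\omega(y)=f_{S_0}(y)\prod_{S\neq S_0}f_S(y+e_{S,\omega})$, and there is no reason these products have small $U_k$ norm just because $f_{S_0}$ does. So the ``bookkeeping obstacle'' you flag is not just bookkeeping---with this choice of directions the argument does not go through.

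The paper's proof supplies exactly the missing idea: a different set of pivot variables that genuinely isolates one $f_{S_0}$-term. Using the cubic symmetry to take $S_0=[k]$, write the $(S,H)$-term as $P_{S,H}=f_S^{c(|H|)}\bigl(x+\sum_{j\in H}x_j+\sum_{i\in S}(t_i+\sum_{j\in H}t_{i,j})\bigr)$, and look at the \emph{diagonal} variables $t_{1,1},\dots,t_{k,k}$. Since $P_{S,H}$ involves $t_{i,i}$ iff $i\in S\cap H$, the term $P_{[k],[k]}$ is the \emph{unique} one depending on all $k$ diagonals; every other $(S,H)$ misses at least one. Fixing all remaining variables and integrating only over $\{t_{i,i}\}$, one is pairing $f_{[k]}^{c(k)}(c+\sum_i t_{i,i})$ against a product of functions each measurable in a proper cylinder of $\bA^k$, and this vanishes by the characterization (from \cite{Sz1}) that $\|f\|_{U_k}=0$ is equivalent to $\mathbb{E}(f\mid\sigma^*_{[k]})=0$. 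In Cauchy--Schwarz--complexity language, the diagonal variables exhibit a partition of the $2^{2k}-1$ non-distinguished forms into $k$ classes (class $i$ being those $(S,H)$ with $i\notin S\cap H$), each of which misses a variable that the distinguished form $P_{[k],[k]}$ uses. That diagonal choice is precisely the organisation of the $k$ Cauchy--Schwarz steps you were looking for; your $\eta_{S_0,j}$'s cannot play this role because they are present in every form.
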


\begin{proof} Using the cubic symmetry we can assume without loss of generality that
$\|f_{[k]}\|_{U_k}=0$. The formula for $\|F^{(k)}\|_{U_k}^{2^k}$ is the integral of a product $P$ with $2^{2k}$ terms. For each functions $f_S$ there are $2^k$ terms of the form
$$P_{S,H}=f_S(x+x^H+\sum_{i\in S}(t_i+t_i^H))^{c(|H|)}$$
where $H\subseteq [k]$,
$$x^H=\sum_{j\in H}x_j~,~t_i^H=\sum_{j\in H}t_{i,j}$$
and $c(n)$ is the conjugation operation applied $n$-times.
In the integration the variables $$\{x,t_1,t_2,\dots,t_k,\{x_i\}_{i=1}^k,\{t_{i,j}\}_{1\leq i,j,\leq q}\}$$ are chosen independently.
Now it is easy to see that $P_{[k],[k]}$ is the only term that contains all the variables $\{t_{i,i}\}_{i=1}^k$. Using Fubini theorem we do the integration first only on these variables.
Now for every fixed values for all the other variables the integral becomes
$$\int_{\{t_{i,i}\}_{i=1}^k}f^{c(k)}_{[k]}(c+\sum_{i=1}^k t_{i,i})\prod_{S\subset[k],S\neq [k]}g_S$$
for some functions $g_S$ that depend only on the variables $\{t_{i,i}\}_{i\in S}$.
Each function $g_S$ is measurable in $\sigma_{[k]}^*$ which is the $\sigma$ algebra on $\bA^k$ generated by all the $k-1$ cylindric $\sigma$ algebras. By \cite{Sz1} we know that $\|f_{[k]}\|_{U_k}=0$ is equivalent with saying that the projection of $f^{c(k)}_{[k]}(c+\sum_{i=1}^k t_{i,i})$ to this $\sigma$ algebra is $0$. This completes the proof.
\end{proof}

\begin{lemma}\label{charchar} Let $F\in G(M,(k))$ be a system of functions such that each term is a $k$-th order character in $M$. Then $f=F^{(k)}$ is a $k$-th order character, and it represents the trivial module if and only if all the terms in $F$ represent the trivial module.
\end{lemma}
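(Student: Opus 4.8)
The plan is to analyze $f = F^{(k)} = \prod_{S\subseteq[k]}\psi_S\circ f_S$ where each $f_S$ is a $k$-th order character, meaning $f_S$ lies in a rank one module over $L_\infty(\mathcal F_{k-1})$, i.e. $\|\Delta_{t_1,\dots,t_k} f_S\|$ trivializes appropriately (equivalently $\Delta_{t_1,\dots,t_{k+1}} f_S = 1$). First I would verify the claim that $f$ is again a $k$-th order character. The cleanest route is to use the diagonal $\sigma$-algebra machinery: $\conv_k$ applied to a system built from $f$ and its conjugates/shifts should be computed by pushing everything through the face maps $\psi_S$. Concretely, I would show that $\Delta_{\iota(t_1),\dots,\iota(t_{k+1})} f = 1$ for all $t_1,\dots,t_{k+1}$ by expanding using the product formula: $\Delta_t (F^{(k)})$ can be rewritten, via the substitution $y=x+t$ in each $\psi_S$ coordinate, as a product of $\Delta$-type expressions applied to the individual $f_S$, and since each $f_S$ has degree $k$ in the relevant sense, $k+1$ applications kill it. The combinatorics here is essentially the Leibniz-type rule for $D_h$ on products combined with the fact that $\psi_S$ shifts variables; I expect this to reduce to the statement that a product of degree-$k$ polynomial maps (in Leibman's sense) into an abelian group is again degree $k$, applied coordinatewise on $\bA^{k+1}$.

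Next I would handle the module statement. Write $h:(k)\to\hat\bA_k$ for the map assigning to each $S$ the character (rank one module) that $f_S$ represents, and similarly let $b$ be the module represented by $f = F^{(k)}$. The key observation is that $b = \prod_{S\subseteq[k]}h(S)^{?}$ — more precisely, since the $\psi_S$ are homomorphisms $\bA^{k+1}\to\bA$, the module of $\psi_S\circ f_S$ on $\bA^{k+1}$ is the pullback $\psi_S^*(h(S))$, and the module of the product is $\prod_S \psi_S^*(h(S))$. I would then argue that this product is trivial on $\bA^{k+1}$ if and only if each $h(S)$ is trivial. One direction is immediate: if every $h(S)=1$ then $f$ represents the trivial module. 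For the converse, I would use the structure of the homomorphisms $\psi_S$: the pullbacks $\psi_S^*$ are "independent enough" that no nontrivial cancellation can occur. The cleanest way to see this is to note that fixing all $t_i = 0$ makes $\psi_S$ the identity on the $x$-coordinate for every $S$, so $\prod_S \psi_S^*(h(S))$ restricted to the diagonal $\{(x,0,\dots,0)\}$ equals $\prod_S h(S)$ as a module on $\bA$; but more useful is to restrict to a single coordinate $t_i$: setting all variables to zero except $x$ and one $t_i$, the face maps $\psi_S$ separate the sets $S$ by whether $i\in S$, and an inductive argument (on $k$, peeling off one coordinate at a time, exactly as in the proof of Lemma \ref{cubspace2}) shows every $h(S)$ must individually be trivial.

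The main obstacle I anticipate is the converse direction of the module statement — showing that triviality of the product forces triviality of each factor — because a priori the characters $h(S)$ for different $S$ could conspire. The resolution should mirror Lemma \ref{cubspace2} and Lemma \ref{cubspace1}: those lemmas already establish that the only "hidden relations" among face-indexed character systems come from $k$-dimensional faces, and here each $h(S)$ sits at a single vertex, so no face relation of positive dimension constrains it. I would therefore invoke (or re-derive in this special case) the injectivity of the map $h \mapsto \prod_S \psi_S^*(h(S))$ on vertex-supported systems. Once that is in place, the equivalence "$f$ represents the trivial module $\iff$ all $f_S$ do" follows, and combined with the first paragraph's verification that $f$ is a $k$-th order character, the lemma is complete. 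A secondary technical point to be careful about is that "$k$-th order character" should be interpreted as an element of a rank one module over $L_\infty(\mathcal F_{k-1})$ rather than literally a function with $\Delta_{t_1,\dots,t_{k+1}} = 1$; the argument goes through for the module formulation using Lemma \ref{convnull} and Lemma \ref{l2int} to control the $U_k$-norms, which is the form in which the surrounding results are stated.
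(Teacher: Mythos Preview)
Your treatment of the first claim (that $f=F^{(k)}$ is a $k$-th order character) is considerably more elaborate than necessary. The paper dispatches it in one line: for any $t\in\bA^{k+1}$ one has $\Delta_t f=\prod_{S}(\Delta_{\psi_S(t)}f_S)\circ\psi_S$, and since each $\Delta_{\psi_S(t)}f_S\in\mathcal F_{k-1}(\bA)$ the product lies in $\mathcal F_{k-1}(\bA^{k+1})$. No Leibniz rule or polynomial-map machinery is needed; you already note the correct interpretation at the end of your proposal, so just run that computation directly.

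The real issue is your plan for the ``if and only if'' part. You propose to show injectivity of $h\mapsto\prod_S\psi_S^*(h(S))$ by restricting to coordinate subgroups and inducting as in Lemma~\ref{cubspace2}. This does not work as stated. For $k\ge 2$ the $2^k$ linear forms $\psi_S$ on $\bA^{k+1}$ satisfy nontrivial linear relations (e.g.\ $\psi_\emptyset-\psi_{\{1\}}-\psi_{\{2\}}+\psi_{\{1,2\}}=0$ when $k=2$), so the simple coordinate restrictions only see products like $\prod_{S\ni i}h(S)$ and $\prod_{S\not\ni i}h(S)$, which can all be trivial while the individual $h(S)$ are not. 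Lemma~\ref{cubspace2} is about a purely combinatorial constraint on maps $(d)\to A$ and does not furnish the analytic input you need here; the obstruction is that triviality of a $k$-th order module on $\bA^{k+1}$ is not detected by restriction to low-dimensional coordinate subgroups.

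The paper's route is entirely different and much shorter: it invokes Lemma~\ref{gowgow} (placed immediately before the statement precisely for this purpose). If some $f_S$ lies in a nontrivial module then $\|f_S\|_{U_k}=0$, and Lemma~\ref{gowgow} gives $\|F^{(k)}\|_{U_k}=0$; since $|F^{(k)}|=1$ this forces $F^{(k)}\notin\mathcal F_{k-1}(\bA^{k+1})$, i.e.\ $F^{(k)}$ represents a nontrivial module. The converse is immediate. You mention Lemmas~\ref{convnull} and~\ref{l2int} at the end, but neither gives this conclusion; the tool you are missing is exactly Lemma~\ref{gowgow}.
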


\begin{proof} It is clear that $\Delta_t f=(\Delta_t F)^{(k)}\in\mathcal{F}_{k-1}(\bA^{k+1})$ for every $t\in\bA^{k+1}$ and so $f$ is contained in a rank one module.
Assume that there is a component $f_S$ in $F$ for some $S\subseteq[d]$ with $f_S\notin\mathcal{F}_{k-1}(\bA)$. Then since $\|f_S\|_{U_k}=0$ lemma \ref{gowgow} completes the proof.
\end{proof}

\begin{lemma}\label{odavissza} Let $\mathcal{B}\subseteq\mathcal{F}_k(\bA)$ be a $\sigma$-algebra.
Then $$(\mathcal{B}\wedge\mathcal{F}_{k-1}(\bA))^{(k)}=\mathcal{B}^{(k)}\wedge\mathcal{F}_{k-1}(\bA^{d+1}).$$
\end{lemma}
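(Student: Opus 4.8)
\emph{Plan.} I would prove the two inclusions separately. Here ``$(k)$'' is the $k$-dimensional cube $[k]$, so $\mathcal{B}^{(k)}$ and the right-hand side live on $\bA^{k+1}$ (read the $\bA^{d+1}$ in the statement as $\bA^{k+1}$), and I will use the standing hypothesis of this section that $\mathcal{B}$ is closed under the conditional expectations onto the $L_2(\mathcal{F}_i)$. The inclusion $\subseteq$ is formal: since $\mathcal{B}\wedge\mathcal{F}_{k-1}(\bA)$ sits inside both $\mathcal{B}$ and $\mathcal{F}_{k-1}(\bA)$, monotonicity of $\mathcal{C}\mapsto\mathcal{C}^{(k)}=\bigvee_{S\subseteq[k]}\psi_S^{-1}(\mathcal{C})$ gives $(\mathcal{B}\wedge\mathcal{F}_{k-1}(\bA))^{(k)}\subseteq\mathcal{B}^{(k)}$ and $(\mathcal{B}\wedge\mathcal{F}_{k-1}(\bA))^{(k)}\subseteq(\mathcal{F}_{k-1}(\bA))^{(k)}$, and the remark that $\mathcal{C}\subseteq\mathcal{F}_j(\bA)$ implies $\mathcal{C}^{\Lambda}\subseteq\mathcal{F}_j(\bA^{k+1})$ (used with $j=k-1$) puts the latter inside $\mathcal{F}_{k-1}(\bA^{k+1})$; intersecting proves this direction.

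For the reverse inclusion it suffices to show $L_2(\mathcal{B}^{(k)}\wedge\mathcal{F}_{k-1}(\bA^{k+1}))\subseteq L_2((\mathcal{B}\wedge\mathcal{F}_{k-1}(\bA))^{(k)})$. Take $g$ in the left space. Since $g\in L_2(\mathcal{B}^{(k)})$ it is an $L_2$-limit of finite linear combinations of products $F^{(k)}=\prod_{S\subseteq[k]}(f_S\circ\psi_S)$ with $f_S\in L_\infty(\mathcal{B})$; since $g$ is also $\mathcal{F}_{k-1}(\bA^{k+1})$-measurable and $\mathbb{E}(\,\cdot\mid\mathcal{F}_{k-1}(\bA^{k+1}))$ is an $L_2$-contraction fixing $g$, the corresponding combinations of $\mathbb{E}(F^{(k)}\mid\mathcal{F}_{k-1}(\bA^{k+1}))$ still converge to $g$. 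As $L_2((\mathcal{B}\wedge\mathcal{F}_{k-1}(\bA))^{(k)})$ is closed, it is enough to prove that for every system $F$ of elements of $L_\infty(\mathcal{B})$,
$$\mathbb{E}\bigl(F^{(k)}\mid\mathcal{F}_{k-1}(\bA^{k+1})\bigr)\in L_\infty\bigl((\mathcal{B}\wedge\mathcal{F}_{k-1}(\bA))^{(k)}\bigr).$$
Write $f_S=a_S+b_S$ with $a_S=\mathbb{E}(f_S\mid\mathcal{F}_{k-1}(\bA))$; then $a_S\in L_\infty(\mathcal{B})\cap L_\infty(\mathcal{F}_{k-1}(\bA))=L_\infty(\mathcal{B}\wedge\mathcal{F}_{k-1}(\bA))$ (here the closure hypothesis on $\mathcal{B}$ enters), while $\|b_S\|_{U_k}=0$, since $\mathbb{E}(b_S\mid\mathcal{F}_{k-1}(\bA))=0$ and, by \cite{Sz1}, $\|h\|_{U_k}=0$ iff $\mathbb{E}(h\mid\mathcal{F}_{k-1})=0$. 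Expanding the product, $F^{(k)}=\sum_T G_T^{(k)}$ over subsets $T$ of the vertex set of the cube, where the $S$-entry of $G_T$ is $b_S$ for $S\in T$ and $a_S$ otherwise.

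For $T=\emptyset$, $G_\emptyset^{(k)}=\prod_S(a_S\circ\psi_S)$ is a finite product of bounded $(\mathcal{B}\wedge\mathcal{F}_{k-1}(\bA))^{(k)}$-measurable functions, hence lies in $L_\infty((\mathcal{B}\wedge\mathcal{F}_{k-1}(\bA))^{(k)})$ and in particular is $\mathcal{F}_{k-1}(\bA^{k+1})$-measurable (via $(\mathcal{B}\wedge\mathcal{F}_{k-1}(\bA))^{(k)}\subseteq(\mathcal{F}_{k-1}(\bA))^{(k)}\subseteq\mathcal{F}_{k-1}(\bA^{k+1})$), so it is unchanged by the conditional expectation. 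For $T\neq\emptyset$, pick $S_0\in T$; then $G_T\in G(L_\infty(\bA),(k))$ has the factor $b_{S_0}$ with $\|b_{S_0}\|_{U_k}=0$, so Lemma \ref{gowgow} gives $\|G_T^{(k)}\|_{U_k}=0$ on $\bA^{k+1}$, i.e. $\mathbb{E}(G_T^{(k)}\mid\mathcal{F}_{k-1}(\bA^{k+1}))=0$. Summing, $\mathbb{E}(F^{(k)}\mid\mathcal{F}_{k-1}(\bA^{k+1}))=G_\emptyset^{(k)}$, which proves the displayed claim; combining the two $L_2$-inclusions yields the asserted equality of $\sigma$-algebras. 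The only non-formal step is the vanishing of the cross terms $G_T^{(k)}$ for $T\neq\emptyset$, which I expect to be the crux: it is precisely Lemma \ref{gowgow} together with the identification of ``zero $U_k$-norm on $\bA^{k+1}$'' with ``orthogonality to $\mathcal{F}_{k-1}(\bA^{k+1})$''. Everything else — the two inclusions, the reduction to single products, and the $a_S$/$b_S$ bookkeeping — is routine.
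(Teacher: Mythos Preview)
Your argument is correct and is essentially the paper's own proof: both directions are handled identically, and the core step---splitting each $f_S$ as $\mathbb{E}(f_S\mid\mathcal{F}_{k-1})$ plus a remainder with vanishing $U_k$-norm, then invoking Lemma~\ref{gowgow} to annihilate every cross term under $\mathbb{E}(\,\cdot\mid\mathcal{F}_{k-1}(\bA^{k+1}))$---is exactly what the paper does. You are in fact a bit more careful than the paper in flagging the standing closure hypothesis on $\mathcal{B}$ (needed so that $a_S\in L_\infty(\mathcal{B}\wedge\mathcal{F}_{k-1})$) and in spelling out the $L_2$-contraction step that reduces to single products.
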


\begin{proof} The left side is obviously contained in the right side. To see the other direction assume that $f$ is measurable in the right hand side. The function $f$ can be approximated by finite sums of elements from $L(M,(k))$. It is enough to prove that the projections of the elements of $L(M,(k))$ to $\mathcal{F}_{k-1}$ are measurable in the left hand side. Let $g$ be an element in $L(M,(k))$ of the form $F^{(k)}$ where $F=\{f_S|S\subseteq [k]\}$. For each function $f_S$ we consider the unique decomposition $f_S=f'_S+h_S$ where $f'_S$ is the projection of $f_S$ to $\mathcal{F}_{k-1}$ and $\|h_S\|_{U_k}=0$. Then by lemma \ref{gowgow} we obtain that the $\mathcal{F}_{k-1}$ projection of $F^{(k)}$ is $F'^{(k)}$. The fact that $F'^{(k)}$ is measurable in the left hand side finishes the proof.
\end{proof}

\begin{lemma}\label{invdec} For every $k$-th order character $\phi$ in $M$ there is a $k$-th order character $\phi'$ in $M^{(k)}$ such that $\phi'$ is measurable in $\mathcal{B}^{(k)}\wedge\mathcal{Y}_k$ and $t((k),\phi)/\phi'$ is measurable in $\mathcal{F}_{k-1}(\mathcal{B}^{(k)})$.
\end{lemma}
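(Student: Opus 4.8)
\noindent The plan is to show that the cube $\Phi:=t((k),\phi)^{(k)}=\prod_{S\subseteq[k]}\psi_S\circ\phi^{c(|S|)}$ is, up to a factor in $\mathcal{F}_{k-1}(\mathcal{B}^{(k)})$, already measurable in $\mathcal{B}^{(k)}\wedge\mathcal{Y}_k$. Since $\phi\in L_\infty(\mathcal{B})$ we have $\Phi\in L_\infty(\mathcal{B}^{(k)})$ (each factor $\psi_S\circ\phi^{c(|S|)}$ is $\psi_S^{-1}(\mathcal{B})$-measurable), and by Lemma \ref{charchar} $\Phi$ is a $k$-th order character in $M^{(k)}$ which represents the trivial module exactly when $\phi$ does. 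So I may assume $\phi$, and hence $\Phi$, represents a nontrivial module, the trivial case being handled by $\phi'=1$; I also use that a $k$-th order character has modulus $1$.

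\noindent The first step is to observe that the module of $\Phi$ over $\mathcal{F}_{k-1}(\bA^{k+1})$ is invariant under the diagonal $\bA$-action of Definition \ref{diagonalaction}. Writing $\iota(s)$ for the diagonal shift $(x,t_1,\dots,t_k)\mapsto(x+s,t_1,\dots,t_k)$ and using the pointwise identity $(\Delta_s\phi)^{c(|S|)}(y)=\phi^{c(|S|)}(y+s)\,\overline{\phi^{c(|S|)}(y)}$, one computes
$$\Phi^{\iota(s)}\,\overline{\Phi}=\prod_{S\subseteq[k]}\psi_S\circ(\Delta_s\phi)^{c(|S|)}=t((k),\Delta_s\phi)^{(k)}.$$
Because $\phi$ is a $k$-th order character, $\Delta_s\phi\in L_\infty(\mathcal{F}_{k-1}(\bA))$; since each $\psi_S\colon\bA^{k+1}\to\bA$ is a surjective homomorphism and $\mathcal{F}_{k-1}$ is a shift-invariant $*$-algebra, the right hand side lies in $L_\infty(\mathcal{F}_{k-1}(\mathcal{B}^{(k)}))$ for every $s$. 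Thus $\Phi^{\iota(s)}$ stays in the module of $\Phi$.

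\noindent The second step is to project onto $\mathcal{Y}_k$. As $\mathcal{Y}_k$ is the coset $\sigma$-algebra of the diagonal subgroup of $\bA^{k+1}$, it is perpendicular to every shift-invariant Hilbert space, so $\mathbb{E}(\,\cdot\mid\mathcal{Y}_k)$ is the average over the diagonal orbit; combining this with the first step gives
$$\mathbb{E}(\Phi\mid\mathcal{Y}_k)=\Phi\cdot G,\qquad G:=\int_s \Phi^{\iota(s)}\overline{\Phi}\;ds\ \in\ L_\infty\bigl(\mathcal{F}_{k-1}(\mathcal{B}^{(k)})\bigr),$$
and, since $\mathcal{B}^{(k)}$ is shift-invariant, $\mathbb{E}(\Phi\mid\mathcal{Y}_k)\in L_\infty(\mathcal{B}^{(k)}\wedge\mathcal{Y}_k)$. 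Unwinding the definitions, $\mathbb{E}(\Phi\mid\mathcal{Y}_k)$ is, after forgetting the first coordinate, exactly $\conv_k(t((k),\phi))$, so by the identity (\ref{convin}) applied with all $2^{k+1}$ functions equal to $\phi$ one gets $\|\mathbb{E}(\Phi\mid\mathcal{Y}_k)\|_2^2=\|\phi\|_{U_{k+1}}^{2^{k+1}}$. Since a $k$-th order character saturates the $U_{k+1}$ norm (this is where the specific structure of characters, from \cite{Sz1} and \cite{Sz2}, enters), the right hand side equals $\|\Phi\|_2^2=1$, and because $\mathbb{E}(\,\cdot\mid\mathcal{Y}_k)$ is a contraction on $L_2$ this forces $G\equiv 1$, i.e.\ $\Phi=\mathbb{E}(\Phi\mid\mathcal{Y}_k)\in L_\infty(\mathcal{B}^{(k)}\wedge\mathcal{Y}_k)$; then $\phi':=\Phi$ is the required $k$-th order character with $t((k),\phi)^{(k)}/\phi'=1$ measurable in $\mathcal{F}_{k-1}(\mathcal{B}^{(k)})$. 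If one prefers not to invoke the saturation fact, one can instead set $\phi':=\mathbb{E}(\Phi\mid\mathcal{Y}_k)/|\mathbb{E}(\Phi\mid\mathcal{Y}_k)|=\Phi\cdot(G/|G|)$, which lies in $\mathcal{B}^{(k)}\wedge\mathcal{Y}_k$, represents the module of $\Phi$, and has $t((k),\phi)^{(k)}/\phi'=\overline{G}/|G|\in L_\infty(\mathcal{F}_{k-1}(\mathcal{B}^{(k)}))$.

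\noindent The hard part is the remaining non-degeneracy: that $G$ is almost everywhere nonzero, equivalently that the convolution $\conv_k(t((k),\phi))$ never vanishes, equivalently that $\Phi$ is genuinely $\mathcal{Y}_k$-measurable. This is exactly the point at which one must use that $\phi$ is an honest $k$-th order character --- so that its iterated multiplicative derivative $t((k),\phi)^{(k)}$ collapses, one order at a time, to a function constant in the base variable --- rather than an arbitrary modulus-$1$ element of $L_\infty(\mathcal{F}_k)$; everything else is formal manipulation with the cube operation and the coset $\sigma$-algebra $\mathcal{Y}_k$.
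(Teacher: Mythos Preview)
Your computation up through $\mathbb{E}(\Phi\mid\mathcal{Y}_k)=\Phi\cdot G$ with $G=\int_s t((k),\Delta_s\phi)^{(k)}\,ds\in L_\infty(\mathcal{F}_{k-1}(\mathcal{B}^{(k)}))$ is correct, and so is the identity $\|\mathbb{E}(\Phi\mid\mathcal{Y}_k)\|_2^2=\|\phi\|_{U_{k+1}}^{2^{k+1}}$. Since $\phi\in L_\infty(\mathcal{F}_k)$ with $\|\phi\|_2=1$, this already gives $\mathbb{E}(\Phi\mid\mathcal{Y}_k)\neq 0$. Up to this point your argument is essentially the paper's: the paper also projects $\Phi$ to $\mathcal{Y}_k$, notes (citing \cite{Sz2}) that the result is nonzero, and observes that it lies in the same rank-one module as $\Phi$ --- your formula $f=\Phi G$ makes this last point explicit.

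The gap is in the final extraction of $\phi'$. Your first route needs $\|\phi\|_{U_{k+1}}=1$, but a $k$-th order character in the sense of this paper is only required to satisfy $|\phi|=1$ and $\Delta_t\phi\in\mathcal{F}_{k-1}$; it need not lie in $P_k(M)$, and for such $\phi$ the $U_{k+1}$ norm is positive but not in general equal to $1$. Your fallback $\phi'=f/|f|$ needs $G\neq 0$ almost everywhere, which you correctly flag as ``the hard part'' and then do not prove. So as written the argument does not close.

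The paper avoids this entirely: it does \emph{not} try to show $G$ is nowhere vanishing. Once one knows $f=\mathbb{E}(\Phi\mid\mathcal{Y}_k)$ is a nonzero element of the rank-one module of $\Phi$ and is measurable in the shift-invariant $\sigma$-algebra $\mathcal{B}^{(k)}\wedge\mathcal{Y}_k$, the module theory of \cite{Sz1},\cite{Sz2} supplies a $k$-th order character $\phi'$ \emph{inside} $\mathcal{B}^{(k)}\wedge\mathcal{Y}_k$ representing that same module. Then $\Phi/\phi'$ lies in the trivial module, i.e.\ in $\mathcal{F}_{k-1}(\mathcal{B}^{(k)})$, and you are done. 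In short: replace your normalisation $f/|f|$ (which needs pointwise nonvanishing) by the choice of any character generating the module of $f$ within $\mathcal{B}^{(k)}\wedge\mathcal{Y}_k$ (which only needs $f\not\equiv 0$, and that you have already shown).
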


\begin{proof} Lemma \ref{charchar} implies that $t((k),\phi)$ is a $k$-th order character. In the paper \cite{Sz2} it was proved that $f=\mathbb{E}(t((k),\phi)|\mathcal{Y}_k)$ is not the $0$ function. Since $\mathcal{Y}_k$ is a coset $\sigma$ algebra it is perpendicular to any shift invariant Hilbert space and so $f$ is contained in the same rank $1$ module as $t((k),\phi)$. Let $\phi'$ be a $k$-th order character inside $\mathcal{Y}_k\cap\mathcal{B}^{(k)}$ equivalent to $f$. Then we have that $t((k),\phi)/\phi'$ is measurable in $\mathcal{F}_{k-1}(\mathcal{B}^{(k)})$.
\end{proof}

\begin{lemma}\label{proddec} Assume that $d\geq k$ and let $F\in G(M,(d))$ be a system consisting of functions in $k$-th order rank one modules. Then if $\|\conv_d(F)\|_2$ is not $0$ then $F$ can be written as a product (using the semigroup structure of $G(M,(d))$ of the form
$$F'\prod_{\dim(\Lambda)=k}t(\Lambda,\phi_\Lambda)$$
where $F'$ is some system of functions in $G(M_{k-1},(d))$ and $\phi_\Lambda$ is a $k$-th order character for each $k$-dimensional face $\Lambda$.
\end{lemma}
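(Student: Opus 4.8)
The plan is to transport the assertion down to the level of module classes, apply Lemmas~\ref{cubspace1} and~\ref{cubspace2}, and then lift the resulting combinatorial decomposition back up to function systems inside $M$. First, none of the $f_S$ vanishes: if $f_{S_0}=0$ then the integrand defining $\conv_d(F)$ is identically zero, so $\|\conv_d(F)\|_2=0$, contrary to hypothesis. Hence each $f_S$ is a nonzero element of a $k$-th order rank one module and so has a well-defined class $h(S)\in\hat{\bA}_k$, giving a map $h\colon(d)\to\hat{\bA}_k$ with $f_S\in h(S)$ for all $S$. Since $\|\conv_d(F)\|_2\neq 0$ and $d\geq k$, Lemma~\ref{cubspace1} gives $\prod_{S\in\Lambda}h(S)=1$ for every $(d-k+1)$-dimensional face $\Lambda$; this is precisely the statement $h\in B_{d,k}$ in the notation of Lemma~\ref{cubspace2} (with $A=\hat{\bA}_k$). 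So by Lemma~\ref{cubspace2} we may write $h=\prod_r g(\Lambda_r,a_r)$ with finitely many $k$-dimensional faces $\Lambda_r$ and $a_r\in\hat{\bA}_k$; moreover the construction in that proof only ever multiplies and inverts values of $h$, so the $a_r$ can be taken inside the subgroup of $\hat{\bA}_k$ generated by the $h(S)$.

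To lift this I use three elementary facts. (a) If $\phi$ is a $k$-th order character of class $a$, then $\overline{\phi}$ has class $a^{-1}$, so $\phi^{c(|S|)}$ has class $a^{\epsilon(S)}$; consequently $t(\Lambda,\phi)$ has class map exactly $g(\Lambda,a)$ and is invertible in $G(M,(d))$ with inverse $t(\Lambda,\overline{\phi})$. (b) $t(\Lambda,\phi)\,t(\Lambda,\psi)=t(\Lambda,\phi\psi)$, and a product of $k$-th order characters is again one (it is unimodular, hence nonzero, and its $\Delta_t$-derivatives lie in $\mathcal{F}_{k-1}$). (c) Since $M$ contains the nonzero module element $f_S$, the rank one module theory of \cite{Sz1,Sz2} supplies a $k$-th order character of class $h(S)$ lying inside $M$; as characters are unimodular their products never vanish, so every class in the subgroup generated by the $h(S)$ --- in particular each $a_r$ --- is the class of a $k$-th order character in $M$. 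Now pick $\phi_r\in M$ of class $a_r$, set $\phi_\Lambda=\prod_{r\colon\Lambda_r=\Lambda}\phi_r\in M$ for each $k$-dimensional face $\Lambda$, and put $F'=F\cdot\bigl(\prod_{\dim\Lambda=k}t(\Lambda,\phi_\Lambda)\bigr)^{-1}$ (legitimate by~(a)). Using (a) and (b), $\prod_{\dim\Lambda=k}t(\Lambda,\phi_\Lambda)=\prod_r t(\Lambda_r,\phi_r)$ has class map $\prod_r g(\Lambda_r,a_r)=h$, so the component $F'_S=f_S\prod_{r\colon S\in\Lambda_r}\overline{\phi_r}^{\,c(|S|)}\in M$ has class $h(S)h(S)^{-1}=1$ and therefore lies in $M\cap L_\infty(\mathcal{F}_{k-1})=M_{k-1}$. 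Hence $F=F'\prod_{\dim\Lambda=k}t(\Lambda,\phi_\Lambda)$ with $F'\in G(M_{k-1},(d))$, as required.

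The one genuinely delicate point is the interface, in the last step, between the combinatorics and the operator algebra: Lemma~\ref{cubspace2} delivers only an identity of module classes, and promoting it to an identity of function systems inside $M$ requires both the observation that the exponents $a_r$ stay within the subgroup generated by the classes of the given $f_S$, and the fact that $M$ --- a von Neumann algebra holding nonzero representatives of those classes --- actually contains unimodular characters realizing them; it is exactly the unimodularity of characters that rules out the vanishing products that could otherwise break the factorization. Everything else (applying the two cube lemmas and the sign bookkeeping with $\epsilon(S)$ and $c(|S|)$) is routine.
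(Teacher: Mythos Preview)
Your proof is correct and follows exactly the route the paper intends: the paper's own proof is the single sentence ``The proof is a consequence of Lemma~\ref{cubspace1} and~\ref{cubspace2},'' and you have supplied precisely those details --- passing to module classes, applying the two cube lemmas, and lifting back --- together with the care needed to ensure the characters $\phi_\Lambda$ live inside $M$ so that $F'$ indeed lands in $G(M_{k-1},(d))$.
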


\begin{proof} The proof is a consequence of Lemma \ref{cubspace1} and \ref{cubspace2}.
\end{proof}

\begin{proposition}\label{szigmadecomp} If $\mathcal{B}\subseteq\mathcal{F}_k(\bA)$ and $d\geq k$ then
\begin{equation}\label{ketszigma}
\mathcal{Y}_d\bigwedge\mathcal{B}^{(d)}=\bigvee_{\dim(\Lambda)=k}(\mathcal{Y}_d\bigwedge\mathcal{B}^\Lambda).
\end{equation}
\end{proposition}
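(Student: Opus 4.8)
The plan is to prove the two inclusions of (\ref{ketszigma}) separately, with the right-to-left inclusion being the easy one and the left-to-right inclusion being where the real work lies.

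First I would dispose of the inclusion $\supseteq$. For each $k$-dimensional face $\Lambda$ the $\sigma$-algebra $\mathcal{B}^\Lambda$ is (by Definition \ref{facerel}) contained in $\mathcal{B}^{(d)}$, and intersecting with $\mathcal{Y}_d$ gives $\mathcal{Y}_d\wedge\mathcal{B}^\Lambda\subseteq\mathcal{Y}_d\wedge\mathcal{B}^{(d)}$; taking the join over all such $\Lambda$ preserves the inclusion. So the substantive claim is $\subseteq$: every function measurable in $\mathcal{Y}_d\wedge\mathcal{B}^{(d)}$ is approximable in $L_2$ by functions measurable in the join on the right.

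For the inclusion $\subseteq$, the natural strategy is: take a generating family of $\mathcal{B}^{(d)}$, namely the semigroup $L(M,(d))$ of functions $F^{(d)}$ with $F\in G(M,(d))$, project onto $\mathcal{Y}_d$, and show each such projection lies in the right-hand join. So let $g=F^{(d)}$ with $F=\{f_S\}_{S\subseteq[d]}$, all $f_S\in M\subseteq L_\infty(\mathcal{F}_k)$. Now decompose each $f_S$ into its $k$-th order characters (using Corollary \ref{addconv}, the additivity of convolution over $\hat{\bA}_k$, which lets us split $F$ into a sum of systems $F_c$ each of whose components lies in a fixed rank-one module over $L_\infty(\mathcal{F}_{k-1})$); since the projection to $\mathcal{Y}_d$ is linear it suffices to handle each such "pure" system $F_c$. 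For a pure system, either $\|\conv_d(F_c)\|_2=0$ — in which case, because $\mathcal{Y}_d$ is a coset $\sigma$-algebra hence perpendicular to every shift-invariant Hilbert space, the projection of $F_c^{(d)}$ to $\mathcal{Y}_d$ equals the projection of $\conv_d(F_c)$ (viewed on $\bA^{d+1}$ via forgetting $x$) and is therefore $0$, so there is nothing to prove — or $\|\conv_d(F_c)\|_2\neq 0$. In the latter case Lemma \ref{proddec} writes $F_c=F'\prod_{\dim(\Lambda)=k}t(\Lambda,\phi_\Lambda)$ with $F'$ built from $M_{k-1}=M\cap L_\infty(\mathcal{F}_{k-1})$ functions and each $\phi_\Lambda$ a $k$-th order character. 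Then $F_c^{(d)}=F'^{(d)}\cdot\prod_\Lambda t(\Lambda,\phi_\Lambda)^{(d)}$, and $t(\Lambda,\phi_\Lambda)^{(d)}$ is, up to the $\mathcal{F}_{k-1}(\mathcal{B}^{(d)})$-measurable correction supplied by Lemma \ref{invdec}, a $k$-th order character measurable in $\mathcal{B}^\Lambda\wedge\mathcal{Y}_d$. The key observation that makes this work is that $t(\Lambda,\phi_\Lambda)$ only involves the coordinate functions $\psi_S$ with $S\in\Lambda$, so $t(\Lambda,\phi_\Lambda)^{(d)}$ is $\mathcal{B}^\Lambda$-measurable; combined with Lemma \ref{invdec} its $\mathcal{Y}_d$-relevant part sits in $\mathcal{Y}_d\wedge\mathcal{B}^\Lambda$.

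The main obstacle — and where I would spend the most care — is the bookkeeping in the last step: reassembling the product $F'^{(d)}\prod_\Lambda t(\Lambda,\phi_\Lambda)^{(d)}$ and showing that its $\mathcal{Y}_d$-projection actually lands in the \emph{join} of the pieces $\mathcal{Y}_d\wedge\mathcal{B}^\Lambda$ over the $k$-faces, rather than just in $\mathcal{Y}_d\wedge\mathcal{B}^{(d)}$. The point is that $F'$ has components in $M_{k-1}$, and one must use Lemma \ref{odavissza} (that $(\mathcal{B}\wedge\mathcal{F}_{k-1})^{(d)}=\mathcal{B}^{(d)}\wedge\mathcal{F}_{k-1}(\bA^{d+1})$) together with an induction on $k$ to absorb the $F'^{(d)}$ factor and the $\mathcal{F}_{k-1}(\mathcal{B}^{(d)})$ corrections from Lemma \ref{invdec} into lower-order data, which by the inductive hypothesis is already a join of $\mathcal{Y}_d\wedge\mathcal{B}^{\Lambda'}$ over lower-dimensional faces $\Lambda'$ — and every face contains a $k$-dimensional subface (as long as its dimension is at least $k$, which holds since $d\geq k$). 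Handling the base case $k=0$, where $\mathcal{B}\subseteq\mathcal{F}_0$ consists of constants and everything degenerates to $\mathcal{Y}_d$ itself, and then cleanly running the induction, is the delicate part; the rest is an application of the lemmas already assembled above.
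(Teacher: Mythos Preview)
Your proposal is correct and follows essentially the same route as the paper's own proof: induction on $k$ with trivial base case $k=0$, reduction to systems $F$ whose components lie in rank-one modules and have $\|\conv_d(F)\|_2\neq 0$, factorization via Lemma \ref{proddec}, splitting each $t(\Lambda,\phi_\Lambda)$ via Lemma \ref{invdec} into a $\mathcal{Y}_d\wedge\mathcal{B}^\Lambda$ part and an $\mathcal{F}_{k-1}$ part, and then Lemma \ref{odavissza} together with the inductive hypothesis to absorb the remaining $M_{k-1}$ factor. One small slip to fix when you write it up: in your last paragraph you need that every $(k-1)$-dimensional face is \emph{contained in} some $k$-dimensional face (not that it contains one), which is what lets the inductive pieces be absorbed into the right-hand join.
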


\begin{proof} For every fixed $d$ we prove the statement by induction on $k$. If $k=0$ then it is trivial. Now assume that it is true for $k-1$. In the $\sigma$ algebra $\mathcal{B}^{(k)}$ every function can be approximated by finite sums of expressions $F^{(d)}$ where $F$ is a function system in $G(M,(d))$ consisting of functions contained in rank one modules. Let us denote the set of such function systems by $G'(M,(d))$. We have that in $\mathcal{Y}_d\bigwedge\mathcal{B}^{(d)}$ every function can be approximated as a finite linear combination of functions $\mathbb{E}(F^{(d)}|\mathcal{Y}_d)$ where $F\in G'(M,(d))$. On the other hand $\|\conv_d(F)\|_2=\|\mathbb{E}(F^{(d)}|\mathcal{Y}_d)\|_2$ so it is enough to use elements from $G'(M,(d))$ where $\|\conv_d(F)\|_2$ is not $0$.
Now it suffices to prove that for any such element $F$ the projection $\mathbb{E}(F^{(d)}|\mathcal{Y}_d)$ is measurable in the right hand side of (\ref{ketszigma}).
Let us use lemma \ref{proddec} to rewrite $F$ in the form as it is stated there.
Using lemma \ref{invdec} we rewrite the terms $t(\Lambda,\phi_\Lambda)$ in the form $v_\Lambda w_\Lambda$ such that $v_\Lambda$ is measurable in $\mathcal{F}_{k-1}(\mathcal{B}^\Lambda)$ and $w_\Lambda$ is measurable in $\mathcal{Y}_d\cap\mathcal{B}^\Lambda$.
By lemma \ref{odavissza} the function $v_\Lambda$ is also measurable in $(\mathcal{B}\cap\mathcal{F}_{k-1}(\bA))^\Lambda$.
Putting this together we obtain that
$F^{(d)}$ is a product of the form $AB$ such that $A$ is measurable in the right hand side of ($\ref{ketszigma}$) and $B$ is measurable in $(\mathcal{B}\cap\mathcal{F}_{k-1}(\bA))^{(d)}$.
We get $$\mathbb{E}(AB|\mathcal{Y}_d\wedge\mathcal{B}^{(d)})=A\mathbb{E}(B|\mathcal{Y}_d\wedge(\mathcal{B}\cap\mathcal{F}_{k-1}(\bA))^{d}).$$
Using our induction the proof is complete.
\end{proof}

\subsection{Nilpotency}

\begin{proposition}[Nilpotency]\label{nilp} The group $G_k(M)$ is a $k$-step nilpotent group.
\end{proposition}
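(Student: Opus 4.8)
The plan is to show that the $(k+1)$-fold iterated commutator of any elements of $G_k(M)$ acts trivially, by reducing everything to the behaviour of the induced face action on the diagonal $\sigma$-algebra $\mathcal{Y}_k$. The key observation is Lemma~\ref{indact10}: every $\sigma\in G_k(M)$ induces a face action $\sigma^{\Lambda}$ on $M^{(k)}$ for each $k$-face $\Lambda$ of the cube $(k)$, and by the lemma right before Definition~\ref{diagonalaction} (the characterization of $G_k(M)$) the element $\sigma$ lies in $G_k(M)$ precisely because it leaves $\mathcal{Y}_k$ pointwise fixed under the induced action $\sigma^{(k)}$. The first step is therefore to set up the homomorphism $\Phi:G_k(M)\to\aut(M^{(k)})$, $\sigma\mapsto\sigma^{(k)}$, observe it is injective (the action of $\sigma^{(k)}$ restricted to the $S=\emptyset$ copy of $M$ recovers $\sigma$), and then decompose $\sigma^{(k)}$ into the product of the $2^k$ ``single-vertex'' face actions $\sigma^{\{S\}}$ over $S\in(k)$ — these commute with one another since they act on disjoint coordinates of the function system, and their product is $\sigma^{(k)}$.

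Next I would filter the cube by dimension. For $0\le j\le k$ let $H_j\subseteq\aut(M^{(k)})$ be the subgroup generated by all face actions $\tau^{\Lambda}$ with $\dim\Lambda\ge j$ and $\tau\in G_k(M)$, together with the diagonal action of $\bA$ (shifts). The claim to establish is that $[\,\Phi(G_k(M)),H_j\,]\subseteq H_{j+1}$, i.e. commuting a ``full'' automorphism past a $j$-dimensional face action produces only $(j+1)$-dimensional face actions, modulo the part that already acts trivially. The geometric heart of this is that two $k$-faces (or a $k$-face and a lower face) of the cube $(k)$ that are moved relative to each other by $\sigma$ overlap in a face of dimension $\ge 1$; more precisely, the commutator $[\sigma^{(k)},\tau^{\Lambda}]$ rearranges the functions on the face $\Lambda$ in a way that is governed by the $k$-th order convolution structure, and Proposition~\ref{szigmadecomp} together with Lemma~\ref{proddec} say that any nonzero convolution forces the function system, after restriction to $\mathcal{Y}_d$, to be built out of $k$-dimensional face contributions $t(\Lambda,\phi_\Lambda)$ whose characters are $k$-th order characters pinned down inside $\mathcal{Y}_k\wedge\mathcal{B}^{\Lambda}$. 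Iterating the containment $[G,H_j]\subseteq H_{j+1}$ gives $[G,[G,[\dots[G,G]\dots]]]$ with $k$ brackets landing in $H_k$, and $H_k$ is exactly the group of face actions coming from the single top face $(k)$ itself, which — because $\sigma\in G_k(M)$ fixes $\mathcal{Y}_k$ — acts trivially on the part of $M^{(k)}$ that detects convolutions; running $\Phi^{-1}$ back down, the $(k+1)$-st iterated commutator in $G_k(M)$ is the identity.

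Concretely the induction should be organized as: (i) reduce to the action on $M^{(k)}$ via the injective $\Phi$; (ii) express each $\Phi(\sigma)$ as an ordered product of vertex actions $\sigma^{\{S\}}$ and compute, for a vertex $S$ and a $j$-face $\Lambda$, the commutator $[\sigma^{\{S\}},\tau^{\Lambda}]$ — this is nontrivial only when $S\in\Lambda$, and then it equals a face action attached to the face $\Lambda\cap(\text{faces through }S)$, which has dimension $\le j-1$ inside the cube but, crucially, whose \emph{support} as a deviation lives on a face of one higher codimension relative to where the original deviation lived, giving the $H_j\to H_{j+1}$ step; (iii) invoke Lemma~\ref{cubspace1} and Lemma~\ref{cubspace2} to see that any such iterated face action that survives $k$ steps must be supported on a $0$-dimensional face, i.e. on the diagonal $\mathcal{Y}_k$, where $G_k(M)$ acts trivially by definition; (iv) conclude nilpotency of class $k$, and note the bound is achieved (so the class is exactly $k$ in general) by the quadratic example with $N=Z$ the circle discussed in the introduction when $k=2$.

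The main obstacle I expect is step (ii)–(iii): making precise the ``codimension drops by one under each commutator'' bookkeeping on the cube $\aut((k))$-equivariantly, and in particular checking that the commutator $[\sigma^{(k)},\tau^{\Lambda}]$ is genuinely a face action of strictly larger dimension (not merely a product of such) — this requires knowing that the ambiguity in Lemma~\ref{invdec} (the $\mathcal{F}_{k-1}$-part $v_\Lambda$ versus the $\mathcal{Y}_k$-part $w_\Lambda$) is respected by the commutator, which is exactly the content of Proposition~\ref{szigmadecomp}. Once the filtration $H_0\supseteq H_1\supseteq\cdots$ is in place and the decomposition proposition is used to control the $\mathcal{Y}_d$-projections at each stage, the nilpotency bound $k$ follows from the purely combinatorial fact that the top (and only $(k+1)$-codimensional) face of the $k$-cube is a point.
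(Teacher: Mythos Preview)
Your intuition that nilpotency should come from a ``commutators drop face dimension'' mechanism is correct, but the implementation has a genuine gap and is set in the wrong cube.

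First, you work on $M^{(k)}$, the function algebra over the $k$-cube $(k)$. In $(k)$ the only $k$-dimensional face is the full cube itself, so there is no room for the descent you describe: the filtration $H_j$ cannot move anywhere, because commuting $\sigma^{(k)}$ with a $j$-face action $\tau^{\Lambda}$ gives $[\sigma,\tau]^{\Lambda}$, which is again a $j$-face action on the \emph{same} face $\Lambda$, not a $(j+1)$-face action. Your claimed containment $[\Phi(G_k(M)),H_j]\subseteq H_{j+1}$ is therefore false as stated. Second, your decomposition of $\sigma^{(k)}$ into single-vertex actions $\sigma^{\{S\}}$ is not available: Lemma~\ref{indact10} only guarantees that $\sigma\in G_k(M)$ induces face actions on $d$-faces of $(d+1)$ and on the full face $(d)$ of $(d)$; there is no reason for the vertex action $\sigma^{\{S\}}$ to extend to an automorphism of $M^{(k)}$ (the trace condition of Lemma~\ref{indactcond} need not hold). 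Finally, the machinery you invoke (Proposition~\ref{szigmadecomp}, Lemma~\ref{proddec}, Lemmas~\ref{cubspace1}--\ref{cubspace2}) plays no role in the nilpotency argument.

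The paper's route is shorter and avoids all of these issues by moving one dimension up, to the $(k+1)$-cube. There, $G_k(M)$ acts via many different $k$-faces $K\subset(k+1)$, and Lemma~\ref{gowinvf} says each such $\sigma^{K}$ preserves the Gowers inner product $(F)_{k+1}$. The elementary identity on function-system actions
\[
[\sigma_1^{K_1},\sigma_2^{K_2}]=[\sigma_1,\sigma_2]^{K_1\cap K_2}
\]
then gives Lemma~\ref{genface} by induction: an element of $G_k(M)^{(i)}$ preserves $(F)_{k+1}$ when acting on any $(k-i)$-face. For $i=k$ these are $0$-faces (single vertices), so for $\sigma\in G_k(M)^{(k)}$ one can remove $\sigma$ from each of the $2^{k+1}$ terms of $\|f-f^{\sigma}\|_{U_{k+1}}$ one vertex at a time, obtaining $\|f-f\|_{U_{k+1}}=0$; since $M\subseteq L_\infty(\mathcal{F}_k)$ this forces $f^{\sigma}=f$. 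The whole argument lives at the level of the face action on $G(M,(k+1))$ (Definition ``Face actions I.''), so no existence of induced algebra automorphisms on lower faces is ever needed.
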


before proving this theorem we need some preparation.

\begin{lemma}\label{ukplus} If $\sigma\in G_k(M)$ and $f\in M$ then $\|f\|_{U_{k+1}}=\|f^\sigma\|_{U_{k+1}}$.
\end{lemma}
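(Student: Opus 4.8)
The plan is to express the $U_{k+1}$ norm of $f$ as (a power of) a Gowers inner product of a system built only from copies of $f$ and its conjugate, and then to use the fact that $\sigma\in G_k(M)$ acts invariantly on such Gowers inner products via face actions. Concretely, recall the system $t((k+1),f)\in G(M,(k+1))$ whose $S$-entry is $f^{c(|S|)}$ for all $S\subseteq[k+1]$. By the definition of the Gowers inner product we have $(t((k+1),f))_{k+1}=\mathbb{E}_{x,t_1,\dots,t_{k+1}}\prod_{S\subseteq[k+1]}f^{c(|S|)}(x+\sum_{i\in S}t_i)=\|f\|_{U_{k+1}}^{2^{k+1}}$. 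So it suffices to show that this Gowers inner product is unchanged when we replace $f$ by $f^\sigma$ everywhere, i.e.\ when we pass from the system $t((k+1),f)$ to $t((k+1),f^\sigma)$.

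The key observation is that replacing $f$ by $f^\sigma$ in \emph{all} $2^{k+1}$ slots can be achieved by composing two face actions on complementary halves of the cube. Write $[k+1]=\{1\}\sqcup\{2,\dots,k+1\}$ and let $\Lambda_0$ be the $k$-face consisting of all $S\subseteq\{2,\dots,k+1\}$ (i.e.\ $1\notin S$) and $\Lambda_1$ the complementary $k$-face of all $S$ with $1\in S$. Then $t((k+1),f^\sigma) = \bigl(t((k+1),f)\bigr)^{\sigma^{\Lambda_0}\sigma^{\Lambda_1}}$, since $\sigma^{\Lambda_0}$ replaces $f_S\mapsto f_S^\sigma$ for $S\in\Lambda_0$ and $\sigma^{\Lambda_1}$ does the same for $S\in\Lambda_1$, and the two faces partition $(k+1)$. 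By Lemma~\ref{gowinvf} applied to the $k$-face $\Lambda_0$ of $(k+1)$ (using $\sigma\in G_k(M)$), the Gowers inner product $(F)_{k+1}$ is invariant under $\sigma^{\Lambda_0}$; applying the same lemma again to $\Lambda_1$ gives invariance under $\sigma^{\Lambda_1}$. Composing, $(t((k+1),f))_{k+1}=(t((k+1),f^\sigma))_{k+1}$, hence $\|f\|_{U_{k+1}}=\|f^\sigma\|_{U_{k+1}}$.

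I would present this as: (1) recall $\|f\|_{U_{k+1}}^{2^{k+1}}=(t((k+1),f))_{k+1}$ straight from the definitions; (2) observe the factorization of the global substitution $f\mapsto f^\sigma$ as $\sigma^{\Lambda_0}\circ\sigma^{\Lambda_1}$ for a complementary pair of $k$-faces of the $(k+1)$-cube; (3) invoke Lemma~\ref{gowinvf} twice to get invariance of the Gowers inner product under each $\sigma^{\Lambda_i}$. The main point to be careful about is step (2) — checking that $\sigma^{\Lambda}$ as defined in ``Face actions I.'' really does leave the $S\notin\Lambda$ slots untouched and acts by $\sigma$ on the $S\in\Lambda$ slots, so that two such actions on complementary faces compose to the total substitution; this is immediate from the definition but is the one place where an off-by-one in the cube dimension ($(k+1)$ versus $(k)$) could creep in. Everything else is a direct citation of Lemma~\ref{gowinvf}, which is exactly tailored to $d$-faces of $(d+1)$ with $\sigma\in G_d(M)$.
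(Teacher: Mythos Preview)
Your proof is correct and is exactly the paper's approach: the paper's one-line proof reads ``In two iterations Lemma~\ref{gowinvf} implies that $\sigma^{(k+1)}$ preserves $(F)_{k+1}$,'' which is precisely your decomposition of the full substitution into two complementary $k$-face actions followed by two applications of Lemma~\ref{gowinvf}. You have simply unpacked the details (and correctly noted the need for $\sigma$ to commute with conjugation so that $(f^{c(|S|)})^\sigma=(f^\sigma)^{c(|S|)}$).
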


\begin{proof} In two iterations Lemma \ref{gowinvf} implies that $\sigma^{[k+1]}$ preserves $(F)_{k+1}$.
\end{proof}

\begin{lemma}\label{genface} If $\sigma\in G_k(M)^{(i)}$ where $G_k(M)^{(i)}$ is the $i$-th term in the lower central series (with $G_k(M)^{(0)}=G_k(M)$) then for a system of $2^{k+1}$ functions $F$ in $M$ we have that
$(F^{\sigma^{K}})=(F)$ whenever $K$ is a face in $[k+1]$ of dimension $k-i$.
\end{lemma}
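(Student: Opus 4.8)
The plan is to prove the statement by induction on $i$; the case $i=0$ is exactly the first assertion of Lemma~\ref{gowinvf} applied with $d=k$ (then $\sigma\in G_k(M)=G_k(M)^{(0)}$ and $K$ is a $k$-dimensional face of $(k+1)$). It will be convenient to work in the group $P$ of all bijections of $G(M,(k+1))$ that preserve the Gowers inner product $(\,\cdot\,)_{k+1}$: Lemma~\ref{gowinvf} says precisely that $\sigma^\Lambda\in P$ for every $\sigma\in G_k(M)$ and every $k$-dimensional face $\Lambda$ of $(k+1)$, and what we must prove is that $\sigma^K\in P$ whenever $\sigma\in G_k(M)^{(i)}$ and $K$ has dimension $k-i$. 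I will use two elementary facts about the face actions of ``Face actions I'': for a fixed face $\Lambda$ of $(k+1)$ the assignment $\sigma\mapsto\sigma^\Lambda$ is a homomorphism of $\aut(M)$ into $\mathrm{Sym}(G(M,(k+1)))$, and for two disjoint faces $\Lambda_1,\Lambda_2$ the actions $\sigma^{\Lambda_1}$ and $\tau^{\Lambda_2}$ commute for all $\sigma,\tau$, since they move disjoint blocks of the coordinate functions.

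Assume the statement for $i-1$ and fix a face $K$ of dimension $k-i$. Because $\sigma\mapsto\sigma^K$ is a homomorphism and $P$ is a group, the set $\{\sigma\in G_k(M):\sigma^K\in P\}$ is a subgroup of $G_k(M)$; hence, since $G_k(M)^{(i)}=[\,G_k(M),\,G_k(M)^{(i-1)}\,]$ is generated by the commutators $[b,c]$ with $b\in G_k(M)$ and $c\in G_k(M)^{(i-1)}$, it suffices to show $[b,c]^K\in P$ for every such pair.

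The face $K$ has codimension $i+1$ in the cube $(k+1)$. Fix one of the $i+1$ coordinates that $K$ keeps constant; let $\Lambda_1$ be the face that keeps only that one coordinate constant (at its $K$-value) and frees the other $k$, so $\dim\Lambda_1=k$, and let $\Lambda_2$ be the face that keeps the remaining $i$ coordinates of $K$ constant (at their $K$-values) and frees the rest, so $\dim\Lambda_2=k-i+1$. Since the coordinate sets fixed by $\Lambda_1$ and $\Lambda_2$ are disjoint and together equal those fixed by $K$, one has $\Lambda_1\cap\Lambda_2=K$. Inspecting the effect on an arbitrary function $f_S$ of the system, according to whether $S\in\Lambda_1\cap\Lambda_2$, $S\in\Lambda_1\setminus\Lambda_2$, $S\in\Lambda_2\setminus\Lambda_1$, or $S\notin\Lambda_1\cup\Lambda_2$, one finds that the mixed commutator concentrates on the intersecting subface:
$$[\,b^{\Lambda_1},\,c^{\Lambda_2}\,]=[b,c]^{\,\Lambda_1\cap\Lambda_2}=[b,c]^{K}$$
in $\mathrm{Sym}(G(M,(k+1)))$; indeed on $\Lambda_1\setminus\Lambda_2$ only $b$ is applied and on $\Lambda_2\setminus\Lambda_1$ only $c$, so the commutator is trivial there and trivial off $\Lambda_1\cup\Lambda_2$ as well, whereas on $K$ both are applied and produce the automorphism $[b,c]$ of $M$.

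Now $b^{\Lambda_1}\in P$ by the case $i=0$ (Lemma~\ref{gowinvf}, since $b\in G_k(M)$ and $\dim\Lambda_1=k$), and $c^{\Lambda_2}\in P$ by the inductive hypothesis (since $c\in G_k(M)^{(i-1)}$ and $\dim\Lambda_2=k-(i-1)$). As $P$ is a group, their commutator $[b^{\Lambda_1},c^{\Lambda_2}]=[b,c]^K$ lies in $P$, which completes the induction. The only step that is not pure formalism is the face bookkeeping of the third paragraph: that an arbitrary codimension-$(i+1)$ face is the intersection of a codimension-$1$ face with a codimension-$i$ face, and that the commutator of the two associated face actions collapses to the face action of $[b,c]$ on that intersection; once this is in place the rest is the subgroup/homomorphism formalism together with Lemma~\ref{gowinvf}.
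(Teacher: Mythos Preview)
Your proof is correct and follows essentially the same approach as the paper's: induct on $i$, use Lemma~\ref{gowinvf} for the base case, write the codimension-$(i+1)$ face $K$ as $\Lambda_1\cap\Lambda_2$ with $\dim\Lambda_1=k$ and $\dim\Lambda_2=k-i+1$, and use the identity $[\,b^{\Lambda_1},c^{\Lambda_2}\,]=[b,c]^K$ together with the inductive hypothesis. You have simply made explicit the face bookkeeping and the reduction to commutator generators that the paper leaves to the reader.
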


\begin{proof} The proof follows by induction on $i$. Lemma \ref{gowinvf} tells it for $i=0$.
Assume that we have the statement for $i-1$. If $\dim(K)=k-i$ then $K=K_1\cap K_2$ where $\dim(K_1)=k$ and $\dim(K_2)=k-i+1$. If $\sigma_1\in G_k(M)$ and $\sigma_2\in G_k(M)^{(i-1)}$ then $[\sigma_1^{K_1},\sigma^{K_2}]=[\sigma_1,\sigma_2]^K$. This completes the proof.
\end{proof}

now we are ready to prove proposition \ref{nilp}

\begin{proof} Let $\sigma$ be an element in $G_k(M)^{(k)}$. We show that $\sigma$ is trivial.
Let $f\in M$ be a function. Then $\|f-f^\sigma\|_{U_{k+1}}$ has $2^{k+1}$ terms. By lemma \ref{genface} the $\sigma$ can be omitted in each term and so we get $\|f-f^\sigma\|_{U_{k+1}}=\|f-f\|_{U_{k+1}}=0$. This means that $f=f^\sigma$ since $f,f^\sigma\in\mathcal{F}_k$.
\end{proof}

\begin{lemma}\label{autnormpres} The group $G_k(M)$ preserves the uniformity norms $U_i$ for $0\leq i\leq k$.
\end{lemma}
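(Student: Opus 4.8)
The plan is to reduce the statement to the Gowers-inner-product invariance already packaged in Lemma~\ref{gowinvf}. Recall that for $\sigma\in G_d(M)$ the second half of Lemma~\ref{gowinvf} says that $\sigma^{(d)}$ leaves $(F)_d$ invariant for every $F\in G(M,(d))$. Taking the system all of whose entries equal a single $f\in M$ gives $(F)_d=\|f\|_{U_d}^{2^d}$, and since $\sigma^{(d)}$ applies $\sigma$ to every entry of such an $F$ (the full cube $(d)$ contains all subsets of $[d]$) we get $(F^{\sigma^{(d)}})_d=\|f^\sigma\|_{U_d}^{2^d}$. Hence every $\sigma\in G_d(M)$ already preserves $\|\cdot\|_{U_d}$, so it is enough to show the nesting $G_k(M)\subseteq G_i(M)$ for every $0\le i\le k$: then any $\sigma\in G_k(M)$ lies in each $G_i(M)$ and therefore preserves each $U_i$.

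First I would prove $G_k(M)\subseteq G_{k-1}(M)$ by a ``padding with the unit'' argument. Let $\sigma\in G_k(M)$ and let $F'=\{f_S\mid S\subseteq[k-1]\}$ be an arbitrary system of elements of $M$. Extend it to a system $F=\{f_S\mid S\subseteq[k]\}$ by setting $f_S=f'_S$ when $k\notin S$ and $f_S=1$ when $k\in S$. From the definition of $\conv_k$ the factors indexed by the sets containing $k$ are then identically $1$, so $\conv_k(F)(t_1,\dots,t_k)=\conv_{k-1}(F')(t_1,\dots,t_{k-1})$, independent of $t_k$. Because automorphisms fix the unit element, $F^\sigma$ is again of this padded form, built from $(F')^\sigma$, and therefore $\conv_{k-1}((F')^\sigma)=\conv_k(F^\sigma)=\conv_k(F)=\conv_{k-1}(F')$. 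Since $F'$ was arbitrary this says exactly that $\sigma$ preserves the $(k-1)$-st order convolution in $M$, i.e.\ $\sigma\in G_{k-1}(M)$. Iterating, $G_k(M)\subseteq G_{k-1}(M)\subseteq\dots\subseteq G_0(M)$, which gives the desired nesting.

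Combining the two steps finishes the argument: for $1\le i\le k$ and $\sigma\in G_k(M)\subseteq G_i(M)$, the first paragraph applied with $d=i$ gives $\|f\|_{U_i}=\|f^\sigma\|_{U_i}$ for every $f\in M$; and for $i=0$ the claim is immediate because $(F)_0=\tr(f)$ and automorphisms preserve the trace. (Alternatively, once the nesting is known one may simply quote Lemma~\ref{ukplus} with $k$ replaced by $i-1$ to obtain preservation of $U_i$.)

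The only point that is not pure bookkeeping is the nesting $G_k(M)\subseteq G_{k-1}(M)$, and even this is soft: the whole force of the padding argument is that an automorphism sends $1$ to $1$, so I do not expect a genuine obstacle here. Everything else is an application of the face-action formalism together with the invariance statements already established for the Gowers inner product and for $\conv_d$.
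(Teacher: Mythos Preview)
Your proof is correct. The underlying idea is the same as the paper's --- pad with the unit function $1$ --- but you factor the argument differently. The paper goes in one step: it observes that $\|f\|_{U_i}^{2^i}$ is the average of $\conv_k(F)$ for the system on $[k]$ with $f_S=f^{c(|S|)}$ when $S\subseteq[i]$ and $f_S=1$ otherwise, and since $\sigma\in G_k(M)$ preserves $\conv_k$, fixes $1$, and commutes with conjugation, the norm is preserved. You instead first extract the nesting $G_k(M)\subseteq G_{k-1}(M)\subseteq\cdots$ from the same padding trick and then invoke Lemma~\ref{gowinvf} (or Lemma~\ref{ukplus}) at level $i$. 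Your route is slightly longer but yields the nesting $G_k(M)\subseteq G_i(M)$ as an explicit byproduct, which is a useful statement in its own right even though the paper does not isolate it here.
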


\begin{proof} It is trivial since $\|f\|_{U_i}^{2^k}$ can be expressed as the average of a $k$-th order convolution of copies of $f,\overline{f}$ and the identity function $1$.
\end{proof}

Let $M_i$ denote the subalgebra of $M$ consisting of functions measurable in $\mathcal{F}_i$.

\begin{lemma}\label{mipres} The group $G_k(M)$ preserves the subalgebras $M_i$ for $0\leq i\leq k$.
\end{lemma}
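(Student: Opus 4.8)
\emph{Proof proposal.} The plan is to give a description of $M_i$ inside $M$ that makes no reference to $\sigma$, and then to observe that each ingredient of that description is preserved by $G_k(M)$.

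First dispose of the endpoint $i=k$: since by hypothesis $\mathcal{B}\subseteq\mathcal{F}_k(\bA)$, we have $M_k=M\cap L_\infty(\mathcal{F}_k)=M$, and $M$ is preserved by every automorphism. So fix $0\le i\le k-1$. I claim that
$$M_i=\{f\in M\;:\;(f,g)=0\ \text{ for every }g\in M\ \text{with }\|g\|_{U_{i+1}}=0\}.$$
For the inclusion ``$\subseteq$'': if $f$ is $\mathcal{F}_i$-measurable and $\|g\|_{U_{i+1}}=0$, then by \cite{Sz1} we have $\mathbb{E}(g\mid\mathcal{F}_i)=0$, hence $(f,g)=(f,\mathbb{E}(g\mid\mathcal{F}_i))=0$. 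For ``$\supseteq$'': given such an $f$, set $f'=\mathbb{E}(f\mid\mathcal{F}_i)$ and $f''=f-f'$. Because $\mathcal{B}$ is closed under projection to $L_2(\mathcal{F}_i)$ and conditional expectation is an $L_\infty$-contraction, $f'\in M_i$ and $f''\in M$; moreover $\mathbb{E}(f''\mid\mathcal{F}_i)=0$, so $\|f''\|_{U_{i+1}}=0$ by \cite{Sz1}. Applying the hypothesis with $g=f''$ and using $f'\perp f''$ gives $0=(f,f'')=(f',f'')+\|f''\|_2^2=\|f''\|_2^2$, whence $f=f'\in M_i$. This proves the claim.

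Now let $\sigma\in G_k(M)$ and $f\in M_i$. Take any $g\in M$ with $\|g\|_{U_{i+1}}=0$ and put $g'=g^{\sigma^{-1}}\in M$. Since $\sigma^{-1}\in G_k(M)$ and $i+1\le k$, Lemma \ref{autnormpres} gives $\|g'\|_{U_{i+1}}=\|g\|_{U_{i+1}}=0$, so $(f,g')=0$ by the characterization. As automorphisms preserve scalar products, $(f^\sigma,g)=(f^\sigma,(g')^\sigma)=(f,g')=0$. Since $g$ was an arbitrary element of $M$ of vanishing $U_{i+1}$-norm, the characterization yields $f^\sigma\in M_i$, i.e.\ $M_i^\sigma\subseteq M_i$; applying this to $\sigma^{-1}$ as well gives $M_i^\sigma=M_i$.

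The only nonformal point is the characterization of $M_i$, and its two inputs are results already available to us: the closure of $\mathcal{B}$ under projections to $L_2(\mathcal{F}_i)$ (which keeps the decomposition $f=f'+f''$ inside $M$), and the fact from \cite{Sz1} that $\|\cdot\|_{U_{i+1}}=0$ is equivalent to orthogonality to $L_\infty(\mathcal{F}_i)$. Once these are granted, invariance under $G_k(M)$ is immediate, since $G_k(M)$ preserves both scalar products (automorphism axioms) and the norm $U_{i+1}$ for $i+1\le k$ (Lemma \ref{autnormpres}). \qed
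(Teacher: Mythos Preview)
Your proof is correct and takes essentially the same approach as the paper: both rely on the decomposition $f=\mathbb{E}(f\mid\mathcal{F}_i)+(f-\mathbb{E}(f\mid\mathcal{F}_i))$, the equivalence between $\|g\|_{U_{i+1}}=0$ and $g\perp L_\infty(\mathcal{F}_i)$, Lemma~\ref{autnormpres}, and preservation of scalar products by automorphisms. The only difference is packaging---you isolate the characterization of $M_i$ as an explicit orthogonality condition and then verify its invariance, whereas the paper runs the same computation as a contradiction argument applied to a single hypothetical $f$.
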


\begin{proof} Let $f\in M_i$ and assume that $f^\sigma$ is not in $M_i$ for some $\sigma\in G_k(M)$. Then $f^\sigma=f_2+f_3$ where $f_2=\mathbb{E}(f|\mathcal{F}_i)$ is in $M_i$ and $\|f_3\|_{U_{i+1}}=0$ but $f_3$ is not $0$. We also have by lemma \ref{autnormpres} that $f_4=f_3^{\sigma^{-1}}$ has zero $U_{i+1}$ norm.
On the other hand the scalar product $(f,f_4)=0$ since $f_4$ is orthogonal to $M_i$. Now $(f^{\sigma},f_4^{\sigma})=(f_2+f_3,f_3)=\|f_3\|_2^2>0$ is a contradiction since $\sigma$ preserves scalar products.
\end{proof}

\begin{lemma}\label{convin2} For $\sigma\in G_k(M)$ and $f,f_2\in M$ we have
\begin{equation}\label{convin2q}
\int_t\|f^\sigma(x)\overline{f_2(x+t)}\|^{2^k}_{U_k}=\int_t\|f(x)\overline{f_2(x+t)}\|^{2^k}_{U_k}.
\end{equation}
\end{lemma}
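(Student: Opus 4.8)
The plan is to exhibit both sides of (\ref{convin2q}) as Gowers inner products of $2^{k+1}$ functions over the cube $(k+1)$, and then to invoke Lemma \ref{gowinvf}.

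First I would set up the system. Let $F=\{f_S\mid S\subseteq [k+1]\}\in G(M,(k+1))$ with $f_S=f$ whenever $k+1\notin S$ and $f_S=f_2$ whenever $k+1\in S$. Grouping the subsets of $[k+1]$ into pairs $\{S',S'\cup\{k+1\}\}$ with $S'\subseteq[k]$ and using Fubini to integrate over $t_{k+1}$ last, one sees that for each fixed value $t=t_{k+1}$ the two factors coming from such a pair combine, via $f_{S'}^{c(|S'|)}(u)\,f_{S'\cup\{k+1\}}^{c(|S'|+1)}(u+t)=(f(u)\overline{f_2(u+t)})^{c(|S'|)}$, into the $c(|S'|)$-th conjugate of $g_t(u):=f(u)\overline{f_2(u+t)}$ evaluated at $u=x+\sum_{i\in S'}t_i$. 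Hence the inner expectation over $x,t_1,\dots,t_k$ is $\|g_t\|_{U_k}^{2^k}$, and
$$(F)_{k+1}=\int_t\|g_t\|_{U_k}^{2^k}=\int_t\|f(x)\overline{f_2(x+t)}\|_{U_k}^{2^k},$$
which is the right-hand side of (\ref{convin2q}).

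Next I would identify the left-hand side as the same inner product after a face action. Let $\Lambda=\Lambda([k],\emptyset)=\{S:S\subseteq[k]\}$, a $k$-dimensional face of $(k+1)$, and let $\sigma_2=\sigma^\Lambda$ be the induced action on $G(M,(k+1))$. Then $F^{\sigma_2}$ is obtained from $F$ by replacing $f$ with $f^\sigma$ at precisely the coordinates $S$ with $k+1\notin S$, so the identical computation gives $(F^{\sigma_2})_{k+1}=\int_t\|f^\sigma(x)\overline{f_2(x+t)}\|_{U_k}^{2^k}$, i.e. the left-hand side of (\ref{convin2q}). Since $\sigma\in G_k(M)$ and $\Lambda$ is a $k$-dimensional face of $(k+1)$, Lemma \ref{gowinvf} (applied with $d=k$) gives $(F)_{k+1}=(F^{\sigma_2})_{k+1}$, which is exactly (\ref{convin2q}).

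The only point requiring care is the bookkeeping of the conjugations $c(|S'|)$ in the pairing step, using $f_2^{c(n+1)}=\overline{f_2}^{c(n)}$ and multiplicativity of conjugation; but this is the same verification already used to derive (\ref{convin}), so no new obstacle appears. In effect the whole argument is a specialization of Lemma \ref{gowinvf} once the correct system $F$ has been guessed, and guessing $F$ is the only genuinely creative step.
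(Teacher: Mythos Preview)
Your proof is correct and is exactly the argument the paper has in mind: the paper's own proof is the one-line ``Follows directly from lemma \ref{gowinvf}'', and you have simply made explicit the choice of the system $F\in G(M,(k+1))$ and the face $\Lambda=\Lambda([k],\emptyset)$ that realize both sides of (\ref{convin2q}) as $(F)_{k+1}$ and $(F^{\sigma^\Lambda})_{k+1}$.
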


\begin{proof} Follows directly from lemma \ref{gowinvf}.
\end{proof}

The results in \cite{Sz1} implies the next proposition.

\begin{proposition} The algebra $M_i$ as a module over $M_{i-1}$ can be decomposed into shift invariant rank one modules. The rank one modules are forming an abelian group that we denote by $\hat{M}_i$.
\end{proposition}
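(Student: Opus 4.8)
The plan is to quote the higher-order Fourier structure theory of \cite{Sz1} for the analytic content and to check the group axioms by a direct computation. First I would pin down what a shift invariant rank one module $\mathcal{N}\subseteq M_i$ over $M_{i-1}$ is: it is a submodule which is invertible (free) of rank one, so after dividing a generator by its absolute value (which lies in $M_{i-1}$) we may write $\mathcal{N}=\phi M_{i-1}$ for a unimodular $\phi\in M_i$, and shift invariance $\phi^{\iota(a)}\in\mathcal{N}$ for all $a\in\bA$ translates exactly into $\Delta_a\phi=\phi^{\iota(a)}\overline{\phi}\in M_{i-1}$ for every $a$ --- that is, $\phi$ is precisely an $i$-th order character in the sense of \cite{Sz1}. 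Two unimodular $i$-th order characters $\phi,\phi'$ generate the same module if and only if $\phi'/\phi\in M_{i-1}$; I write $\phi\sim\phi'$ for this relation, and let $\hat M_i$ denote the set of $\sim$-classes, equivalently the set of shift invariant rank one submodules occurring in $M_i$.

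For the decomposition itself I would invoke \cite{Sz1}: the $\sigma$-algebra $\mathcal{F}_i$ is generated over $\mathcal{F}_{i-1}$ by $i$-th order characters, so every $f\in M_i$ is an $L_2$-limit of finite sums of elements of rank one modules $\phi M_{i-1}$. Inequivalent modules are orthogonal, and this part is easy to see directly. If $\phi_1\not\sim\phi_2$ then for $u_1,u_2\in M_{i-1}$ one has $(\phi_1u_1)\overline{(\phi_2u_2)}=(\phi_1\overline{\phi_2})(u_1\overline{u_2})$, where $\phi_1\overline{\phi_2}$ is again a unimodular $i$-th order character (since $\Delta_a(\phi_1\overline{\phi_2})=\Delta_a\phi_1\cdot\overline{\Delta_a\phi_2}\in M_{i-1}$) which is \emph{not} in $M_{i-1}$; by the basic dichotomy for $i$-th order characters proved in \cite{Sz1} such a character has vanishing $U_i$ norm and is therefore orthogonal to all of $M_{i-1}$, so $\mathbb{E}((\phi_1\overline{\phi_2})(u_1\overline{u_2}))=0$. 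Hence $M_i$ is the orthogonal (Hilbert-module) direct sum, convergent in $L_2$, of the rank one modules $\phi M_{i-1}$ as $\phi$ runs over a set of representatives of $\hat M_i$, which is exactly the asserted decomposition.

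It remains to put the abelian group structure on $\hat M_i$. I would define the product of the classes of $\phi_1$ and $\phi_2$ to be the class of $\phi_1\phi_2$; module-theoretically this is the tensor product $\phi_1 M_{i-1}\otimes_{M_{i-1}}\phi_2 M_{i-1}$. The product is again a unimodular $i$-th order character, since $\Delta_a(\phi_1\phi_2)=\Delta_a\phi_1\cdot\Delta_a\phi_2\in M_{i-1}$ (a special case of Lemma \ref{charchar}), and it is well defined on $\sim$-classes: if $\phi_j'=u_j\phi_j$ with unimodular $u_j\in M_{i-1}$, then $\phi_1'\phi_2'=(u_1u_2)\phi_1\phi_2$ with $u_1u_2\in M_{i-1}$. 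The class of the constant function $1$ (the module $M_{i-1}$ itself) is the identity, the class of $\overline{\phi}$ is the inverse of the class of $\phi$ because $\phi\overline{\phi}\equiv 1$, and associativity and commutativity are inherited from pointwise multiplication of $\mathbb{C}$-valued functions. Therefore $\hat M_i$ is an abelian group.

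The formal part of the argument --- identifying shift invariant rank one modules with $i$-th order characters up to equivalence, and verifying the group axioms --- is routine. The real content, and the step I expect to be the main obstacle, is the decomposition statement: that $M_i$ is genuinely exhausted by its rank one modules over $M_{i-1}$, with no surviving ``defect'' term. For $i=1$ this is the classical fact that the Kronecker factor is spanned by eigenfunctions of the shift; for general $i$ it is the Host--Kra-type structure theorem for the factors $\mathcal{F}_i$ of an ultra product group, and it is precisely what is established in \cite{Sz1}. A secondary point to be checked is that the indexing set $\hat M_i$ is legitimate and that the orthogonal expansion of each $f\in M_i$ converges in $L_2$, both of which are again part of the structure theory developed there.
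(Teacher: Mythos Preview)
Your proposal is correct and matches the paper's treatment: the paper does not give an independent proof of this proposition but simply records that it follows from the results of \cite{Sz1}, which is exactly where you place the weight of the argument. Your write-up is in fact more detailed than the paper's, spelling out the orthogonality of inequivalent modules and the verification of the group axioms; the only quibble is that the parenthetical appeal to Lemma~\ref{charchar} for the multiplicativity $\Delta_a(\phi_1\phi_2)=\Delta_a\phi_1\cdot\Delta_a\phi_2$ is misplaced (that lemma concerns the cubic functional $F^{(k)}$, while the identity you need is immediate from the definition of $\Delta_a$).
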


Now we prove a strengthening of lemma \ref{mipres}.

\begin{lemma}\label{charpres} Let $\sigma\in G_k(M)$ then the rank one modules in $\hat{M}_i$ (as sets) are preserved under the action of $\sigma$.
\end{lemma}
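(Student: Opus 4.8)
The plan is to reduce the statement about preserving rank-one modules as sets to the statement that $\sigma$ preserves the abelian group $\hat M_i$ of such modules (which is Lemma \ref{mipres} together with the compatibility of $\sigma$ with the decomposition), and then to pin down each individual module by a characterization that only involves data $\sigma$ is already known to preserve. First I would fix a rank-one module $V\subseteq M_i$ over $M_{i-1}$, pick a generator $\phi\in V$ which is an $i$-th order character (so $\Delta_t\phi\in\mathcal F_{i-1}$ for all $t$), and observe that by Lemma \ref{mipres} the function $\phi^\sigma$ again lies in $M_i$. The key point to establish is that $\phi^\sigma$ is again an $i$-th order character, i.e.\ $\Delta_t(\phi^\sigma)=(\Delta_t\phi)^\sigma\cdot(\text{conjugation terms})$ is measurable in $\mathcal F_{i-1}$; this should follow because $\sigma\in G_k(M)$ with $k\ge i$ preserves $M_{i-1}$ (Lemma \ref{mipres} applied with index $i-1$) and intertwines the shift-difference operators up to the combinatorics already recorded in Lemma \ref{gowinvf}. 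Hence $\phi^\sigma$ generates some rank-one module $W\in\hat M_i$, and the assignment $V\mapsto W$ is exactly the induced action of $\sigma$ on $\hat M_i$.

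The heart of the argument is then to show $W=V$, i.e.\ that this induced action on $\hat M_i$ is trivial. Here I would invoke Lemma \ref{invdec} (and the machinery around $\mathcal Y_k$ and the diagonal $\sigma$-algebra): an $i$-th order character $\phi$ in $M$ lifts, modulo $\mathcal F_{i-1}$, to a character $\phi'$ on the cube space $M^{(i)}$ that is measurable in $\mathcal B^{(i)}\wedge\mathcal Y_i$, i.e.\ in the diagonal $\sigma$-algebra, where we already know from the discussion preceding Lemma 2.? that $G_k(M)$ acts trivially (that is the defining feature of convolution-preservation: the induced action on $\mathcal B^{(k)}\cap\mathcal Y_k$ is trivial, hence on $\mathcal Y_i$ for $i\le k$ after the reduction via face actions of Lemma \ref{indact10}). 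Since $\sigma$ fixes $\phi'$ and changes $\phi$ only by a factor measurable in $\mathcal F_{i-1}(\mathcal B^{(i)})$ — which does not affect which rank-one module over $M_{i-1}$ the character represents — we conclude $\phi^\sigma$ lies in the same module $V$, i.e.\ $W=V$.

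I expect the main obstacle to be the bookkeeping in the second paragraph: making precise that the triviality of the $G_k(M)$-action on $\mathcal Y_k$ (a statement about the top cube $(k)$) actually forces triviality on the relevant piece at level $i<k$, which requires passing through the face actions $\sigma^\Lambda$ for $k$-dimensional faces $\Lambda$ of $(d)$ (Lemmas \ref{proddec}, \ref{invdec}, Proposition \ref{szigmadecomp}) and checking that the character $\phi'$ produced by Lemma \ref{invdec} is genuinely in the $\sigma$-fixed part. A secondary technical point is confirming that "preserving a module as a set" is equivalent to "fixing the corresponding element of $\hat M_i$" — this is immediate once one notes $\sigma$ is $M_{i-1}$-semilinear in the appropriate sense (it preserves $M_{i-1}$ by Lemma \ref{mipres} and is multiplicative), so it carries a rank-one $M_{i-1}$-module to a rank-one $M_{i-1}$-module and the correspondence is a group isomorphism of $\hat M_i$. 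Everything else — that $\phi^\sigma$ is again a character, that scalar products and uniformity norms are preserved — is already in hand from Lemmas \ref{ukplus}, \ref{autnormpres}, \ref{mipres}, \ref{charchar}.
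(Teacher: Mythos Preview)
Your proposal takes a genuinely different route from the paper, and it has a real gap at the very first step.

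\medskip

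\textbf{The gap.} You want to show that if $\phi$ is an $i$-th order character then $\phi^\sigma$ is again an $i$-th order character, i.e.\ lies in a single rank-one module. Your justification is the formula $\Delta_t(\phi^\sigma)=(\Delta_t\phi)^\sigma\cdot(\text{conjugation terms})$ together with the claim that $\sigma$ ``intertwines the shift-difference operators.'' But $\sigma$ is an abstract algebra automorphism and has no reason to commute with the shifts $\iota(t)$; indeed $G_k(M)$ is $k$-nilpotent, not abelian, so $[\sigma,\iota(t)]$ is generically nontrivial. Concretely,
\[
\Delta_t(\phi^\sigma)\big/(\Delta_t\phi)^\sigma \;=\; \Delta_{[\sigma,\iota(t)]}\bigl(\phi^{\iota(t)\sigma}\bigr),
\]
and nothing available at this point in the paper tells you this lies in $M_{i-1}$. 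Lemma~\ref{gowinvf} controls Gowers inner products under face actions, not the commutation of $\sigma$ with individual shifts on a single function. Without this step you cannot even define the module $W$ that the rest of your argument aims to identify with $V$. (Nor can you sidestep it by a norm argument: an $i$-th order character with $|\phi|=1$ need not satisfy $\|\phi\|_{U_{i+1}}=1$, so norm preservation does not force $\phi^\sigma$ into a single module.)

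\medskip

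\textbf{What the paper does instead.} The paper avoids this issue entirely. After reducing to $i=k$ (via $G_k(M)\subseteq G_i(M)$, which follows by padding a $[i]$-system to a $[k]$-system with $1$'s), it does \emph{not} first argue that $f^\sigma$ is a character. Rather, it takes $f$ in one module and $f_2$ a character in a different module, observes that
\[
\int_t \|f(x)\overline{f_2(x+t)}\|_{U_k}^{2^k}=0,
\]
and uses Lemma~\ref{convin2} to conclude the same holds with $f$ replaced by $f^\sigma$. A short computation with conditional expectations then shows $f^\sigma\perp f_2$. Since this holds for every module other than the one containing $f$, the conclusion follows directly. The whole argument is a single $\sigma$-invariant scalar quantity that detects module membership---no need to know in advance that $\phi^\sigma$ is a character, and no appeal to the diagonal $\sigma$-algebra or Lemma~\ref{invdec}.

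\medskip

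\textbf{On your second half.} If you could fill the gap, your diagonal-$\sigma$-algebra argument for $W=V$ can be made to work: one checks $\sigma\in G_i(M)$, so $\sigma^{(i)}$ acts trivially on $\mathcal{B}^{(i)}\cap\mathcal{Y}_i$, hence fixes $\phi'$; since $\sigma^{(i)}$ preserves $M_{i-1}^{(i)}$ (Lemma~\ref{odavissza} plus Lemma~\ref{mipres}), one gets $t((i),\phi^\sigma)/\phi'\in M_{i-1}^{(i)}$, and Lemma~\ref{charchar} then forces $\phi/\phi^\sigma\in M_{i-1}$. But this is substantially heavier machinery than the paper's two-line invariant, and it still rests on the unproved first step.
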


\begin{proof} Since $G_k(M)$ preserves the algebras $M_i$ it is obviously enough to show the statement for rank one modules in $\hat{M}_k$. Let $f$ be a function contained in such a rank one module and let $f_2$ be a character contained in another rank one module. First note that
\begin{equation}\label{charpresq}
\int_t\|f(x)\overline{f_2(x+t)}\|^{2^k}_{U_k}=0
\end{equation}
By lemma \ref{convin2} we obtain that $\|f^\sigma(x)\overline{f_2(x+t)}\|_{U_k}=0$ for almost all $t$ and so $\mathbb{E}(f^\sigma(x)\overline{f_2(x+t)}|\mathcal{F}_{k-1})=0$ for almost all $t$. Let us fix one such $t$.
Then $$0=\mathbb{E}(f^\sigma(x)\overline{f_2(x+t)}|\mathcal{F}_{k-1})=\mathbb{E}(f^\sigma(x)\overline{f_2(x)}|\mathcal{F}_{k-1})f_2(x)\overline{f_2(x+t)}.$$
This implies that $f^\sigma$ and $f_2$ are orthogonal.
\end{proof}

\subsection{Degree $d$ functions}

Recall that $\mathcal{C}$ is the complex unit circle.
A function $f:\bA\rightarrow\mathcal{C}$ is called a $d$ degree function if $|f(x)|=1$ for every $x$ and $\Delta_{t_1,t_2,\dots,t_{d+1}}f(x)$ is the constant $1$ function.
We denote the set of $d$ degree functions in $M$ by $P_d(M)$.

\begin{lemma} Let $\sigma\in G_k(M)$ such that it induces the trivial action on $M_{k-1}$. Let $f$ be a $k$-th order character in $M$ such that $f^\sigma=fh$. Then $f^{[\iota(-t),\sigma]}=f\Delta_t(h)$. Furthermore $h$ is a $k-1$ degree function.
\end{lemma}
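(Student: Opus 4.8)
The plan is to compute the two operators $\Delta_\sigma$ and $\Delta_{\iota(-t)}$ directly on $f$ and track how the commutator $[\iota(-t),\sigma]$ acts, using the hypothesis $f^\sigma = fh$ repeatedly together with the fact that $\sigma$ acts trivially on $M_{k-1}$ (which will be needed to identify $h$). First I would note that since $f$ is a $k$-th order character, $\Delta_t f = f(x+t)\overline{f(x)}$ lies in $M_{k-1}$; hence $\sigma$ fixes $\Delta_t f$, i.e. $f^\sigma(x+t)\overline{f^\sigma(x)} = f(x+t)\overline{f(x)}$. Substituting $f^\sigma = fh$ on the left gives $f(x+t)h(x+t)\overline{f(x)}\,\overline{h(x)} = f(x+t)\overline{f(x)}$, so $h(x+t)\overline{h(x)}=1$ for every $t$, i.e. $\Delta_t h = 1$; thus $h$ is already a degree $0$ function in this naive computation. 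That is too strong, so the correct reading is that $\sigma$ fixes $M_{k-1}$ only \emph{up to} the allowed discrepancy, and the content of the lemma is the refined statement $f^{[\iota(-t),\sigma]} = f\,\Delta_t(h)$, which I obtain by the four-fold composition below; the assertion ``$h$ is a $k-1$ degree function'' then follows because $[\iota(-t),\sigma]$ lies one level deeper in the lower central series and so multiplies $f$ by a degree $k-2$ function, forcing $\Delta_t h \in P_{k-2}(M)$ for all $t$, hence $h\in P_{k-1}(M)$.

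The main computation is the expansion of $f^{[\iota(-t),\sigma]}$. Write $\tau = \iota(-t)$ and recall the commutator convention $[\tau,\sigma]=\tau^{-1}\sigma^{-1}\tau\sigma$. I would compute $f^{[\tau,\sigma]} = \bigl(((f^\tau)^\sigma)^{\tau^{-1}}\bigr)^{\sigma^{-1}}$ step by step. Using $f^\sigma=fh$ and the multiplicativity and shift-equivariance of all these automorphisms (and the fact that $\sigma$ commutes with $\iota(t)$ only modulo lower-order terms — precisely the place where the hypothesis that $\sigma$ is trivial on $M_{k-1}$ enters, since $h\in M_{k-1}$ and its $\sigma$-image must be computed), each application produces one factor of $h$ evaluated at a shifted argument. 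Collecting the four factors, the $f$-terms telescope back to $f$ and the $h$-terms combine into $h(x)\overline{h(x-t)}$ (or its inverse, up to relabelling $t\mapsto -t$), which is exactly $\Delta_t h$ up to a harmless sign in the shift. So the identity $f^{[\tau,\sigma]} = f\,\Delta_t(h)$ drops out.

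The step I expect to be the main obstacle is justifying that $\sigma$ fixes $h$, or more precisely controlling $h^\sigma$: $h$ is a function in $M_{k-1}$ and the hypothesis is that $\sigma$ acts trivially on $M_{k-1}$, but one must be careful that this is literally ``trivial action on the algebra $M_{k-1}$'' and not merely ``preserves $M_{k-1}$'' (Lemma \ref{mipres} gives only the latter in general). Granting the stated trivial action, $h^\sigma = h$, and the telescoping goes through cleanly; without it one would pick up an extra factor $h^\sigma/h$ that must separately be shown to be degree $\le k-2$. The secondary point to handle carefully is the bookkeeping of conjugations ($c(|S|)$-type signs / complex conjugates) since $\Delta_\sigma f = f^\sigma \overline f$ involves a conjugate; I would fix the convention that $f$ is unimodular (being a $k$-th order character it has $|f|=1$ after normalization, cf.\ the discussion of $P_d(M)$), so that $\overline f = f^{-1}$ in the relevant algebra and the conjugations become inverses, making the telescoping formal. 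Once both points are settled, reading off that $\Delta_t h$ is a degree $k-2$ function for all $t$, hence $h\in P_{k-1}(M)$, is immediate from the definition of $P_d(M)$.
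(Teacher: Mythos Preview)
Your commutator computation for the first identity is essentially the paper's ``trivial calculation'', and once you fix a convention it goes through. But your opening paragraph misdiagnoses its own error. The hypothesis really is that $\sigma$ acts trivially on $M_{k-1}$, with no discrepancy; what fails in your first computation is the silent assumption that $(f^{\iota(t)})^\sigma=(f^\sigma)^{\iota(t)}$. In fact, writing $f^{\iota(t)}=(\Delta_t f)\cdot f$ with $\Delta_t f\in M_{k-1}$ gives $(f^{\iota(t)})^\sigma=(\Delta_t f)\,f^\sigma=f^{\iota(t)}h$, not $f^{\iota(t)}h^{\iota(t)}$; the mismatch between $h$ and $h^{\iota(t)}$ is precisely what produces $\Delta_t h$ in the commutator. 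So drop that paragraph entirely. Also, you use $h\in M_{k-1}$ (to get $h^\sigma=h$) without saying why; this is not automatic from the hypotheses on $\sigma$ alone but follows from Lemma~\ref{charpres}, which says $f^\sigma$ stays in the rank-one module of $f$, so $h=f^\sigma\overline f$ lies in the trivial module $M_{k-1}$.

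The genuine gap is in the second part. You write that $[\iota(-t),\sigma]$ ``lies one level deeper in the lower central series and so multiplies $f$ by a degree $k-2$ function''. The ``so'' is exactly what needs proof: nothing established so far links depth in the lower central series to the degree of the multiplier. The paper's argument avoids this by iterating the first identity directly: set $\sigma_0=\sigma$, $\sigma_{j+1}=[\iota(-t_{j+1}),\sigma_j]$; each $\sigma_j$ still acts trivially on $M_{k-1}$ (since $\sigma$ does and shifts cancel), and the first part gives $f^{\sigma_j}=f\,\Delta_{t_j,\dots,t_1}h$. After $k$ steps $\sigma_k$ is a $k$-fold commutator in the $k$-nilpotent group $G_k(M)$, hence trivial, forcing $\Delta_{t_k,\dots,t_1}h=1$ for all $t_1,\dots,t_k$, i.e.\ $h\in P_{k-1}(M)$. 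Your inductive-sounding sentence can be made rigorous only by unwinding it into exactly this iteration.
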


\begin{proof} Using lemma \ref{charpres} we obtain that $h\in M_{k-1}$. From here the first statement is a trivial calculation. The second statement follows from the fact $G_k(M)$ is $k$-step nilpotent and by the application of the first statement iteratively for $-t_1,-t_2,\dots,-t_k$.
\end{proof}

\begin{corollary}\label{alsoszint2} If $\sigma\in G_k(M)$ induces the trivial action on $M_{k-1}$ then there is a homomorphism $\tau:\hat{M}_k\rightarrow P_{k-1}(M)$ such that $f^\sigma=f\tau(a)$ for every $a\in\hat{M}_k$ and $f\in a$.
\end{corollary}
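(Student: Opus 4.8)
The plan is to build the homomorphism $\tau$ directly from the previous lemma, which already hands us everything we need at the level of individual characters, and then to check that the assignment $a\mapsto\tau(a)$ respects the group structure of $\hat M_k$. First I would fix, for each rank one module $a\in\hat M_k$, a character $f_a\in a$ with $|f_a|=1$ (such a representative exists since $a$ is a shift invariant rank one module over $M_{k-1}$, and $k$-th order characters can be normalized to have modulus $1$). By Lemma \ref{charpres} the automorphism $\sigma$ preserves $a$ as a set, so $f_a^\sigma\in a$ as well, and hence $f_a^\sigma = f_a h_a$ for a well-defined element $h_a\in M$. By the preceding lemma, $h_a$ is a $(k-1)$ degree function, i.e.\ $h_a\in P_{k-1}(M)$; this is exactly clause two of that lemma. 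Define $\tau(a)=h_a$.

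The next step is to verify that $\tau$ is a homomorphism and that the defining relation $f^\sigma = f\tau(a)$ holds for \emph{every} $f\in a$, not just for the chosen representative. For the latter: any other $f\in a$ has the form $f = c f_a$ with $c\in M_{k-1}$ (since $a$ is rank one over $M_{k-1}$); because $\sigma$ induces the trivial action on $M_{k-1}$ we get $c^\sigma = c$, so $f^\sigma = c^\sigma f_a^\sigma = c f_a h_a = f h_a = f\tau(a)$, independently of the choice of representative. For multiplicativity, take $a,b\in\hat M_k$ with product module $ab$, and representatives $f_a\in a$, $f_b\in b$; then $f_a f_b$ lies in $ab$, and applying $\sigma$ gives $(f_a f_b)^\sigma = f_a^\sigma f_b^\sigma = f_a f_b\,h_a h_b$, so by the representative-independence just established $\tau(ab) = h_a h_b = \tau(a)\tau(b)$. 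Since the trivial module is represented by the constant $1$, which is fixed by $\sigma$, we also get $\tau(1)=1$, and $\tau(a^{-1})=\tau(a)^{-1}$ follows. Thus $\tau:\hat M_k\to P_{k-1}(M)$ is a group homomorphism with the stated property.

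The only genuine subtlety — and the step I would treat most carefully — is the passage from $k$-th order characters to arbitrary elements of a rank one module and the claim that $h_a$ genuinely lands in $P_{k-1}(M)$ rather than merely in $M_{k-1}$. This is precisely what the iterative commutator argument in the preceding lemma supplies: $f^{[\iota(-t),\sigma]} = f\,\Delta_t(h)$, and since $G_k(M)$ is $k$-step nilpotent (Proposition \ref{nilp}), iterating this identity with $-t_1,\dots,-t_k$ forces $\Delta_{t_1,\dots,t_k}(h)\equiv 1$, which is the statement $h\in P_{k-1}(M)$. So the main obstacle is really bookkeeping: one must be sure that the commutator $[\iota(-t),\sigma]$ again induces the trivial action on $M_{k-1}$ (it does, being a commutator of elements one of which acts trivially there, together with Lemma \ref{mipres}), so that the hypothesis of the lemma is preserved under iteration. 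Once that is in place, the construction of $\tau$ is forced and the homomorphism property is a one-line computation as above.
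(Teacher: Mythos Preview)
Your proof is correct and is exactly the argument the paper leaves implicit: the corollary is stated without proof, and the intended derivation is precisely to define $\tau(a)$ via a character representative, invoke the preceding lemma to land in $P_{k-1}(M)$, and check well-definedness and multiplicativity as you do. Your final paragraph is a commentary on the proof of the preceding lemma rather than an additional step needed for the corollary itself, but it does no harm.
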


\begin{lemma}\label{purepres} If $f\in P_d(M)$ for some $d\leq k$ and $\sigma\in G_k(M)$ then $f^\sigma/f\in P_{d-1}(M)$.
\end{lemma}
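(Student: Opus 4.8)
The plan is to split the claim into the two defining conditions for membership in $P_{d-1}(M)$ and then prove the second by a double induction. Write $g=f^\sigma/f=f^\sigma\overline{f}$. The condition $|g(x)|=1$ is immediate: automorphisms preserve value distributions, hence $|f^\sigma|=1$ a.e., and $|f|=1$. So the whole content is the identity $\Delta_{t_1,\dots,t_d}g\equiv 1$ for all $t_1,\dots,t_d\in\bA$.

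The one genuine computation is a commutation identity for the operators $\Delta_\sigma$. Since $M$ is commutative and all functions in sight have modulus one (so $\overline{f}=f^{-1}$ multiplicatively), one checks directly that for $\sigma,\tau\in G_k(M)$
$$\Delta_\tau(\Delta_\sigma f)=\Delta_\sigma(\Delta_\tau f)\cdot(\Delta_c f)^{\tau\sigma},\qquad c=\sigma\tau\sigma^{-1}\tau^{-1},$$
the point being that, by commutativity, $\Delta_\tau(\Delta_\sigma f)/\Delta_\sigma(\Delta_\tau f)=f^{\sigma\tau}/f^{\tau\sigma}=(f^c f^{-1})^{\tau\sigma}=(\Delta_c f)^{\tau\sigma}$. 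Specializing $\tau=\iota(t)$ (a shift, which lies in $G_k(M)$) gives
$$\Delta_t(\Delta_\sigma f)=\Delta_\sigma(\Delta_t f)\cdot(\Delta_c f)^{\iota(t)\sigma},$$
where $c$, being a commutator of $\sigma$ and $\iota(t)$, lies one step deeper in the lower central series of $G_k(M)$ than $\sigma$.

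I would then prove the following strengthening, of which the lemma is the case $j=0$: for all $0\le e\le k$, all $0\le j\le k$, all $\sigma$ in the $j$-th term $G_k(M)^{(j)}$ of the lower central series (with $G_k(M)^{(0)}=G_k(M)$), and all $f\in P_e(M)$, one has $f^\sigma\overline{f}\in P_{e-1-j}(M)$, with the convention $P_m(M)=\{1\}$ for $m<0$. This goes by induction on the degree $e$, and, for each fixed $e$, by \emph{downward} induction on $j$. The case $e=0$ is trivial since automorphisms fix constants, and the base case $j=k$ of the inner induction is trivial because $G_k(M)^{(k)}=\{1\}$ by Proposition \ref{nilp}. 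For the inductive step it suffices to show $\Delta_t(f^\sigma\overline{f})\in P_{e-2-j}(M)$ for every $t$; by the commutation identity above this is the product of $\Delta_\sigma(\Delta_t f)$ and $(\Delta_c f)^{\iota(t)\sigma}$. The first factor lies in $P_{e-2-j}(M)$ by the outer hypothesis, since $\Delta_t f\in P_{e-1}(M)$; the function $\Delta_c f$ lies in $P_{e-2-j}(M)$ by the inner hypothesis, since $c\in G_k(M)^{(j+1)}$; and applying $\iota(t)$ and then $\sigma$ (both in $G_k(M)$) keeps $\Delta_c f$ in $P_{e-2-j}(M)$, because the outer hypothesis, used in degrees $<e$, says that elements of $G_k(M)$ preserve $P_m(M)$ for $m<e$ (immediate from $h^\rho=h\cdot\Delta_\rho h$, $\Delta_\rho h\in P_{m-1}(M)$, and the fact that $P_m(M)$ is a group under multiplication with $P_{m-1}(M)\subseteq P_m(M)$). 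Since $\Delta_t$ is multiplicative on the commutative algebra $M$, it follows that $\Delta_t(f^\sigma\overline{f})\in P_{e-2-j}(M)$, and hence $f^\sigma\overline{f}\in P_{e-1-j}(M)$.

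The main obstacle is exactly the commutator term $\Delta_c f$ produced by the identity: it involves a function of the \emph{same} degree $e$ as $f$, so induction on the degree cannot close on its own — one is forced to strengthen the statement to track how deep $\sigma$ sits in the lower central series and to carry out the second, downward induction on that depth, the nilpotency of $G_k(M)$ serving as its base. Everything else is routine: the modulus-one claim, the multiplicativity of $\Delta_t$ and $\Delta_\sigma$, the elementary facts that each $P_m(M)$ is a group and $P_m(M)\subseteq P_{m+1}(M)$, and the stability of the $P_m(M)$ under $G_k(M)$ in degrees below $e$.
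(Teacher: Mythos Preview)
Your commutation identity is correct and the double-induction strategy is natural, but the inner induction does not close. The reduction ``to show $g\in P_m(M)$ it suffices to show $\Delta_t g\in P_{m-1}(M)$ for all $t$'' is only valid for $m\ge 0$; with your convention $P_m(M)=\{1\}$ for $m<0$, the case $m=-1$ fails, since $\Delta_t g=1$ for all $t$ only forces $g$ to be a constant of modulus one, not $g=1$. This matters because your downward induction on $j$ must pass through the regime $j\ge e$: already $(1,k-1)$ (for $k\ge 2$) asks for $\Delta_\sigma f\in P_{1-k}=\{1\}$, but your argument only yields $\Delta_\sigma f\in P_0$. Tracing dependencies, the case $(d,0)$ you actually want requires $(d,1),\dots,(d,d-1)$, and $(d,d-1)$ in turn requires both $(d-1,d-1)$ and $(d,d)$ as exact equalities $\Delta_\bullet f=1$ --- precisely the cases your reduction cannot reach. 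Nor can one simply weaken the claim to $P_{\max(e-1-j,0)}$: the induction then loses a degree at each step and never recovers.

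The paper's proof avoids this by using the analytic input that $\sigma\in G_k(M)$ preserves Gowers inner products (Lemma~\ref{convin2}): from $\int_t\|f^\sigma(x)\overline{f(x+t)}\|_{U_d}^{2^d}=\int_t\|f(x)\overline{f(x+t)}\|_{U_d}^{2^d}=1$ one gets $f^\sigma(x)\overline{f(x+t)}\in P_{d-1}(M)$ for almost every $t$, and multiplying by $\overline{f(x)}f(x+t)\in P_{d-1}(M)$ finishes. If you want to salvage the algebraic route, you need an independent argument that $G_k(M)^{(e)}$ fixes $P_e(M)$ to serve as the genuine base case of the inner induction; one way is to first use Lemma~\ref{autnormpres} (or Lemma~\ref{ukplus}) to see that $G_k(M)$ preserves each $P_e(M)$, deduce that $\sigma\mapsto \Delta_\sigma f\bmod P_{e-1}$ is a homomorphism into the abelian group $P_e/P_{e-1}$, and iterate --- but this already imports the same Gowers-norm preservation the paper uses.
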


\begin{proof} Note first of all that since $\sigma$ preserves the value distribution we have that $|f^\sigma(x)|=1$ for almost every $x$. Using lemma \ref{convin2} for $f=f_2$ we obtain that
$$\int_t\|f^\sigma(x)\overline{f(x+t)}\|^{2^d}_{U_k}=\int_t\|f(x)\overline{f(x+t)}\|^{2^k}_{U_d}=\|f\|_{U_{k+1}}=1$$
This implies that $\|f^\sigma(x)\overline{f(x+t)}\|^{2^k}_{U_d}=1$ for almost every $t$ since it is at most $1$. Let $t$ be such a value. Then $f^\sigma(x)\overline{f(x+t)}$ has to be in $P_{d-1}(M)$. On the other hand $\overline{f(x)}f(x+t)$ is in $P_{d-1}(M)$ as well so by multiplying them the proof is complete.
\end{proof}

\medskip

The set $P_k(M)$ is an abelian group with respect to multiplication. Lemma \ref{purepres} implies that $G_k(M)$ has a representation of the form $$G_k(M)\rightarrow\aut(P_k(M))$$
which preserves the subgroups $P_d(M)$ and induces the trivial action on the factors $P_d(M)/P_{d-1}(M)$. Let $GP_k(M)$ denote the set of all automorphisms of $P_k(M)$ that preserve the subgroups $P_d(M)$ and act trivially on $P_d(M)/P_{d-1}(M)$. The full group $GP_k(M)$ is also $k$-step nilpotent.

\subsection{The group $G(M)$}

\begin{definition} For an arbitrary function algebra $M=L_\infty(\mathcal{B})$ where $\mathcal{B}$ is a shift invariant sub-algebra of $\mathcal{A}$ we introduce the group $G(M)$ as $\cap_{k=1}^\infty G_k(M)$. In other words $G(M)$ is the group of automorphisms which preserve convolutions of all orders.
\end{definition}

\begin{definition}[Topology] The group $G(M)$ is a topological space with respect to the weakest topology in which the functions $\|f^\sigma-f\|_2$ are all continuous for every $f\in M$.
\end{definition}

The next theorem is crucial.

\begin{theorem}If $\mathcal{B}$ is a shift invariant $\sigma$-algebra in $\mathcal{F}_{k-1}$ and $M$ is the algebra of bounded $\mathcal{B}$ measurable functions (up to $0$ measure change) then $G_k(M)=G(M)$.
\end{theorem}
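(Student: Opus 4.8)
The plan is to show that if $\mathcal{B}\subseteq\mathcal{F}_{k-1}$ then every convolution-preserving automorphism of lower order is automatically $k$-th order convolution preserving; since $G(M)=\bigcap_{j\geq 1}G_j(M)\subseteq G_k(M)$ always holds, it suffices to prove the reverse containment $G_k(M)\subseteq G_j(M)$ for all $j$, and in fact the crucial point is $G_k(M)\subseteq G_{k+1}(M)$ (the rest then follows by induction, and the containments $G_k(M)\subseteq G_j(M)$ for $j<k$ follow because a $j$-th order convolution with $j<k$ is itself an average of a $k$-th order convolution where the extra functions are the identity, so $G_k(M)\subseteq G_j(M)$ for $j\le k$ is immediate from the definitions). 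So the heart of the matter is: if $\sigma\in G_k(M)$ and all functions in $M$ are $\mathcal{F}_{k-1}$-measurable, then $\sigma$ preserves $(k+1)$-st order convolution, and then one iterates.

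First I would reduce preservation of $\conv_{k+1}$ to preservation of the Gowers inner product $(\cdot)_{k+2}$ of systems of $2^{k+2}$ functions, using Lemma \ref{convl2norm} (or rather the identity from (\ref{convin})): $\|\conv_{k+1}(F)-\conv_{k+1}(F^\sigma)\|_2^2$ can be written as a sum of Gowers inner products of systems built from $F$, $F^\sigma$ and their conjugates. So it is enough to show $\sigma^{(k+2)}$ (the diagonal-on-$\Lambda$ face action, for $\Lambda$ a maximal proper face of $(k+2)$, or more precisely $\sigma^{[k+2]}$) preserves $(G)_{k+2}$ for every system $G$ of $2^{k+2}$ functions in $M$. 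Now invoke Lemma \ref{gowgow}: since every function in $M$ is $\mathcal{F}_{k-1}$-measurable, it has zero $U_k$ norm is false in general, but the key fact is the opposite — each $f\in M$ lies in $\mathcal{F}_{k-1}$, hence $\|f\|_{U_k}$ can be nonzero but $\|f\|_{U_{k+1}}$ need not vanish; what we actually use is that $\Delta_{t_1,\dots,t_k}f$ has a very restricted form. Better: I would use that for $f\in M\subseteq L_\infty(\mathcal{F}_{k-1})$, Lemma \ref{gowinvf} already gives that $\sigma\in G_k(M)$ preserves $(F)_{k+1}$ for systems of $2^{k+1}$ functions (this is Lemma \ref{ukplus}), and then the extra dimension is handled because any system of $2^{k+2}$ functions $G=\{g_S\mid S\subseteq[k+2]\}$ with $g_S\in\mathcal{F}_{k-1}$ has its Gowers inner product $(G)_{k+2}$ expressible, after splitting off the coordinate $k+2$ as in (\ref{convin}), as an inner product of two functions that are $\mathcal{F}_{k-1}$-measurable in $\bA^{k+2}$, and on such functions the face action $\sigma^\Lambda$ for $\Lambda$ any $k$-dimensional (or larger) face acts trivially on the relevant diagonal $\sigma$-algebra.

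Concretely, the cleanest route is: show that for $\sigma\in G_k(M)$ acting on $M=L_\infty(\mathcal{B})$ with $\mathcal{B}\subseteq\mathcal{F}_{k-1}$, the induced action $\sigma^{(k+1)}$ on $M^{(k+1)}$ leaves $\mathcal{Y}_{k+1}$ invariant, which by the lemma preceding Definition \ref{diagonalaction}'s section (``An element $\sigma$ of $\aut(M)$ is in $G_k(M)$ if and only if it induces an action on $M^{(k)}$ under which $\mathcal{Y}_k$ is invariant'', with $k$ replaced by $k+1$) is exactly what it means for $\sigma\in G_{k+1}(M)$. For this, apply Proposition \ref{szigmadecomp} with $d=k+1$ and the given $\mathcal{B}\subseteq\mathcal{F}_{k-1}$: taking the role of ``$k$'' in that proposition to be $k-1$, we get that $\mathcal{Y}_{k+1}\wedge\mathcal{B}^{(k+1)}$ is the join of $\mathcal{Y}_{k+1}\wedge\mathcal{B}^\Lambda$ over $(k-1)$-dimensional faces $\Lambda$ of $(k+1)$, and each such $\mathcal{B}^\Lambda$ only involves the coordinates in $\Lambda$ shifted; since $\sigma\in G_k(M)$ already controls face actions on subcubes of dimension $\le k$ (Lemma \ref{indact10}), and a $(k-1)$-face sits inside a $k$-face, the action of $\sigma^\Lambda$ on each piece is by an induced face action that fixes the corresponding diagonal part. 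Piecing these together (they generate the whole diagonal algebra, by the proposition) shows $\sigma^{(k+1)}$ fixes $\mathcal{Y}_{k+1}\wedge\mathcal{B}^{(k+1)}$ pointwise, hence $\sigma\in G_{k+1}(M)$. Then $G_k(M)=G_{k+1}(M)$, and since the same argument applies with $k$ replaced by $k+1$ (as $\mathcal{B}\subseteq\mathcal{F}_{k-1}\subseteq\mathcal{F}_k$), we get $G_k(M)=G_{k+1}(M)=G_{k+2}(M)=\cdots$, so $G_k(M)=G(M)$.

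The main obstacle I expect is the bookkeeping in applying Proposition \ref{szigmadecomp} with the right shift of indices and checking that the generating pieces $\mathcal{Y}_{k+1}\wedge\mathcal{B}^\Lambda$ for $(k-1)$-faces $\Lambda$ really are fixed by $\sigma^{(k+1)}$ — one must verify that the induced face action of $\sigma\in G_k(M)$ on $M^\Lambda$ (which exists by Lemma \ref{indact10} since $\dim\Lambda=k-1<k$, so $\Lambda$ is a proper face of a $k$-subcube) restricted to $\mathcal{Y}_{k+1}\wedge\mathcal{B}^\Lambda$ is trivial, and this in turn should reduce to the defining property that $\sigma$ preserves $\conv_{k-1}$ (indeed $\conv_j$ for all $j\le k$). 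A secondary technical point is making sure the hypothesis ``$\mathcal{B}$ closed under projections to $L_2(\mathcal{F}_i)$'' is in force so that $M^\Lambda$ and the various conditional expectations behave well; this was assumed in the setup of Section 2.4 and should be carried along.
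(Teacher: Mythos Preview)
Your proposal is essentially correct and follows the same strategy as the paper: characterize $\sigma\in G_d(M)$ via invariance of $\mathcal{Y}_d\wedge\mathcal{B}^{(d)}$ under the induced face action, then use Proposition~\ref{szigmadecomp} to decompose that diagonal $\sigma$-algebra into pieces coming from low-dimensional faces, each of which is fixed because $\sigma$ preserves lower-order convolutions. The paper's write-up is simply a cleaner packaging of what you arrive at in your ``Concretely, the cleanest route'' paragraph.

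Two small points of comparison. First, the paper applies Proposition~\ref{szigmadecomp} with $k$-dimensional faces (using $\mathcal{B}\subseteq\mathcal{F}_{k-1}\subseteq\mathcal{F}_k$) rather than your $(k-1)$-dimensional faces; this saves the auxiliary step of invoking $G_k(M)\subseteq G_{k-1}(M)$, since the invariance of $\mathcal{Y}_d\wedge\mathcal{B}^\Lambda$ for a $k$-face $\Lambda$ is exactly the defining property of $G_k(M)$. Second, the paper runs the induction directly in $d$ (showing $G_{d-1}(M)=G_k(M)\Rightarrow G_d(M)=G_k(M)$), which avoids your device of ``replacing $k$ by $k+1$'' and re-verifying the hypothesis $\mathcal{B}\subseteq\mathcal{F}_k$ at each stage; Lemma~\ref{indact10} then supplies the induced action on $M^{(d)}$ at each inductive step for free. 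These are cosmetic differences; your argument goes through as written once the bookkeeping you flag in your last paragraph is carried out.
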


\begin{proof} We prove by induction that $G_d(M)=G_k(M)$ if $d\geq k$. There is nothing to prove for $d=k$. Assume that $G_{d-1}(M)=G_k(M)$. Lemma \ref{indact10} says that every $\sigma\in G_{d-1}$ induces an action on $\mathcal{B}^{(d)}$. We have to prove that $\mathcal{Y}_d\wedge\mathcal{B}^{(d)}$ is preserved under this action. Using that $\sigma\in G_k(M)$ we have that $\sigma$ preserves all the algebras $\mathcal{B}^\Lambda\wedge\mathcal{Y}_d$ where $\Lambda$ is a $k$-dimensional face. By Proposition \ref{szigmadecomp} the proof is complete.
\end{proof}

In the rest of this chapter we will study extension properties of convolution preserving automorphisms. We will need the next lemma which follows directly from corollary \ref{addconv}.

\begin{lemma}\label{ext1} Let $\sigma$ be an automorphism of $M$ that preserves the $k$-th order rank one modules (elements of $\hat{M}_k$) as sets. Furthermore assume that $\sigma$ preserves the convolution of each system $(F)=\{f_S|S\subseteq [k],f_S\in a^{\epsilon(|S|)}\}$ where $\epsilon(n)=-1^n$ and $a\in\hat{M}_k$ is a fixed rank one module. Then $\sigma\in G_k(M)$.
\end{lemma}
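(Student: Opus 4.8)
The plan is to verify the four automorphism axioms and the convolution-preservation condition, and then invoke the additivity corollary \ref{addconv} to reduce the general convolution to the ``isotypic'' pieces that $\sigma$ is assumed to preserve. Concretely, suppose $\sigma$ is an automorphism of $M$ preserving each rank one module $a\in\hat M_k$ as a set and preserving $\conv_k$ of every system whose functions lie in $a^{\epsilon(|S|)}$ for a single fixed $a$. I want to show $\conv_k(F)=\conv_k(F^\sigma)$ for an \emph{arbitrary} system $F=\{f_S\mid S\subseteq[k]\}$ of elements of $M$.

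First I would reduce to functions measurable in $\mathcal{F}_k$: by the Corollary following Lemma \ref{convl2norm}, replacing each $f_S$ by $\mathbb{E}(f_S\mid\mathcal{F}_k)$ changes neither $\conv_k(F)$ nor (since $\sigma$ preserves $M_k$ by Lemma \ref{mipres} and commutes with the projection, being an algebra automorphism fixing $M_{k-1}$-structure) $\conv_k(F^\sigma)$; actually one must be a little careful here, so the cleaner route is simply to work with the decomposition of each $f_S$ into its rank one components directly. So, second, for each $S$ write $f_S=\sum_{c\in\hat{\bA}_k}\mathbb{E}(f_S\mid c^{\epsilon(|S|)})$ (the convention matching the statement), and expand $\conv_k(F)$ multilinearly. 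Corollary \ref{addconv} tells us precisely that all the ``mixed'' terms vanish and $\conv_k(F)=\sum_{c\in\hat{\bA}_k}\conv_k(F_c)$, where $F_c=\{\mathbb{E}(f_S\mid c^{\epsilon(|S|)})\}$. The same identity applies to $F^\sigma$; and here is where the hypothesis that $\sigma$ preserves each rank one module as a set is used: $(f_S)^\sigma$ lies in the same module $c^{\epsilon(|S|)}$ that $f_S$ does, so $\mathbb{E}((f_S)^\sigma\mid c^{\epsilon(|S|)})=((f_S)^\sigma)$ restricted to that component equals $(\mathbb{E}(f_S\mid c^{\epsilon(|S|)}))^\sigma$ — i.e.\ $(F^\sigma)_c=(F_c)^\sigma$. (This last equality deserves a sentence: $\sigma$ being an algebra automorphism preserving $M_{k-1}$ and each rank one module, it commutes with the projection onto a rank one component, which is multiplication by a suitable $M_{k-1}$-valued idempotent-like averaging.)

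Third, for each fixed $c$, the system $F_c$ consists of functions with $\mathbb{E}(f_S\mid c^{\epsilon(|S|)})\in c^{\epsilon(|S|)}$, which is exactly a system of the type appearing in the hypothesis with $a=c$. Hence $\conv_k(F_c)=\conv_k((F_c)^\sigma)=\conv_k((F^\sigma)_c)$ by assumption. Summing over $c\in\hat{\bA}_k$ and using additivity in both directions gives $\conv_k(F)=\conv_k(F^\sigma)$, which is exactly the statement that $\sigma\in G_k(M)$.

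The main obstacle is the bookkeeping in the middle step: establishing cleanly that $\sigma$ commutes with projection onto a rank one component, i.e.\ that $(F_c)^\sigma=(F^\sigma)_c$. This needs that $\sigma$ preserves $M_{k-1}$ (Lemma \ref{mipres}), that it preserves each rank one module as a set (a hypothesis), and that projection of $f_S$ onto $c^{\epsilon(|S|)}$ is an $M_{k-1}$-linear operation — so that an $M_{k-1}$-module-automorphism sending each summand to itself must commute with it. Everything else is either a direct quotation of corollary \ref{addconv} or routine multilinearity, so I would spend most of the written proof making this commutation precise and only a line each on the expansion and the final summation.
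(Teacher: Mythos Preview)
Your approach is exactly the paper's intended one: the paper simply states that the lemma ``follows directly from corollary \ref{addconv},'' and your argument --- decompose $\conv_k(F)=\sum_c\conv_k(F_c)$, show $(F^\sigma)_c=(F_c)^\sigma$, apply the hypothesis termwise, and sum --- is the natural unpacking of that remark.

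One small correction: your invocation of Lemma \ref{mipres} to get that $\sigma$ preserves $M_{k-1}$ is circular, since that lemma is about elements already in $G_k(M)$, which is precisely what you are proving. You do not need it anyway. The commutation $(F^\sigma)_c=(F_c)^\sigma$ follows directly from the hypothesis that $\sigma$ fixes each rank one module setwise, together with the orthogonality of distinct rank one modules in $L_2$ and the fact that a trace-preserving $*$-automorphism is unitary; a unitary fixing each of a family of mutually orthogonal closed subspaces commutes with the orthogonal projections onto them. (Incidentally, the hypothesis itself gives $\sigma(M_{k-1})=M_{k-1}$, since $M_{k-1}$ is the trivial element of $\hat M_k$.)
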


\begin{proposition}\label{ext2} Let $\sigma$ be an automorphism of the algebra $M$ such that it leaves the $k$-th order rank one modules set wise invariant and $\sigma$ restricted to $M_{k-1}$ is in $G(M_{k-1})$. Assume furthermore that for every $k$-th order character $\phi$ the function $f:=t((k),\phi)/\phi'$ (using the notation in lemma \ref{invdec}) satisfies
\begin{equation}\label{kiterj}
\Delta_\sigma(f)=t((k),\Delta_\sigma(\phi)).
\end{equation}
Then $\sigma\in G(M)$.
\end{proposition}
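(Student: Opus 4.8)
The goal is to show $\sigma \in G(M) = \bigcap_k G_k(M)$, so by the theorem of the previous subsection (applied since $\mathcal B \subseteq \mathcal F_k$) it suffices to prove $\sigma \in G_k(M)$. By Lemma \ref{ext1} it is in fact enough to verify that $\sigma$ preserves $\conv_k(F)$ only for the ``diagonal'' systems $F = \{f_S \mid S \subseteq [k],\ f_S \in a^{\epsilon(|S|)}\}$ attached to a single rank one module $a \in \hat M_k$, since $\sigma$ is already assumed to preserve each element of $\hat M_k$ as a set. So fix such a module $a$ and such a system $F$.

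The plan is to lift the computation of $\conv_k(F)$ into the cube algebra $M^{(k)}$, where $\conv_k(F)$ appears as $\mathbb E(F^{(k)} \mid \mathcal Y_k)$ after forgetting the $x$-coordinate. First I would decompose each $f_S = \phi_S g_S$ where $\phi_S$ is a $k$-th order character in the module $a^{\epsilon(|S|)}$ and $g_S \in M_{k-1}$; this is the standard rank-one-module structure. Then $F^{(k)} = \big(t((k),\phi)\big) \cdot G^{(k)}$ up to a harmless rearrangement, where $\phi$ is a fixed character in $a$ (the diagonal nature of $F$ lets us take all the $\phi_S$ to be powers/conjugates of one $\phi$) and $G = \{g_S\} \in G(M_{k-1},(k))$. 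Next, using Lemma \ref{invdec}, write $t((k),\phi) = \phi' v$ where $\phi'$ is a $k$-th order character measurable in $\mathcal B^{(k)} \wedge \mathcal Y_k$ and $v := t((k),\phi)/\phi'$ is measurable in $\mathcal F_{k-1}(\mathcal B^{(k)})$. Since $\mathcal Y_k$ is a coset $\sigma$-algebra it is perpendicular to every shift invariant Hilbert space, so $\phi'$ is unaffected by the diagonal shift action and, crucially, by any induced action of an automorphism that fixes $M_{k-1}$ and fixes the module of $\phi'$. Hence when we apply the induced action $\sigma^{(k)}$ (which exists by Lemma \ref{indact10}, as $\sigma \in G_{k-1}(M) = G(M_{k-1})$ by hypothesis on the restriction plus the theorem above), the factor $\phi'$ is preserved, and the change in $F^{(k)}$ is concentrated in the factor $v \cdot G^{(k)}$, all of which lives below $\mathcal F_{k-1}$ together with its $\sigma$-image.

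The heart of the argument, and the step I expect to be the main obstacle, is to convert hypothesis (\ref{kiterj}), $\Delta_\sigma(f) = t((k),\Delta_\sigma(\phi))$, into the statement that $\sigma^{(k)}$ fixes the conditional expectation $\mathbb E(v\,G^{(k)} \mid \mathcal Y_k)$. The point of (\ref{kiterj}) is that it precisely controls how the non-character correction $v$ transforms: it says $v^{\sigma^{(k)}}/v = t((k), h)$ where $h = \phi^\sigma/\phi \in M_{k-1}$ is the ``cocycle'' of $\sigma$ on the module $a$, i.e. the correction $v$ twists in lockstep with the characters $\phi_S$. Combining this with the fact that $\sigma$ restricted to $M_{k-1}$ lies in $G(M_{k-1})$ — so it preserves $\conv_{k-1}$ of every system in $M_{k-1}$, and in particular preserves all the relevant $\mathcal Y_k$-projections of products of the $g_S$'s and the extra $h$-factors — one gets that $\mathbb E(v\,G^{(k)} \mid \mathcal Y_k)$ is $\sigma^{(k)}$-invariant. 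Multiplying back the invariant factor $\phi'$ gives $\mathbb E(F^{(k)} \mid \mathcal Y_k) = \mathbb E((F^{\sigma})^{(k)} \mid \mathcal Y_k)$, hence $\conv_k(F) = \conv_k(F^\sigma)$. By Lemma \ref{ext1}, $\sigma \in G_k(M)$, and by the theorem, $\sigma \in G(M)$.

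The bookkeeping that needs care: matching the exponents $\epsilon(|S|)$ and conjugations $c(|S|)$ consistently when passing from $\{f_S\}$ to $\{\phi_S\}$ and $\{g_S\}$; checking that the decomposition $f_S = \phi_S g_S$ can indeed be taken with all $\phi_S$ powers of a single $\phi$ (this uses that $a^{\epsilon(|S|)}$ are the relevant modules and that $t((k),\phi)$ is itself a $k$-th order character by Lemma \ref{charchar}); and verifying that the induced action $\sigma^{(k)}$ genuinely fixes $\phi'$ — here one invokes Lemma \ref{charpres} for the module containing $\phi'$ inside $\mathcal B^{(k)} \wedge \mathcal Y_k$ together with the triviality of any convolution-preserving action on $\mathcal B^{(k)} \cap \mathcal Y_k$ noted at the start of the ``diagonal $\sigma$-algebra'' subsection. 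None of these is deep, but getting the cocycle identity (\ref{kiterj}) to interact correctly with the $\mathcal Y_k$-projection is the crux.
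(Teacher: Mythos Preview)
Your overall architecture matches the paper's: reduce via Lemma \ref{ext1} to a single module $a$, write $f_S=\phi^{c(|S|)}g_S$ with $g_S\in M_{k-1}$, factor $t((k),\phi)^{(k)}=\phi'\cdot v$ via Lemma \ref{invdec}, and then push the computation of $\mathbb E(F^{(k)}\mid\mathcal Y_k)$ through using that $\sigma|_{M_{k-1}}\in G(M_{k-1})$. But two of your justifications are genuine gaps.

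First, your claim that the induced action $\sigma^{(k)}$ on $M^{(k)}$ exists ``by Lemma \ref{indact10}, as $\sigma\in G_{k-1}(M)=G(M_{k-1})$'' is wrong on both counts. The groups $G_{k-1}(M)$ and $G(M_{k-1})$ are not the same object: the former consists of automorphisms of $M$ preserving $(k-1)$-st order convolution, the latter of automorphisms of the subalgebra $M_{k-1}$. More importantly, even granting $\sigma\in G_{k-1}(M)$, Lemma \ref{indact10} would give you an induced action on $M^{(k-1)}$ (and $(k-1)$-face actions on $M^{(k)}$), not the full diagonal action $\sigma^{(k)}$ on $M^{(k)}$; for the latter the lemma demands $\sigma\in G_k(M)$, which is exactly what you are trying to prove. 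The paper handles this by a separate first step: it checks directly, via Lemma \ref{autextends}, that $\sigma$ preserves the trace on the multiplicative semigroup $\{F^{(k)}:F\in G'((k),M)\}$ of products of rank-one-module functions. When some module in $F$ is nontrivial, both $\tr(F^{(k)})$ and $\tr((F^\sigma)^{(k)})$ vanish by Lemma \ref{charchar}; when all modules are trivial, $F\in G(M_{k-1},(k))$ and the trace is preserved because $\sigma|_{M_{k-1}}\in G(M_{k-1})$. This establishes the induced action on $M^{(k)}$ before any of the $\mathcal Y_k$-computations.

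Second, your argument that $\phi'$ is fixed by $\sigma^{(k)}$ is circular: you appeal to ``the triviality of any convolution-preserving action on $\mathcal B^{(k)}\cap\mathcal Y_k$'', but that triviality is a property of elements of $G_k(M)$, which you do not yet know $\sigma$ belongs to. The correct (and much shorter) reason is that hypothesis (\ref{kiterj}) \emph{is} the statement that $\phi'$ is fixed. Since $|\phi|=|\phi'|=1$, equation (\ref{kiterj}) reads $f^\sigma\overline f = (t((k),\phi)^{(k)})^\sigma\,\overline{t((k),\phi)^{(k)}}$, which after dividing gives $(t((k),\phi)^{(k)}/f)^\sigma = t((k),\phi)^{(k)}/f$, i.e.\ $(\phi')^\sigma=\phi'$. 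Once you have this, the chain
\[
\mathbb E((F^\sigma)^{(k)}\mid\mathcal Y_k)=\phi'\,\mathbb E\big(f^\sigma(G^\sigma)^{(k)}\mid\mathcal Y_k\big)=\phi'\,\mathbb E\big(f\,G^{(k)}\mid\mathcal Y_k\big)=\mathbb E(F^{(k)}\mid\mathcal Y_k)
\]
goes through, the middle equality using that $f,G^{(k)}\in M_{k-1}^{(k)}$ and $\sigma|_{M_{k-1}}\in G_k(M_{k-1})$.
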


\begin{remark} Note that by lemma \ref{odavissza} $\mathcal{F}_{k-1}(\mathcal{B}^{(k)})=M_{k-1}^{(k)}$ and so $\sigma$ extends to $\mathcal{F}_{k-1}(\mathcal{B}^{(k)})$. This justifies the notation $\Delta_\sigma(f)$.
\end{remark}

\begin{proof} First we prove that $\sigma$ induces an action on $M^{(k)}$.
To see this let $G'((k),M)$ denote the set of function systems in $G((k),M)$ in which every element is from a $k$-th order rank one module. The set of functions $J=\{F^{(k)}|F\in G'((k),M)\}$ is obviously a semigroup satisfying the conditions of lemma \ref{autextends} in $M^{(k)}$ so it is enough to check that $\sigma$ preserves the trace on $J$. Now if one of the modules in $F$ is non trivial then by lemma \ref{charchar} both $\tr(F^{(k)})$ and $\tr((F^\sigma)^{(k)})$ are $0$ since $\sigma$ preserves the rank one modules.
If all the rank one modules are trivial then using $\sigma\in G(M_{k-1})$ shows that the trace is preserved.

Now we prove that $\sigma\in G(M)$. Let $a$ be a fixed module in $\hat{M}_k$ and let $F$ be a system as in lemma \ref{ext1}.  By lemma \ref{ext1} it is enough to check that $\sigma$ is convolution preserving for $F$.
Let $\phi:\bA\rightarrow\mathcal{C}$ be a character representing the module $a$. We write $f_S$ as $\phi^{c(|S|)}g_S$ where $g_S\in M_{k-1}$. Now we have that $F^{(k)}=t((k),\phi)G^{(k)}$. By equation (\ref{kiterj}) $\sigma$ stabilizes the function $t((k),\phi)/f=\phi'$ which is measurable in $\mathcal{Y}_k$. Now using that $\sigma\in G_k(M_{k-1})$
$$\mathbb{E}((F^\sigma)^{(k)}|\mathcal{Y}_k)=\mathbb{E}(t((k),\phi)^\sigma (G^\sigma)^{(k)}|\mathcal{Y}_k)=\mathbb{E}(\phi'f^\sigma(G^\sigma)^{(k)}|\mathcal{Y}_k)=$$
$$=\phi'\mathbb{E}(f^\sigma (G^\sigma)^{(k)}|\mathcal{Y}_k)=\phi'\mathbb{E}(fG^{(k)}|\mathcal{Y}_k)=\mathbb{E}(t((k),\phi)G^{(k)}|\mathcal{Y}_k)=\mathbb{E}(F^{(k)}|\mathcal{Y}_k).$$
\end{proof}

\medskip

\begin{lemma}\label{alsoszint} Let $\tau:\hat{M}_k\rightarrow P_{k-1}(M)$ be an arbitrary homomorphism. Let us define $\sigma$ as the unique automorphism satisfying that $\sigma$ restricted to $M_{k-1}$ is trivial and $f^\sigma=f\tau(a)$ for any function $f$ in a rank one module $a\in\hat{M}_k$.
Then $\sigma\in G(M)$.
\end{lemma}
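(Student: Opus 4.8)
The plan is to deduce the statement from Proposition \ref{ext2}, which is exactly the criterion for membership in $G(M)$. So I would first make sure that an automorphism $\sigma$ with the stated two properties genuinely exists and is unique, and then check the three hypotheses of Proposition \ref{ext2}.

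For existence I would invoke Lemma \ref{autextends} applied to the set $S$ of all products $\phi g$, where $\phi$ is a $k$-th order character lying in a rank one module $a\in\hat{M}_k$ and $g\in M_{k-1}$. This $S$ is closed under multiplication (a product of two $k$-th order characters is again one, and $\hat{M}_k$ is a group), under scalars and under conjugation, and its finite linear combinations are $L_2$-dense in $M$. On $S$ one sets $(\phi g)^\sigma=\phi\,\tau(a)\,g$; this is well defined because a nonzero element of $S$ lies in a unique rank one module (distinct rank one modules are orthogonal, and $\tau(1)=1$), it is multiplicative because $\tau$ is a homomorphism, it commutes with conjugation because $\tau(a^{-1})=\overline{\tau(a)}$, and it preserves the trace because a nontrivial rank one module is orthogonal to $M_{k-1}$ while $\tau(a)$ is a unit of $M_{k-1}$. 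Lemma \ref{autextends} then produces the unique automorphism $\sigma$, which by construction restricts to the identity on $M_{k-1}$.

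Next I would verify the hypotheses of Proposition \ref{ext2}. First, $\sigma$ leaves every $k$-th order rank one module invariant setwise: if $f\in a$ then $f^\sigma=f\,\tau(a)$ with $\tau(a)\in P_{k-1}(M)\subseteq M_{k-1}$ a unit, so $f^\sigma\in a$. Second, $\sigma$ restricted to $M_{k-1}$ is the identity, hence lies in $G(M_{k-1})$. The remaining hypothesis is equation (\ref{kiterj}): for each $k$-th order character $\phi$, with $a$ the module containing it and $f:=t((k),\phi)/\phi'$ in the notation of Lemma \ref{invdec}, one must show $\Delta_\sigma(f)=t((k),\Delta_\sigma(\phi))$.

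This last point is the only place requiring a short computation, and it is where I expect the small subtlety to sit. On the right-hand side, since $k$-th order characters are unimodular we get $\Delta_\sigma(\phi)=\phi^\sigma\overline{\phi}=\tau(a)$, which is a function of degree $k-1$; consequently $t((k),\Delta_\sigma(\phi))$, which as a function on $\bA^{k+1}$ equals $\Delta_{t_1,\dots,t_k}(\tau(a))$, is the constant function $1$. On the left-hand side, Lemma \ref{invdec} together with the remark following Proposition \ref{ext2} shows that $f$ is measurable in $\mathcal{F}_{k-1}(\mathcal{B}^{(k)})=M_{k-1}^{(k)}$; since $\sigma$ acts trivially on $M_{k-1}$, its induced action on $M_{k-1}^{(k)}$ is trivial as well, so $f^\sigma=f$ and hence $\Delta_\sigma(f)=|f|^2=1$ (using that $t((k),\phi)$ and $\phi'$ are unimodular). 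Thus both sides of (\ref{kiterj}) are the constant $1$, all hypotheses of Proposition \ref{ext2} hold, and we conclude $\sigma\in G(M)$. The only thing to be attentive about is the identification $\Delta_\sigma(\phi)=\tau(a)$ and the fact that this is a degree $k-1$ function, so that one further $k$-fold difference annihilates it; the rest is formal.
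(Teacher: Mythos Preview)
Your proposal is correct and follows essentially the same route as the paper: invoke Lemma~\ref{autextends} on the semigroup of functions lying in rank one modules to manufacture $\sigma$, then verify the hypotheses of Proposition~\ref{ext2}, showing that both sides of (\ref{kiterj}) are the constant $1$ because $\sigma$ is trivial on $M_{k-1}$ and $\Delta_\sigma\phi=\tau(a)\in P_{k-1}(M)$. One tiny inaccuracy: the function $t((k),\tau(a))^{(k)}$ is not literally $\Delta_{t_1,\dots,t_k}\tau(a)$ but its complex conjugate when $k$ is odd (the exponents $(-1)^{|S|}$ versus $(-1)^{k-|S|}$ differ by a global sign); this does not affect the conclusion that it equals $1$.
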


\begin{proof} The fact that $\sigma$ extends to an automorphism follows for lemma \ref{autextends} applied to the semigroup of functions contained in rank one modules. (It is obvious that $\sigma$ preserves the trace operation since $\tr(f)=0$ for any function in a non trivial rank one module. On the other hand $f^\sigma=f$ for functions in the trivial rank one module $M_{k-1}$.)

Now it is enough to check the conditions of proposition \ref{ext2}. Since $\Delta_\sigma\phi$ is in $P_{k-1}(M)$ we have that $t((k),\Delta_\sigma\phi)$ is the constant $1$ function on the right hand side of (\ref{kiterj}). The left hand side of (\ref{kiterj}) is also $1$ since $\sigma$ acts trivially on $M_{k-1}$.
\end{proof}

\section{The linear case and the quadratic case}

In this part of the paper we explain our theory in the special case when $\mathcal{B}$ is a shift invariant $\sigma$-algebra embedded into $\mathcal{F}_1$ or $\mathcal{F}_2$.

\subsection{The linear case}

First of all note that if $\mathcal{B}\subseteq\mathcal{F}_1$ is a shift invariant $\sigma$ algebra then it is fully described by the rank one modules in the first dual group $\hat{\bA}_1$ contained in $M=L_\infty(\mathcal{B})$. We have that $\hat{M}_1$ is a subgroup in $\hat{\bA}_1$ characterizing $\mathcal{B}$.

Let $\sigma\in G(M)=G_1(M)$. Since every element $\chi$ in every rank one module is in $P_1(M)$ we have by lemma \ref{purepres} that $\chi^\sigma=\chi\tau(\chi)$ where $\tau(\chi)$ is a constant depending only on the rank one module containing $\chi$. Note that in the linear case, elements of rank one modules are fully determined up to constant multiples.
In other words $\tau$ is a homomorphism from $\hat{M}_1$ to the unit circle $\mathcal{C}$ in $\mathbb{C}$.

On the other hand by lemma \ref{alsoszint} every such homomorphism corresponds to an element in $G(M)$. It follows that
$$G(M)=\hom(\hat{M}_1,\mathcal{C})$$
and in particular
$$G(L_\infty(\mathcal{F}_1))=\hom(\hat{\bA}_1,\mathcal{C}).$$

An interesting fact is that is that every element of $G(M)$ extends to an element of $G(L_\infty(\mathcal{F}_1))$ since $\mathcal{C}$ is an injective $\mathbb{Z}$ module (it is divisible) and so every homomorphism from $\hat{M}_1$ to $\mathcal{C}$ extends to a homomorphism from $\hat{\bA}_1$ to $\mathcal{C}$.

It follows from Tchihonov's theorem that $G(M)$ is a compact topological group. The group of translations $\bA$ is embedded into $G(M)$ as a dense subgroup. This density is explained by the next lemma.

\begin{lemma}\label{linrest} Let $\sigma$ be an element in $G(L_\infty(\mathcal{F}_1))$. For every separable shift invariant subalgebra $\mathcal{B}$ of $\mathcal{F}_1$ there is an element $t\in\bA$ such that the restriction of $\sigma$ to $\mathcal{B}$ is the same as the shift operator $\iota(t)$ on $\mathcal{B}$.
\end{lemma}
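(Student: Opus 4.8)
The plan is to reduce the statement to a density/compactness argument about characters. First I would recall the identification established just above: $G(L_\infty(\mathcal{F}_1))=\hom(\hat{\bA}_1,\mathcal{C})$, and likewise $G(M)=\hom(\hat{M}_1,\mathcal{C})$ for a shift invariant subalgebra $M=L_\infty(\mathcal{B})$, where $\hat{M}_1\le\hat{\bA}_1$ is the subgroup of first-order characters measurable in $\mathcal{B}$. Under this identification, the shift operator $\iota(t)$ corresponds to the homomorphism $\chi\mapsto\chi(t)$ (here I am using that a first-order character $\chi$, being an element of $\hat{\bA}_1$, can be evaluated on group elements up to a global constant, which does not affect the ratio $\chi^\sigma/\chi$). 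So the claim becomes: given $\sigma\leftrightarrow\tau\in\hom(\hat{\bA}_1,\mathcal{C})$ and a separable subalgebra $\mathcal{B}$, there is $t\in\bA$ such that $\tau(\chi)=\chi(t)$ for all $\chi\in\hat{M}_1$, i.e. the restriction $\tau|_{\hat{M}_1}$ is an evaluation homomorphism.

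Next I would use separability of $\mathcal{B}$ to observe that $\hat{M}_1$ is a countable subgroup of $\hat{\bA}_1$; pick a sequence $\chi_1,\chi_2,\dots$ generating it. Fix $\e>0$ and any finite list $\chi_1,\dots,\chi_m$. The key point is that $\bA$ is an ultraproduct group: each $\chi_j$ is realized, on a set of full ultrafilter-measure of indices, by genuine characters $\chi_j^{(n)}$ of the finite abelian groups $A_n$, and $\tau(\chi_j)$ is a complex number of modulus one. Because $\tau$ is multiplicative on the finitely generated subgroup $\langle\chi_1,\dots,\chi_m\rangle$ and that subgroup is realized inside $\widehat{A_n}$ for $\o$-almost all $n$, one can find, for $\o$-almost all $n$, an element $t_n\in A_n$ with $\chi_j^{(n)}(t_n)$ within $\e$ of $\tau(\chi_j)$ for all $j\le m$ — this is the statement that a homomorphism from a finitely generated subgroup of $\widehat{A_n}$ to $\mathcal{C}$ is approximately an evaluation, which holds because the finite group $\widehat{A_n}$ has $A_n$ as its (Pontryagin) dual and the pairing is perfect. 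Taking the ultralimit $t=\lo t_n\in\bA$ gives $|\chi_j(t)-\tau(\chi_j)|\le\e$ for $j\le m$.

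Finally I would combine these approximations via a compactness/diagonal argument: the set $K$ of $\sigma'\in G(L_\infty(\mathcal{F}_1))$ (equivalently $\tau'\in\hom(\hat{\bA}_1,\mathcal{C})$) whose restriction to $\hat{M}_1$ agrees with that of some translation $\iota(t)$ is nonempty and, by the previous paragraph, the closure of the translations contains $\tau|_{\hat{M}_1}$ in $\hom(\hat{M}_1,\mathcal{C})$ (which is compact by Tychonoff). To upgrade "in the closure" to "equal to an actual $\iota(t)$", I would use that $\hat{M}_1$ is countable: enumerate it, and for each $k$ choose $t^{(k)}\in\bA$ matching $\tau$ on $\chi_1,\dots,\chi_k$ to within $1/k$; then pass to a further ultralimit (or diagonal subsequence) $t=\lo_k t^{(k)}\in\bA$ — the ultraproduct structure of $\bA$ is exactly what lets an infinite list of constraints be satisfied exactly rather than approximately — obtaining $\chi(t)=\tau(\chi)$ for every $\chi\in\hat{M}_1$, hence $\sigma$ and $\iota(t)$ agree on all of $M$ since first-order characters in $\hat{M}_1$ span $L_2(\mathcal{B})$ and automorphisms are determined on such a spanning set by Lemma~\ref{autextends}.

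The main obstacle I expect is the last step: turning simultaneous approximate matching on all of the countable group $\hat{M}_1$ into exact matching by a single $t\in\bA$. Approximation on finitely many characters is elementary finite Fourier duality on each $A_n$; the essential use of the ultraproduct is that $\bA$ is $\aleph_1$-saturated-like, so a countable finitely-satisfiable system of equations $\{\chi(t)=\tau(\chi)\}_{\chi\in\hat{M}_1}$ has a solution. Making this saturation argument precise — i.e. checking that for $\o$-almost all $n$ one genuinely gets an element of $A_n$ solving the finite subsystems, so that Łoś's theorem applies — is the delicate point, and it is exactly here that separability of $\mathcal{B}$ (giving countability of $\hat{M}_1$) is indispensable.
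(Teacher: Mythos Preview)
Your approach is correct but takes a different route from the paper. You proceed by approximate solvability at the finite level (finding $t_n\in A_n$ matching $\tau$ on finitely many characters to within $\e$) followed by $\aleph_1$-saturation of the ultraproduct to get an exact solution for the whole countable system. The paper instead shows the evaluation map $h:\bA\to\hat{H}=\hom(H,\mathcal{C})$ (with $H=\hat{M}_1$) is surjective in one stroke: linear characters are continuous in the $\sigma$-topology, so $h$ is continuous; $\bA$ is countably compact and $\hat{H}$ is metrizable (since $H$ is countable), so the image $h(\bA)$ is a compact subgroup; and since the characters in $H$ are independent on $\bA$, the Pontryagin dual of $h(\bA)$ is $H$, forcing $h(\bA)=\hat{H}$. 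The paper's argument hides the ultraproduct input inside the compactness of the image, while yours makes it explicit as saturation; both buy the same conclusion, but the paper's version avoids any finite-level computation. Two small repairs your write-up needs: the claim that $\langle\chi_1,\dots,\chi_m\rangle$ is ``realized inside $\widehat{A_n}$'' fails when some $\chi_j$ has infinite order (there is only a surjection $\langle\chi_1,\dots,\chi_m\rangle\twoheadrightarrow\langle\chi_1^{(n)},\dots,\chi_m^{(n)}\rangle$, through which $\tau$ need not descend), though the approximate matching you want still holds because the orders of the extra relations tend to infinity along $\o$; and ``pass to a further ultralimit $t=\lo_k t^{(k)}\in\bA$'' is not literally correct, since an ultralimit of elements of $\bA$ need not lie in $\bA$ --- the saturation step is the diagonal construction at the level of the $A_n$ that you allude to in your final paragraph.
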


\begin{proof} The fact that $\mathcal{B}$ is separable is equivalent with the fact that it is generated by a countable group $H\subset\hat{\bA}_1$ of linear characters. Let $\hat{H}$ denote the dual group $\hom(H,\mathcal{C})$ of the discrete group $H$. Now $\hat{H}$ is a separable compact topological group. The group $\bA$ has a homomorphism $h$ into $\hat{H}$ given by $h(t)(\chi)=\chi(t)$. Since linear characters are continuous in the $\sigma$ topology on $\bA$ we have that the image of $\bA$ in $\hat{H}$ is a compact subgroup $\hat{H}'$. Using the fact that the linear characters are independent on $\bA$ the dual group of $\hat{H}'$ is $H$ which means that $\hat{H}'=\hat{H}$
\end{proof}

\subsection{Harmonic analytic limits of functions on abelian groups}

Let $f_1,f_2,\dots$ be a uniformly bounded sequence of functions defined on a growing sequence of finite abelian groups $\{A_i\}_{i=1}^\infty$ with ultra product $\bA$. We denote the ultra limit function $\limo f_i$ by $f$. The function $f$ is in $L_\infty(\bA)$.
Let $f'$ be the projection $\mathbb{E}(f|\mathcal{F}_1)$ and let $f'=\lambda_1\phi_1+\lambda_2\phi_2+\dots$ be its first order Fourier decomposition where $\phi_i$ are distinct measurable linear characters of $\bA$ and $\lambda_i$ are nonzero complex numbers.
Let $Q$ denote the discrete countable abelian subgroup in $\hat{\bA}_1$ generated by $\phi_1,\phi_2,\dots$. The Pontrjagin dual of $G$ is a compact topological group $G$.

\begin{lemma} The group $G$ is isomorphic to the image of $\bA$ under the map
$$\tau:a\mapsto \{\phi(a)\}_{\phi\in Q}\subseteq \mathcal{C}^Q.$$
\end{lemma}

\begin{proof} Clearly, for every fix $a\in\bA$ we have that $\phi\mapsto\phi(a)$ defines an element of the dual group of $Q$. It is enough to prove that every homomorphism $Q\rightarrow\mathcal{C}$ arises this way. First of all we show that the image of $\bA$ under the map $\tau$ is a compact subgroup in $\mathcal{C}^Q$. Since every linear character of $\bA$ is continuous in the $\sigma$ topology (they are ultra limits of ordinary characters) the map $\tau$ is also continuous in the $\sigma$-topology. This means that the image of $G$ under $\tau$ is countably compact. Since it is in $\mathcal{C}^Q$, it has to be compact.
Obviously, the dual group of $\tau(\bA)$ is isomorphic to $Q$ and so it is isomorphic to $G$.
\end{proof}

\medskip

Since $f$ is measurable in the $\sigma$ algebra generated by $\tau$ there is a measurable function  $h:G\rightarrow\mathbb{C}$ such that $f(a)=h(\tau(a))$ for almost every $a$.
The function $h$ can be obtained by the formula $\lambda_1\phi^*_1+\lambda_2\phi^*_2+\dots$ where $\phi^*_1,\phi^*_2,\dots$ are the characters of $G$ corresponding to $\phi_1,\phi_2,\dots$.

\begin{definition} The measurable function $h$ on the compact topological group $G$ is called the harmonic analytic limit of the sequence $\{f_i\}_{i=1}^\infty$.
\end{definition}

\begin{remark} The above definition also makes sense if $\{A_i\}_{i=1}^\infty$ is itself a sequence of compact abalian groups.
\end{remark}

The previous remark gives rise to interesting examples. It can happen for example that $\{f_i\}$ is a sequence of measurable functions of the group $\mathbb{R}/\mathbb{Z}$ but the harmonic analytic limit exists on $(\mathbb{R}/\mathbb{Z})^2$.

\subsection{The quadratic case}

We start by analyzing simple $\sigma$-algebras in $\mathcal{F}_2$. Let $\mathcal{B}=\mathcal{B}_\phi$ be the $\sigma$ algebra generated by $\mathcal{F}_1$ and a single quadratic character $\phi$.
Since $\Delta_t \phi^n$ is in $\mathcal{F}_1$ for every $t\in\bA$ and integer $n$ we have that $\mathcal{B}_\phi$ is shift invariant.
Furthermore $\hat{M}_2$ is a cyclic subgroup of $\hat{\bA}_2$ since it is generated by the module of $\phi$.

Let $Q$ be the subgroup in $G(M)$ generated by those elements whose restriction to $M_1$ is trivial. Obviously $Q$ is normal in $G(M)$. We know by lemma \ref{alsoszint} and corollary \ref{alsoszint2} that $Q$ can be described as the group $\hom(\hat{M}_2,P_1(M))$.

Note that $P_1(M)$ consists of liner characters of $\bA$ multiplied by root of unities and so $$P_1(M)=\hat{\bA}_1\times\mathcal{C}.$$
By abusing the notation Let $\bA$ be the group of shifts on $\bA$ which is also embedded into $G(M)$.
The conjugation action of $\bA$ is given by
$(\chi,c)^t=(\chi,c\chi(t))$. Now the semidirect product
$$\bA\ltimes(\hat{\bA}_1\times\mathcal{C})$$
is embedded into $G(M)$. As the next lemma says the group $G(M)$ is bigger then that.
First we examine the case when the module of $\phi$ is of infinite order but as we will see after the lemma this assumption is not crucial. Note that in the so called ``zero characteristic case'' \cite{Sz2} the group $\hat{\bA}_2$ is torsion free.

\begin{lemma}\label{ext15} Assume that the module of $\phi$ has infinite order.
Then every element $\sigma\in G(M_1)$ extends to some element in $G(M)$.
\end{lemma}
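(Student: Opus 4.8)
\medskip
\noindent
The plan is to write down an extension explicitly and then certify that it lies in $G(M)$ via Proposition~\ref{ext2}. Since $\mathcal B_\phi$ contains $\mathcal F_1$ we have $M_1=L_\infty(\mathcal F_1)$, and since the module of $\phi$ has infinite order the group $\hat M_2$ is infinite cyclic, generated by the class $[\phi]$ of $\phi$; hence $M$ is the $L_2$-closed linear span of $\{\phi^n g:n\in\bZ,\ g\in M_1\}$, the submodules $\phi^nM_1$ being mutually orthogonal. First I would define $\tilde\sigma$ on the multiplicative, scalar- and conjugation-closed set $S=\{\phi^n g:n\in\bZ,\ g\in M_1\}$ by $\tilde\sigma(\phi^n g)=\phi^n\sigma(g)$. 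This is well defined precisely because distinct powers of $\phi$ lie in distinct (orthogonal) rank one modules, which is the one place where the infinite-order hypothesis enters; it is visibly multiplicative and compatible with scalars and conjugation, and it preserves the trace, since $\tr(\phi^n g)=0=\tr(\phi^n\sigma(g))$ for $n\neq0$ while $\sigma$ preserves the trace on $M_1$. By Lemma~\ref{autextends}, $\tilde\sigma$ extends uniquely to an automorphism of $M$; by construction it restricts to $\sigma$ on $M_1$ and fixes $\phi$.

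It remains to show $\tilde\sigma\in G(M)$, which I would do via Proposition~\ref{ext2} with $k=2$. The first two hypotheses are immediate: $\tilde\sigma$ fixes every second order rank one module setwise (as $\tilde\sigma(\phi^nM_1)=\phi^nM_1$), and $\tilde\sigma|_{M_1}=\sigma\in G(M_1)$. For the remaining hypothesis~(\ref{kiterj}), let $\psi$ be a second order character in $M$ (so $\Delta_{t_1,t_2,t_3}\psi=1$); its module is $[\phi]^n$ for some $n$, so $\psi=\phi^n g$ with $g\in M_1$, and $\Delta_{t_1,t_2,t_3}\psi=1=\Delta_{t_1,t_2,t_3}(\phi^n)$ forces $g$ to be a degree-two function as well. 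The key observation is that the mixed second difference of a degree-two function does not depend on the base point, so $t((2),\psi)=\Delta_{t_1,t_2}\psi$ is $\mathcal Y_2$-measurable; being moreover a degree-two character in $\mathcal B^{(2)}$, it is an admissible choice for the character $\psi'$ of Lemma~\ref{invdec} — the non-vanishing of $\mathbb E(t((2),\psi)|\mathcal Y_2)$ needed there holds automatically, since $t((2),\psi)$ is already $\mathcal Y_2$-measurable and unimodular — and with this choice $f:=t((2),\psi)/\psi'=1$. Now the left side of~(\ref{kiterj}) is $\Delta_{\tilde\sigma}(1)=1$, and the right side is $t((2),\Delta_{\tilde\sigma}\psi)=\Delta_{t_1,t_2}(\Delta_{\tilde\sigma}\psi)$ with $\Delta_{\tilde\sigma}\psi=\psi^{\tilde\sigma}\overline\psi=\sigma(g)\overline g=\Delta_\sigma(g)$. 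Since $g$ is degree two and $\sigma\in G(M_1)\subseteq G_2(M_1)$, Lemma~\ref{purepres} gives $\Delta_\sigma(g)\in P_1(M)$, whose mixed second difference is trivial, so the right side equals $1$ too. Thus~(\ref{kiterj}) holds for every $\psi$, and Proposition~\ref{ext2} yields $\tilde\sigma\in G(M)$.

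The step I expect to need the most care is the verification that, for an arbitrary second order character $\psi$ of $M$, the function $t((2),\psi)$ is base-point independent and qualifies as the $\psi'$ of Lemma~\ref{invdec} — in particular that the ``$M_1$-part'' $g$ of $\psi$ is forced to be degree two; once this ``$f=1$'' normalization is available the rest is routine. It is worth recording where the infinite-order hypothesis is essential: if $[\phi]$ had finite order $m$, then $\phi^m\in M_1$, so any extension must send $\phi$ to $\phi h$ with $h^m=\Delta_\sigma(\phi^m)\in P_1(M)=\hat{\bA}_1\times\mathcal C$, and such an $m$-th root need not exist there; this is exactly the obstruction that disappears when $\hat M_2$ is free, and (as the paper remarks just after this lemma) the finite-order case is dealt with separately.
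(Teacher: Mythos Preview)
Your argument has a genuine gap at the step you yourself flagged as delicate. A \emph{second order character} $\psi$ in this paper is a unimodular function with $\Delta_t\psi\in\mathcal F_1$ for all $t$; it is \emph{not} assumed to satisfy $\Delta_{t_1,t_2,t_3}\psi=1$. (That stronger condition defines the degree-$2$ functions $P_2(M)$, a much smaller class.) In particular $t((2),\psi)=\Delta_{t_1,t_2}\psi(x)$ generally depends on $x$ and is \emph{not} $\mathcal Y_2$-measurable, so you cannot take $\psi'=t((2),\psi)$ and get $f=1$. With the correct $f=t((2),\phi)/\phi'\in M_1^{(2)}$, equation~(\ref{kiterj}) for your extension (which fixes $\phi$, so $\Delta_{\tilde\sigma}\phi=1$) reduces to $\Delta_\sigma f=1$, i.e.\ $\sigma$ fixes $f$; there is no reason for an arbitrary $\sigma\in G(M_1)$ to do this.

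In fact your extension is typically \emph{not} in $G(M)$. The paper constructs an extension $\sigma'$ with $\sigma'(\phi)=\phi\,\Delta_t\phi$ (for a suitable $t\in\bA$) and shows $\sigma'\in G(M)$. If your $\tilde\sigma$ were also in $G(M)$ then $\sigma'\tilde\sigma^{-1}\in G(M)$ would act trivially on $M_1$ and send $\phi\mapsto\phi\,\Delta_t\phi$; by Corollary~\ref{alsoszint2} this forces $\Delta_t\phi\in P_1(M)$, which fails whenever $\Delta_{t_1,t_2}\phi$ is not a constant --- and the whole point of the ``type'' $T$ of $\phi$ (a countable group of linear characters supporting the functions $\Delta_{t_1,t_2}\phi$) is that it is generally nontrivial. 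The paper's proof avoids this by first choosing a separable $\mathcal B'\subset\mathcal F_1$ in which $f$ is measurable, then using Lemma~\ref{linrest} to find $t\in\bA$ with $\sigma|_{\mathcal B'}=\iota(t)$, and defining the extension via $\phi^\sigma:=\phi^{\iota(t)}$; the point is that $\Delta_\sigma f=\Delta_t f$ on $\mathcal B'$, and the diagonal shift matches $t((2),\Delta_t\phi)$, giving~(\ref{kiterj}). The choice of $t$ is exactly what makes the extension land in $G(M)$; an extension fixing $\phi$ does not.
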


\begin{proof} Let $\phi'$ be a character guaranteed by lemma \ref{invdec}. We know that $t((k),\phi)/\phi'$ is in $M_1^{(2)}$ and so there is a separable (and shift invariant) sub $\sigma$-algebra $\mathcal{B}'$ of $\mathcal{F}_1$ such that $t((k),\phi)/\phi'$ is measurable in $(\mathcal{B}')^{(2)}$. By lemma \ref{linrest} there is an element $t\in\bA$ such that $\sigma$ restricted to $\mathcal{B}'$ is the shift $\iota(t)$.

Now we define $\phi^\sigma$ as $\phi^{\iota(t)}=\phi\Delta_t\phi$. This determines the action of $\sigma$ uniquely on $\mathcal{B}_\phi$ because everything in $M$ can be approximated by a ``polynomial'' of the form $a_0+a_1\phi+a_2\phi^2+\dots+a_n\phi^n$ where $a_i\in M_1$. The condition that $\phi$ has infinite order guarantees that $\sigma$ is well defined and multiplicative on such polynomials. It is also clear that $\sigma$ preserves the trace (and conjugation) since the trace in any non trivial module is $0$. We use lemma \ref{autextends} to show that $\sigma$ determines an automorphism on $M_2$.

Now to prove that $\sigma$ is in $G(M)$ we need to check the conditions of proposition \ref{ext2}.
The only non-trivial condition is the equation (\ref{kiterj}).
Using the fact that $\sigma$ acts as $\iota(t)$ on $\mathcal{B}'$ we get that
$\Delta_\sigma f=\Delta_t f$ where we use the diagonal shift operator on $M^{(2)}$.
On the other hand $t((k),\Delta_\sigma\phi)=t((k),\Delta_t\phi)$.
Now since $t((k),\phi)/f$ is invariant under the diagonal shifts we get that
$\Delta_t f=t((k),\Delta_t\phi)$. This completes the proof.
\end{proof}

The only place where we needed that $\phi$ is of infinite order is to guarantee that $\sigma$ is well defined. However if we extend the $\sigma$-algebra $\mathcal{B}'$ in the proof by the $\sigma$-algebra generated by $\phi^{o(\phi)}$ (where $o(\phi)$ is the order of the module of $\phi$) then the reader can check easily that $\sigma$ becomes well defined.

Summarizing of our results we get the short exact sequence

$$0\rightarrow \hat{\bA}_1\times\mathcal{C}\rightarrow G(M)\rightarrow \hom(\hat{\bA}_1,\mathcal{C})\rightarrow 0$$

which describes the structure of $G(M)$. In other words $G(M)$ is an extension of $Q=\hat{\bA}_1\times\mathcal{C}$ by $G(M_1)=\hom(\hat{\bA}_1,\mathcal{C})$.
The action of $G(M_1)$ on $Q$ does not depend on the choice of $\phi$ however the extension is not always a semi direct product.

Let $Z$ be the group of elements in $Q$ of the form $(1_{\bA},c)$. The group $Z$ is isomorphic to the dual group of the cyclic group generated by the module of $\phi$. It is either the full unit circle or a finite subgroup of it. The group $Z$ is obviously in the center of $G(M)$.

\begin{lemma} The group $Z$ contains the commutator subgroup of $G(M)$.
\end{lemma}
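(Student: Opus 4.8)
The plan is to build a homomorphism from $G(M)$ onto an abelian group whose kernel is contained in $Z$; since any homomorphism to an abelian group kills the commutator subgroup, this immediately yields $[G(M),G(M)]\subseteq Z$. The target will be the abelian group $G(M_1)\times\hat{\bA}_1$, where $M_1=L_\infty(\mathcal{F}_1)$ (note $\mathcal{B}_\phi\supseteq\mathcal{F}_1$, so $M_1$ is all of $L_\infty(\mathcal{F}_1)$) and $G(M_1)=\hom(\hat{\bA}_1,\mathcal{C})$ by the linear case.

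First I would set up the two coordinate maps. The restriction $\sigma\mapsto\sigma|_{M_1}$ is a homomorphism: $G(M)$ preserves $M_1$ by Lemma \ref{mipres}, and $\sigma|_{M_1}$ preserves convolutions, hence lies in $G(M_1)=\hom(\hat{\bA}_1,\mathcal{C})$, which is abelian. For the second coordinate, observe that by Lemma \ref{charpres} every $\sigma\in G(M)\subseteq G_2(M)$ fixes the rank one module of $\phi$ set-wise, so $\phi^\sigma/\phi\in M_1$; and since $\phi$ is a quadratic character, $\phi\in P_2(M)$, so Lemma \ref{purepres} upgrades this to $\mu_\sigma:=\phi^\sigma/\phi\in P_1(M)=\hat{\bA}_1\times\mathcal{C}$. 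Let $\psi_\sigma\in\hat{\bA}_1$ denote the linear-character part of $\mu_\sigma$, i.e. $\mu_\sigma$ taken modulo constants. Then $\sigma\mapsto\psi_\sigma$ should be a homomorphism into the abelian group $\hat{\bA}_1$: from $\phi^{\sigma_1\sigma_2}=(\mu_{\sigma_1}\phi)^{\sigma_2}=\mu_{\sigma_1}^{\sigma_2}\mu_{\sigma_2}\phi$ one reads off $\mu_{\sigma_1\sigma_2}=\mu_{\sigma_1}^{\sigma_2}\mu_{\sigma_2}$, and writing $\mu_{\sigma_1}=c\chi$ with $\chi$ a linear character gives $\mu_{\sigma_1}^{\sigma_2}=c\,\chi^{\sigma_2}=c\lambda\chi$ with $\lambda\in\mathcal{C}$ (the linear case: $\sigma_2$ acts on each rank one module by a scalar), so modulo constants $\mu_{\sigma_1}^{\sigma_2}\equiv\chi$, whence $\psi_{\sigma_1\sigma_2}=\psi_{\sigma_1}\psi_{\sigma_2}$.

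Setting $\Phi(\sigma)=(\sigma|_{M_1},\psi_\sigma)$, this is a homomorphism into an abelian group, so $[G(M),G(M)]\subseteq\ker\Phi$. It then remains to check $\ker\Phi\subseteq Z$: if $\sigma|_{M_1}=\mathrm{id}$ then $\sigma\in Q=\hom(\hat{M}_2,P_1(M))$, and if additionally $\psi_\sigma=1$ then $\mu_\sigma\in\mathcal{C}$, so the generator of the cyclic group $\hat{M}_2$ (the module of $\phi$) is mapped into $\mathcal{C}\subseteq P_1(M)$; since $\hat M_2$ is generated by this one module, the whole homomorphism $\hat M_2\to P_1(M)$ has image in $\mathcal{C}$, i.e. $\sigma\in\hom(\hat M_2,\mathcal{C})=Z$. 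This completes the argument.

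I expect the only genuinely delicate step to be the multiplicativity of $\sigma\mapsto\psi_\sigma$, which rests entirely on the linear-case fact that $G(M_1)=\hom(\hat{\bA}_1,\mathcal{C})$ acts on each rank one module of $\mathcal{F}_1$ by a scalar. Two minor points also need care: that $\mu_\sigma$ lies in $P_1(M)$ and not merely in $M_1$ (here $\phi\in P_2(M)$ together with Lemma \ref{purepres} is essential), and that, because $\hat M_2$ is cyclic generated by the module of $\phi$, membership in $Z$ can be detected on $\phi$ alone.
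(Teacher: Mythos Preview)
Your argument is correct, and it takes a genuinely different route from the paper. The paper's proof is a two-line structural argument: since $G(M)$ is $2$-step nilpotent (Proposition~\ref{nilp}), every commutator is central; since $G(M)/Q\cong G(M_1)$ is abelian, every commutator lies in $Q$; hence $[G(M),G(M)]\subseteq Q\cap Z(G(M))$. It then checks that any $(\chi,c)\in Q$ with $\chi\neq 1$ fails to commute with some shift $\iota(t)$ (choose $t$ with $\chi(t)\neq 1$ and compare $\phi^{g\iota(t)}$ with $\phi^{\iota(t)g}$), so $Q\cap Z(G(M))=Z$.

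Your approach instead builds an explicit homomorphism $\Phi:G(M)\to G(M_1)\times\hat{\bA}_1$ with $\ker\Phi=Z$, bypassing nilpotency entirely. The first coordinate is the same restriction map the paper uses; your second coordinate $\sigma\mapsto\psi_\sigma$ is the new ingredient, and your verification that it is multiplicative (the scalar action on linear rank-one modules absorbs the twist $\mu_{\sigma_1}^{\sigma_2}$) is exactly the computation that in the paper appears as the non-commutation of $(\chi,c)$ with a shift. What you gain is self-containedness: your proof does not invoke Proposition~\ref{nilp} and in fact gives the sharper statement that $G(M)/Z$ is abelian directly. What the paper gains is brevity, since nilpotency is already established and the center calculation is a one-line check on $\phi$.
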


\begin{proof} Any commutator is contained in the center of $G(M)$ since $G(M)$ is two-nilpotent. Furthermore the commutator is contained in $Q$ since $G(M)/Q$ is abelian. This means that the commutator subgroup is contained in $Q\cap Z(G(M))$. Now let $g=(\chi,c)$ be an arbitrary element in $Q$. If $\chi$ is non trivial then there is an element $t\in \bA$ such that $\chi(t)\neq 1$. Then the shift operator $t$ does not commute with $g$ since $\phi^{g\iota(t)}\neq\phi^{\iota{t}g}$.
\end{proof}

Now that we have a full description of $G(M)$ we get the following lemma.

\begin{lemma}\label{deltasigma} If $T$ is the type of $\phi$ then the dual support of $\Delta_\sigma\phi$ is contained in some coset of $T$.
\end{lemma}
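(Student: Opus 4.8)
The plan is to identify $\Delta_\sigma\phi$ inside $M_1$ using the description of $G(M)$ obtained in this subsection, and then to exploit $2$-nilpotency to see how this function transforms under the translations $\iota(t)$.

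First, by Lemma \ref{charpres} the automorphism $\sigma$ fixes the rank one module of $\phi$ setwise, so $\phi^\sigma=h\phi$ with $h\in M_1$ and $|h|=1$; since $|\phi|=1$ this gives $h=\Delta_\sigma\phi$, so it suffices to show that the dual support of $h$ is contained in a single coset of $T$. The first genuine step I would establish is the identity
$$\Delta_t(\phi^\sigma)=\mu(t)\,(\Delta_t\phi)^\sigma\qquad(t\in\bA)$$
for some scalar $\mu(t)\in\mathcal{C}$. To get it, write the group relation $\sigma\iota(t)=z_t\,\iota(t)\,\sigma$ with $z_t=[\sigma,\iota(t)]$ lying in the central circle $Z$ (recall $Z$ contains the commutator subgroup of $G(M)$); since $Z$ fixes $M_1$ pointwise and acts on the module of $\phi$ by scalars, applying both sides to $\phi$ and using $\phi^{\iota(t)}=\Delta_t\phi\cdot\phi$ yields $\phi^{\sigma\iota(t)}=\mu(t)\,(\Delta_t\phi)^\sigma\,\phi^\sigma$, and dividing by $\phi^\sigma$ gives the displayed identity. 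Consequently $\Delta_t h=\Delta_t(\phi^\sigma)\,\overline{\Delta_t\phi}=\mu(t)\,(\Delta_t\phi)^\sigma\,\overline{\Delta_t\phi}$ for every $t$.

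Now $\sigma$ restricted to $M_1$ is an element of $G(M_1)=\hom(\hat{\bA}_1,\mathcal{C})$, which merely rescales the Fourier coefficient of each linear character; hence $(\Delta_t\phi)^\sigma$ has the same dual support as $\Delta_t\phi$, which by definition of the type $T$ of $\phi$ lies in a single coset $C(t)$ of $T$ (see \cite{Sz2}). Therefore the dual support of $\Delta_t h$ is contained in $C(t)\cdot C(t)^{-1}=T$, for every $t\in\bA$. Writing $h=\sum_\psi b_\psi\psi$ in $L_2(\mathcal{F}_1)$, the $\xi$-Fourier coefficient of $\Delta_t h$ equals $\sum_\psi b_\psi\,\overline{b_{\overline{\xi}\psi}}\,\psi(t)$; for $\xi\notin T$ this vanishes for all $t$, so by independence of the linear characters on $\bA$ every product $b_\psi\,\overline{b_{\overline{\xi}\psi}}$ is zero. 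This says precisely that any two characters in the dual support of $h$ differ by an element of $T$, i.e.\ the dual support of $h$ is contained in one coset of $T$, which is the assertion.

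I expect the delicate point to be the identity $\Delta_t(\phi^\sigma)=\mu(t)(\Delta_t\phi)^\sigma$: its content is that commuting $\sigma$ past a translation changes $\phi^\sigma$ only by a constant, and this uses both that $G(M)$ is $2$-nilpotent (so the discrepancy $[\sigma,\iota(t)]$ is central) and that the central subgroup $Z$ acts trivially on $M_1$ and by scalars on the module of $\phi$ — exactly the structure of $G(M)$ established above. The concluding Fourier argument is routine once one knows $\Delta_t h$ is $T$-supported for every $t$; the only thing to be slightly careful about is that $h$ may have infinite dual support, so the manipulations should be carried out in $L_2$.
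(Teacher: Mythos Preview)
Your proof is correct but takes a genuinely different route from the paper's. The paper's argument is a two-line reduction to Lemma~\ref{ext15}: the restriction $\sigma_2=\sigma|_{M_1}$ has the specific extension built in that lemma, for which $\Delta_\sigma\phi=\Delta_t\phi$ for some $t\in\bA$; any other extension (in particular $\sigma$ itself) differs from this one by an element of $Q=\hat{\bA}_1\times\mathcal{C}$, so $\Delta_\sigma\phi=(\Delta_t\phi)\chi c$ for some linear character $\chi$ and constant $c$. Since the dual support of $\Delta_t\phi$ is already a coset of $T$ by the definition of type, multiplying by $\chi c$ only translates that coset.

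Your approach avoids invoking the explicit extension of Lemma~\ref{ext15} and instead works purely from the structural facts $[G(M),G(M)]\subseteq Z$ and $G(M_1)=\hom(\hat{\bA}_1,\mathcal{C})$: you show that $\Delta_t(\Delta_\sigma\phi)$ is $T$-supported for every $t$, and then recover the coset structure of the support of $\Delta_\sigma\phi$ by an $L_2$ Fourier argument. This is more self-contained (it does not rely on the particular construction in the proof of Lemma~\ref{ext15}) but longer; the paper's route is shorter precisely because it has already parametrized all of $G(M)$ as $\{\mbox{extensions from Lemma~\ref{ext15}}\}\cdot Q$, so the action on $\phi$ is known explicitly. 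Both arguments ultimately rest on the same input, namely that the dual support of $\Delta_t\phi$ lies in a coset of $T$.
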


\begin{proof} Let $\sigma_2$ be the image of $\sigma$ under the map $G(M)\rightarrow G(M_1)$. Then $\sigma$ is one of the extensions of $\sigma_2$ to $G(M)$. By the proof of lemma \ref{ext15} we have know that $\Delta_\sigma\phi=(\Delta_t\phi)\chi$ for some $t\in\bA$ and $\chi\in Q$. This completes the proof.
\end{proof}

\subsection{The stabilizer of $\phi$}

In this part we analyze the stabilizer of the function $\phi$ in $G(M)$.
Let us start by analyzing the stabilizer $\stab(G(M),\phi)$ of the function $\phi$ in $G(M)$.
Recall \cite{Sz2} that there is a separable shift invariant $\sigma$-algebra $\mathcal{B}_2$ in $\mathcal{F}_1$ in which all the functions $\Delta_{t_1,t_2}\phi$ are measurable.
This $\sigma$-algebra is spanned by a countable group $T\subseteq \hat{\bA}_1$. We will denote by $\ker(T)$ the subgroup of $G(M_1)=\hom(\bA_1,\mathcal{C})$ consisting of those homomorphisms that vanish on $T$. The subgroup $T$ is called the type of $\phi$.

\begin{lemma}\label{kertesz} Let $\sigma$ be an element in $\ker(T)$ and $\sigma_2$ be an extension guaranteed by lemma \ref{ext15}. Then $\phi^{\sigma_2}=\phi\chi c$ where $\chi$ is some linear character and $c$ is a constant of absolute value $1$.
\end{lemma}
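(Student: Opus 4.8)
The plan is to follow the construction of $\sigma_2$ carried out in the proof of Lemma \ref{ext15} and to exploit that an element of $\ker(T)$ acts trivially on the $\sigma$-algebra $\mathcal{B}_2$ spanned by $T$. Recall that in that proof $\phi^{\sigma_2}$ is defined to be $\phi^{\iota(t)}=\phi\cdot\Delta_t\phi$, where $t\in\bA$ is produced by Lemma \ref{linrest} so as to make $\sigma$ agree with the shift $\iota(t)$ on a separable, shift invariant sub-$\sigma$-algebra $\mathcal{B}'\subseteq\mathcal{F}_1$ in which $t((2),\phi)/\phi'$ is measurable. Since $\mathcal{B}_2$ is itself separable, shift invariant and contained in $\mathcal{F}_1$, we may replace $\mathcal{B}'$ by $\mathcal{B}'\vee\mathcal{B}_2$ without disturbing the requirement used in that construction; so we may and do assume $\mathcal{B}_2\subseteq\mathcal{B}'$, whence $\sigma$ and $\iota(t)$ agree on $\mathcal{B}_2$.

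Next I would argue that $\iota(t)$ fixes $\mathcal{B}_2$ pointwise. In the linear picture $G(M_1)=\hom(\hat{M}_1,\mathcal{C})$ a linear character $\chi$ is sent by $\sigma$ to $\chi\cdot\sigma(\chi)$, and $\sigma(\chi)=1$ for every $\chi\in T$ because $\sigma\in\ker(T)$; as $\mathcal{B}_2$ is generated by $T$, the automorphism $\sigma$ restricts to the identity on $\mathcal{B}_2$, hence so does $\iota(t)$. In particular $(\Delta_{s_1,s_2}\phi)^{\iota(t)}=\Delta_{s_1,s_2}\phi$ for all $s_1,s_2\in\bA$, since each $\Delta_{s_1,s_2}\phi$ is $\mathcal{B}_2$-measurable. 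Using $|\Delta_{s_1,s_2}\phi|=1$ this says $\Delta_t(\Delta_{s_1,s_2}\phi)=1$, and because the (multiplicative) difference operators on unimodular functions commute, $\Delta_{s_1,s_2}(\Delta_t\phi)=1$ for all $s_1,s_2\in\bA$.

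Finally, $|\Delta_t\phi|=1$ together with the vanishing of all its second differences means $\Delta_t\phi\in P_1(M)$, so by the identification $P_1(M)=\hat{\bA}_1\times\mathcal{C}$ we may write $\Delta_t\phi=\chi c$ with $\chi$ a linear character and $|c|=1$; therefore $\phi^{\sigma_2}=\phi^{\iota(t)}=\phi\cdot\Delta_t\phi=\phi\chi c$. For an arbitrary extension supplied by Lemma \ref{ext15}, note that any two such extensions differ by an element of $Q=\hom(\hat{M}_2,P_1(M))$, which multiplies $\phi$ by an element of $P_1(M)=\hat{\bA}_1\times\mathcal{C}$, so the conclusion is unaffected. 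The one point requiring care is the reduction in the first paragraph, namely that one is entitled to enlarge $\mathcal{B}'$ to contain $\mathcal{B}_2$ and that the resulting shift $\iota(t)$ then genuinely acts trivially on all of $\mathcal{B}_2$ (not merely on the part of it already seen by $\mathcal{B}'$); once this is granted, the rest is a routine identity between difference operators.
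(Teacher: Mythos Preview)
Your proof is correct and follows essentially the same route as the paper: enlarge the separable $\sigma$-algebra $\mathcal{B}'$ from Lemma~\ref{ext15} to contain $\mathcal{B}_2$, use $\sigma\in\ker(T)$ to conclude that the chosen shift $\iota(t)$ acts trivially on $\mathcal{B}_2$, and then deduce $\Delta_{s_1,s_2,t}\phi=1$ so that $\Delta_t\phi\in P_1(M)$. The only cosmetic difference is that the paper absorbs the ambiguity among extensions into a factor $\chi_1 c_1$ at the outset, whereas you handle it at the end via the observation that any two extensions differ by an element of $Q$; the caveat you flag about enlarging $\mathcal{B}'$ is precisely what the paper does and is fully justified by Lemma~\ref{linrest}.
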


\begin{proof} We learn from the proof of lemma \ref{ext15} that the action of every extension $\sigma_2$ of $\sigma$ on $\phi$ is of the form $\phi^{\sigma_2}=\phi\Delta_t\phi\chi_1 c_1$ for an arbitrary element $t$ which acts the same way on $\mathcal{B}'$ as $\sigma$.
Let us consider the $\sigma$-algebra generated by $\mathcal{B}'$ and $\mathcal{B}_2$. This is still a separable $\sigma$ algebra so we can find a $t$ which acts on this in the same way as $\sigma$.
It is enough to prove that $\Delta_t\phi$ is also of the form $\chi_1c_1$. In other words $\Delta_{t_1,t_2,t}\phi=1$ for every $t_1,t_2$.
This follows from $\Delta_{t_1,t_2,t}\phi=\Delta_t(\Delta_{t_1,t_2}\phi)$ because $\Delta_{t_1,t_2}\phi$ is measurable in $\mathcal{B}_2$ and so the shift by $t$ is acts trivially on it.
\end{proof}

\begin{corollary} Let $\sigma$ be an element in $\ker(T)$ then $\sigma$ has an extension $\sigma_2$ to $G(M)$ such that $\sigma_2$ stabilizes $\phi$.
\end{corollary}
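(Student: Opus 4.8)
The approach is to start from an arbitrary extension of $\sigma$ guaranteed by the previous results and then correct it inside the kernel $Q$ of the projection $G(M)\to G(M_1)$ until $\phi$ is fixed.

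First I would invoke Lemma \ref{ext15}, together with the remark following it (which removes the hypothesis that the module of $\phi$ have infinite order), to obtain some extension $\sigma_1\in G(M)$ of $\sigma$. Since $\sigma\in\ker(T)$, Lemma \ref{kertesz} then gives $\phi^{\sigma_1}=\phi g$ with $g=\chi c$, where $\chi$ is a linear character of $\bA$ and $|c|=1$; note that $g\in P_1(M)$ and, because $\mathcal B\supseteq\mathcal F_1$, also $g\in M_1$. Next I would produce a correcting automorphism $\rho\in Q$ with $\phi^{\rho}=\phi g^{-1}$. Recall from Lemma \ref{alsoszint} and Corollary \ref{alsoszint2} that $Q$ is canonically $\hom(\hat M_2,P_1(M))$, that $\hat M_2$ is the cyclic group generated by the module $a$ of $\phi$, and that an element of $Q$ acts on $\phi$ by $\phi\mapsto\phi\,\tau(a)$ for the corresponding homomorphism $\tau$. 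Thus I need a homomorphism $\hat M_2\to P_1(M)$ sending $a$ to $g^{-1}$, after which Lemma \ref{alsoszint} upgrades it to the desired $\rho\in G(M)$.

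If $a$ has infinite order there is nothing to check. If $a$ has finite order $o=o(\phi)$, such a homomorphism exists exactly when $g^{o}=1$, and this is the key point. Using multiplicativity of $\sigma_1$ one has
\[
g^{o}=(\phi^{\sigma_1})^{o}/\phi^{o}=(\phi^{o})^{\sigma_1}/\phi^{o},
\]
and $\phi^{o}$ lies in the trivial $\hat M_2$-module, hence in $M_1$, so $(\phi^{o})^{\sigma_1}=(\phi^{o})^{\sigma}$; since the dual support of $\phi^{o}$ is contained in the type $T$ while $\sigma\in\ker(T)$, we get $(\phi^{o})^{\sigma}=\phi^{o}$ and therefore $g^{o}=1$, so $\rho$ exists.

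Finally I would set $\sigma_2=\sigma_1\rho$. As $\rho$ is trivial on $M_1$ and $\sigma_1$ preserves $M_1$, the automorphism $\sigma_2$ restricts to $\sigma$ on $M_1$, so it is an extension of $\sigma$, and $\sigma_2\in G(M)$ because $G(M)$ is a group. Since $g\in M_1$ is fixed by $\rho$,
\[
\phi^{\sigma_2}=(\phi g)^{\rho}=\phi^{\rho}\,g=\phi g^{-1}g=\phi ,
\]
so $\sigma_2$ stabilizes $\phi$, as required. The step I expect to be the main obstacle is the verification that $g^{o(\phi)}=1$ when the module of $\phi$ has finite order, i.e. that the dual support of $\phi^{o(\phi)}$ lies in the type $T$; this is precisely where the hypothesis $\sigma\in\ker(T)$ is genuinely used, and it is vacuous in the zero-characteristic setting of \cite{Sz2} where $\hat{\bA}_2$, and hence $\hat M_2$, is torsion free.
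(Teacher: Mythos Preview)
Your approach matches the paper's one-sentence proof exactly: take an extension via Lemma~\ref{ext15}, apply Lemma~\ref{kertesz} to obtain $\phi^{\sigma_1}=\phi\chi c$, and then multiply by the element of $Q$ that sends $\phi\mapsto\phi\chi^{-1}c^{-1}$. Your extra analysis of the finite-order case goes beyond what the paper does (it simply asserts the correcting element of $Q$ exists), but note that your assertion that the dual support of $\phi^{o(\phi)}$ lies in $T$ is not established anywhere in the paper and would itself require justification.
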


\begin{proof} Let us first consider an arbitrary extension and then multiply it by the element in $Q$ which takes $\phi$ to $\phi\chi^{-1}c^{-1}$.
\end{proof}

\begin{corollary}\label{stabchar} The image of $\stab(G(M),\phi)$ under the homomorphism $G(M)\rightarrow G(M_1)$ is exactly $\ker(T)$ and furthermore $\stab(G(M),\phi)\cap Q=1$.
\end{corollary}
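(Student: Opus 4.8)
\proof
The statement has two parts, and the second is where the work lies. For $\stab(G(M),\phi)\cap Q=1$: every $q\in Q$ restricts trivially to $M_1$ (by the definition of $Q$ in the quadratic case), so Corollary~\ref{alsoszint2} provides a homomorphism $\tau_q\colon\hat M_2\to P_1(M)$ with $f^q=f\,\tau_q(a)$ whenever $f$ lies in a module $a\in\hat M_2$. Since $\hat M_2$ is cyclic, generated by the module of $\phi$, and $\phi^q=\phi$, the value $\tau_q$ takes on that generator is $1$; hence $\tau_q$ is trivial and $q$ is the identity automorphism.

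For the image: every element of $\ker(T)$ already lies in the image of $\stab(G(M),\phi)$ by the preceding corollary, so only the reverse inclusion needs an argument. Given $\sigma\in\stab(G(M),\phi)$ with image $\sigma_1\in G(M_1)=\hom(\hat\bA_1,\mathcal{C})$, the plan is to show that $\sigma_1$ fixes each function $\Delta_{t_1,t_2}\phi$. Since $\sigma_1$ acts on a linear character $\chi$ by $\chi^{\sigma_1}=\sigma_1(\chi)\chi$, fixing $\Delta_{t_1,t_2}\phi$ forces $\sigma_1(\chi)=1$ for every $\chi$ in its dual support, and as $T$ is generated by these dual supports this is exactly $\sigma_1\in\ker(T)$. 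To get the required identity I would, for each $t\in\bA$, use the commutator $[\iota(t),\sigma]$: because $G(M)$ is $2$-nilpotent and $Z$ contains the commutator subgroup, $[\iota(t),\sigma]\in Z$, so it acts on $\phi$ by multiplication by a constant $c_t\in\mathcal{C}$. Unwinding $\phi^{[\iota(t),\sigma]}$ — using $\phi^\sigma=\phi$, that $\sigma$ restricts to $\sigma_1$ on $M_1$ (and $\Delta_t\phi\in M_1$, as $\phi$ is a second order character), and that $\iota(t)$ and $\sigma_1$ commute because $G(M_1)$ is abelian — should give $(\Delta_t\phi)^{\sigma_1}=c_t\,\Delta_t\phi$, i.e. $\sigma_1$ merely rescales $\Delta_t\phi$. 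Then, writing $\Delta_{t_1,t_2}\phi=(\Delta_{t_1}\phi)^{\iota(t_2)}\,\overline{\Delta_{t_1}\phi}$ and applying $\sigma_1$, the factors $c_{t_1}$ and $\overline{c_{t_1}}$ cancel, so $\sigma_1$ fixes $\Delta_{t_1,t_2}\phi$, completing the plan.

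The step I expect to be the obstacle is this middle one: recognizing that the commutators $[\iota(t),\sigma]$ are the right objects to exploit, that they land in $Z$ and hence act on $\phi$ through nothing more than a constant, and that this constant ambiguity is annihilated once one passes from $\Delta_t\phi$ to the genuine generators $\Delta_{t_1,t_2}\phi$ of the group $T$. The remaining ingredients — that an algebra automorphism of $M_1$ fixing a function must fix each character in its dual support, and the bookkeeping with right actions and the exact sequence $0\to Q\to G(M)\to G(M_1)\to 0$ — are routine. \qed
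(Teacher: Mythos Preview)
Your proof is correct, and for the hard containment (image $\subseteq\ker(T)$) it takes a genuinely different route from the paper's. The paper argues by going back through the explicit extension construction of Lemmas~\ref{ext15} and~\ref{kertesz}: every lift of $\sigma\in G(M_1)$ to $G(M)$ acts on $\phi$ as $\phi\mapsto\phi\,(\Delta_t\phi)\,\chi c$ for a shift $t$ that agrees with $\sigma$ on the relevant separable $\sigma$-algebra, so a lift stabilizing $\phi$ would force $\Delta_t\phi$ to be linear, equivalently $\Delta_t(\Delta_{t_1,t_2}\phi)\equiv 1$ for all $t_1,t_2$; one then observes this fails as soon as $\sigma(a)\neq 1$ for some $a$ in the dual support of some $\Delta_{t_1,t_2}\phi$. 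Your argument bypasses the extension machinery entirely and works directly with the commutator structure: the single fact $[\iota(t),\sigma]\in Z$ yields $(\Delta_t\phi)^{\sigma_1}=c_t\,\Delta_t\phi$ in one line, and the spurious constants cancel upon passing to $\Delta_{t_1,t_2}\phi$. This is cleaner and more conceptual; the only cost is that it leans on the lemma that the commutator subgroup of $G(M)$ lies in $Z$, which itself was proved via the short exact sequence for $G(M)$. For the statement $\stab(G(M),\phi)\cap Q=1$ the two arguments coincide --- yours is simply the unpacked version of the paper's one-line remark that at most one extension of a given $\sigma\in G(M_1)$ can fix $\phi$.
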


\begin{proof} The second statement is trivial sice only one extension of each $\sigma\in G_1(M)$ can be good. To see the first statement assume that $\sigma$ is not in $\ker(T)$. Then there is an element $a$ in $T$ such that $\sigma(a)\neq 0$ and the $a$ component of $\Delta_{t_1,t_2}\phi$ is not zero for some $t_1,t_2$. This implies that $\Delta_t(\Delta_{t_1,t_2}\phi)$ can't be the constant $1$ function for any $t$ acting in the same way on $\mathcal{B}_2$ as $\sigma$.
\end{proof}

Our goal in this chapter is to extract a group from $G(M)$ which has a ``standard'' (the opposite of non-standard) structure (it will be a locally compact Hausdorff topological group).
We denote by $W$ the centralizer of $\stab(G(M),\phi)$ is $G(M)$.
It will out that $W_0=W/\stab(G(M),\phi)$ which is a section of $G(M)$ is a standard topological group which is crucial in the study of our situation.

Recall that $T$ is the type of $\phi$.

\begin{lemma}\label{wchar} The group $W$ consists of those elements $\sigma\in G(M)$ for which $\Delta_\sigma \phi$ is measurable in the $\sigma$-algebra $\mathcal{B}_2$ generated by the rank one modules in $T$.
\end{lemma}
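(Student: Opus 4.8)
The plan is to characterize membership in $W$ (the centralizer of $\stab(G(M),\phi)$) by analyzing how an element $\sigma\in G(M)$ must interact with the large ``gauge'' subgroup $\stab(G(M),\phi)$. Recall that by Corollary \ref{stabchar}, the image of $\stab(G(M),\phi)$ under $G(M)\to G(M_1)$ is exactly $\ker(T)$, and $\stab(G(M),\phi)\cap Q=1$, so $\stab(G(M),\phi)$ is a lift of $\ker(T)$ into $G(M)$ that fixes $\phi$. The key object to track is $\Delta_\sigma\phi=\phi^\sigma\overline\phi\in M_1$; since $G(M)$ is $2$-nilpotent, commutators land in the center $Z$, and membership in $W$ should translate into the requirement that the linear character part of $\Delta_\sigma\phi$ be ``$T$-measurable'', i.e. its dual support lies in $T$.

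\textbf{Step 1 (one direction).} First I would show that if $\Delta_\sigma\phi$ is measurable in $\mathcal{B}_2$ (equivalently its dual support lies in $T$), then $\sigma$ centralizes $\stab(G(M),\phi)$. Take $\rho\in\stab(G(M),\phi)$, so $\phi^\rho=\phi$ and $\rho|_{M_1}\in\ker(T)$. The commutator $[\sigma,\rho]$ lies in $Z$ (center, $2$-nilpotency), so to show it is trivial it suffices to evaluate its action on $\phi$, since $Z$ acts trivially on $M_1$ and any element of $G(M)$ is determined by its restriction to $M_1$ together with its value on $\phi$. A direct computation: $\phi^{[\sigma,\rho]}=\phi\cdot\big((\Delta_\sigma\phi)^\rho\overline{\Delta_\sigma\phi}\big)\cdot(\text{correction from }\rho\text{ on }M_1)$, and because $\Delta_\sigma\phi$ has dual support in $T$ while $\rho|_{M_1}$ kills $T$, we get $(\Delta_\sigma\phi)^\rho=\Delta_\sigma\phi$ up to a constant, and the constant cancels. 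Hence $[\sigma,\rho]$ fixes $\phi$ and is in $Z$, so it is trivial; thus $\sigma\in W$.

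\textbf{Step 2 (converse).} Conversely, suppose $\Delta_\sigma\phi$ has a nontrivial dual component at some $a\notin T$. Using the description of $G(M)$ from Lemma \ref{ext15} and the fact (Corollary \ref{stabchar}) that $\ker(T)$ is the image of $\stab(G(M),\phi)$, I would produce an element $\rho\in\stab(G(M),\phi)$ whose image $\rho|_{M_1}\in\ker(T)$ is a homomorphism $\hat{\bA}_1\to\mathcal{C}$ that does not vanish at $a$. Since $a\notin T$ and $\ker(T)$ consists of \emph{all} homomorphisms vanishing on $T$, such a homomorphism exists precisely because $a$ is independent of $T$ (here one uses divisibility/injectivity of $\mathcal{C}$, as in the linear case: one chooses a value at $a$ not forced by $T$). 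Then $(\Delta_\sigma\phi)^\rho\neq\Delta_\sigma\phi$ (the $a$-component gets multiplied by $\rho(a)\neq 1$), so $\phi^{[\sigma,\rho]}\neq\phi$, giving a nontrivial commutator; hence $\sigma\notin W$.

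\textbf{Main obstacle.} I expect the delicate point to be Step 2: carefully exhibiting an element of $\stab(G(M),\phi)$ (not merely of $\ker(T)\subseteq G(M_1)$) that detects the offending dual component, and making sure the bookkeeping of constants and central elements in the non-split extension $0\to\hat{\bA}_1\times\mathcal{C}\to G(M)\to\hom(\hat{\bA}_1,\mathcal{C})\to 0$ does not accidentally kill the commutator. The rest is a routine computation with $\Delta_\sigma$, $2$-nilpotency, and the fact that an automorphism in $G(M)$ is determined by its restriction to $M_1$ together with its value on the single generator $\phi$.
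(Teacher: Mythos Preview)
Your approach is correct and matches the paper's proof in substance. Both directions reduce to the same criterion: $\sigma$ commutes with every $\rho\in\stab(G(M),\phi)$ if and only if $(\Delta_\sigma\phi)^\rho=\Delta_\sigma\phi$ for all such $\rho$, which (since the image of $\stab(G(M),\phi)$ in $G(M_1)$ is exactly $\ker(T)$ by Corollary~\ref{stabchar}) is equivalent to the dual support of $\Delta_\sigma\phi$ lying in $T$, i.e.\ measurability in $\mathcal{B}_2$.

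The paper's execution is slightly more streamlined than yours. Rather than passing through the commutator $[\sigma,\rho]$ and invoking $2$-nilpotency and the description of $Z$, it simply compares $\phi^{\sigma\rho}$ and $\phi^{\rho\sigma}$ directly: since $\rho$ fixes $\phi$, one has $\phi^{\rho\sigma}=\phi\cdot\Delta_\sigma\phi$ and $\phi^{\sigma\rho}=\phi\cdot(\Delta_\sigma\phi)^\rho$, and agreement on $M_1$ is automatic because $G(M_1)=\hom(\hat{\bA}_1,\mathcal{C})$ is abelian. In particular the ``correction from $\rho$ on $M_1$'' and the ``constant that cancels'' that you anticipate in Step~1 simply do not appear: when $\Delta_\sigma\phi$ has dual support in $T$ and $\rho|_{M_1}\in\ker(T)$, one gets $(\Delta_\sigma\phi)^\rho=\Delta_\sigma\phi$ on the nose. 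Likewise your ``main obstacle'' in Step~2 dissolves: you only need to compare the two compositions on $\phi$, and the non-splitness of the extension plays no role in that comparison. The construction of the detecting element $\rho$ is exactly as you describe, using that $\ker(T)$ separates points outside $T$ (divisibility of $\mathcal{C}$), and the paper cites Corollary~\ref{stabchar} for this without further comment.
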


\begin{proof} First we prove that if $\sigma$ centralizes $W$ then $\Delta_\sigma\phi$ is measurable in $\mathcal{B}_2$. Assume by contradiction that $\Delta_\sigma\phi$ is not measurable in $\mathcal{B}_2$. Then by corollary \ref{stabchar} there is a $\sigma_2$ in $\stab(G(M),\phi)$ such that $(\Delta_\sigma\phi)^\sigma_2\neq\Delta_\sigma \phi$. Now $\phi^{\sigma\sigma_2}=\phi(\Delta_\sigma\phi)^{\sigma_2}$ is not the same as $\phi^{\sigma_2\sigma}=\phi\Delta_\sigma\phi$ and so $\sigma$ can't be in the centralizer.

A concrete calculation shows the other containment.
If $\Delta_\sigma\phi$ is measurable in $\mathcal{B}_2$ then $(\Delta_\sigma\phi)^{\sigma_2}=\Delta_\sigma\phi$ and so $\phi^{\sigma\sigma_2}=\phi^{\sigma_2\sigma}$.
\end{proof}

\medskip

Let $T^*=\{(\chi,c)|\chi\in T,c\in\mathcal{C}\}\subset Q$.

\begin{corollary} $W\cap Q=T^*$
\end{corollary}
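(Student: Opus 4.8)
The plan is to deduce the corollary immediately from Lemma~\ref{wchar} together with the explicit description $Q=\hom(\hat M_2,P_1(M))$ furnished by Lemma~\ref{alsoszint} and Corollary~\ref{alsoszint2}. First I would recall that an element $\sigma\in Q$ acts trivially on $M_1$ and is therefore determined by the single value $\phi^\sigma$; by Corollary~\ref{alsoszint2} we have $\phi^\sigma=\phi\cdot p$ for some $p\in P_1(M)=\hat{\bA}_1\times\mathcal{C}$, and writing $p=\chi c$ with $\chi$ a linear character and $c$ a constant of modulus one, the assignment $\sigma\mapsto(\chi,c)$ is precisely the identification of $Q$ with (a subgroup of) $\hat{\bA}_1\times\mathcal{C}$ under which $T^*\subset Q$ is defined.

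The one computation needed is the following: for $\sigma=(\chi,c)\in Q$, since $|\phi|\equiv 1$,
$$\Delta_\sigma\phi=\phi^\sigma\overline{\phi}=\phi\chi c\,\overline{\phi}=\chi c,$$
so $\Delta_\sigma\phi$ is simply the degree-one function $\chi c$, and in particular it does not involve $\phi$ at all. Now I would invoke Lemma~\ref{wchar}: $\sigma$ lies in $W$ if and only if $\Delta_\sigma\phi$ is measurable in the $\sigma$-algebra $\mathcal{B}_2$ spanned by the rank one modules in $T$. Hence $\sigma=(\chi,c)$ lies in $W\cap Q$ if and only if $\chi c\in L_\infty(\mathcal{B}_2)$.

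It remains to observe that $\chi c$ is $\mathcal{B}_2$-measurable exactly when $\chi\in T$. Indeed, a constant is measurable in every $\sigma$-algebra and $c$ is an invertible scalar, so $\chi c$ is $\mathcal{B}_2$-measurable iff the linear character $\chi$ is; and by the structure of $\mathcal{F}_1$-measurable functions (their decomposition into linear characters, \cite{Sz1}) a linear character is measurable in the $\sigma$-algebra generated by $T$ precisely when its rank one module lies in $T$. Combining these facts gives $W\cap Q=\{(\chi,c)\mid\chi\in T\}=T^*$, yielding both inclusions at once. The only points requiring care are this last measurability criterion and the degenerate case in which the module of $\phi$ has finite order: there $Q$ is a proper subgroup of $\hat{\bA}_1\times\mathcal{C}$ and $T^*$ must be read as $T^*\cap Q$, but the computation of $\Delta_\sigma\phi$ and the criterion of Lemma~\ref{wchar} go through verbatim, so nothing substantial changes.
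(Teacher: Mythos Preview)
Your proof is correct and follows exactly the route the paper intends: the corollary is stated in the paper without proof, as an immediate consequence of Lemma~\ref{wchar}, and your argument simply spells out that deduction by computing $\Delta_\sigma\phi=\chi c$ for $\sigma=(\chi,c)\in Q$ and observing that this lies in $\mathcal{B}_2$ precisely when $\chi\in T$. Your caveat about the finite-order case is appropriate given the paper's own informality on that point.
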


\begin{corollary} Every $\sigma\in G(M_1)$ has an extension $\sigma_2$ to an element in $G(M)$ such that $\sigma_2\in W$.
\end{corollary}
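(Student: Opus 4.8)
The plan is to take an arbitrary extension of $\sigma$ to $G(M)$ supplied by Lemma~\ref{ext15} (together with the remark following it, which removes the infinite-order hypothesis) and then to correct it by a suitable element of the normal subgroup $Q$ so that the result lies in $W$. First I would fix some $\sigma'\in G(M)$ with $\sigma'|_{M_1}=\sigma$. By Lemma~\ref{charpres} the automorphism $\sigma'$ preserves the rank one module of $\phi$, so $\Delta_{\sigma'}\phi=\phi^{\sigma'}\overline\phi$ is a function $h\in M_1$; by Lemma~\ref{deltasigma} its dual support is contained in a single coset $\chi_0 T$ of the type $T$ of $\phi$, and the representative $\chi_0$ may be chosen in $\hat{\bA}_1$.

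Next I would use the explicit description of $Q$ established above: every element of $Q$ has the form $(\chi,c)\in\hat{\bA}_1\times\mathcal{C}$, acts trivially on $M_1$, and sends $\phi$ to $\chi c\,\phi$. Put $q=(\chi_0^{-1},1)\in Q$ and $\sigma_2=\sigma' q$. Then $\sigma_2\in G(M)$ as a product of elements of $G(M)$, and $\sigma_2|_{M_1}=\sigma$ because $q$ restricts to the identity on $M_1$; moreover a one-line computation gives $\phi^{\sigma_2}=h\,\chi_0^{-1}\,\phi$, i.e.\ $\Delta_{\sigma_2}\phi=h\chi_0^{-1}$, whose dual support now lies in $T$. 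Hence $\Delta_{\sigma_2}\phi$ is measurable in the $\sigma$-algebra $\mathcal{B}_2$ generated by the rank one modules in $T$, and Lemma~\ref{wchar} gives $\sigma_2\in W$, which is the required extension.

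I do not expect a genuine obstacle: all the substantive content is already in Lemma~\ref{ext15}, Lemma~\ref{deltasigma}, Lemma~\ref{wchar} and in the structure of $Q$. The one point deserving a little care is the order of composition: the correction $q$ must be applied after $\sigma'$ (so that we form $\sigma' q$ rather than $q\sigma'$), since only then does $q$ fix the $M_1$-coefficient $h=\Delta_{\sigma'}\phi$ and multiply $\phi$ by the untwisted character $\chi_0^{-1}$; composing on the other side would introduce a $\sigma$-twist of $\chi_0$ on $M_1$ and complicate the bookkeeping. Checking that $\chi_0$ can be taken in $\hat{\bA}_1$, hence that $(\chi_0^{-1},1)$ really lies in $Q$, is immediate.
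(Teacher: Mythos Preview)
Your argument is correct and follows essentially the same route as the paper: take an arbitrary extension (Lemma~\ref{ext15}), observe via Lemma~\ref{deltasigma} that $\Delta_{\sigma'}\phi$ has dual support in a single coset of $T$, and then correct by the corresponding element of $Q$ so that Lemma~\ref{wchar} applies. The only difference is that you spell out more of the bookkeeping (the order of composition, the explicit form of $q$), whereas the paper compresses this into two lines.
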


\begin{proof} Let $\sigma_3$ be an arbitrary extension. We know that the dual support of $\Delta_{\sigma_3}\phi$ is a coset of $T$. Then there is a linear character $\chi$ such that the dual support of $\Delta_{\sigma_3}\phi\chi$ is a subset of $T$. This by lemma \ref{wchar} means that $\sigma_3$ multiplied by the element $(\chi,1)$ form $Q$ is an extension in $W$.
\end{proof}

\medskip

Summarizing our knowledge, we have learned that $W$ is a subgroup in $G(M)$ which maps surjectively onto $G(M_1)$ under the map $G(M)\rightarrow G(M_1)$. Furthermore the intersection of $W$ by $Q$ is $T^*$. Let $\tau$ denote the homomorphism  $W\rightarrow W/\stab(G(M),\phi)$. Obviously $\tau$ is an embedding restricted to $T^*$ because $\stab(G(M),\phi)\cap T^*$ is trivial. The group $\tau(T^*)$ is a normal subgroup in $W_0$. The factor group $W_0/\tau(T^*)$ is isomorphic to a compact abelian group $A$ which is the Pontrjagin dual of $T$.
Let us write $T^*$ in the form $T_0\times Z$ where $T_0$ is the group of linear characters representing the modules in $T$. In particular $T$ and $T_0$ are naturally isomorphic.
Since $Z$ is a compact subgroup of the unit circle we have that $\tau(T_0)$ is a co-compact subgroup in $W_0$.

Now we introduce the topology on $W_0$ that is relevant for us.
First of all Let us consider the separable Hilbert space of functions $\mathcal{H}_\phi=\{f\phi|f\in L_2(\hat{T})\}$. In this definition $f$ is a measurable function in $\mathcal{F}_1$ whose character decomposition contains only elements from $T$. The space of such functions is naturally isomorphic with $L_2(\hat{T})$.
Before stating the next lemma note that since $L_\infty(\mathcal{B})$ is dense in $L_2(\mathcal{B})$ and $G(M)$ is scalar product preserving, there is a natural unitary representation of $G(M)$ on $L_2(\mathcal{B})$.

\begin{lemma} The space $\mathcal{H}_\phi$ is invariant under the action of $W$. We denote the representation of $W$ on $\mathcal{H}_\phi$ by $\Xi:W\rightarrow U(\mathcal{H}_\phi)$. The kernel of $\Xi$ is $\stab(G(M),\phi)$ and so $\Xi$ induces a faithful representation $\Xi:W_0\rightarrow U(\mathcal{H}_\phi)$.
\end{lemma}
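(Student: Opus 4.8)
The plan is to establish the three claims in sequence: $W$-invariance of $\mathcal{H}_\phi$, unitarity of the restricted action, and the identification of the kernel. I would first record the elementary identity that gets used twice below: since $\phi$ is unimodular, $\Delta_\sigma\phi=\phi^\sigma\overline{\phi}$ gives $\phi^\sigma=\phi\,\Delta_\sigma\phi$ for every $\sigma\in G(M)$.

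For invariance, fix $\sigma\in W$ and a function $f\phi\in\mathcal{H}_\phi$, so $f$ is $\mathcal{F}_1$-measurable with character support in $T$ (more precisely, $f$ ranges over the $L_2$-closure of such). Then
\[(f\phi)^\sigma=f^\sigma\phi^\sigma=\bigl(f^\sigma\,\Delta_\sigma\phi\bigr)\phi,\]
so it suffices to check that $g:=f^\sigma\,\Delta_\sigma\phi$ again lies in $L_2(\hat T)$. By Lemma \ref{mipres} the subalgebra $M_1$ is $\sigma$-invariant, and by Lemma \ref{charpres} (with $i=1$) each one-dimensional module $\mathbb{C}\chi$, $\chi\in T$, is preserved set-wise; hence $f^\sigma$ still has character support in $T$, i.e.\ $f^\sigma$ is measurable in $\mathcal{B}_2$. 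Since $\sigma\in W$, Lemma \ref{wchar} gives that $\Delta_\sigma\phi$ is measurable in $\mathcal{B}_2$ as well — this is the one place where membership in $W$, rather than merely in $G(M)$, is used. As $\mathcal{B}_2$ is the $\sigma$-algebra spanned by the characters in $T$, the product of these two $\mathcal{B}_2$-measurable functions is $\mathcal{B}_2$-measurable, and $|\Delta_\sigma\phi|=1$ together with $\|f^\sigma\|_2=\|f\|_2$ gives $g\in L_2$; hence $g\in L_2(\hat T)$ and $(f\phi)^\sigma=g\phi\in\mathcal{H}_\phi$. Because $\sigma$ acts unitarily on $L_2(\mathcal{B})$ (it preserves scalar products), the restriction $\Xi(\sigma)$ is a unitary operator on the closed subspace $\mathcal{H}_\phi$, and $\Xi$ is a group homomorphism, being just the restriction of the $L_2$-representation of $G(M)$.

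For the kernel: if $\sigma\in\stab(G(M),\phi)$, then by Corollary \ref{stabchar} the image of $\sigma$ under $G(M)\to G(M_1)=\hom(\hat{\bA}_1,\mathcal{C})$ lies in $\ker(T)$, so, in the description of the linear case, $\chi^\sigma=\chi$ for every $\chi\in T$; hence $\sigma$ fixes every element of $L_2(\hat T)$ by linearity and $L_2$-continuity, and therefore fixes every $f\phi$, so $\sigma\in\ker\Xi$. Conversely, if $\sigma\in\ker\Xi$, then taking $f\equiv 1\in L_2(\hat T)$ and using that automorphisms fix the unit we get $\phi=(1\cdot\phi)^\sigma=\phi^\sigma$, so $\sigma\in\stab(G(M),\phi)$. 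Thus $\ker\Xi=\stab(G(M),\phi)$, and $\Xi$ descends to a faithful representation $\Xi\colon W_0=W/\stab(G(M),\phi)\to U(\mathcal{H}_\phi)$.

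The only step carrying real content is the invariance claim, and within it the point that $f^\sigma\,\Delta_\sigma\phi$ stays in $L_2(\hat T)$: this is exactly the conjunction of the module-preservation Lemma \ref{charpres}, the characterization of $W$ in Lemma \ref{wchar}, and the fact that $\mathcal{B}_2$ is the $\sigma$-algebra of $T$. Everything else is formal manipulation with unimodularity and the unitarity of the $L_2$-action, so I expect no serious obstacle beyond correctly marshalling those three inputs.
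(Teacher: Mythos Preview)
Your proof is correct and follows essentially the same route as the paper, which simply cites Lemma \ref{wchar} for the invariance of $\mathcal{H}_\phi$ and Corollary \ref{stabchar} for the kernel claim. You have spelled out the details the paper leaves implicit---in particular the observation that $f^\sigma$ retains character support in $T$ via Lemma \ref{charpres}, and the two inclusions for the kernel---but the key inputs and the logical structure are the same.
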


\begin{proof} Lemma \ref{wchar} shows the first statement. The second statement follows from lemma \ref{stabchar}.
\end{proof}

\begin{definition}[Topology on $W_0$] The topology on $W_0$ is given by the week operator topology on $\Xi(W_0)$.
\end{definition}

\subsection{Quadratic nil-patterns}

To be able to reproduce the separable structures appearing on $G(M)$ on other abelian groups we introduce structures that we call quadratic nil-pattern.
The use of these structures is that they make the investigation of $W_0$ independent from the big non-standard group $G(M)$ and also help in transporting our results to the finite case.

\begin{definition}[Quadratic nil-pattern] A (quadratic, elementary) nil-pattern $N$ of type $(T,Z)$ is locally compact Hausdorff group which is the element of the following short exact sequence
$$0\rightarrow T\times Z\rightarrow N\rightarrow \hat{T}\rightarrow 0.$$
and has the following properties:
\begin{enumerate}
\item $N,T,\hat{T},Z$ are topological groups and the arrows are continuous. The second arrow is a homeomorphism of $T\times Z$ with its image and the third is homeomorphism between the factor space $N/(T\times Z)$ and $\hat{T}$.
\item $N'\subseteq Z$
\item $Z$ is a compact subgroup of the circle group $\mathcal{C}$
\item $T$ is a countable abelian group with the discrete topology and $\hat{T}$ is the dual of $T$ with the usual compact topology.
\item the action of $\hat{T}$ on $T\times Z$ is given by $(a,b)^\chi=(a,b\chi(a))$
\end{enumerate}
\end{definition}

Note that since $\hat{T}$ and $Z$ are compact, the group $T$ is a co-compact subgroup of $N$. We will call the left coset space of $T$ the {\bf core of the nil-pattern}. We will denote the core by $C(N)$. The Haar measure induces a unique $N$ invariant probability measure on the core. Even though the core is not a group there is a surjective map $c:C(N)\rightarrow\hat{T}$ induced by the group homomorphism $N\rightarrow\hat{T}$. This is well defined since $T$ is in the kernel of this map. We call the map $c$ the {\bf first degree map}.
It also follows form the definition that $Z$ is in the center of $N$ and $N$ is two step nilpotent. If $T$ is trivial then we say that the {\bf nil-pattern is trivial}, if $T$ is finitely generated then we say that the {\bf nil-pattern is finitely generated}.
Two nil-classes are called isomorphic if the extensions are isomorphic.

\begin{remark} Since the group $Z$ is a compact subgroup of the circle group $\mathcal{C}$ (and most of the times it is isomorphic to $\mathcal{C}$, we usually omit $Z$ from the specification of the type of $N$, and we will just say that $N$ is of type $T$.
\end{remark}

\begin{remark} Nil-pattern of type $T$ can be classified by elements of the second co-homology group $H^2(\hat{T},T\times Z)$. Note that not every element in $H^2(\hat{T},T\times Z)$ correspond to a nil-pattern because of the extra conditions $Z\subseteq N'$ and the continuity of the arrows.
\end{remark}

The motivation for the definition of nil-pattern is that we are able to re-interpret them over other abelian groups and so each nil-pattern can be used to produce a class of nil-systems. To be more precise, for every homomorphism $T\rightarrow T_2$ there is a way of creating a nil-pattern out of $N$. We call these new nil-patterns {\bf interpretations} of $N$.

Let $\alpha:T\rightarrow T_2$ be a homomorphism into an abelian group $T_2$. Then alpha induces a map $\hat{\alpha}:\hat{T_2}\rightarrow\hat{T}$ and this map induces a map $\hat{\beta}$ from $H^2(\hat{T},T\times Z)$ to $H^2(\hat{T_2},T\times Z)$. On the other hand $\alpha$ induces a map $\beta$ from $H^2(\hat{T}_2,T\times Z)$ to $H^2(\hat{T}_2,T_2,\times Z)$. By composing $\hat{\beta}$ and $\beta$ we get a map
$$\alpha^*:H^2(\hat{T},T\times Z)\rightarrow H^2(\hat{T_2},T_2\times Z).$$

\begin{definition}[Interpretation] The image of the co-homology class describing the extension of $N$ under the map $\alpha^*$ will be called the the interpretation of the nil-pattern under the homomorphism $T\rightarrow T_2$. By abusing the notation we denote the interpretation of $N$ under the map $\alpha$ by $\alpha^*(N)$. As we will see later $\alpha$ induces a map from $C(\alpha^*(N))$ to $C(N)$ so its acts in the reverse way between the cores.
\end{definition}

\begin{definition}[Class] We say that a nil-pattern $N_2$ belongs to the class of $N$ if it is an interpretation on $N$ over some homomorphism.
\end{definition}

\subsection{Some group theory}

In this chapter we describe two product notion for groups that we will need later.

\noindent{\bf Sub-direct product over a joint factor:}~
Let $G_1$ and $G_2$ be two groups and two surjective homomorphisms
$\phi_1:G_1\rightarrow H$ and $\phi_2:G_2\rightarrow H$.
Then we define $G_1\times_H G_2$ as the subgroup
$$\{(a,b)|\phi_1(a)=\phi_2(b)~,a\in G_1,b\in G_2\}$$
of the direct product $G_1\times G_2$.

\bigskip

\noindent{\bf Amalgamated semi direct product:}~
Let $K$ be a group and $\phi$ be a homomorphism of $K$ into $\aut(N)$. Let $H$ be a group which has injective homomorphisms of the form $\phi_1:H\rightarrow N$ and $\phi_2:H\rightarrow K$.
Assume furthermore that for every $h\in H$ and $k\in K$ we have $$\phi_2(\phi_1^{-1}(\phi_1(h)^{\phi(k)}))=k^{-1}\phi_2(h)k.$$
Then there is a unique group $K\ltimes_H N$ which contains $K$ as a subgroup, $N$ as a normal subgroup, $G=KN$, $K\cap N$ is $H$ and $K$ acts on $N$ by $\phi$.
The group $G$ can be obtained by first taking the amalgamated product $K*_\phi N$ and then by factoring out the relations
$$\{k^{-1}nk=n^{\phi(k)}|n\in N,k\in K\}.$$

\medskip

\subsection{Interpretations and related maps}

\medskip

In this part we give a more explicit definition of the notion of an ``interpretation'' of a nil-pattern. The proof that the two kind of definitions are the same will be left to the reader. In the rest of the paper we use the definition worked out in this chapter.

Recall that if $N$ is a quadratic nil-pattern of type $T$ and $\alpha:T\rightarrow T_2$ is a homomorphism than it creates an interpretation of $N$ which is a nil-pattern of type $T_2$.
Note that if a map $\alpha:T_2\rightarrow\hat{T}$ is given then it induces a map $\hat{\alpha}:T\rightarrow \hat{T_2}$. In this situation we will say that the interpretation corresponding to $\alpha$ is the interpretation obtained from $\hat{\alpha}$ which is a nil pattern of type $\hat{T_2}$. This will be important later on.

Every homomorphism $\alpha:T\rightarrow T_2$ can be written as a composition of an epimorphism and a monomorphism. For this reason we divide our discussion into these two cases

\medskip

\noindent{\bf Epimorphisms:}~~

Assume that $\alpha:T\rightarrow T_2$ is a surjective map. This means that $T_2$ is a factor group of $T$ by the kernel $T_3$ of $\alpha$.

\begin{definition}[interpretation (factor)] Let $N$ be a nil-pattern of type $(T,Z)$, let $T_3$ be a subgroup of $T$ and let $N'$ be the pre-image of $\ker(T_3)$ (which is the dual of $T/T_2$) under the homomorphism $N\rightarrow\hat{T}$. Then $T_3$ is in the center of $N'$ and the group $N_2=N'/T_3$ is a nil-pattern of type $(T_2,Z)$ which is the interpretation of $N$ under the map $\alpha:T\rightarrow T_2$.
\end{definition}

Observe that $N_2$ is a factor group of $N'$ in a way that the corresponding normal subgroup is contained in $T$. It follows that $C(N_2)$ can be identified with the left coset space of $T$ in $N'$. This means that $C(N_2)$ is naturally embedded into $C(N)$. In other words an epimorphism $\alpha:T\rightarrow T_2$ induces a monomorphism $C(N_2)\rightarrow C(N)$ where $N_2$ is the interpretation of $N$ over $\alpha$.
We will see that the dual statement is also true when $\alpha$ is a monomorphism.


\medskip

\noindent{\bf Monomorphism:}~~

Assume that $\alpha:T\rightarrow T_2$ is an injective map. In this case we have a surjective map $\hat{\alpha}:\hat{T_2}\rightarrow \hat{T}$.
This means that $\hat{T}$ is a factor group of both $N$ and $\hat{T_2}$.
Let $K=N\times_{\hat{T}} \hat{T_2}$ be the sub-direct product over the common factor $\hat{T}$.
Let $M$ denote the group $T_2\times\mathcal{C}$ and $H$ denote the group $T\times\mathcal{C}$.
Let $\phi_1:H\rightarrow M$ and $\phi_2:H\rightarrow K$ be the natural embeddings.
Let $\tau:K\rightarrow\hat{T_2}$ denote the projection to the ``second coordinate'' in $K$. We define the action $\phi:K\rightarrow\aut(M)$, as usual, by
$$k^{-1}(t,c)k=(t,c\tau(k)(t)).$$
Let $N_2=K\ltimes_H M$ be the amalgamated semidirect product. Then $M$ is a normal subgroup in $N_2$. The factor group is isomorphic to $K/N$ which is $\hat{T_2}$. It follows that $N_2$ is a quadratic nil-pattern of type $T_2$.

\begin{definition}[interpretation (embedding)] The amalgamated semidirect product $N_2=K\ltimes_H M$ is called the interpretation of $N$ under the map $\alpha$.
\end{definition}

Observe that the only difference between $K$ and $N_2$ is that $T_2$ is increased to $T$. In other words there is a natural bijection between the left coset space of $T_2$ in $N_2$ and the left coset space of $T$ in $K$. On the other hand $K$ has a homomorphism to $N$ whose kernel has trivial intersection with $T_2$. This means that this homomorphism induces a map between the left coset spaces of $T_2$ in $K$ and the left coset space of $T_2$ in $N$. Summarizing this we get an epimorphism from $C(N_2)$ to $C(N)$. Note that the fibres of this epimorphism are imprimitivity domains of the action of $K$ on $C(N_2)$.


\medskip

\noindent{Lift:}~Let $A$ be a finite group and let $\psi:A\rightarrow C(N)$ be a map such that its composition with the first degree map $c:C(N)\rightarrow\hat{T}$ is a homomorphism $\alpha$.
In this case the dual map $\hat{\alpha}:T\rightarrow\hat{A}$ creates an interpretation $N_2$ of type $\hat{A}$. As we have seen, this interpretation induces a map $g:C(N_2)\rightarrow C(N)$.
\begin{definition} We define the {\bf lift} $\psi_2:A\rightarrow C(N_2)$ of $\psi$ as the unique map satisfying the next two conditions.
\begin{enumerate}
\item $\psi_2\circ g=\psi$
\item $\psi_2$ composed with the first order map of $C(N_2)$ is the identity map on $A$.
\end{enumerate}
\end{definition}

\subsection{Quadratic nil-morphisms}

The general concept of a quadratic nil-morphism was introduced in the introduction.
Here we study nil-morphisms into nil-manifolds coming from nil-patterns.

\begin{definition} We say that a nil-morphism is normalized if the unit element of $A$ is mapped into the coset of the unit element in $N$.
\end{definition}

Let us fix an elementary quadratic nil-pattern $0\rightarrow T\times Z\rightarrow N\rightarrow \hat{T}\rightarrow 0$. Let $A$ be a finite abelian group and let $\psi:A\rightarrow C(N)$ be a normalized nil-morphism. Let $c:C(N)\rightarrow \hat{T}$ denote the first degree map. It is obvious from the definitions that the composition $\alpha=\psi\circ c$ is a homomorphism from $A$ to $\hat{T}$. The map $\hat{\alpha}:T\rightarrow\hat{A}$ gives an interpretation $N_2$ of $N$ of type $\hat{A}$. Let $\psi_2$ denote the lift of $\psi$ to $C(N_2)$.

\begin{lemma}\label{liftnilhom} The map $\psi_2$ is a normalized nil-morphism.
\end{lemma}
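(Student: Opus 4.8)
The plan is to show directly that $\psi_2 : A \to C(N_2)$ satisfies the defining property of a quadratic nil-morphism, namely that for every $a \in A$ there is $n_a \in N_2$ and a homomorphism $\chi_a : A \to Z$ with $\psi_2(a+b) = \chi_a(b)\, n_a\, \psi_2(b)$ for all $b \in A$. Recall the two characterizing conditions of the lift: $\psi_2 \circ g = \psi$ (where $g : C(N_2) \to C(N)$ is the map induced by the interpretation $N_2$ of $N$ under $\hat\alpha : T \to \hat A$), and $\psi_2$ composed with the first degree map $c_2 : C(N_2) \to \widehat{\hat A} = A$ is the identity on $A$. The first step is to translate the nil-morphism property of $\psi$ upstairs: since $\psi$ is a normalized quadratic nil-morphism there are $m_a \in N$ and homomorphisms $\eta_a : A \to Z$ with $\psi(a+b) = \eta_a(b)\, m_a\, \psi(b)$; I want to manufacture $n_a \in N_2$ lying over $m_a$ (under the homomorphism $N_2 \to N$ coming from the embedding-type interpretation description via $K = N \times_{\hat T}\hat{\hat A}$) together with a matching $\chi_a$.

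Next I would use the explicit amalgamated-semidirect-product description $N_2 = K \ltimes_H M$, $K = N \times_{\hat T}\hat A$ wait — here the relevant sub-direct product is $N \times_{\hat T}\widehat{\hat A}$, and since $\alpha : A \to \hat T$ is a homomorphism its dual gives a canonical section-type element of $\widehat{\hat A} = A$, so for each $a\in A$ there is a canonical element $\tilde a \in K$ projecting to $m_{?}$ — more precisely, use that $\psi_2(b)$ lives in the coset space of $T_2 = \hat A$ in $N_2$, and that the $K$-action on $C(N_2)$ has the fibres of $g$ as imprimitivity domains. So $g$ is $K$-equivariant for the map $K \to N$, and $c_2 : C(N_2)\to A$ is also $K$-equivariant in the appropriate sense. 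Now define $n_a$ to be the element of $N_2$ acting on $C(N_2)$ that (i) maps to $m_a \in N$ under $N_2 \to N$ and (ii) induces translation by $a$ on the $A = \widehat{\hat A}$ factor. Condition (i) is consistent with (ii) precisely because $\psi\circ c = \alpha$ is a homomorphism — this is the compatibility that makes $n_a$ exist inside $N_2$ rather than merely in $N$. Then set $\chi_a = \eta_a$ viewed as a homomorphism $A \to Z \subseteq N_2$ (the copy of $Z$ inside $M = \hat A \times \mathcal C$, which is central in $N_2$).

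With $n_a$ and $\chi_a$ in hand, I would verify $\psi_2(a+b) = \chi_a(b)\,n_a\,\psi_2(b)$ by checking the equality after applying $g$ and after applying $c_2$, and invoking uniqueness of the lift. Applying $g$: $g(\psi_2(a+b)) = \psi(a+b) = \eta_a(b)\,m_a\,\psi(b) = \chi_a(b)\,m_a\,g(\psi_2(b)) = g(\chi_a(b)\,n_a\,\psi_2(b))$, using $K$-equivariance of $g$ and that $g$ sends the central $Z\subseteq N_2$ onto the central $Z\subseteq N$ compatibly with $\eta_a = \chi_a$. Applying $c_2$: both sides map to $(a+b) \in A$, since $c_2(n_a \cdot x) = a + c_2(x)$ by construction of $n_a$, $c_2$ kills $\chi_a(b)\in Z$, and $c_2(\psi_2(b)) = b$ by the lift condition. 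Since $g \times c_2$ separates points of $C(N_2)$ (the two interpretations recover both $N$ and the $\hat A$-grading), this forces the claimed identity. Normalization of $\psi_2$ is immediate: $\psi$ is normalized and both lift conditions preserve the unit coset, so $\psi_2(0)$ is the unit coset in $C(N_2)$.

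The main obstacle I expect is pinning down, rigorously, the existence and uniqueness of $n_a \in N_2$ lying over $m_a$ and inducing the correct translation on the $\widehat{\hat A}$-factor — i.e. checking that the compatibility hypothesis in the amalgamated semidirect product construction ($\phi_2(\phi_1^{-1}(\phi_1(h)^{\phi(k)})) = k^{-1}\phi_2(h)k$) is exactly what guarantees this, and that the element $m_a$ has a well-defined image in the sub-direct product $N\times_{\hat T}\widehat{\hat A}$ because $\psi\circ c = \alpha$. Everything else is a routine equivariance bookkeeping once the structure maps $g$ and $c_2$ and their naturality are set up; I would lean on the explicit descriptions in the ``Interpretations and related maps'' subsection rather than the cohomological definition.
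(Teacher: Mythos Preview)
Your approach is essentially the same as the paper's: lift the translating elements $m_a \in N$ (the paper calls them $g_a$) to elements $g'_a$ in the sub-direct product $K = N \times_{\hat T} A$, and then verify the nil-morphism identity for $\psi_2$ using those lifts together with the same central characters $\chi_a$. Your verification via ``check after applying $g$ and after applying $c_2$, then use that $g\times c_2$ separates points of $C(N_2)$'' is a clean way to make rigorous what the paper dismisses with ``it is easy to check.''

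One point to be careful about: the amalgamated semidirect product description $N_2 = K \ltimes_H M$ with $K = N\times_{\hat T} A$ is the \emph{monomorphism} construction, valid only when $\hat\alpha : T \to \hat A$ is injective, equivalently when $\alpha : A \to \hat T$ is surjective. The paper handles this by an explicit preliminary reduction: it replaces $N$ by the sub-nil-pattern $N_3 = N'/T_3$ sitting over the image $\hat{T_2} = \alpha(A)$, observes that $\psi$ already lands in $C(N_3)\subseteq C(N)$, and thereby reduces to the surjective case. You should insert this reduction (or equivalently argue that $m_a$ necessarily lies in the preimage $N'$ of $\alpha(A)$, since $m_a T = \eta_a(0)^{-1}\psi(a)$ and $\psi$ is normalized), otherwise the element $(m_a,a)$ need not lie in the sub-direct product and your $n_a$ is not yet defined. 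Once that is done, your argument and the paper's coincide.
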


\begin{proof} Let $\hat{T_2}\subseteq\hat{T}$ denote the image of $\alpha$ and let $N'$ be the primage of $\hat{T_2}$ under the homomorphism $N\rightarrow \hat{T}$. Let $T_2$ denote the dual of $\hat{T_2}$. With this notation the range of $\psi$ consists of cosets of $T$ contained in $N'$. As we have seen, the group $N_3=N'/T_2$ is a nil pattern of type $T_2$. It is easy to see from the definition that $\psi$ restricted to $C(N_3)$ is also a quadratic nil character. This means that without loss of generality we can assume that $\alpha$ is surjective.

New let $K$ denote the sub-direct product of $N$ with $A$ over the common factor $\hat{T}$.
The space $C(N_2)$ is the same as the left coset space of $T$ in $K$.
Let $g'_a$ denote the pre-image of $g_a$ under the homomorphism $K\rightarrow N$ which maps to $a$ under the homomorphism $K\rightarrow A$. It is easy to check that
$\psi_2(a+b)=\chi_a(b)g'_a\psi_2(b)$ and so $\psi_2$ is a nil-homomorphism.
\end{proof}

\begin{proposition}\label{nilhomhom} There is a homomorphism $\phi_3:A\rightarrow N_2$ such that $\phi$ composed with the factor map $N_2\rightarrow A$ is the identity map on $A$ and furthermore $\phi$ composed with the map $C(N_2)\rightarrow C(N)$ is equal to $\psi$. In particular $N_2$ is a split extension.
\end{proposition}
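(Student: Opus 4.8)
The plan is to write down $\phi_3$ explicitly in the amalgamated semidirect product model $N_2=K\ltimes_H M$ of the interpretation and then to deduce that it is a homomorphism almost for free, by observing that $N_2$ acts faithfully on $C(N_2)$ while each $\phi_3(a)$ acts there as the translation by $a$ coming from $\psi_2$.

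Following the proof of Lemma~\ref{liftnilhom}, I would first reduce to the case where $\alpha=c\circ\psi:A\to\hat{T}$ is surjective, so that $\hat{\alpha}:T\to\hat{A}$ is injective and $N_2$ is the ``embedding'' interpretation: $K=N\times_{\hat{T}}A$ with projections to $N$ and $\tau:K\to A$, $M=\hat{A}\times\mathcal{C}$, and $N_2=K\ltimes_H M$ with $K$-action $k^{-1}(\mu,c)k=(\mu,c\,\mu(\tau(k)))$. Recall from that proof that, writing $g'_a=(g_a,a)\in K$ for the unique lift to $K$ of the element $g_a\in N$ with $\tau(g'_a)=a$, one has $\psi_2(a+b)=\chi_a(b)\,g'_a\,\psi_2(b)$ for all $a,b$; in particular $\psi_2(b)=g'_b\cdot o$, where $o\in C(N_2)=N_2/\hat{A}$ is the base coset. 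I would then set
$$\phi_3(a):=g'_a\cdot(\chi_a,1)\in N_2,\qquad (\chi_a,1)\in\hat{A}\times\{1\}\subseteq M.$$
Since $(\chi_a,1)$ lies in $M=\ker(N_2\to A)$ and $\tau(g'_a)=a$, the composite of $\phi_3$ with the factor map $N_2\to A$ is the identity on $A$; so the last sentence of the proposition (``$N_2$ is a split extension'') will follow once $\phi_3$ is shown to be a homomorphism.

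The two facts I would establish are: (i) the left translation action of $N_2$ on $C(N_2)=N_2/\hat{A}$ is faithful, because its kernel is the largest normal subgroup of $N_2$ contained in $\hat{A}$, and $g'_a{}^{-1}(\mu,1)g'_a=(\mu,\mu(a))$ lies in $\hat{A}$ for all $a\in A$ only when $\mu=1$; and (ii) $\phi_3(a)\cdot\psi_2(b)=\psi_2(a+b)$ for all $a,b$, since $(\chi_a,1)g'_b=g'_b(\chi_a,\chi_a(b))$ by the conjugation formula and, working modulo $\hat{A}$ and using that $\{1\}\times\mathcal{C}$ is central in $N_2$, this gives $\phi_3(a)\cdot\psi_2(b)=(1,\chi_a(b))\,g'_a g'_b\cdot o=\chi_a(b)\,g'_a\,\psi_2(b)=\psi_2(a+b)$. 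Granting these, the homomorphism property is immediate: a short normal form computation (write a coset representative as $km$ with $k\in K$, $m\in M$, and use $\pi_N(g_a^{-1}n)\in T\times Z$) shows every point of $C(N_2)$ has the form $z\cdot\psi_2(b)$ with $z\in\{1\}\times\mathcal{C}$ central and $b\in A$; then by (ii) applied twice, $\phi_3(a_1)\phi_3(a_2)\cdot z\psi_2(b)=z\,\psi_2(a_1+a_2+b)=\phi_3(a_1+a_2)\cdot z\psi_2(b)$, so $\phi_3(a_1)\phi_3(a_2)$ and $\phi_3(a_1+a_2)$ agree on all of $C(N_2)$ and hence coincide by (i). Finally $\phi_3$ composed with $N_2\to C(N_2)$ sends $a$ to $\phi_3(a)\cdot o=\psi_2(a)$, and one further composition with the induced map $g:C(N_2)\to C(N)$ gives $g\circ\psi_2=\psi$ by the definition of the lift, which is the remaining assertion.

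I expect the only genuine difficulty to be organisational: tracking the various embedded copies of $T$, $Z$, $\mathcal{C}$ and $\hat{A}$ inside $K$, $M$ and the amalgam $N_2$, and checking the identity $(\chi_a,1)g'_b=g'_b(\chi_a,\chi_a(b))$ and the ``$z\cdot\psi_2(b)$'' normal form against the amalgamation relations. Conceptually nothing is hard: the failure of $a\mapsto g_a$ to be a homomorphism into $N$ — caused by the $T$-ambiguity of $g_a$ together with the nontrivial twists $\chi_a$ — is exactly cancelled by the extra $\hat{A}$-coordinate of $M$, and this cancellation is already transparent in the explicit model.
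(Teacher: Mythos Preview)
Your proposal is correct and follows essentially the same route as the paper: both define $\phi_3(a)$ as the product of $g'_a\in K$ with the element of $M=\hat{A}\times\mathcal{C}$ representing the twist $\chi_a$, and both read off the homomorphism property from the identity $\phi_3(a)\cdot\psi_2(b)=\psi_2(a+b)$. The paper simply asserts that this identity forces $a\mapsto\phi_3(a)$ to be a homomorphism, whereas you supply the missing justification via faithfulness of the $N_2$-action on $C(N_2)$ and the $z\cdot\psi_2(b)$ normal form---a welcome addition, since that step is not entirely formal.
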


\begin{proof} By lemma \ref{liftnilhom} it is enough to prove that there is a homomorphism $\phi_3:A\rightarrow N_2$ such that $\phi_3$ composed with the map $N_2\rightarrow C(N_2)$ is $\psi_2$.
The function $\chi_a$ is a linear function for every $a$ and so it can be represented as an element $l_a$ in $\hat{A}\times\mathcal{C}$. Let $t_a:=l_ag'_a$. Whit this notation we have that
$\psi_2(a+b)=t_a\psi_2(b)$ for every $a,b\in A$. Now the map $\psi_3:a\rightarrow t_a$ is a homomorphism satisfying the required condition.
\end{proof}


Even the trivial nil-pattern $N=Z=\mathcal{C},|T|=1$ is interesting. As an example we give some details on that.
First of all any interpretation of $N$ is a split extension. In particular any nil representation of trivial class is a homomorphism
$$\phi:A\rightarrow A\ltimes(\hat{A}\times\mathcal{C})$$
satisfying that $\phi$ composed with the factor map $N_2\rightarrow A$ is the identity.
Assume that $\phi(t)=(t,(\chi_t,c_t))$ then we have that $\eta:t\rightarrow c_t$ is the corresponding nil-morphism. The reader can check easily $\Delta_{t_1,t_2,t_3}\eta$ is the constant $1$ function. In other words $\eta$ is a pure quadratic function on $A$. The other direction is also true. Every pure quadratic function $\psi$ on $A$ generates a quadratic nil-representation of trivial class by putting $\Delta_t\psi$ (which is the product of a character and a constant) into the second coordinate.

\medskip

\subsection{Circular functions}

\begin{definition} Let $N$ be a nil-pattern. A measurable $L_2$ function $f:C(N)\rightarrow\mathbb{C}$ is called {\bf circular} is for every $z\in Z$ and almost all $x\in C(N)$ we have $f(zx)=zf(x)$. (Recall that $Z$ acts on $C(N)$.) Let furthermore $\mathcal{H}_i(C(N)),~i\in\mathbb{Z}$ denote the set of $L_2$ measurable functions $f:C(N)\rightarrow\mathbb{C}$ such that $f(zx)=z^if(x)$. In particular $\mathcal{H}_1(C(N))$ is the set of circular functions.
\end{definition}

The next lemma is obvious.

\begin{lemma} Let $\phi:C(N)\rightarrow\mathbb{C}$ be an arbitrary circular function of absolute value $1$. Then $\mathcal{H}_i=\phi^i\mathcal{H}_0$. Furthermore every $L_2$ function $f:C(N)\rightarrow\mathbb{C}$ can be written in the form
$$f=\sum_{i\in\mathbb{Z}}\phi_i f_i$$
where $f_i\in\mathcal{H}_0(C(N))$ and
$f_i=\int_{z\in Z} f(xz)\overline{\phi_i(xz)}.$
\end{lemma}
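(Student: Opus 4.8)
The plan is to unpack the definition of circularity in terms of the $Z$-action on $C(N)$ and to reduce the statement to an honest Fourier decomposition along the circle orbits. First I would verify the identity $\mathcal{H}_i = \phi^i\mathcal{H}_0$. Since $\phi$ is circular with $|\phi|=1$, for $z\in Z$ we have $\phi(zx)=z\phi(x)$, hence $(\phi^i f_0)(zx)=z^i\phi^i(x)f_0(x)=z^i(\phi^if_0)(x)$ whenever $f_0\in\mathcal{H}_0$, so $\phi^i\mathcal{H}_0\subseteq\mathcal{H}_i$. Conversely, given $g\in\mathcal{H}_i$, the function $g/\phi^i$ is well-defined a.e.\ (as $|\phi|=1$) and satisfies $(g/\phi^i)(zx)=z^i g(x)/(z^i\phi^i(x))=(g/\phi^i)(x)$, so it lies in $\mathcal{H}_0$; thus $\mathcal{H}_i\subseteq\phi^i\mathcal{H}_0$. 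This gives the first assertion.

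Next I would establish the decomposition $f=\sum_{i\in\mathbb{Z}}\phi_i f_i$. The point is that $Z$, being a compact subgroup of the circle group, is either the full circle or a finite cyclic group, and its Pontryagin dual is $\mathbb{Z}$ (resp. a finite cyclic group) realized by the characters $z\mapsto z^i$. Decompose $L_2(C(N))$ under the unitary $Z$-action into isotypic components: a function $f$ restricted to (the measure-disintegration over) each $Z$-orbit admits a Fourier expansion, and the $i$-th Fourier coefficient along the orbit through $x$ is exactly $\int_{z\in Z} f(xz)z^{-i}\,dz$. Using the substitution $z\mapsto \overline{\phi(xz)}\phi(x)^{?}$ — more precisely, since $\phi(xz)=z\phi(x)$ one has $z^{-i}=\phi(x)^i\overline{\phi(xz)}^i$ — one checks that $\int_{z\in Z} f(xz)\overline{\phi_i(xz)}\,dz$ (with $\phi_i=\phi^i$) differs from the $i$-th orbit-Fourier coefficient only by the factor $\phi(x)^{-i}$, which is precisely what is needed so that $f_i$ is $Z$-invariant, i.e.\ lies in $\mathcal{H}_0$. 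Parseval along the orbits, integrated over the base $\hat T$ (with respect to the $N$-invariant probability measure on $C(N)$, using Fubini for the disintegration $C(N)\to C(N)/Z$), then yields $\|f\|_2^2=\sum_i\|f_i\|_2^2$ and hence $L_2$-convergence of $\sum_i\phi_if_i$ to $f$.

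The only genuinely delicate point is the measure-theoretic bookkeeping: one must justify disintegrating the invariant probability measure on $C(N)$ along the free (a.e.) $Z$-action, so that "Fourier expansion along each orbit" makes literal sense and the coefficient formula $f_i=\int_{z\in Z} f(xz)\overline{\phi_i(xz)}\,dz$ defines a bona fide $L_2$ function on $C(N)/Z$. Since $Z$ is compact and acts measurably and (a.e.) freely — the freeness coming from the fact that distinct elements of $Z$ map a given coset of $T$ to distinct cosets, as $Z\cap T$ is trivial in $N$ — the orbit space $C(N)/Z$ carries a quotient probability measure and $C(N)\cong (C(N)/Z)\times Z$ as measure spaces, which is exactly the statement the paper calls "obvious". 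Everything else is the standard Peter–Weyl/Fourier decomposition for the compact group $Z$ applied fiberwise, together with the twist by $\phi^i$ to convert $Z$-equivariant components into $Z$-invariant ones. Hence the lemma.
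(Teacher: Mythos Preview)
Your argument is correct and is exactly the standard verification; the paper itself offers no proof, declaring the lemma ``obvious,'' so there is no alternative approach to compare against. Your fiberwise Fourier decomposition along the free $Z$-action, together with the twist by $\phi^i$ to pass between $\mathcal{H}_i$ and $\mathcal{H}_0$, is precisely the routine computation the author had in mind.
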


\subsection{Nil-morphism on the ultra product group}

The basic example for an interpretation of a nil-pattern is the group $G(M)$ itself. Following the notation of the previous chapters, the group $W_0$ defines a nil-class of type $T$ and $G(M)$ is a interpretation of this class under the embedding $T\rightarrow\hat{\bA}_1$.
It follows that there is an induced map of the form $\Psi:G(M)\rightarrow C(W_0)$ where $C(W_0)$ is the core of $W_0$. Since $\bA$ is embedded into $G(M)$ we will be especially interested in the restriction $\Psi:\bA\rightarrow C(W_0)$.
The map $\Psi$ can be regarded as a nil-morphism on $\bA$.
In this chapter, our goal is to examine the properties of $\Psi$ on $G(M)$ and on $\bA$.
For this reason we start with a more explicit description that the reader can regard as an alternative definition of $\Psi$. We also check explicitly that $\Psi$ is a nil-morphism.

If $\sigma$ is an element on $G(M)$ then by lemma \ref{deltasigma} the dual support of $\Delta_\sigma\phi$ is a coset of $T$. This means that there is a linear character $\chi$ such that $(\Delta_\sigma\phi)\chi$ is in the Hilbert space $\mathcal{H}_\phi$. Let $\sigma_2$ denote element in $Q\subseteq G(M)$ corresponding to $\chi$. Then by definition $\sigma_3=\sigma\sigma_2\in W$. The element $\chi$ is unique up to a multiplication with a character from $T$ and so the left coset of $T$ containing $\sigma_3$ depends only on $\sigma$. After factoring $W$ out by $\stab(G(M),\phi)$ the image of $\sigma_3$ determines a unique element from the coset space $C(W_0)$. This element is $\Psi(\sigma)$.

Our goal is to show that $\Psi$ restricted to $\bA$ is measurable and (which is stronger then measurability) it is continuous in the $\sigma$-topology on $\bA$.
Recall that a function $f$ on $\bA$ is called continuous if it is (without any zero measure change) the ultra limit of bounded functions.

We will make use of the following lemma.

\begin{lemma}\label{charsection} Let $f:\bA\times\bA\rightarrow \mathcal{C}$ be a bounded continuous function such that for every fix $x$ the function $f(x,y)$ is measurable in $\mathcal{F}_1$ and its $L_2$ norm is positive. Then there is a continuous function $f'(x,y)$ such that for every fixed $f'(x,y)$ is a linear character for every fixed $x$ such that $(f'(x,y),f(x,y))>0$ for every $x$.
\end{lemma}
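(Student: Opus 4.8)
\medskip
\noindent\emph{Proof plan.} The plan is to make the choice of a dominant Fourier character at the finite level, where each dual group is finite and there is a canonical such character, and then pass to the ultra limit; continuity of the resulting $f'$ is then automatic, since it is presented as an ultra limit of bounded functions. Concretely, write $f=\limo f_i$ with $f_i\colon A_i\times A_i\to\mathbb{C}$ bounded (we may take $|f_i|\le 1$). Fix, once and for all, a total ordering of each finite dual group $\hat A_i$. For each $i$ and each $x\in A_i$ let $\chi_{i,x}\in\hat A_i$ be the character that is smallest in this ordering among those maximising $|(\chi,f_i(x,\cdot))|$, let $c_{i,x}=\overline{(\chi_{i,x},f_i(x,\cdot))}\,/\,|(\chi_{i,x},f_i(x,\cdot))|$ when this is well defined and $c_{i,x}=1$ otherwise, and put $g_i(x,y)=c_{i,x}\,\chi_{i,x}(y)$. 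Each $g_i$ is a bounded function on $A_i\times A_i$, so $f':=\limo g_i$ is a well-defined continuous function on $\bA\times\bA$, and this is the candidate for $f'$.

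I would then verify the two fibrewise properties. Fix $x=(x_i)_\omega\in\bA$. The function $y\mapsto f'(x,y)$ is the ultra limit of the functions $y\mapsto c_{i,x_i}\chi_{i,x_i}(y)$, hence a unit-modulus constant times the ultra limit of the genuine characters $\chi_{i,x_i}$; since an ultra limit of characters of the groups $A_i$ is a continuous (hence measurable) linear character of $\bA$, $f'(x,\cdot)$ is (a unit-modulus multiple of) a measurable linear character, as required. For the correlation, the pairing $(u,v)\mapsto\mathbb{E}_y u(y)\overline{v(y)}$ is continuous and therefore commutes with $\limo$, so
$$(f'(x,\cdot),f(x,\cdot))=\limo\,c_{i,x_i}\,(\chi_{i,x_i},f_i(x_i,\cdot))=\limo\,\max_{\chi\in\hat A_i}\,|(\chi,f_i(x_i,\cdot))|\ \ge\ 0 .$$

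The only substantive point is the strict positivity. On a finite abelian group one has the elementary estimate
$$\|g\|_{U_2}^4=\sum_{\chi}|(\chi,g)|^4\ \le\ \Bigl(\max_\chi|(\chi,g)|\Bigr)^{2}\sum_\chi|(\chi,g)|^2=\Bigl(\max_\chi|(\chi,g)|\Bigr)^{2}\|g\|_2^2 ,$$
whence $\max_\chi|(\chi,g)|\ge\|g\|_{U_2}^2/\|g\|_2$. Applying this with $g=f_i(x_i,\cdot)$ and taking ultra limits (the $U_2$ and $L_2$ norms of a uniformly bounded family commute with $\limo$) gives $(f'(x,\cdot),f(x,\cdot))\ge\|f(x,\cdot)\|_{U_2}^2/\|f(x,\cdot)\|_2$. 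Here $\|f(x,\cdot)\|_2$ is finite and, by hypothesis, positive, and $\|f(x,\cdot)\|_{U_2}>0$ because $f(x,\cdot)$ is a nonzero element of $L_\infty(\mathcal{F}_1)$ while $\|\cdot\|_{U_2}$ vanishes on $L_\infty(\mathcal{F}_1)$ only at $0$ (\cite{Sz1}). Thus $(f'(x,\cdot),f(x,\cdot))>0$ for every $x$.

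The main obstacle, as signalled above, is continuity: the assignment $x\mapsto$ ``a dominant character of $f(x,\cdot)$'' is genuinely multivalued (ties can occur), and breaking ties by a fixed order on each finite $\hat A_i$ is exactly what promotes it to an honest function on $A_i\times A_i$; after that, both continuity on $\bA\times\bA$ and the fact that each fibre is a measurable character are automatic consequences of working in the ultra product. The constant $c_{i,x}$ is merely cosmetic --- needed only to make the correlation a positive real rather than a nonzero complex number --- and it costs us that $f'(x,\cdot)$ is a character only up to a unit-modulus scalar, equivalently that it lies in a single rank-one $\mathcal{F}_1$-module.
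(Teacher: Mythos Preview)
Your proof is correct and follows the same route as the paper: pick a maximal-Fourier-coefficient character for each fixed $x$ at the finite level and take the ultra limit. Your tie-breaking, the phase constant $c_{i,x}$, and the $U_2$-norm bound for strict positivity are minor elaborations over the paper's terser argument (which reads ``$>0$'' as ``nonzero'' and simply asserts that the ultra limit of the maximal Fourier coefficients equals the maximal Fourier coefficient of $f(x,\cdot)$).
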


\begin{proof} The fact that $f$ is continuous is equivalent with the fact that it is the ultra limit of functions $f_i(x,y)$ on $A_i\times A_i$. For every $i$ we define the functions $f'_i(x,y)$ such that for every fix $x$ the function $f'_i(x,y)$ is one of the linear characters of $A_i$ which has maximal (in absolute value) scalar product with $f_i(x,y)$.
Let $f'(x,y)$ be the ultra limit of $\{f'_i(x,y)\}_i$. Then $|(f'(x,y),f(x,y))|$ for a fix $x$ is the ultra limit of the absolute value of the maximal fourier coefficient of $f_i(x_i,y)$ for a sequence $\{x_i\}_{i=1}^\infty$ whit ultra product $x$. This is the same as the maximal Fourier coefficient of $f(x,y)$. Using the fact the $f(x,y)$ is in $\mathcal{F}_1$ and has positive $L_2$ norm we obtain that it can't be $0$.
\end{proof}

\begin{proposition}\label{contofphi} The function $\Psi:\bA\rightarrow C(W_0)$ is continuous in the $\sigma$-topology on $\bA$.
\end{proposition}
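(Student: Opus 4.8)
First I would make the definition of $\Psi$ on $\bA$ fully explicit. For $t\in\bA$ we have $\Delta_{\iota(t)}\phi=\Delta_t\phi$ with $\Delta_t\phi(x)=\phi(x+t)\overline{\phi(x)}$ a degree one function, whose dual support is a single coset $\xi_tT$ of the type group $T$ by Lemma \ref{deltasigma}. Picking any linear character $\chi_t$ with $\chi_t\xi_t\in T$, the element $\sigma_t:=\iota(t)\sigma_2(\chi_t)$ lies in $W$ (here $\sigma_2(\chi_t)\in Q$ sends the module of $\phi$ to $\phi\chi_t$ and acts trivially on $M_1$), and since $\Delta_t\phi\in M_1$,
$$\phi^{\sigma_t}=\bigl(\phi^{\iota(t)}\bigr)^{\sigma_2(\chi_t)}=\phi^{\sigma_2(\chi_t)}\Delta_t\phi=\phi\cdot(\chi_t\Delta_t\phi)\in\mathcal H_\phi ,$$
and $\Psi(t)$ is the image of $\sigma_t$ in $C(W_0)=W_0/\tau(T_0)$, which does not depend on the choice of $\chi_t$. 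Via the faithful representation $\Xi:W_0\to U(\mathcal H_\phi)$, carrying the weak operator topology, it is enough to prove that $t\mapsto\sigma_t$ is WOT-continuous into $W_0$ and then project to $C(W_0)$. Using the orthonormal basis $\{\chi'\phi\}_{\chi'\in T}$ of $\mathcal H_\phi$ and the fact that $\sigma_t$ restricts to $\iota(t)$ on $M_1$, one has $\Xi(\sigma_t)(\chi_1\phi)=\chi_1^{\sigma_t}\phi^{\sigma_t}=\chi_1(t)\,\chi_1\cdot\phi^{\sigma_t}$, so WOT-continuity of $t\mapsto\sigma_t$ reduces to two facts: (i) for each linear character $\chi_1$ the map $t\mapsto\chi_1(t)$ is $\sigma$-continuous, which is standard (linear characters are ultra limits of ordinary characters); and (ii) $t\mapsto\phi^{\sigma_t}=\phi\cdot(\chi_t\Delta_t\phi)$ is weakly continuous into $\mathcal H_\phi$. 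For (ii) the point is to choose $\chi_t$ continuously in $t$.

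That continuous choice is exactly what Lemma \ref{charsection} supplies. I would apply it to $f(t,x):=\Delta_t\phi(x)=\phi(x+t)\overline{\phi(x)}$ on $\bA\times\bA$: it is bounded ($|f|\equiv1$), for each fixed $t$ it is $\mathcal F_1$-measurable with $\|f(t,\cdot)\|_2=1>0$, and it is continuous on $\bA\times\bA$ since $\phi=\lim_\omega\phi_i$ is continuous, so $f=\lim_\omega\bigl(\phi_i(x+t)\overline{\phi_i(x)}\bigr)$. (Should one not wish to assume $\phi$ itself continuous, the same argument applies verbatim to $\Delta_{t_1,t_2}\phi$, which is $\mathcal B_2$-measurable hence continuous, and $\chi_t$ is read off from its coset structure.) Lemma \ref{charsection} produces a continuous $f':\bA\times\bA\to\mathcal C$ with $f'(t,\cdot)$ a linear character and $\bigl(f'(t,\cdot),\Delta_t\phi\bigr)\neq0$ for every $t$. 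Since the entire dual support of $\Delta_t\phi$ is the single coset $\xi_tT$, this non-vanishing forces $f'(t,\cdot)\in\xi_tT$, so $\chi_t:=\overline{f'(t,\cdot)}$ satisfies $\chi_t\xi_t\in T$, i.e.\ $\chi_t\Delta_t\phi$ has dual support inside $T$ — a choice of correcting character jointly continuous in $(t,x)$.

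Finally I would conclude. With this choice $g(t,x):=\chi_t\Delta_t\phi(x)=\overline{f'(t,x)}\,\phi(x+t)\overline{\phi(x)}$ is a bounded continuous function on $\bA\times\bA$ with dual support in $x$ contained in $T$, and $\phi^{\sigma_t}=\phi\cdot g(t,\cdot)$. For each $\chi'\in T$ the $\chi'\phi$-coordinate of $\phi^{\sigma_t}$ is the Fourier coefficient $\int_x g(t,x)\overline{\chi'(x)}\,dx$, a continuous function of $t$ because $g$ is continuous and integration in $x$ commutes with the ultra limit; this gives (ii). Together with (i), $t\mapsto\sigma_t$ is WOT-continuous, so $\Psi$ is continuous into $C(W_0)$; concretely $\Psi=\lim_\omega\Psi_i$, where $\Psi_i$ is built from $\phi_i$, $g_i$ and the characters $\chi_1$ as above. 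The step I expect to be the main obstacle is the second one: extracting a single choice of correcting character that is $\sigma$-continuous uniformly in $t$, and in particular checking the hypotheses of Lemma \ref{charsection} for $\Delta_t\phi$ on $\bA\times\bA$, above all its continuity there. Once that is secured the rest is routine manipulation of Fourier coefficients together with the elementary fact that weak and norm convergence agree on the unit sphere.
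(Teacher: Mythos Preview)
Your proof is correct and follows essentially the same route as the paper: you assume $\phi$ continuous, form $f(t,x)=\Delta_t\phi(x)$ on $\bA\times\bA$, invoke Lemma~\ref{charsection} to get a $\sigma$-continuous choice of correcting character $\chi_t$, and then verify weak operator continuity of the resulting operator in $W$ (the paper's $\Xi'_t$ is your $\sigma_t$ up to the sign convention on $\chi_t$). The only quibble is your parenthetical fallback ``$\mathcal B_2$-measurable hence continuous'': measurability in $\mathcal B_2$ does not by itself give $\sigma$-continuity, so stick with the zero-measure modification of $\phi$ as the paper does.
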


\begin{proof} First of all by a zero measure change we can assume that $\phi$ is continuous and $\phi_i:A_i\rightarrow\mathcal{C}$ is a sequence a function whose ultra limit is $\phi$.
Now let $f_i:A_i\times A_i\rightarrow\mathcal{C}$ be the sequence of two variable functions defined by $f_i(t,x)=\Delta_t\phi_i(x)$. The ultra limit of $\{f_i\}_{i=1}^\infty$ is the function $f:\bA\times\bA\rightarrow\mathcal{C}$ defined by $f(t,x)=\Delta_t\phi(x)$ and so $f$ is continuous.
We use lemma \ref{charsection} to get a function $f':\bA\times\bA\rightarrow\mathcal{C}$ such that for every $t\in\bA$ the function $f'(t,x)$ is a character $\chi_t$ in the dual support of $\Delta_t\phi$.
Now for every $t$ we define a a unitary operator $\Xi'_t$ on $\mathcal{H}_\phi$ such that $\Xi'_t g=g^{\iota(t)}\chi_t^{-1}$. First we show that $\Xi'_t$ depend continuously on $t$ using the weak topology on operators. For this, it is enough to prove that for fixed $g,g_2\in\mathcal{H}_\phi$ the function $(\Xi'_tg,g_2)$ is continuous. This is trivial since in $(\Xi'_tg,g_2)=((\Delta_t g)\chi_t^{-1},g_2)$ everything is continuous here.

On the other hand $\Xi'_t$ is an operator in $W$ and its coset according to $T$ represents $\Psi(t)$. This completes the proof.
\end{proof}

\begin{definition} Let $\theta$ be a function in $\mathcal{F}_1$ which is equal to a linear function $\theta_2$ for almost every $t\in\bA$. Then we denote by $[\theta]$ the function $\theta_2$.
\end{definition}

The next lemma follows directly from the definition of $\Xi'_t$.

\begin{lemma}\label{skewhom2} The function $\Xi'_t$ defined in the proof of proposition \ref{contofphi} satisfies the equation $\Xi'_tT=\Psi(t)$.
Furthermore  if $t_2\in \ker(T)$ then
$$\Xi'_{t+t_2}\equiv\Xi'_t[\Delta_{t_2}\phi](t)$$ modulo the group $T$. (see Lemma \ref{kertesz} shows that $\Delta_{t_2}\phi$ is linear).
\end{lemma}

\begin{lemma} The function $\Psi$ is a nil-morphism on $\bA$.
\end{lemma}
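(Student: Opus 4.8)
The plan is to exhibit the data required by the definition of a degree $2$ nil-morphism: a group $B$, a surjective homomorphism $f\colon B\to\bA$, and a polynomial map $\phi'\colon B\to W_0$ of degree at most $2$ making the square commute, i.e.\ $g\circ\phi'=\Psi\circ f$ where $g\colon W_0\to C(W_0)$ is the coset map. The idea is to enlarge $\bA$ by the (non‑canonical, coset–valued) choice of character $\chi_t$ used to define $\Xi'_t$. By the cocycle identity $\Delta_{t+s}\phi=(\Delta_t\phi)(\Delta_s\phi)^{\iota(t)}$ together with Lemma \ref{deltasigma}, the map sending $t$ to the dual–support coset of $\Delta_t\phi$ is a homomorphism $\bA\to\hat{\bA}_1/T$; so I would set
\[
B=\{(t,\eta)\in\bA\times\hat{\bA}_1:\eta\ \text{lies in the dual support of}\ \Delta_t\phi\},
\]
a (necessarily abelian) subgroup of $\bA\times\hat{\bA}_1$, with $f(t,\eta)=t$ surjective and $\ker f\cong T$.

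Next I would define $\phi'(t,\eta):=\Xi''_{(t,\eta)}$, the unitary operator on $\mathcal{H}_\phi$ given by $\Xi''_{(t,\eta)}(h)=h^{\iota(t)}\eta^{-1}$. The defining condition on $B$ guarantees that $(\Delta_t\phi)\eta^{-1}$ has dual support inside $T$, so $\Xi''_{(t,\eta)}$ preserves $\mathcal{H}_\phi$; moreover it is $\Xi$ of the element of $W$ obtained from $\iota(t)$ by the correction in $Q$ multiplying $\phi$ by $\eta^{-1}$ (this correction lies in $W$ because the resulting $\Delta_\sigma\phi=(\Delta_t\phi)\eta^{-1}$ is $\mathcal{B}_2$‑measurable — in the finite‑order case one first enlarges the relevant separable $\sigma$‑algebra by that of $\phi^{o(\phi)}$, as in the proof of Lemma \ref{ext15}). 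Hence $\phi'$ has values in $W_0$. Since $\Xi''_{(t,\eta)}$ and the operator $\Xi'_t$ of Proposition \ref{contofphi} differ only by multiplication by the $T$‑character $\chi_t\eta^{-1}$, i.e.\ by an element of the copy of $T_0$ inside $W_0$, we get $g(\phi'(t,\eta))=\Xi''_{(t,\eta)}\,T=\Xi'_t\,T=\Psi(t)=\Psi(f(t,\eta))$ using Lemma \ref{skewhom2}; and $\phi'$ is normalized since $(0,1)\mapsto\mathrm{id}$.

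It remains to check the degree bound. A direct computation gives $\Xi''_{(t,\eta)}\Xi''_{(s,\mu)}(h)=h^{\iota(t+s)}(\eta\mu)^{-1}\mu(t)^{-1}$, so
\[
\phi'\big((t,\eta)(s,\mu)\big)=z\big((t,\eta),(s,\mu)\big)\,\phi'(t,\eta)\,\phi'(s,\mu),\qquad z\big((t,\eta),(s,\mu)\big)=\mu(t),
\]
where $z$ is a scalar operator; as $\phi'(b_1b_2)$ and $\phi'(b_1)\phi'(b_2)$ both lie in $W_0$, the scalar $z(b_1,b_2)$ lies in the group of scalars of $W_0$, which by the nil‑pattern structure is exactly $Z$ and hence central. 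In particular $\phi'$ is a homomorphism modulo $Z\supseteq N_1$, giving the refined condition that $\phi'$ mod $N_1$ has degree $1$. Finally $z$ is bi‑additive — additive in $b_1$ because the $\bA$‑component of $(t_1,\eta_1)(t_2,\eta_2)$ is $t_1+t_2$ and $\mu(t_1+t_2)=\mu(t_1)\mu(t_2)$, and additive in $b_2$ because $(\mu_1\mu_2)(t)=\mu_1(t)\mu_2(t)$ — so $D_h\phi'(b)=\phi'(h)\,z(b,h)$ with $z(bh',h)=z(b,h)z(h',h)$, whence $D_{h'}D_h\phi'(b)=z(h',h)$ is constant in $b$ and $D_{h''}D_{h'}D_h\phi'\equiv 1$. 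Thus $\phi'$ is polynomial of degree at most $2$, and $(B,f,\phi')$ exhibits $\Psi$ as a degree $2$ nil‑morphism. I expect the only delicate point to be the well‑definedness of $\phi'$ as a map into $W_0$, i.e.\ that the correcting automorphisms genuinely lie in $W$ (equivalently that $\Xi''$ is the restriction to $\mathcal{H}_\phi$ of an element of $W$), which in the finite‑order case requires the $\phi^{o(\phi)}$ enlargement; once this is in place the polynomiality is mechanical as above.
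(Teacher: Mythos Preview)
Your argument is correct, but it takes a different route from the paper. The paper does not construct the auxiliary group $B$ and a polynomial lift $\phi'$ at all; instead it verifies directly the concrete quadratic definition from the introduction, namely that for each $a\in\bA$ there exist $n_a\in W_0$ and a homomorphism $\chi_a$ into $Z$ with $\Psi(a+b)=\chi_a(b)\,n_a\,\Psi(b)$. This is done in three lines by expanding $\Xi'_b(\Xi'_a g)$ using the definition $\Xi'_t g=g^{\iota(t)}\chi_t^{-1}$, obtaining $\Xi'_a\Xi'_b\equiv\Xi'_{a+b}\chi_a^{-1}(b)\pmod{T}$, and then reading off the nil-morphism identity via $\Psi(t)=\Xi'_tT$ (Lemma~\ref{skewhom2}). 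Your approach instead instantiates the general polynomial-map definition: you build the pullback $B$ over the homomorphism $t\mapsto(\text{dual-support coset of }\Delta_t\phi)\in\hat{\bA}_1/T$, lift $\Psi$ to $\phi'\colon B\to W_0$, and check degree~$2$ via the bi-additivity of the $2$-cocycle $z\big((t,\eta),(s,\mu)\big)=\mu(t)$. This is longer but more structural, and it is the shape of argument that generalizes to higher order; the paper's computation, by contrast, is tailored to $k=2$ and leans on the specific quadratic form of the definition.

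One small wording issue: when you define $B$ you write ``$\eta$ lies in the dual support of $\Delta_t\phi$'', but for $B$ to be a subgroup (and for $\ker f\cong T$ rather than being trivial) you need ``$\eta$ lies in the $T$-coset containing the dual support of $\Delta_t\phi$''. Your surrounding prose (``the dual-support coset'') makes clear this is what you intended, and with that reading the rest of the argument --- that $\Xi''_{(t,\eta)}$ lands in $W$ by Lemma~\ref{wchar}, that $g\circ\phi'=\Psi\circ f$ via Lemma~\ref{skewhom2}, and that the cocycle $z$ is bi-additive --- goes through as written.
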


\begin{proof} Let $a$ be an arbitrary element in $\bA$. Then using the functions $\Xi'_t$ form the proof of proposition \ref{contofphi} we have
$$\Xi'_b(\Xi'_a g)=(g^{\iota(a)}\chi_a^{-1})^{\iota(b)}\chi_b^{-1}=g^{\iota(a+b)}(\chi_a^{-1})^{\iota(b)}\chi_b^{-1}=\Xi'_{a+b}\chi_{a+b}(\chi_a^{-1})^{\iota(b)}\chi_b^{-1}.$$
This formula shows that
\begin{equation}\label{skewhom1}
\Xi'_a\Xi'_b\equiv\Xi'_{a+b}\chi_a^{-1}(b)
\end{equation}
modulo the subgroup $T\subset W_0$. In other words $\Xi'_a\Psi(b)=\Psi(b)\chi_a^{-1}(b)$.
\end{proof}

\begin{definition} A measurable function $f:\bA\rightarrow\mathbb{C}$ is called ``representable on $C(W_0)$'' if there is a Borel function $f':C(W_0)\rightarrow\mathbb{C}$ such that $f(t)=f'(\Psi(t))$ for almost every $t$.
\end{definition}

\begin{lemma}\label{charfelhuz} Every character $\chi\in T$ is representable on $C(W_0)$.
\end{lemma}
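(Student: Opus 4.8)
The plan is to show that each $\chi \in T$ can be recovered from $\Psi(t)$ by exhibiting it explicitly in terms of the data that define the map $\Psi$. Recall that $\Psi(t)$ is obtained from $\Xi'_t$ (the unitary operator $g \mapsto g^{\iota(t)}\chi_t^{-1}$ on $\mathcal{H}_\phi$) by reading off its coset modulo $T$ inside $W_0$. The key observation is that the group $W_0$ modulo $\tau(T^*)$ is the compact dual group $\hat{T}$, and the first degree map $c:C(W_0)\to\hat{T}$ sends $\Psi(t)$ to the image of $\Xi'_t$ in $\hat{T}$, which by the explicit description of $W$ (as an extension with quotient $\hat{T}$) is precisely the character of $T$ given by evaluation $\chi \mapsto \chi(t)$. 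In other words, $c(\Psi(t)) = \{\chi(t)\}_{\chi \in T} \in \hat{T}$, since $\alpha = \Psi \circ c$ is exactly the homomorphism $\bA \to \hat{T}$ dual to the inclusion $T \hookrightarrow \hat{\bA}_1$.

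Granting this, the proof is short. Fix $\chi \in T$. Define $f':C(W_0)\to\mathbb{C}$ by $f' = \mathrm{ev}_\chi \circ c$, where $\mathrm{ev}_\chi:\hat{T}\to\mathcal{C}$ is the continuous character ``evaluation at $\chi$'' on the compact group $\hat{T}$, and $c:C(W_0)\to\hat{T}$ is the first degree map, which is continuous (hence Borel) by the definition of the nil-pattern structure on $W_0$. Then for almost every $t\in\bA$ we have $f'(\Psi(t)) = \mathrm{ev}_\chi(c(\Psi(t))) = \mathrm{ev}_\chi(\alpha(t)) = \chi(t)$, where the middle equality is the identity $c\circ\Psi = \alpha$ noted above and the last is the definition of $\alpha$ as the dual of $T\hookrightarrow\hat{\bA}_1$. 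Thus $\chi$ is representable on $C(W_0)$ with witness $f'$.

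The one point that needs care — and which I expect to be the main obstacle to a fully rigorous writeup — is the identification $c\circ\Psi = \alpha$, i.e. that the first degree map applied to $\Psi(t)$ really yields the evaluation character $\chi\mapsto\chi(t)$. This is essentially bookkeeping about how the interpretation $G(M)$ of the nil-class $W_0$ over the embedding $T\to\hat{\bA}_1$ interacts with the embedding $\bA\hookrightarrow G(M)$: one must trace through the construction of $\Psi$ in the preceding chapter and the description of $W$ as an extension $0\to T\times Z\to W\to \hat{T}\to 0$ to see that the composite $W\to\hat{T}$, restricted along $\bA\hookrightarrow G(M)\to W/\stab$, is the map $t\mapsto(\chi\mapsto\chi(t))$. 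Concretely one checks this on the operators $\Xi'_t$: the image of $\Xi'_t$ in $W_0/\tau(T^*)\cong\hat{T}$ is determined by how $\Xi'_t$ conjugates $T^*$, and the formula $\Xi'_tg = g^{\iota(t)}\chi_t^{-1}$ together with $(a,b)^{\iota(t)}=(a,b\,a(t))$ in $Q$ gives exactly the evaluation character. Once this identification is in hand, continuity (indeed Borel measurability) of $f'$ is automatic since $c$ is continuous and $\mathrm{ev}_\chi$ is a continuous character of the compact group $\hat{T}$.
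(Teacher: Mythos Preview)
Your proposal is correct and follows essentially the same route as the paper: use the first degree map $c:C(W_0)\to\hat T$ (equivalently, quotient $C(W_0)$ by $Z$) and represent $\chi$ on $\hat T$ by evaluation. The paper's proof is a two-line version of yours; your extra paragraph verifying $c\circ\Psi=\alpha$ unpacks what the paper hides behind ``it is easy to see,'' and your identification via the quotient $G(M)\to G(M_1)\to G(M_1)/\ker(T)\cong\hat T$ (which sends $\iota(t)$ to $\chi\mapsto\chi(t)$) is the cleanest way to do this --- slightly cleaner, in fact, than your alternative suggestion of reading off the image from the conjugation action on $T^*$.
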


\begin{proof} The space $C(W_0)$ factored out by $Z$ is equal to $\hat{T}$.
If we represent $\chi$ on the space $\hat{T}$ in the natural way then it is easy to see that we get the desired function $\chi'$ on $C(W_0)$.
\end{proof}

\begin{corollary}\label{charfelhuz2} Every $L_2$ function in the $\sigma$-algebra of $T$ is representable on $C(W_0)$.
\end{corollary}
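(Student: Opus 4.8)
The plan is to reduce Corollary \ref{charfelhuz2} to Lemma \ref{charfelhuz} by a standard density and closure argument inside the Hilbert space $L_2(\bA)$. Write $\mathcal{B}_T$ for the $\sigma$-algebra generated by the rank one modules in $T$; we must show that every $f\in L_2(\mathcal{B}_T)$ is representable on $C(W_0)$, i.e. $f(t)=f'(\Psi(t))$ a.e. for some Borel $f':C(W_0)\to\mathbb{C}$. First I would observe that the set $\mathcal{R}$ of representable functions is a linear subspace of $L_2(\bA)$: if $f_j(t)=f'_j(\Psi(t))$ for $j=1,2$ then $(\lambda_1 f_1+\lambda_2 f_2)(t)=(\lambda_1 f'_1+\lambda_2 f'_2)(\Psi(t))$, and finite complex coefficients cause no trouble.

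Next I would argue that $\mathcal{R}$ is closed in the $L_2$ norm. The key point is that $\Psi_*(\mu)$, the pushforward of the shift-invariant measure $\mu$ on $\bA$ under $\Psi$, is the $N$-invariant probability measure on $C(W_0)$: this is exactly the content of Proposition \ref{contofphi} together with the fact that $\Psi$ is a nil-morphism and $\bA$ acts on $C(W_0)$ by the diagonal/first-degree action with dense image, so invariance forces $\Psi_*(\mu)$ to be Haar measure on the core. Consequently $g\mapsto g\circ\Psi$ is an isometry $L_2(C(W_0))\to L_2(\bA)$, and its image is precisely $\mathcal{R}$ (restricted to $L_2$ functions); an isometric image of a complete space is closed. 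Hence $\mathcal{R}$ is a closed subspace, so it suffices to check that it contains a set whose linear span is dense in $L_2(\mathcal{B}_T)$.

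The linear characters $\chi\in T$ are exactly such a generating set: by definition $\mathcal{B}_T$ is the $\sigma$-algebra generated by the rank one modules in $T$, and an $L_2$ function measurable in this $\sigma$-algebra is approximated by finite linear combinations of the generating characters $\chi\in T$ (this is the same density fact used repeatedly earlier, e.g.\ in the linear case where $\hat M_1\subseteq\hat{\bA}_1$ describes $\mathcal{B}$). Each such $\chi$ is representable on $C(W_0)$ by Lemma \ref{charfelhuz}. Therefore the closed linear span of $\{\chi:\chi\in T\}$, which is all of $L_2(\mathcal{B}_T)$, lies in $\mathcal{R}$, proving the corollary.

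The main obstacle — really the only nontrivial point — is justifying that $\Psi_*(\mu)$ is the Haar measure on $C(W_0)$, so that precomposition with $\Psi$ is genuinely an $L_2$-isometry rather than merely bounded; without this one cannot conclude that $\mathcal{R}$ is closed. I would handle it by noting that $\Psi$ intertwines the shift action of $\bA$ on itself with the action of the image of $\bA$ in $W_0$ on $C(W_0)$ (this is the nil-morphism identity $\Xi'_a\Psi(b)=\Psi(b)\chi_a^{-1}(b)$ from the preceding lemma, which in particular shows the relevant subgroup acts ergodically with dense orbit closure on $C(W_0)$), so $\Psi_*(\mu)$ is an invariant probability measure; by uniqueness of the invariant probability measure on the homogeneous space $C(W_0)=W_0/T$ it must be the one induced by Haar measure. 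Everything else is the routine "closed subspace containing a generating set'' packaging.
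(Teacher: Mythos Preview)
Your overall approach---characters $\chi\in T$ are representable by Lemma \ref{charfelhuz}, their linear span is dense in $L_2(\mathcal{B}_T)$, and the set $\mathcal{R}$ of representable functions is a closed subspace---is correct and is exactly the argument the paper intends (the corollary is stated without proof, as an immediate consequence of the lemma).

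However, you misidentify the ``main obstacle.'' Showing that $\Psi_*\mu$ equals the Haar measure on $C(W_0)$ is \emph{not} needed to conclude that $\mathcal{R}$ is closed. The map $g\mapsto g\circ\Psi$ is automatically an $L_2$-isometry from $L_2(C(W_0),\Psi_*\mu)$ into $L_2(\bA,\mu)$ by the change-of-variables identity $\int|g\circ\Psi|^2\,d\mu=\int|g|^2\,d(\Psi_*\mu)$, whatever measure $\Psi_*\mu$ happens to be; its image is therefore closed. Equivalently and even more simply, $\mathcal{R}$ is just $L_2$ of the pullback $\sigma$-algebra (the $\Psi$-preimage of the Borel sets on $C(W_0)$, completed by null sets), and $L_2$ of any sub-$\sigma$-algebra is always a closed subspace. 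Your final paragraph is therefore superfluous---and the invariance argument you sketch there is in fact more delicate than you indicate, since the nil-morphism identity carries the twist $\chi_a(b)$, so translation by $a$ on $\bA$ does not push forward to the action of a fixed element of $N$ on $C(W_0)$.
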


\begin{lemma}\label{crossection} There is a Borel measurable function $f:C(N)\rightarrow\mathcal{C}$ such that for every $c\in Z$ we have $f(xc)=cf(x)$ for almost every $x\in C(N)$.
\end{lemma}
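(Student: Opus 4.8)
The plan is to read the statement as saying that the free action of $Z$ on the core $C(N)$ admits a Borel trivialisation, and to produce one from a Borel cross-section of the quotient homomorphism $N\to N/(T\times Z)=\hat{T}$. (Throughout I write $z$ for a generic element of $Z$, reserving $c$ for the first degree map, so the claimed identity $f(xc)=cf(x)$ becomes $f(z\cdot x)=zf(x)$.)

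First I would record the elementary geometry of the $Z$-action. By the definition of a nil-pattern, $Z$ is central in $N$ and $T\times Z$ embeds in $N$, so $T\cap Z=\{1\}$ inside $N$. Write $C(N)=N/T$ for the left coset space and let $c:C(N)\to\hat{T}$ be the first degree map, induced by the quotient $N\to\hat{T}$ with kernel $T\times Z$. Since $Z$ is central it acts on $C(N)$, and this action is free: if $z\cdot nT=nT$ then $n^{-1}zn=z\in T$, hence $z=1$. Moreover, for $g\in N$ the fibre $c^{-1}(c(gT))$ equals $\{g(tz)T:t\in T,\ z\in Z\}=\{gzT:z\in Z\}=\{z\cdot gT:z\in Z\}$, which is exactly the $Z$-orbit of $gT$. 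Thus the $Z$-orbits on $C(N)$ are precisely the fibres of $c$, and $Z$ acts simply transitively on each fibre.

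Next I would invoke a measurable cross-section. Since $T$ is countable, $\hat{T}$ and $Z$ are compact metrizable, so $N$ — being an extension of $\hat{T}$ by $T\times Z$ — is a second countable locally compact group and $T\times Z$ is a closed subgroup; hence $N\to N/(T\times Z)=\hat{T}$ admits a Borel section (Mackey's cross-section theorem; equivalently, $c$ is a continuous open surjection of compact metrizable spaces, which has a Borel section by the classical selection theorem). Composing it with $N\to N/T=C(N)$ gives a Borel section $s:\hat{T}\to C(N)$ of $c$. Now define $f:C(N)\to Z\subseteq\mathcal{C}$ by letting $f(x)$ be the unique $z\in Z$ with $x=z\cdot s(c(x))$; this is well defined by the simple transitivity and freeness above. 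The map $(z,y)\mapsto(z\cdot y,\,y)$ is a continuous bijection from the compact space $Z\times C(N)$ onto the closed set $\{(x,y):c(x)=c(y)\}$, hence a homeomorphism, so the ``division'' map $(x,y)\mapsto$ (the unique $z$ with $x=z\cdot y$) is continuous there; consequently $f$, obtained by composing it with the Borel map $x\mapsto(x,s(c(x)))$, is Borel. Finally, for $z\in Z$ one has $c(z\cdot x)=c(x)$ and $z\cdot x=(zf(x))\cdot s(c(x))$, so $f(z\cdot x)=zf(x)$; this holds for every $x$, a fortiori almost everywhere, which is the assertion of the lemma.

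The only substantive point is the Borel selection: one must be careful that $Z$ may be the whole circle, so a countable-to-one uniformisation is not available and one genuinely needs a measurable cross-section theorem for second countable locally compact groups (or, equivalently, a Borel section of a continuous open surjection of Polish spaces). Everything else is bookkeeping inside the extension that defines $N$.
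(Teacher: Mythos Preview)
Your proof is correct and follows essentially the same route as the paper: both take a Borel cross-section for the quotient $N\to N/(T\times Z)=\hat T$ and use it to read off the $Z$-coordinate of a point of $C(N)$. The paper phrases this as choosing a Borel set $S\subset N$ of coset representatives for $T\times Z$, observing that $SZ$ is then a Borel section for $T$, and setting $f(sz)=z$; your version works directly on $C(N)$ via a Borel section $s:\hat T\to C(N)$ of the first degree map and the division map, which is the same construction after projecting $S$ to $C(N)$.
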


\begin{proof} Let $S$ be a Borel section (a Borel coset representative system) for the subgroup $T\times Z$ in $N$. Then $SZ$ is a Borel section for the subgroup $T$. This means that the elements of the set $SZ$ represent the elements of $C(N)$. We define the function $f$ first on the set $SZ$ by $f(sz)=z$ where $s\in S$ and $z\in Z$. The map $f$ induces a Borel map, that we also denote by $f$, on $C(N)$ which obviously satisfy the requirement of the lemma.
\end{proof}

The next crucial lemma says that $\phi$ can be represented as Borel measurable functions on the nil-manifold $C(W_0)$.

\begin{lemma}\label{representable} The function $\phi$ is representable on $C(W_0)$.
\end{lemma}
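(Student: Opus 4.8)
The plan is to show that $\phi$, viewed as an element of the Hilbert space $\mathcal{H}_\phi$, is a circular function in the sense of the preceding subsection, and then to exhibit it explicitly as the pullback under $\Psi$ of a Borel circular function on $C(W_0)$. The key point is that $\mathcal{H}_\phi=\{f\phi\mid f\in L_2(\hat T)\}$ carries the faithful representation $\Xi\colon W_0\to U(\mathcal{H}_\phi)$, and that the element $\phi\in\mathcal{H}_\phi$ itself is, up to the $T$-ambiguity, a coset representative: the map $t\mapsto\Xi'_t$ built in the proof of Proposition \ref{contofphi} sends the unit coset (which is $\Psi$ of the identity) to the vector $\phi$. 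So $\phi$ should be recoverable as "the value of a canonical circular function at the point $\Psi(x)$''.

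Concretely, I would proceed as follows. First, invoke Lemma \ref{crossection} to get a Borel function $s\colon C(W_0)\to\mathcal{C}$ with $s(xc)=c\,s(x)$ for $c\in Z$; this is the canonical circular function on the core. Next I would check that the orbit of $s$ under the representation $\Xi$ of $W_0$ reproduces, after pulling back along $\Psi$, exactly the functions $\Xi'_t\phi=(\Delta_t\phi)\chi_t^{-1}$; this is where Lemma \ref{skewhom2} and the identity $\Xi'_tT=\Psi(t)$ enter — they say precisely that translating $\phi$ by a group element of $W$ corresponds, modulo $T$, to moving the base point in $C(W_0)$ by the induced nil-morphism action. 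Then I would define $\phi'\colon C(W_0)\to\mathcal{C}$ to be $s$ itself (or a suitable $\mathcal{C}$-multiple of it normalized so that the unit coset maps to the value of $\phi$ at a chosen base point), and verify that $\phi(x)=\phi'(\Psi(x))$ for almost every $x\in\bA$. For the verification one uses that both sides are $Z$-equivariant in the appropriate sense and agree on the dense set of translates coming from the shift group $\bA$, together with the continuity of $\Psi$ on $\bA$ established in Proposition \ref{contofphi}; Corollary \ref{charfelhuz2} handles the $\mathcal{F}_1$ / type-$T$ part, so that the only genuinely new content is the circular ($Z$-equivariant) direction, which is exactly what $s$ supplies.

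The main obstacle I expect is the matching of the $Z$-action: one must be sure that the circular function $s$ produced by the abstract Borel section in Lemma \ref{crossection} is compatible (up to an $\mathcal{F}_1$-measurable, hence already-representable, correction factor) with the specific function $\phi$ that generated the whole structure, rather than with some other circular function differing from it by a non-representable twist. This amounts to showing that $\phi/(s\circ\Psi)$, which is manifestly $Z$-invariant, descends to a function on $\hat T$, i.e. lies in the $\sigma$-algebra of $T$ — and then Corollary \ref{charfelhuz2} finishes the job by writing that quotient as $g'\circ\Psi$ for some Borel $g'$, whence $\phi=(g'\cdot s)\circ\Psi$. Establishing that $\phi/(s\circ\Psi)$ is $T$-measurable is the crux: it should follow from the fact that $\Psi$ was constructed so that $W_0$ acts on $C(W_0)$ in a way transitively matching the action on $\mathcal{H}_\phi$, so that any $Z$-invariant function of $\Psi(x)$ is forced into the $\hat T$-part, but one has to be careful that "$Z$-invariant composed with $\Psi$'' really forces measurability in the type algebra $\mathcal{B}_2$ and not merely in $\mathcal{F}_1$.
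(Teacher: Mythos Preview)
Your outline is exactly the paper's: set $\phi_2:=s\circ\Psi$ with $s$ the Borel circular function from Lemma~\ref{crossection}, reduce to showing that $\phi\overline{\phi_2}$ is measurable in the $\sigma$-algebra generated by $T$, and finish with Corollary~\ref{charfelhuz2}. So the architecture is right.

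Where you go off track is in your proposed attack on the crux. You try to argue $T$-measurability of $\phi/(s\circ\Psi)$ via ``$Z$-invariance'': since $s$ is $Z$-equivariant, the quotient should be $Z$-invariant and hence descend to $\hat T$. But this reasoning is circular. To speak of $Z$-equivariance of $\phi$ you would need $\phi$ to factor through $\Psi$ in the first place, which is precisely what you are trying to prove. The quotient $\phi/(s\circ\Psi)$ lives on $\bA$, not on $C(W_0)$, and there is no a priori $Z$-action on it.

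The paper's actual mechanism is the one you mention but do not exploit: the \emph{second} clause of Lemma~\ref{skewhom2}. For $t_2\in\ker(T)$ one has $\Xi'_{t+t_2}\equiv\Xi'_t\,[\Delta_{t_2}\phi](t)$ modulo $T$, and since $[\Delta_{t_2}\phi](t)\in Z$ (Lemma~\ref{kertesz}) the circularity of $s$ gives
\[
\phi_2(t+t_2)=s(\Xi'_{t+t_2}T)=s\bigl(\Xi'_t\,[\Delta_{t_2}\phi](t)\,T\bigr)=[\Delta_{t_2}\phi](t)\,\phi_2(t),
\]
i.e.\ $\Delta_{t_2}\phi_2=\Delta_{t_2}\phi$. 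Hence $\phi\overline{\phi_2}$ is invariant under all shifts in $\ker(T)$, which forces it into the $\sigma$-algebra of $T$. This is a direct computation on $\bA$ and never needs to lift $\phi$ to $C(W_0)$. Once you replace your $Z$-invariance heuristic with this $\ker(T)$-shift argument, your proof is complete and coincides with the paper's.
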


\begin{proof} Let $\phi_2$ be the composition of $\Psi$ with the map $f$ guaranteed by lemma \ref{crossection}. Then by lemma \ref{skewhom2} we have $\phi_2(t)=f(\Xi'_tT)$ and so
if $t_2\in\ker(T)$ then by the second part of lemma \ref{skewhom2}
\begin{equation}\label{skewhom3}
\phi_2(t+t_2)=f(\Xi'_{t+t_2}T)=f(\Xi'_t[\Delta_{t_2}\phi](t) T)
\end{equation}

This implies that $\Delta_{t_2}\phi_2(t)=\Delta_{t_2}\phi(t)$ for almost every $t$.
It follows that $\phi\phi_2$ is invariant under the translations $t_2\in\ker(T)$ and so it is measurable in $T$. Now corollary \ref{charfelhuz2} finishes the proof.
\end{proof}

\begin{corollary}\label{representable2} Every function in the  $\sigma$-algebra generated by all functions in $\mathcal{H}_\phi$ is representable on $C(W_0)$.
\end{corollary}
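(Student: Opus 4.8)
The plan is to deduce Corollary \ref{representable2} from Lemma \ref{representable} and Corollary \ref{charfelhuz2} by a routine $\sigma$-algebra generation argument, using that representability is preserved by the algebra operations. Write $\mathcal{R}$ for the set of measurable functions on $\bA$ that are representable on $C(W_0)$. First I would record that $\mathcal{R}$ is a subalgebra of the measurable functions, closed under complex conjugation and containing the constants: if $f_1=f_1'\circ\Psi$ and $f_2=f_2'\circ\Psi$ almost everywhere with $f_1',f_2'$ Borel on $C(W_0)$, then $f_1f_2=(f_1'f_2')\circ\Psi$, $\overline{f_1}=\overline{f_1'}\circ\Psi$ and $\lambda f_1+\mu f_2=(\lambda f_1'+\mu f_2')\circ\Psi$ almost everywhere, all with Borel representing functions, and a constant is represented by a constant.

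Second I would check that $\mathcal{H}_\phi\subseteq\mathcal{R}$. By definition every element of $\mathcal{H}_\phi$ is of the form $g\phi$, where $g$ is an $L_2$ function measurable in the $\sigma$-algebra $\mathcal{B}_2$ spanned by the rank one modules in $T$. By Corollary \ref{charfelhuz2} such a $g$ is representable on $C(W_0)$, by Lemma \ref{representable} so is $\phi$, and since $\mathcal{R}$ is closed under products, $g\phi\in\mathcal{R}$.

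The actual content is then the factorization lemma. Let $\mathcal{B}$ denote the Borel $\sigma$-algebra of the core $C(W_0)$. Since $\mathcal{H}_\phi$ is a separable Hilbert space, $W_0$ is a second countable locally compact Hausdorff group, so $C(W_0)=W_0/T$ is a standard Borel space; and by Proposition \ref{contofphi} the map $\Psi\colon\bA\to C(W_0)$ is measurable. By the second step each generator $g\phi$ of $\sigma(\mathcal{H}_\phi)$ agrees almost everywhere with a $\Psi^{-1}(\mathcal{B})$-measurable function, hence every set $(g\phi)^{-1}(B)$ with $B\in\mathcal{B}(\mathbb{C})$ lies in the $\mu$-completion of $\Psi^{-1}(\mathcal{B})$; therefore the $\sigma$-algebra $\sigma(\mathcal{H}_\phi)$ that these sets generate is contained in that completion. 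By the Doob--Dynkin factorization lemma, a function measurable with respect to $\Psi^{-1}(\mathcal{B})$ equals, after a $\mu$-null modification, $h\circ\Psi$ for some Borel $h\colon C(W_0)\to\mathbb{C}$, i.e.\ is representable on $C(W_0)$; conjugates cause no difficulty because $\mathcal{R}$ is conjugation closed. This proves the corollary.

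I do not expect a genuine obstacle: the corollary is bookkeeping on top of Lemma \ref{representable} and Corollary \ref{charfelhuz2}. The only two points requiring some care are that the factorization lemma applies in the present setting --- which is fine since $C(W_0)$ is standard Borel and $\Psi$ is measurable --- and that one argues consistently modulo $\mu$-null sets, as representability is by definition an almost-everywhere statement.
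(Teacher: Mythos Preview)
Your proposal is correct and follows the same approach the paper intends: the paper gives no explicit proof of Corollary \ref{representable2}, treating it as an immediate consequence of Lemma \ref{representable} and Corollary \ref{charfelhuz2}, and you have simply supplied the routine $\sigma$-algebra/Doob--Dynkin bookkeeping that the paper leaves implicit.
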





\subsection{Reduction to finite dimension}

The Group $W_0$ is a an extension of the one dimensional group $T^*$ by the compact abelian group $\hat{T}$. However $\hat{T}$ can be infinite dimensional or it can have an infinite torsion part.  Fortunately it turns out that we can choose a different representative for the module of $\phi$
such that $T$ becomes finitely generated. This means that in this case $\hat{T}$ is a direct product with a finite dimensional torus with a finite group.

\begin{lemma}\label{finitecomp} Let $A$ be a compact topological group and $D$ be a finite dimensional subgroup in $A$. Then there is a closed subgroup $K\subseteq A$ such that $K\cap D$ is trivial and $A/K$ is finite dimensional.
\end{lemma}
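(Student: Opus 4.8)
The natural approach is Pontryagin duality. The lemma is applied with $A=\hat T$, so I work with $A$ a compact \emph{abelian} group (and one may assume $D$ is closed). Put $X=\hat A$, a discrete abelian group, and let $Y=D^{\perp}\le X$ be the annihilator of $D$, so that $\hat D\cong X/Y$. The hypothesis $\dim D<\infty$ says exactly that $X/Y$ has finite torsion-free rank, say $r$. I would look for $K$ of the form $K=L^{\perp}$ for a subgroup $L\le X$. By biduality $(L^{\perp})^{\perp}=L$, so $\widehat{A/K}\cong L$; hence $A/K$ is finite dimensional iff $L$ has finite torsion-free rank. Also $K\cap D=L^{\perp}\cap Y^{\perp}=(L+Y)^{\perp}$, which is trivial iff $L+Y$ is dense in the discrete group $X$, i.e. iff $L+Y=X$. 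Thus the lemma reduces to a purely algebraic statement about the discrete group $X$: given $Y\le X$ with $X/Y$ of finite torsion-free rank, find a subgroup $L\le X$ of finite torsion-free rank with $L+Y=X$.

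If $D$ is a compact Lie group, equivalently $X/Y=\hat D$ is finitely generated, this is immediate: lift a finite generating set of $X/Y$ to $x_1,\dots,x_m\in X$ and set $L=\langle x_1,\dots,x_m\rangle$; then $L$ is finitely generated, a fortiori of finite rank, and $L+Y=X$ by construction. For a general finite-dimensional $D$ I would first use Zorn's lemma to pick $L\le X$ maximal with $L\cap Y=0$. Then $L$ embeds into $X/Y$, so $\mathrm{rank}(L)\le r$, which controls $\dim(A/K)$; and maximality forces $X/(L+Y)\cong(X/Y)/\bar L$ to be a torsion group, since any element of infinite order modulo $L+Y$ could be adjoined to $L$ while keeping $L\cap Y=0$. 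Dually this already produces a closed $K$ with $A/K$ finite dimensional and $K\cap D$ totally disconnected.

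The delicate point — and, I expect, the only real obstacle — is to improve "$K\cap D$ profinite'' to "$K\cap D$ trivial'', i.e. to kill the torsion quotient $X/(L+Y)$. This is precisely where the structure of the particular $D$ arising from the previous subsection must be invoked: if $\hat D$ is finitely generated (so $D$ is essentially a finite-dimensional torus times a finite group, and $X\to X/Y$ admits a finitely generated partial section surjecting onto $X/Y$) the torsion quotient vanishes and the Lie case above applies; without some such input one cannot in general force $L+Y=X$ on the nose (consider $A=\mathbb{T}^{\mathbb N}$ with $D$ a one-dimensional solenoid). So the plan is: reduce by duality as above; dispatch the finitely generated case by lifting generators; and in the general step use the finiteness of the character group of the $D$ at hand to arrange $L+Y=X$, the Zorn argument supplying the finite rank of $L$ (hence finite dimensionality of $A/K$) unconditionally.
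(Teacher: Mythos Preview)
Your Pontryagin-duality reduction, and the argument in the finitely generated case by lifting a generating set of $X/Y=\hat D$ to a finitely generated $L\le X$ and setting $K=L^{\perp}$, is exactly the paper's proof (the paper writes $Q$ for your $L$). Note that the paper's first sentence simply \emph{declares} ``$D$ finite dimensional'' to mean ``$\hat D$ finitely generated'', so it only treats --- and, in the application inside Theorem~\ref{veges}, only needs --- the Lie case you dispatch in one line: there $D=(Z\times T)\cap C_2$ is a closed subgroup of the circle times a finite group. Your caution about the general case is entirely justified: your solenoid example ($A=\mathbb{T}^{\mathbb N}$, $\hat D\cong\mathbb Z[1/2]$) shows the lemma is false if ``finite dimensional'' is read topologically, since any $L\le\mathbb Z^{(\mathbb N)}$ of finite rank is free of finite rank, hence finitely generated, and cannot surject onto $\mathbb Z[1/2]$. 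So the Zorn step and the attempt to kill torsion are unnecessary for the paper's purposes and cannot be completed in that generality.
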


\begin{proof} The fact that $D$ is finite dimensional is equivalent with the fact that the dual group of $D$ is finitely generated. The group $\hat{D}$ is a factor group in $\hat{A}$ and so there is a finitely generated group $Q\subseteq\hat{A}$ which together with $\hat{D}$ generates $\hat{A}$. This means that the kernel $K$ of the homomorphism $A\rightarrow\hat{Q}$ intersects $D$ trivially. It follows that $K$ satisfies the required conditions.
\end{proof}

\begin{lemma}\label{bilzero} Let $A_1,A_2$ be a compact abelian groups and $f:A_1\times A_2\rightarrow \mathcal{C}$ a continuous bilinear function. Then there is a closed finite index subgroup $B\subseteq A_1$ such that $f$ is trivial on $B\times A_2$.
\end{lemma}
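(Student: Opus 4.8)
The plan is to reduce the bilinear function to a single homomorphism into the (discrete) Pontryagin dual of $A_2$ and to exploit the fact that the circle group $\mathcal{C}$ has no small subgroups. First I would note that bilinearity of $f$ means that for each fixed $a\in A_1$ the map $\chi_a:b\mapsto f(a,b)$ is a continuous homomorphism $A_2\to\mathcal{C}$, i.e. $\chi_a\in\hat{A_2}$, and that $a\mapsto\chi_a$ is a group homomorphism $\Phi:A_1\to\hat{A_2}$. Since $f$ vanishes identically on $B\times A_2$ precisely when $B\subseteq\ker\Phi$, the assertion reduces to showing that $\ker\Phi$ is a closed subgroup of finite index in $A_1$.

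Next I would fix an open neighbourhood $V$ of $1$ in $\mathcal{C}$ that contains no nontrivial subgroup of $\mathcal{C}$; for instance $V=\{z\in\mathcal{C}:\mathrm{Re}\,z>0\}$ works, since a nontrivial subgroup of $\mathcal{C}$ is either dense or finite cyclic of order at least $2$, and in either case it contains an element with non-positive real part. Because $f$ is continuous, $f^{-1}(V)$ is open in $A_1\times A_2$, and it contains $\{0\}\times A_2$ because $f(0,b)=f(0,b)^2$ forces $f(0,b)=1$. As $A_2$ is compact, the tube lemma yields an open neighbourhood $W$ of $0$ in $A_1$ with $f(W\times A_2)\subseteq V$. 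Then for every $a\in W$ the image $\chi_a(A_2)$ is a compact subgroup of $\mathcal{C}$ contained in $V$, hence trivial; so $\chi_a$ is trivial and $W\subseteq\ker\Phi$.

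Finally, $B:=\ker\Phi$ is a subgroup of $A_1$ containing the open set $W$, hence an open subgroup, therefore also closed; and since the cosets of $B$ form an open partition of the compact group $A_1$, there are only finitely many of them, so $B$ has finite index. By construction $f$ restricted to $B\times A_2$ is identically $1$, which is the claim.

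The only genuinely delicate point is the uniformity step: one needs compactness of $A_2$ (via the tube lemma) to pass from ``for each $a$ near $0$ the character $f(a,\cdot)$ is close to $1$ on a set depending on $a$'' to ``$f(a,\cdot)$ is close to $1$ on all of $A_2$ simultaneously, for $a$ in one fixed neighbourhood'', and one needs the no-small-subgroups property of $\mathcal{C}$ to upgrade ``close to $1$'' to ``equal to $1$''. Everything else is routine. (Alternatively one could argue via $\hat{A_2}$ being discrete and the image of a continuous homomorphism from the compact group $A_1$ being finite, which is essentially the same argument packaged through duality, in the spirit of Lemma \ref{finitecomp}.)
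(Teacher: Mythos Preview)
Your proof is correct and follows essentially the same route as the paper: both define the homomorphism $\Phi:A_1\to\hat{A_2}$, $a\mapsto f(a,\cdot)$, and argue that its kernel is open (hence closed of finite index) in $A_1$. The paper does this in one line by invoking that $\hat{A_2}$ is discrete and $\Phi$ is continuous, so the compact image $\Phi(A_1)$ is finite; your tube-lemma-plus-no-small-subgroups argument is exactly what one would write to justify that continuity claim in detail, and the alternative you sketch at the end is verbatim the paper's argument.
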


\begin{proof} For every fixed $x\in A_1$ the map $f_x(y)=f(a,y)$ is a continuous homomorphism from $A$ to $\mathcal{C}$ and so it is an element in the dual group $\hat{A_2}$. Since the dual group is discrete and $f_x$ depend continuously on $x$ the compactness of $A_1$ implies that there are only finitely many different functions of the form $f_x$. It follows that there is a finite index subgroup $B\subseteq A_1$ such that $f_x$ is trivial for $x\in B$.
\end{proof}

\begin{lemma}\label{locbizero} Let $A$ be a locally compact abelian group, $H\subseteq A$ be a discrete co-compact subgroup and $(-,-)$ be a continuous bilinear form $A\times A\rightarrow\mathcal{C}$.
Then there is an open subgroup $B\subseteq A$ and finite co-dimensional compact subgroup $C$ in $B$ such that $BH=A$ and $(C,B)=0$.
\end{lemma}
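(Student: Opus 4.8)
The plan is to combine the structure theorem for locally compact abelian groups, which furnishes an open subgroup $V\subseteq A$ of the form $V\cong\mathbb R^n\times K$ with $K$ compact, with Lemma~\ref{bilzero}. First I would produce $B$. Since $V$ is open, the image $(V+H)/H$ of $V$ in the compact group $A/H$ is an open subgroup, hence of finite index, so $V+H$ is an open subgroup of finite index in $A$. Choosing coset representatives $a_1,\dots,a_r$ of $V+H$ in $A$ and letting $B$ be the subgroup generated by $V$ and $a_1,\dots,a_r$, we obtain an open subgroup $B\subseteq A$ with $B+H=A$, and $B/V$ is a finitely generated discrete group.

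Next I would extract $C$. Write $b=(-,-)$. For each $k\in K$ the character $x\mapsto b(k,x)$ of $\mathbb R^n$ is an element of $\hat{\mathbb R}^n\cong\mathbb R^n$, and $k\mapsto$ that element is a continuous homomorphism from the compact group $K$ into $\mathbb R^n$, hence trivial; so $b(K,\mathbb R^n)=0$. Thus it suffices to control $b$ on $K\times K$, and Lemma~\ref{bilzero} applied to $b|_{K\times K}$ gives a closed finite-index (hence open) subgroup $K_0\subseteq K$ with $b(K_0,K)=0$; together with the previous remark this yields $b(K_0,V)=0$. Now set
$$C=\{\,k\in K_0\ :\ b(k,a_j)=1\ \mbox{ for }j=1,\dots,r\,\}.$$
This is a closed, hence compact, subgroup of $K_0$. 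Since $B$ is generated by $V$ and $a_1,\dots,a_r$ and $b$ is bilinear, $b(c,\cdot)$ vanishes on $V$ and on each $a_j$ for $c\in C$, so $b(C,B)=0$. Moreover $C$ is the kernel of the continuous homomorphism $K_0\to\mathcal C^r$, $k\mapsto(b(k,a_j))_{j=1}^r$, and as $K_0$ is compact this identifies $K_0/C$ with a closed subgroup of $\mathcal C^r$, so $\dim(K_0/C)\le r$. Consequently $K/C$ (an extension of $K_0/C$ by the finite group $K/K_0$), then $V/C\cong\mathbb R^n\times(K/C)$, and finally $B/C$ (an extension of $B/V$, a finitely generated discrete group, by $V/C$) are all finite dimensional, i.e. $C$ is finite co-dimensional in $B$. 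So $B$ and $C$ have all the required properties.

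The point I would think about first, and the only mildly delicate one, is the coordination of the two requirements: one is in general forced to take $B$ strictly larger than a ``Euclidean $\times$ compact'' open subgroup in order to achieve $BH=A$, and one must then still annihilate the finitely many extra generators $a_j$ by a large enough compact subgroup. This works precisely because there are only finitely many of them, so cutting $K_0$ down by the $r$ characters $b(-,a_j)$ costs at most $r$ dimensions. (If $(C,B)=0$ is intended to also include $b(B,C)=0$, the same argument applied to the transposed form $(x,y)\mapsto b(y,x)$ produces a second such compact subgroup, and one intersects the two.)
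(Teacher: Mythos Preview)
Your proof is correct. Both your argument and the paper's proceed via the structure theorem for locally compact abelian groups together with Lemma~\ref{bilzero}, and both finish by killing finitely many extra elements (your coset representatives $a_j$, the paper's $t_i$) to push $B$ up to a set with $BH=A$. The one genuine tactical difference is in how a compact subgroup annihilating all of $V$ is produced: you observe directly that $b(K,\mathbb{R}^n)=0$ (any continuous homomorphism from a compact group into $\widehat{\mathbb{R}^n}\cong\mathbb{R}^n$ is trivial) and then apply Lemma~\ref{bilzero} to $K\times K$; the paper instead first kills the finitely many generators $h_j$ of $H\cap O$, so that the form on $O_3\times O$ factors through the \emph{compact} quotient $O/(O\cap H)$, and only then invokes Lemma~\ref{bilzero}. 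Your route is slightly more streamlined in that it never needs to involve $H\cap O$; the paper's route, on the other hand, defines $B$ a posteriori as the full annihilator of $C$ rather than fixing $B$ in advance. Either order works and neither gains anything essential over the other.
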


\begin{proof} The structure theorem of locally compact abelian groups says that $A$ has an open subgroup $O$ of the form $O=O_1\times O_2$ such that $O_1$ is compact and $O_2$ is isomorphic with $\mathbb{R}^d$ for some natural number $d$. The fact that $H\cap O_1$ is finite implies that $(H\cap O)/O_1$ is a discrete subgroup in $\mathbb{R}^d$ and so $H\cap O$ is finitely generated.
Let $h_1,h_2,\dots,h_r$ be a generating system of $H\cap O$.

The fact that $O$ is open implies that $HO$ is open and since $A/H$ is compact we get that $HO$ has finite index. Let $t_1,t_2,\dots,t_n$ be a coset representative system for $HO$ in $A$.

Let $O_3$ be the subgroup of $O_1$ defined by
$$O_3=\{x~|~x\in O_1,~(x,t_i)=0,~(x,h_j)=0~~\forall~1\leq i\leq n,~1\leq j\leq r\}.$$
Clearly, $O_3$ is finite (at most $n+r$)co-dimensional in $O_1$.

The fact that $(O_3,h_i)=0$ (and thus $(O_3,O\cap H)=0$) implies that the bilinear form on $O_3\times O$ factors through the map $O\rightarrow O/(O\cap H)$. Since $O/(O\cap H)$ is compact we obtain by lemma \ref{bilzero} that there is a subgroup $C$ in $O_3$ such that $(C,O)=0$ and $C$ is finite co-dimensional in $O_3$.

Let $B$ be the subgroup $\{x~|~x\in A, (C,x)=0\}$. We have that $B$ contains $O$ and the elements $\{t_1,t_2,\dots,t_n\}$. It follows that $BH=A$.
\end{proof}

\begin{theorem}\label{veges} Every quadratic nil-pattern $0\rightarrow T\times Z\rightarrow N\rightarrow\hat{T}\rightarrow 0$ is the re-interpretation of a finite dimensional e.g. finitely generated nil-pattern of type $T_2$ under a monomorphism $T_2\rightarrow T$.
\end{theorem}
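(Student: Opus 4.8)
The plan is to reduce the statement to a fact about the bilinear (commutator) data of $N$ and then apply the preparatory Lemmas \ref{bilzero}, \ref{locbizero} and \ref{finitecomp}. The key structural observation is that, since $N'\subseteq Z$ and $Z$ is central, the quotient $N/Z$ is a \emph{locally compact abelian} group, the image $H$ of $T$ is a discrete co-compact subgroup of it, and the commutator of $N$ descends to a continuous alternating bilinear form $\beta\colon (N/Z)\times(N/Z)\to Z\subseteq\mathcal C$. Thus the isomorphism type of $N$ is captured by the abelian extension $0\to T\to N/Z\to\hat T\to 0$, the form $\beta$, and the central extension $0\to Z\to N\to N/Z\to 0$; equivalently, by the classifying class $c_N\in H^2(\hat T,T\times Z)$ with the canonical action. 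The whole point is then to exhibit a finitely generated subgroup $T_2\subseteq T$ — equivalently a finite-dimensional quotient $\hat{T_2}=\hat T/U$ of $\hat T$ with $T_2=U^{\perp}$ — through which all of this data factors; the nil-pattern $N_2$ of type $T_2$ carrying the descended data is then finite-dimensional and, essentially by construction, has $N$ as its interpretation under the monomorphism $\iota\colon T_2\hookrightarrow T$.

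First I would split the data of $c_N$ into three pieces and control each separately. The $T$-component of a continuous normalized cocycle representing $c_N$ is a continuous map from the compact group $\hat T\times\hat T$ to the discrete group $T$, hence locally constant: it takes only finitely many values, spanning a finitely generated subgroup $T_0\subseteq T$, and factors through a finite quotient of $\hat T$. The alternating, circle-valued part is $\beta$ itself; applying Lemma \ref{locbizero} to the triple $(N/Z,\,H,\,\beta)$ (or, after descending $\beta$ to $\hat T$, Lemma \ref{bilzero}) yields a subgroup of $N/Z$ on which $\beta$ vanishes, whose image in $\hat T$ is finite co-dimensional, so that $\beta$ — being alternating — descends to a finite-dimensional quotient of $\hat T$. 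The remaining ``symmetric'' part of the $Z$-valued data is the class of a \emph{central} extension of $\hat T$ by $Z$: when $Z=\mathcal C$ this class is zero because the circle is injective among locally compact abelian groups, and when $Z$ is finite it is classified by a single element of $T/nT$, hence supported on a finitely generated subgroup of $T$ — here Lemma \ref{finitecomp}, applied to the relevant compact group with its finite-dimensional subgroup $Z$, provides the needed finite-dimensionality. Intersecting the three finite-index or finite co-dimensional subgroups of $\hat T$ obtained above (and, if necessary, intersecting further with $T_0^{\perp}$ so that $T_0$ lands in the dual), I get a single closed $U\subseteq\hat T$ with $\hat T/U$ finite-dimensional through which every piece of $c_N$ factors; set $\hat{T_2}:=\hat T/U$ and $T_2:=U^{\perp}=\widehat{\hat{T_2}}\subseteq T$, which is finitely generated.

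Finally I would define $N_2$ to be the quadratic nil-pattern of type $(T_2,Z)$ whose classifying class is the one through which $c_N$ factors — the canonical action survives because it was canonical on $N$ and factors through $\hat{T_2}$ — and verify that $\iota^{*}(N_2)=N$, i.e.\ that forming the sub-direct product $K=N_2\times_{\hat{T_2}}\hat T$ and then the amalgamated semidirect product $K\ltimes_{T_2\times\mathcal C}(T\times\mathcal C)$ returns $N$. On classifying classes this is the tautology $\iota_{*}(\hat\iota^{*}(c_2))=c_N$, and on the level of groups it is a diagram chase comparing $N$, $N/Z$, $K$ and $N_2$. I expect this last identification, together with the correct handling of the symmetric $Z$-valued part when $Z\neq\mathcal C$ and the continuity subtleties of topological group cohomology, to be the main obstacle; a purely group-theoretic variant avoids explicit cocycles by working throughout with $N$, $N/Z$ and the preimages of the subgroups $B,C$ produced by Lemma \ref{locbizero} (then killing a finite-dimensional-by-Lemma \ref{finitecomp} complement of $Z$), at the cost of a more delicate check that the resulting quotient is a nil-pattern of type exactly $T_2$. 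One should also verify the mild hypotheses needed to invoke Lemmas \ref{locbizero} and \ref{finitecomp}: that $\beta$ is continuous (from continuity of multiplication in $N$), that $Z$ and the other relevant subgroups are closed, and that the image of $T$ is discrete and co-compact in $N/Z$.
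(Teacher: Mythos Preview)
Your ``purely group-theoretic variant'' in the last paragraph is exactly the route the paper takes, and it is cleaner than you suggest. The paper passes to $N/Z$, applies Lemma~\ref{locbizero} to the commutator form to obtain $C\subseteq B\subseteq N/Z$ with $BH=N/Z$ and $(C,B)=0$, pulls these back to $C_2\subseteq B_2\subseteq N$, notes that $C_2$ is compact abelian, uses Lemma~\ref{finitecomp} to find a finite-codimensional closed $C_3\subseteq C_2$ with $C_3\cap(T\times Z)=1$, sets $K$ equal to the centralizer of $C_3$ in $N$ and $T_2=T\cap K$, and declares $K/C_3$ to be the desired finitely generated nil-pattern of type $T_2$. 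No cocycles are written down, and the identification of $N$ as the interpretation of $K/C_3$ is read off directly from the construction of interpretations under monomorphisms.

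Your main cohomological route has a real obstruction beyond bookkeeping. The three ``pieces'' of $c_N$ are not independent: because the action of $\hat T$ on $T\times Z$ is nontrivial (it is $(a,b)^\chi=(a,b\chi(a))$), the $Z$-projection $c_Z$ of a cocycle $c=(c_T,c_Z)$ is \emph{not} a $2$-cocycle for the trivial action --- the cocycle identity for $c_Z$ carries a correction term $\chi(c_T(\cdot,\cdot))$ --- so you cannot simply split off a ``symmetric $Z$-valued class'' and dispose of it by injectivity of $\mathcal C$. Relatedly, the parenthetical about descending $\beta$ to $\hat T$ is wrong: the commutator form restricted to $T\times(N/Z)$ is precisely the duality pairing $T\times\hat T\to\mathcal C$ (from $(a,1)^\chi=(a,\chi(a))$), which is nondegenerate, so $\beta$ never factors through $\hat T$ and ``being alternating'' does not help. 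This same nondegeneracy is in fact what drives the group-theoretic argument --- it identifies $T_2=T\cap K$ with the annihilator in $T$ of the image of $C_3$ in $\hat T$ --- and explains why taking a centralizer and quotienting is the natural move, whereas the cohomological splitting fights against the twist in the action.
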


\begin{proof}The group $N/Z$ is a locally compact abelian group with co-compact discrete subgroup $H=T/Z\equiv T$. The map $(x,y):=[x,y]$ is a continuous bilinear map on $N/Z\times N/Z$ to $\mathcal{C}$. Let $C\subseteq B\subseteq N/Z$ be the subgroups guaranteed by lemma \ref{locbizero}. Let furthermore $C_2$ and $B_2$ be the pre-images of $B$ and $C$ under the homomorphism $N\rightarrow N/Z$.

The groups $C_2\subseteq B_2$ satisfy that $[C_2,B_2]$ is trivial and so $B_2$ is contained in the centralizer of $C_2$. In particular $C_2$ is abelian and compact. By lemma \ref{finitecomp} we have a finite co-dimensional compact subgroup $C_3$ in $C_2$ such that $C_3\cap (Z\times T)$ is trivial.

Let $K$ denote the centralizer of $C_3$ in $N$. Let $T_2$ denote the intersection of $T$ with $K$.
The fact that $C_3$ is finite co-dimensional implies that $T_2$ is finitely generated.
The fact that $K(Z\times T)=N$ implies that $K/C_3$ is a quadratic nil-pattern of type $T_2$.

It can be seen from the construction of reinterpretations that $N$ is a re-interpretation of the nil-pattern $K/C_3$ under the monomorphism $T_2\rightarrow T$.
\end{proof}

The next theorem is a very important consequence of the previous one.

\begin{theorem}\label{fingentype} Every rank one module $M$ in $\hat{\bA}_2$ contains a character $\phi$ such that the type of $\phi$ is finitely generated.
\end{theorem}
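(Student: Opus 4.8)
The plan is to start from an arbitrary $2$nd order character $\phi$ representing the module $M$ and then to replace it by a character whose type is the finitely generated one provided by Theorem~\ref{veges}. First I would attach to $\phi$ the locally compact group $W_0=W/\stab(G(M),\phi)$, which by the earlier sections is a quadratic nil-pattern of type $T$ (the type of $\phi$), and recall the nil-morphism $\Psi\colon\bA\to C(W_0)$ of Proposition~\ref{contofphi}. Applying Theorem~\ref{veges} to the nil-pattern $W_0$ produces a finitely generated nil-pattern $N_2$ of type $T_2$ together with a monomorphism $\beta\colon T_2\hookrightarrow T$ such that $W_0$ is the re-interpretation of $N_2$ under $\beta$. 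Unwinding the embedding-interpretation construction, $\beta$ induces an epimorphism of cores $p\colon C(W_0)\to C(N_2)$ which is $Z$-equivariant, is compatible with the first degree maps ($c_{N_2}\circ p=\hat\beta\circ c_{W_0}$), and is itself induced by group homomorphisms; hence $\Psi_2:=p\circ\Psi\colon\bA\to C(N_2)$ is again a nil-morphism, say $\Psi_2(a+b)=\chi_a(b)\,n_a\,\Psi_2(b)$ with $\chi_a\colon\bA\to Z$ a homomorphism and $n_a\in N_2$.

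Next I would fix a Borel circular function $f_2\colon C(N_2)\to\mathcal{C}$ as produced by Lemma~\ref{crossection} and put $\phi':=f_2\circ\Psi_2$. The claim is that $\phi'$ is a $2$nd order character lying in $M$ whose type is contained in $T_2$, which is finitely generated. Clearly $|\phi'|=1$. Writing $g_n(y)=f_2(ny)\overline{f_2(y)}$, circularity of $f_2$ makes each $g_n$ invariant under $Z$, so it descends to a Borel function $\tilde g_n$ on $C(N_2)/Z=\hat{T_2}$; the nil-morphism identity then gives $\Delta_a\phi'(b)=\chi_a(b)\,\tilde g_{n_a}(\rho_2(b))$, where $\rho_2=c_{N_2}\circ\Psi_2\colon\bA\to\hat{T_2}$ is a homomorphism. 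Hence $\Delta_a\phi'\in\mathcal{F}_1$ and $\phi'$ is a $2$nd order character; applying a further difference operator the linear factor $\chi_a$ contributes only the constant $\chi_a(\cdot)$, so $\Delta_{t_1,t_2}\phi'$ is a function of $\rho_2$ alone and is therefore measurable in the $\sigma$-algebra spanned by $T_2$. Thus the type of $\phi'$ may be taken to be $T_2$.

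To see that $\phi'$ actually lies in $M$ I would compare it with $\phi$. By Lemma~\ref{representable} (more precisely its proof) $\phi$ is representable on $C(W_0)$ by a Borel \emph{circular} function $f'$, i.e.\ $\phi=f'\circ\Psi$ almost everywhere. Since $f_2\circ p$ is circular as well ($f_2$ composed with the $Z$-equivariant $p$), the ratio $(f_2\circ p)\overline{f'}$ is $Z$-invariant and hence factors through $C(W_0)/Z=\hat T$; consequently $\phi'\overline{\phi}=[(f_2\circ p)\overline{f'}]\circ\Psi$ is pulled back from $\hat T$ along the first degree homomorphism $c_{W_0}\circ\Psi$, hence is measurable in the $\sigma$-algebra spanned by $T$, which lies inside $\mathcal{F}_1$ (this is the content of Corollary~\ref{representable2} read in reverse). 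Being of modulus $1$ it lies in $M_1$, so $\phi'=(\phi'\overline{\phi})\,\phi\in M_1\phi=M$, and $\phi'$ is the required character.

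The $\Delta$-computations and the remark that circular functions on a core pull back along the first degree homomorphism to the $\sigma$-algebra of the type are routine and parallel the treatment already given for $\phi$ and $C(W_0)$ in Lemmas~\ref{skewhom2}--\ref{representable}. The step that I expect to need the most care is the preliminary one: extracting from Theorem~\ref{veges} and the embedding-interpretation construction the precise equivariance of the core epimorphism $p$ (with respect to the $Z$-action and the first degree maps), since it is exactly these properties that guarantee both that $\Psi_2$ is a nil-morphism and that $\phi'$ returns to the same module $M$ rather than to a neighbouring one.
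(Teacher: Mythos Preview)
Your proposal is correct and follows essentially the same route as the paper: represent the initial character by a circular function on $C(W_0)$, apply Theorem~\ref{veges} to obtain the core epimorphism $p\colon C(W_0)\to C(N_2)$ to a finitely generated nil-pattern, and take the new character to be a circular function on $C(N_2)$ pulled back along $p\circ\Psi$; the paper does exactly this (with the roles of $\phi$ and $\phi'$ swapped), only it works entirely on $C(W_0)$ via $\phi_3=h\circ\phi_2$ rather than passing to $\Psi_2$. Your write-up is in fact more careful than the paper's, since you explicitly verify that $\Delta_{t_1,t_2}\phi'$ is measurable in the $\sigma$-algebra of $T_2$ (hence the type is finitely generated), a step the paper leaves implicit.
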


\begin{proof} Let $\phi'$ be an arbitrary character in $M$. Then by lemma \ref{representable} we know that $\phi'$ is representable on $C(W_0)$ for some nil-morphism $\Psi:\bA\rightarrow C(W_0)$ where $W_0$ is of type $T$. This means that $\phi'=\Psi\circ\phi'_2$ for some circular function $\phi'_2$. Theorem \ref{veges} implies that there is a finitely generated abelian group $T_2$ embedded into $T$ and a nil-pattern of type $T_2$ such that $C(W_0)$ is the interpretation of $N$ over the embedding $T_2\rightarrow T$. This generates an epimorphism $h:C(W_0)\rightarrow C(N)$. If $\phi_2:C(N)\rightarrow\mathbb{C}$ is circular of absolute value one then the composition $\phi_3=h\circ\phi_2$ is again circular. Since $\phi'_2$ is circular it means that $\phi'_2=\phi_3 f$ for some function $f$ in $\mathcal{H}_0(C(W_0))$. It follows that the composition $\phi=\Psi\circ\phi_3$ is a function such that $\phi'=\phi \Psi\circ f$. Moreover $\Psi\circ f$ is obviously measurable in $\mathcal{F}_1$.
\end{proof}

\subsection{Almost homomorphisms}

Before stating the main theorem in this chapter we need some notation.
First of all we will use the metric $d$ on the circle group $\mathcal{C}$ defined in a way that $d(x,y)$ is the length of the shorter arch on $\mathcal{C}$ connecting $x,y$ divided by $2\pi$.
We extend this metric to groups $N$ that are central extensions of $\mathcal{C}$.
If $x,y$ are in a different coset of $\mathcal{C}$ then $d(x,y)=\infty$. If $x,y$ are contained in the same coset $r\mathcal{C}$ then $d(xr,yr)=d(x,y)$. In other words $d$ is shift invariant on $N$.
Using the fact that $N$ is a central extension, there is a metric (which is unique) satisfying this equation.

Let $X\rightarrow\mathcal{C}$ be a random variable. We say that $X$ is concentrated if there is a short arch of length at most $2\pi/3$ containing all the values.
The significance of concentrated random variables is that they have a well defined expected value which is between the two extremal points of the range of $X$ on the short arch.

This expected value can be computed by considering a universal cover map $p:\mathbb{R}\rightarrow\mathcal{C}$. There is a lift (defined up to a shift in $\mathbb{Z}$) of $X$ to another variable $X'\rightarrow\mathbb{R}$ such that the range of $X'$ is contained in an interval of length $1/3$. The expected value of $X'$ composed with $p$ is a unique value.
We also have that if $X_1,X_2$ are both concentrated on an arch of length $2\pi/6$ the $\mathbb{E}(X_1X_2)=\mathbb{E}(X_1)\mathbb{E}(X_2)$.

\begin{theorem}\label{almosthom} Let $A$ and $B$ be two finite groups and let $N$ be a central extension of the circle group $\mathcal{C}$ by $B$. Assume that $h:A\rightarrow N$ is a map such that
$h\rightarrow N/\mathcal{C}$ is a homomorphism and
$$d(h(a+b),h(a)h(b))\leq\epsilon\leq 1/40$$ for every $a,b\in A$. Then there is a homomorphism $g:A\rightarrow N$ such that $g(a)\mathcal{C}=h(a)\mathcal{C}$ and $d(g(a),h(a))\leq4\epsilon$ for every $a\in A$.
\end{theorem}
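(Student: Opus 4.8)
The plan is to treat $h$ as a genuine homomorphism perturbed by a small $2$-cocycle, and then to correct it by subtracting an averaged copy of that cocycle. Since $h$ followed by the projection $N\to N/\mathcal{C}$ is a homomorphism, for all $a,b\in A$ the elements $h(a)h(b)$ and $h(a+b)$ lie in the same coset of the central circle $\mathcal{C}$, so $c(a,b):=h(a)h(b)\,h(a+b)^{-1}$ is a well-defined element of $\mathcal{C}$; equivalently $h(a)h(b)=c(a,b)\,h(a+b)$. Associativity in $N$ together with the centrality of $\mathcal{C}$ yields the usual cocycle identity $c(a,b)\,c(a+b,e)=c(b,e)\,c(a,b+e)$ for all $a,b,e$, while the hypothesis and the shift-invariance of $d$ on $N$ (set up just before the theorem) give $d(c(a,b),1)=d(h(a)h(b),h(a+b))\le\epsilon$.

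The first real step is to lift $c$ through the universal cover $p\colon\mathbb{R}\to\mathcal{C}$. Since $d(c(a,b),1)\le\epsilon<1/2$, each $c(a,b)$ has a unique lift $\tilde c(a,b)\in[-\epsilon,\epsilon]$. The lifted left- and right-hand sides of the cocycle identity then both lie in $[-2\epsilon,2\epsilon]$, an interval of length well under $1$; as they represent the same element of $\mathcal{C}$, they differ by an integer, and hence by $0$. Thus $\tilde c\colon A\times A\to\mathbb{R}$ is an honest $\mathbb{R}$-valued $2$-cocycle on the finite group $A$.

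Next I would trivialize $\tilde c$ explicitly by averaging. Put $\tilde\lambda(a):=\frac{1}{|A|}\sum_{b\in A}\tilde c(a,b)$, so $|\tilde\lambda(a)|\le\epsilon$ for every $a$. Summing the cocycle identity $\tilde c(a,b)+\tilde c(a+b,e)=\tilde c(b,e)+\tilde c(a,b+e)$ over $e\in A$ and using that $e\mapsto b+e$ is a bijection of $A$, three of the four sums collapse to multiples of $\tilde\lambda$, giving $\tilde c(a,b)=\tilde\lambda(a)+\tilde\lambda(b)-\tilde\lambda(a+b)$. Now set $\lambda(a):=p(-\tilde\lambda(a))\in\mathcal{C}$ and $g(a):=\lambda(a)h(a)$. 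Since $p$ is a homomorphism, a direct computation using $\lambda(a)\lambda(b)\,c(a,b)=p\bigl(-\tilde\lambda(a)-\tilde\lambda(b)+\tilde c(a,b)\bigr)=p\bigl(-\tilde\lambda(a+b)\bigr)=\lambda(a+b)$ shows $g(a)g(b)=g(a+b)$, so $g$ is a homomorphism; clearly $g(a)\mathcal{C}=h(a)\mathcal{C}$ because $\lambda(a)\in\mathcal{C}$; and by shift-invariance of $d$ on $N$, $d(g(a),h(a))=d(\lambda(a),1)=|\tilde\lambda(a)|\le\epsilon$, which is comfortably within the claimed bound $4\epsilon$.

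The only genuine obstacle is the lifting step: one must check carefully that the smallness of $\epsilon$ really forces the a priori $\mathbb{Z}$-valued ambiguity in the cocycle relation to vanish, so that $\tilde c$ is a true real $2$-cocycle rather than merely a cocycle modulo $1$ — this is exactly where a numerical hypothesis on $\epsilon$ is used (a considerably weaker bound than $1/40$ would already suffice). The remaining ingredients — the vanishing of the second cohomology of a finite group with real coefficients, realized here concretely by the averaging formula, and the bookkeeping $d(p(t),1)=|t|$ for $|t|\le 1/2$ — are routine.
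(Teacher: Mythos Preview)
Your proof is correct and actually cleaner than the paper's argument, but the route is genuinely different. The paper first symmetrizes $h$ to an $h'$ with $h'(-a)=h'(a)^{-1}$ (at cost $\epsilon$, yielding $d(h'(a+b),h'(a)h'(b))\le 3\epsilon$), and then defines $g(a)$ directly as the circular expectation $\mathbb{E}\bigl(h'(a_1)h'(a_2)\mid a_1+a_2=a\bigr)$, using the ``concentrated random variable'' machinery set up just before the theorem; the homomorphism property is verified by splitting this expectation into two independent concentrated pieces. Your approach instead isolates the $\mathcal{C}$-valued $2$-cocycle $c(a,b)=h(a)h(b)h(a+b)^{-1}$, lifts it to a genuine $\mathbb{R}$-valued cocycle $\tilde c$ (the smallness of $\epsilon$ killing the integer ambiguity), and then exploits the vanishing of $H^2(A,\mathbb{R})$ for finite $A$ via the explicit coboundary $\tilde\lambda(a)=\frac{1}{|A|}\sum_b\tilde c(a,b)$.

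What each buys: the paper's argument avoids any cohomological language and stays entirely inside $N$, at the price of the preliminary symmetrization step and a looser constant. Your cohomological route is shorter, needs no symmetrization, makes the role of finiteness of $A$ completely transparent, and yields the sharper bound $d(g(a),h(a))\le\epsilon$ rather than $4\epsilon$; it also works verbatim without assuming $A$ abelian and shows that any $\epsilon<1/4$ (so that $4\epsilon<1$) already suffices for the lifting step.
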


\begin{proof} First of all it is trivial that there is a function $h'$ satisfying the formula
$$(h'(a)=h(a)\vee h'(-a)=h(-a))\wedge (h'(a)^{-1}=h(-a))$$
for every $a\in A$. We have that $d(h'(a),h(a))\leq\epsilon$ because $h'(a)$ is either $h(a)$ or $h(-a)^{-1}$ and in the second case $$d(h'(a),h(a))=d(h'(a)h(-a),h(a)h(-a))=d(h(0),h(a)h(-a))\leq\epsilon.$$
It follows that
\begin{equation}\label{3epsilon}
d(h'(a+b),h'(a)h'(b))\leq 3\epsilon.
\end{equation}
Now we define
$$g(a)=\mathbb{E}(h'(a_1)h'(a_2)|a_1+a_2=a)$$
where $a_1$ (or equivalently $a_2$) is chosen uniformly at random.
To show that $g(a)$ is well defined we need that the random variable $h'(a_1)h'(a_2)$ is concentrated. This follows from (\ref{3epsilon}) since there is an arch of length at most $6\epsilon$ containing the range. We have that $d(g(a),h'(a))\leq 3\epsilon$ and $d(g(a),h(a))\leq 4\epsilon$.
Now to see that $g$ is a homomorphism we write
$$g(c_1+c_2)=\mathbb{E}(h'(a_1+c_2)h'(a_2)|a_1+a_2=c_1)=\mathbb{E}(h'(a_1+c_2)h'(-a_1)h'(a_1)h'(a_2)|a_1+a_2=c_1).$$
Now we introduce the random variables $X_1=h'(a_1+c_2)h'(-a_1)$ and $X_2=h'(a_1)h'(a_2)$. Both random variables are concentrated on an arch of length $6\epsilon$ on a fix circle in $N$. It follows that $g(c_1+c_2)=g(c_1)g(c_2)$ since $\mathbb{E}(X_1)=g(c_1)$ and $\mathbb{E}(X_2)=g(c_2)$.
\end{proof}

\subsection{Constructing nil-morphisms on finite groups}

If $N$ is a finitely generated nil-pattern of type $T,\mathcal{C}$ then $C(N)$ is a smooth compact manifold which has an embedding $p$ into a finite dimensional Euclidean space. The embedding $p$ induces a metric $\delta$.

\begin{definition} Let $N$ be a finitely generated nil pattern and $A$ an abelian group. A function $f:A\rightarrow C(N)$ is called an $\epsilon$ almost nil-morphism if for every element $a\in A$ there is an element $g$ from $N$ and a linear function (a linear character multiplied by a constant in $\mathcal{C}$) $\chi$ such that for every element $b\in A$ we have $\delta(\psi(a+b),\chi(b)g\psi(b))\leq\epsilon$.
\end{definition}

We will also need an Abelian version of the above definition.

\begin{definition} Let $T$ be a finitely generated abelian group. We say that $f:A\rightarrow\hat{T}$ is an $\epsilon$-linear function if for every $a\in A$ there is an element $c\in\hat{T}$ such that for every $d_2(f(a+b)-f(b),c)\leq\epsilon$ for every $b\in A$ (where $d_2$ is the Riemann metric on $\hat{T}$).
\end{definition}

\begin{lemma}\label{almosthom2} Let us fix the finitely generated group $T$. For every $\epsilon>0$ there is a number $\delta>0$ such that every $\delta$-liner function can be modified to a linear function with error $\epsilon$.
\end{lemma}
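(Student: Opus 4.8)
The plan is to run the additive, abelian analogue of the averaging argument in the proof of Theorem~\ref{almosthom}, with the circle replaced by the target $\hat T$. Since $T$ is finitely generated, $\hat T$ is a compact abelian Lie group, so it has positive injectivity radius and its connected components lie at positive distance from one another; I would fix once and for all a scale $\rho=\rho(T)>0$, small enough in terms of $\hat T$, so that any subset of $\hat T$ of diameter at most (say) $20\rho$ sits inside a single geodesically normal chart. On such a chart the center of mass of a probability measure is a well defined point of $\hat T$, and, lifting to the tangent space and using linearity of expectation there, the expectation of a sum of two $\hat T$-valued random variables, each supported in a set of diameter at most $2\rho$, equals the sum of their expectations. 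This is precisely the behaviour of concentrated random variables used just before Theorem~\ref{almosthom}, and $\delta:=\min(\epsilon,\rho)$ (or $\tfrac12$ of it, to land with error at most $\epsilon$ rather than $2\epsilon$) is what I claim works.

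First I would extract a clean approximate-linearity statement. Let $f:A\to\hat T$ be $\delta$-linear and, for $a\in A$, let $c_a$ witness this, so $d_2(f(a+b)-f(b),c_a)\le\delta$ for all $b$. Taking $b=0$ gives $d_2(f(a)-f(0),c_a)\le\delta$, hence $d_2\big(f(a+b)-f(b),\,f(a)-f(0)\big)\le 2\delta$ for all $a,b$. Now define $\ell:A\to\hat T$ by $\ell(a)=\mathbb{E}_{b\in A}\big(f(a+b)-f(b)\big)$ with $b$ uniform on $A$; this is well defined since all averaged values lie within $\delta\le\rho$ of $c_a$. Because the value at $b=0$ is $f(a)-f(0)$, we get $d_2\big(\ell(a),f(a)-f(0)\big)\le 2\delta$ for every $a$, so $\tilde f(x):=\ell(x)+f(0)$ will automatically be $2\delta$-close to $f$ by translation invariance of $d_2$.

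It then remains only to check that $\ell$ is a genuine homomorphism, which is where the content lies. For fixed $a_1,a_2\in A$ I would write, for uniform $b$,
\[
f(a_1+a_2+b)-f(b)=\big(f(a_1+a_2+b)-f(a_2+b)\big)+\big(f(a_2+b)-f(b)\big)=X_1+X_2,
\]
noting that $X_2$ has exactly the distribution defining $\ell(a_2)$ (so $\mathbb{E}X_2=\ell(a_2)$, and $X_2$ is supported within $\delta$ of $c_{a_2}$), and that as $b$ ranges uniformly so does $b':=a_2+b$, so $X_1=f(a_1+b')-f(b')$ has exactly the distribution defining $\ell(a_1)$ (so $\mathbb{E}X_1=\ell(a_1)$, supported within $\delta$ of $c_{a_1}$). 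Both supports have diameter at most $2\delta\le 2\rho$, so the concentration remark gives $\ell(a_1+a_2)=\mathbb{E}(X_1+X_2)=\ell(a_1)+\ell(a_2)$. Hence $\ell$ is a homomorphism, $\tilde f=\ell+f(0)$ is a ($0$-)linear function, and $d_2\big(f(x),\tilde f(x)\big)=d_2\big(f(x)-f(0),\ell(x)\big)\le 2\delta\le\epsilon$ for all $x$. The one delicate point I would be careful about is the bookkeeping that keeps every random variable above ($X_1$, $X_2$, their sum, and the $\ell$-values, whose centers lie near $c_{a_1}$, $c_{a_2}$, $c_{a_1}+c_{a_2}\approx c_{a_1+a_2}$) inside one exponential chart of $\hat T$, so that ``expectation commutes with addition'' is literally valid; this is exactly what choosing $\rho$ small relative to the injectivity radius and the component spacing of $\hat T$ buys us, and it is the same issue handled, in the connected one-dimensional case, by the discussion preceding Theorem~\ref{almosthom}.
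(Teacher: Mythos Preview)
Your argument is correct. You carry out the center-of-mass averaging from Theorem~\ref{almosthom} directly in the target $\hat T$, using that $\hat T$ is a compact abelian Lie group: concentrated $\hat T$-valued variables have a well defined mean via a lift to the universal cover $\mathbb{R}^d\times F$ (with $F$ finite), and since the covering map is a group homomorphism, expectation commutes with addition for jointly concentrated variables regardless of dependence. Your bookkeeping is right: $X_1$, $X_2$ and $X_1+X_2$ live in three (possibly different) components, but each lifts to a small ball in the cover and the three lifts add, so linearity of expectation in $\mathbb{R}^d$ transfers down.

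The paper's own proof takes a slightly different route: it invokes the structure $\hat T\cong(\mathbb{R}/\mathbb{Z})^d\times F$, discards the finite factor (where sufficiently small $\delta$ forces exactness), and then works one circle coordinate at a time, quoting Theorem~\ref{almosthom} as a black box for each. So the paper \emph{reduces} to the circle case, whereas you \emph{reprove} the averaging argument in the general abelian target. Your approach is a bit more self-contained and avoids the coordinate splitting; the paper's is shorter because the heavy lifting has already been done in Theorem~\ref{almosthom}. Substantively they are the same idea organized differently.
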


\begin{proof} The group $\hat{T}$ is the direct product of a torus with a finite abelian group. It is enough to prove the statement for toruses. A torus is a direct power of the circle group. By doing the approximation coordinate wise we can reduce the problem to $\hat{T}=\mathcal{C}$ which follows from theorem \ref{almosthom}
\end{proof}

\begin{lemma}\label{almostnilhom} For every nil-pattern $N$ there is a positive constant $\epsilon>0$ there is a constant $\delta>0$ such that if $f$ is a $\delta$ almost nil-morphism on a finite abelian group $A$ then there is a nil character $f':A\rightarrow C(N)$ such that $\delta(f'(a),f(a))\leq\epsilon$.
\end{lemma}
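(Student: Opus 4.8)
The plan is to strip the nil-pattern one level at a time, exactly as a nil-morphism is built from a homomorphism in Lemmas \ref{liftnilhom} and \ref{nilhomhom}, correcting the error at each level by Lemma \ref{almosthom2} (for the first degree map) and then by Theorem \ref{almosthom} (for the remaining two-step central extension). Throughout, all implied constants may depend on $N$ but not on $A$; I write $\delta(\cdot,\cdot)$ for the metric on cores coming from the fixed embeddings, $d_2$ for the Riemann metric on $\hat T$, and $d$ for the metric on central extensions of $\mathcal{C}$ introduced before Theorem \ref{almosthom}.

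\emph{First degree map.} Let $c\colon C(N)\to\hat T$ be the first degree map; it is the restriction of a smooth map to a compact manifold, hence Lipschitz, and it is $N$-equivariant for the translation action of $\hat T$, and it kills $Z$. Applying $c$ to the defining inequality $\delta(f(a+b),\chi_a(b)g_a f(b))\le\delta$ and using that the $Z$-valued factor $\chi_a(b)$ disappears, one gets $d_2(c(f(a+b)),\overline{g_a}+c(f(b)))=O(\delta)$ where $\overline{g_a}\in\hat T$ is the image of $g_a$; that is, $c\circ f\colon A\to\hat T$ is an $O(\delta)$-linear function (in the sense of the definition of $\epsilon$-linear maps preceding Lemma \ref{almosthom2}). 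By Lemma \ref{almosthom2}, if $\delta$ is small enough there is a genuine homomorphism $\alpha\colon A\to\hat T$ with $d_2(c(f(a)),\alpha(a))\le\epsilon_1$ for all $a$, where $\epsilon_1\to 0$ as $\delta\to 0$ (we may assume $f$ normalized, adjusting by a translate at cost $O(\delta)$).

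\emph{Interpretation over $\alpha$, lift, and correction of the central extension.} Replacing $N$ by the pre-image of the finite subgroup $\alpha(A)\subseteq\hat T$ we may assume $\alpha$ surjective, so $\hat\alpha\colon T\to\hat A$ is injective; let $N_2$ be the interpretation of $N$ over $\hat\alpha$ (a nil-pattern of type $\hat A$) and $g\colon C(N_2)\to C(N)$ the induced finite covering, which for the chosen embeddings is uniformly bi-Lipschitz. Since $d_2(c(f(a)),\alpha(a))\le\epsilon_1$ and the fibres of $c$ vary Lipschitz-ly, the map $\tilde f(a):=$ (nearest point of $f(a)$ in $c^{-1}(\alpha(a))$) satisfies $\delta(\tilde f(a),f(a))=O(\epsilon_1)$, has $c\circ\tilde f=\alpha$ exactly, and is still an $O(\delta)$-almost nil-morphism; because $c\circ\tilde f$ is a homomorphism, its lift $\tilde f_2\colon A\to C(N_2)$ along $g$ is well defined pointwise ($\tilde f_2(a)$ is the unique point over $\tilde f(a)$ with first degree image $a$) and is an $O(\delta)$-almost nil-morphism whose first degree map is the identity. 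Now $N_3:=N_2/\hat A$ is a central extension $0\to Z\to N_3\to A\to 0$ with $Z\subseteq\mathcal{C}$, and $\hat A=T_2$ acts trivially on the left coset space $C(N_2)$, so the almost-nil-morphism property gives, for each $a$, a well-defined $\bar t_a\in N_3$ lying over $a$ with $\delta(\tilde f_2(a+b),\bar t_a\cdot\tilde f_2(b))=O(\delta)$ for all $b$. Comparing $\bar t_{a+b}$ with $\bar t_a\bar t_b$, both send every $\tilde f_2(c)$ to within $O(\delta)$ of $\tilde f_2(a+b+c)$ and both lie over $a+b$, so $\bar t_{a+b}^{-1}\bar t_a\bar t_b$ lies in the central $Z$ and rotates every circle fibre by $O(\delta)$; hence $d(\bar t_{a+b},\bar t_a\bar t_b)=O(\delta)$ and $a\mapsto\bar t_a$ is an $O(\delta)$-almost homomorphism $A\to N_3$. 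If $Z$ is finite then $N_3$ is finite and (for $\delta$ small) this is a homomorphism; if $Z=\mathcal{C}$, Theorem \ref{almosthom} applies (take $\delta$ so small that $O(\delta)\le 1/40$). In either case we obtain a homomorphism $\gamma\colon A\to N_3$ with $d(\gamma(a),\bar t_a)=O(\delta)$.

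\emph{Assembly.} Put $f_2'(a):=\gamma(a)\cdot\tilde f_2(0)$; then $f_2'(a+b)=\gamma(a)\cdot f_2'(b)$, so $f_2'$ is a nil-morphism into $C(N_2)$ (an orbit representation: any lift of $\gamma(a)$ to $N_2$ acts the same way on $C(N_2)$). Exactly as in the proofs of Lemmas \ref{liftnilhom} and \ref{nilhomhom}, pushing such an orbit map down through the interpretation via $g$ produces the linear twist $\chi_a$ out of the $Z$-part, so $f':=g\circ f_2'\colon A\to C(N)$ is a nil-morphism. Finally $\delta(f'(a),f(a))\le\delta(g(f_2'(a)),g(\tilde f_2(a)))+\delta(\tilde f(a),f(a))=O(\delta(f_2'(a),\tilde f_2(a)))+O(\epsilon_1)$, and $\delta(f_2'(a),\tilde f_2(a))\le\delta(\gamma(a)\tilde f_2(0),\bar t_a\tilde f_2(0))+\delta(\bar t_a\tilde f_2(0),\tilde f_2(a))=O(\delta)$ (the last term by the almost-nil-morphism inequality with $b=0$). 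All error terms tend to $0$ as $\delta\to 0$ with constants depending only on $N$, so choosing $\delta$ small enough in terms of $N$ and $\epsilon$ gives $\delta(f'(a),f(a))\le\epsilon$ for all $a$. The main obstacle is making rigorous sense of ``the lift of an approximate nil-morphism'' and verifying that the interpretation over the \emph{corrected} homomorphism $\alpha$ is the right object, so that one entire level of the pattern --- including the twist $\chi_a$, which descends into a discrete $\hat A$-coordinate needing no correction --- is captured by a single honest homomorphism; the subsidiary nuisance is keeping the Lipschitz constants of $c$, of $g\colon C(N_2)\to C(N)$, and of the comparison between $\delta$ on $C(N_2)$ and $d$ on $N_3$ uniform in $A$, which requires fixing the embeddings of the interpretations compatibly.
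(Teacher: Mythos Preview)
Your overall architecture matches the paper's: correct the first-degree map to a homomorphism $\alpha$ via Lemma \ref{almosthom2}, pass to the interpretation $N_2$ of type $\hat A$, lift, extract an almost-homomorphism into a finite central extension of $Z$, and repair it with Theorem \ref{almosthom}. The paper does exactly this, citing Proposition \ref{nilhomhom} for the ``extract an almost-homomorphism'' step.

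There is, however, a genuine error in your central-extension step. You set $N_3:=N_2/\hat A$ and assert both that this is a group (a central extension $0\to Z\to N_3\to A\to 0$) and that $\hat A=T_2$ acts trivially on $C(N_2)$. Neither holds. By axiom (5) of a nil-pattern, the conjugation action of $a\in A=\hat T_2$ on $(\chi,1)\in T_2\times Z$ is $(\chi,\chi(a))$, so $T_2$ is \emph{not} normal in $N_2$ and $N_2/\hat A$ is not a group. Correspondingly, for $n\in N_2$ with image $b\in A$ one computes $\chi\cdot nT_2=n(\chi\cdot\chi(b))T_2=\chi(b)\cdot nT_2$: the left action of $\chi\in T_2$ on the fiber of $C(N_2)$ over $b$ is rotation by $\chi(b)\in Z$, not the identity. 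This is precisely the mechanism that absorbs the twist $\chi_a(b)$, but it does not let you quotient by $\hat A$.

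The fix is to stay in $N_2$ itself, which \emph{is} a central extension of $Z$ by the finite group $N_2/Z$ (the latter is an extension of $\hat A$ by $A$, both finite). Concretely, set $t_a:=l_a g_a\in N_2$ where $l_a\in\hat A\times Z$ encodes $\chi_a$; the computation above shows $t_a\cdot\tilde f_2(b)=\chi_a(b)g_a\tilde f_2(b)$ up to a fixed $Z$-factor, so $t_a$ plays the role you wanted for $\bar t_a$. That $a\mapsto t_a$ is an $O(\delta)$-almost homomorphism into $N_2$ is checked exactly as in the proof of Proposition \ref{nilhomhom} (this is what the paper invokes), and then Theorem \ref{almosthom} applies with $B=N_2/Z$. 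Your concern about uniformity of Lipschitz constants in $A$ is legitimate and the paper is equally terse about it; the point is that Theorem \ref{almosthom} gives an error $4\epsilon$ independent of the groups, and the remaining comparisons can be made through the covering $C(N_2)\to C(N)$ using only the fixed geometry of $C(N)$.
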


\begin{proof} Let $\tau$ denote the homomorphism $N\rightarrow\hat{T}$. With this assumption, if $f$ is a $\delta_2$ almost nil-morphism then $f$ composed with the map $\tau$ is an almost linear function into the finite dimensional group $\hat{T}$. This means by lemma \ref{almosthom2} that if $\delta_2$ is small enough then $f$ can be modified with an error $\epsilon_2$ in the metric $\delta$ such that the new function $f'$ composed with $\tau$ is a linear function. By applying an appropriate translation from $N$ (with changing the errors only in a constant way) we can also assume that $f'$ composed with $\tau$ is a homomorphism. This means that there is a finite subgroup $H$ in $\hat{T}$ with dual group $T_3=T/T_2$ (for some finite index subgroup $T_2$ in $T$) such that $f'$ composed with $\tau$ maps $A$ surjectively on $H$. Let $H'$ denote the group $\tau^{-1}(H)$. The group $T_2$ is in the center of $H'$ and thus the restriction of the map $N\rightarrow C(N)$ is well defined on the group $H_2=H'/T_2$. We can interpret $H_2$ as a nil-pattern of type $T_3$ and $f'$ as a map going to $C(H_2)$. Now it is easy to see that $f':A\rightarrow C(H_2)$ is also an almost nil-morphism by modifying the elements $g$ (see the definition of almost-nil characters) to preserve the group $H_2$.
Let $N_2$ denote the interpretation of $H_2$ under the dual map of $\tau$ and let $f_2$ be the lift of $f'$ to $C(N_2)$.
Similarly as in proposition \ref{nilhomhom} we see that there is an almost homomorphism $\phi_3:A\rightarrow N_2$ whose composition with the map $N_2\rightarrow C(N_2)$ is suitably close to $f_2$. Then by theorem \ref{almosthom} we can correct $\phi_3$ to a homomorphism $\phi'_3$ with a small error. This completes the proof.
\end{proof}

\begin{theorem}\label{rigidity} There is a sequence of nil-morphisms $\Psi_i:A_i\rightarrow C(W_0)$ such that their ultra limit is $\Psi$.
\end{theorem}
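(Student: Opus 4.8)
The plan is to discretize $\Psi$ using its continuity and then to repair the discrete approximants into honest nil-morphisms. We work under the standing assumption of this subsection that the type $T$ of $\phi$ is finitely generated (as we may arrange by Theorem \ref{fingentype}), so that $C(W_0)$ is a finite dimensional compact nil-manifold carrying an embedding $p\colon C(W_0)\hookrightarrow\mathbb{R}^n$ with induced metric $\delta$, the setting in which Lemma \ref{almostnilhom} is available.

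First I would build approximants $f_i\colon A_i\to C(W_0)$ with $\limo f_i=\Psi$. By Proposition \ref{contofphi} the composite $p\circ\Psi\colon\bA\to\mathbb{R}^n$ is continuous in the $\sigma$-topology, hence is the ultralimit $\limo h_i$ of bounded maps $h_i\colon A_i\to\mathbb{R}^n$. Choosing a neighborhood $U$ of $p(C(W_0))$ in $\mathbb{R}^n$ with a smooth retraction $r\colon U\to C(W_0)$, one checks that for $\o$-almost every $i$ the range of $h_i$ lies (off a null set) in $U$, so $f_i:=r\circ h_i$ is defined and satisfies $\limo f_i=\Psi$.

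The core step is to show that, for $\o$-almost every $i$, $f_i$ is a $\delta_i$-almost nil-morphism with $\limo\delta_i=0$. Here I would use the explicit witnesses from the proof of Proposition \ref{contofphi}: by Lemma \ref{skewhom2} and equation (\ref{skewhom1}), for each $a\in\bA$ the unitary $\Xi'_a$ determines an element $g_a\in W_0$ and a linear character $\chi_a$ (namely $f'(a,\cdot)$, cf.\ Lemma \ref{charsection}) with $\Psi(a+b)=\chi_a(b)\,g_a\,\Psi(b)$ for every $b$, and both $a\mapsto g_a$ and $a\mapsto\chi_a$ are continuous in the $\sigma$-topology. Embedding $W_0$ properly in a Euclidean space, continuity makes these maps ultralimits of finite-level maps $a\mapsto g_{a,i}\in W_0$ and $a\mapsto\chi_{a,i}$, where $\chi_{a,i}$ is a linear character of $A_i$ times a unit constant; consequently the two-variable function $(a,b)\mapsto\delta\bigl(f_i(a+b),\chi_{a,i}(b)\,g_{a,i}\,f_i(b)\bigr)$ on $A_i\times A_i$ has ultralimit the zero function on $\bA\times\bA$, so its supremum $\delta_i$ tends to $0$ along $\o$. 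I expect the genuine difficulty of the proof to be concentrated precisely here: passing from ``the ultralimit vanishes'' (i.e.\ vanishing almost everywhere on $\bA\times\bA$) to the uniform bound over all $a,b\in A_i$ demanded by the definition of an almost nil-morphism. This should be handled by exploiting the rigidity of linear characters together with the fact that $C(W_0)$ is a compact manifold, so that an estimate failing only on a set of small measure can be corrected on that exceptional set, in the spirit of the gluing performed inside the proof of Lemma \ref{almostnilhom}.

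Finally, granting that $f_i$ is a $\delta_i$-almost nil-morphism with $\limo\delta_i=0$, fix a sequence $\epsilon_k\downarrow 0$ and the corresponding thresholds $\delta(\epsilon_k)$ provided by Lemma \ref{almostnilhom}. For $\o$-almost every $i$ pick $k(i)$ maximal with $\delta_i\le\delta(\epsilon_{k(i)})$ and let $\Psi_i\colon A_i\to C(W_0)$ be the nil-morphism furnished by Lemma \ref{almostnilhom} with $\delta(\Psi_i(a),f_i(a))\le\epsilon_{k(i)}$ for all $a$; on the remaining negligible set of indices define $\Psi_i$ arbitrarily. Since $\limo\delta_i=0$ we get $k(i)\to\infty$, hence $\epsilon_{k(i)}\to 0$, along $\o$, and therefore $\limo\Psi_i=\limo f_i=\Psi$, as desired.
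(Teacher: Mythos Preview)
Your approach is exactly the paper's: lift $\Psi$ to a sequence $\Psi'_i\colon A_i\to C(W_0)$ via continuity, verify that $\Psi'_i$ is an $\epsilon_i$-almost nil-morphism with $\epsilon_i\to 0$ along $\o$, and then correct by Lemma~\ref{almostnilhom}. The paper simply asserts the middle step with ``it is easy to see''; you have written out the witnesses $g_a,\chi_a$ coming from (\ref{skewhom1}) and Lemma~\ref{skewhom2}, which is the right thing to do.

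The ``genuine difficulty'' you flag is in fact not there. All the ingredients are \emph{internal} functions on $\bA$ (respectively $\bA\times\bA$): Proposition~\ref{contofphi} gives $\Psi$ itself, not merely an a.e.\ representative, as an ultralimit, and the continuous two-variable function $f'$ from Lemma~\ref{charsection} supplies $\chi_a$ (hence $\Xi'_a$ and $g_a$) pointwise for every $a\in\bA$. The identity $\Psi(a+b)=\chi_a(b)\,g_a\,\Psi(b)$ therefore holds for \emph{every} pair $(a,b)\in\bA\times\bA$, so the internal error function $E$ on $\bA\times\bA$ is identically zero, not just zero almost everywhere. Now for each $i$ pick $(a_i,b_i)\in A_i\times A_i$ realizing the supremum $\delta_i$; then $\limo\delta_i=E([a_i],[b_i])=0$. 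No exceptional-set repair is needed. (For the same reason your ``off a null set'' in the construction of $f_i$ is superfluous: since $p\circ\Psi$ is internal with range in $p(C(W_0))$, the supremum of $d(h_i(\cdot),p(C(W_0)))$ over $A_i$ tends to $0$ along $\o$, so the whole range of $h_i$ lands in $U$ for $\o$-almost every $i$.)
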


\begin{proof} We start with a sequence of functions $\Psi'_i:A_i\rightarrow C(W_0)$ such that the ultra limit of $\Psi'_i$ is $\Psi$. It is easy to see that there is a sequence $\epsilon_i$ tending to $0$ such that $\Psi'_i$ is an $\epsilon_i$ almost nil-morphism when $i\in I$ for an index set $I$ in the ultra filter $\omega$. Then lemma \ref{almostnilhom} completes the proof.
\end{proof}

\subsection{Proof of the quadratic structure theorem}

First of all note that it is enough to prove a seemingly weaker statement in which $M$ also depend on $f$. The reason is that there are only finitely many nil-patterns of dimension less then $n$.
Taking their direct product a universal one can be constructed.

We show the theorem by contradiction and assume that there is a number $\epsilon$ and function $F$ not satisfying the conditions of the theorem. This means that there is a sequence of growing abelian groups $\{A_i\}_{i=1}^\infty$ and functions $f_i:A_i\rightarrow \mathbb{C}$ such that $f_i$ doesn't satisfy the condition with $n$.

Let $f$ be the ultra limit of the sequence $\{f_i\}$ and let
$f=g+\phi_1+\phi_2+\dots$ be its quadratic decomposition where $\|g\|_{U_3}=0$ and $\|g\|_\infty\leq 1$.
Let $n_1$ be a natural number such that $$\|f-(g+\sum_{i=1}^{n_1}\phi_i)\|_2\leq\epsilon/5$$.
From theorem \ref{fingentype} we get that there are functions $\phi'_i$ for every $i$ such that $\phi'_i$ is in the same rank one module as $\phi_i$, the type of $\phi'_i$ is finitely generated, and $\|\phi_i-\phi'_i\|\leq\epsilon/(5n_0)$. We can also assume that $(\phi_i,\phi_i-\phi'_i)=0$.

According to corollary \label{representable2} there are nil-morphisms $\Psi_i:\bA\rightarrow C(N_i)$ for some nil-patterns $N_i$ such that $\phi'_i$ is representable on $C(N_i)$.
Consider the manifolds $C(N_i)$ already embedded into finite dimensional spaces. Since polynomial functions are dense in $L_2(C(N_i))$ it follows that there are polynomials $p_i:C(N_i)\rightarrow\mathbb{C}$ such that $\|\phi'-p_i(\Psi_i)\|_2\leq \epsilon/(5n_0)$.

For every $i$ let $\{\Psi_i^j\}_{j=1}^\infty$ be a sequence of nil-morphisms $A_i\rightarrow C(N_i)$ guaranteed by theorem \ref{rigidity} converging to $\Psi_i$.
Let $\{g_j\}_{j=1}^\infty$ be a sequence of functions converging to $g$.
Then $f_j=g_j+\sum_{i=1}^{n_0}p_i(\Psi_i^j)+h_j$ where $h_j$ is some error term such that $\limsup\|h_j\|_2\leq\epsilon/2$. Let $d_j=\sum_{i=1}^{n_0}p_i(\Psi_i^j)$. We also have that $\limsup (d_j,h_j)=\limsup (d_j,g_j)=\limsup (h_j,g_j)=0$. Let $n$ be the maximum of $n_0$, the degrees of $\{p_i\}_{i=1}^{n_0}$ and the dimensions of $N_i$.
We obtain that every $f_i$ satisfies the condition of the theorem with $n$ for infinitely many indices which is a contradiction.

\subsection{Some other inverse theorems}

Nil-morphisms, on an abelian group $A$, take values in a nil-manifold $N/T$ and so we can't take the scalar product of an ordinary function $f:A\rightarrow\mathbb{C}$ with a nil-morphism.
In this part we define two alternative correlation notions that are suitable for inverse theorems.

\begin{definition} Let $\{f_i:C(N)\rightarrow\mathbb{C}\}_{i=1}^\infty$ be a fixed (arbitrary) system of continuous functions such that the $L_2$-span $\langle{f_i\}_{i=1}^\infty}\rangle$ contains every measurable function $f:C(N)\rightarrow\mathbb{C}$. We say that a nil-morphism $\phi:A\rightarrow C(N)$ $\delta$-correlates with a function $g:A\rightarrow\mathbb{C}$ if $(g,f_i\circ\phi)\leq\delta$ for some $i\leq 1/\delta$.
\end{definition}

\begin{remark} Note that it is easy to construct function sequences of the above type. If we fix any embedding of $C(N)$ into a finite dimensional Euclidean space then polynomials of the coordinate functions (with rational coefficients) have the required property.
\end{remark}

It turns out rather surprisingly that the condition on the set $\{f_i\}$ is way too strong for applications. Inverse theorems can be formulated using much simpler systems.

\begin{definition} Let $\{f_i:C(N)\rightarrow\mathbb{C}\}_{i=1}^\infty$ be a fixed (arbitrary) system of continuous functions such that $\lim_{i\rightarrow\infty}f_i$ is a circular function of absolute value $1$.
We say that a nil-morphism $\phi:A\rightarrow C(N)$ $\delta$-correlates with a function $g:A\rightarrow\mathbb{C}$ if $(g,\chi (f_i\circ\phi))\leq\delta$ for some $i\leq 1/\delta$ and linear function $\chi:A\rightarrow\mathbb{C}$.
\end{definition}


\begin{thebibliography}{99}

\bibitem{ESz}
G. Elek, B. Szegedy: {\it A measure-theoretic approach to the theory of dense hypergraphs}, preprint

\bibitem{Gow}
T. Gowers, {\it A new proof of Szemer\'edi's theorem}, Geom. Funct. Anal. 11 (2001), no 3, 465-588
\bibitem{Gow2}
T. Gowers, {\it Fourier analysis and Szemer\'edi's theorem}, Proceedings of the International Congress of Mathematics, Vol. I (Berlin 1998).

\bibitem{GrTao}
B. Green, T. Tao {\it An inverse theorem for the gowers $U_3(G)$ norm} Proc. Edinb. Math. Soc. (2) 51 (2008), no. 1, 73--153.

\bibitem{Leib} A. Leibman {\it Polynomial mappings of groups},~ Israel J. of Math. 129 (2002),29-60

\bibitem{Zieg}
T. Ziegler, {\it Universal characteristic factors and F\"urstenberg averages} J. Amer. Math. Soc. 20 (2007), no. 1, 53-97

\bibitem{Sz1}B. Szegedy, {\it Higher order fourier analysis as an algebraic theory I.}

\bibitem{Sz2}B. Szegedy, {\it Higher order Fourier analysis as an algebraic theory II.}



\bibitem{HKr} B. Host, B. Kra {\it Nonconventional ergodic averages and nilmanifolds} Ann. of Math. (2) 161 (2005), no. 1, 397--488

\end{thebibliography}
\end{document}